\documentclass[11pt]{amsart}
\usepackage{amscd}
\usepackage[arrow,matrix]{xy}
\usepackage{graphicx}
\usepackage{amsmath}
\usepackage{amsrefs}
\usepackage{mathtools}
\usepackage{xcolor}
\usepackage{listings}
\usepackage{soul}
\usepackage{comment}
\usepackage{latexsym,amssymb}
\usepackage{hyperref}
\usepackage[utf8]{inputenc}

\numberwithin{equation}{section}
\theoremstyle{plain}
\newtheorem{lemma}{Lemma}[section]
\newtheorem{proposition}[lemma]{Proposition}
\newtheorem{theorem}[lemma]{Theorem}
\newtheorem{corollary}[lemma]{Corollary}

\theoremstyle{definition}
\newtheorem{definition}[lemma]{Definition}
\newtheorem{remark}[lemma]{Remark}

\usepackage{tikz}
\usepackage{dsfont}

\definecolor{grey}{RGB}{188,188,188}

\newcommand{\vol}{{\rm vol }}

\DeclareMathOperator{\Var}{Var}

\DeclareMathOperator{\End}{End}
\DeclareMathOperator{\Ric}{Ric}

\begin{document}
	\newcommand{\R}{{\mathbb R}}
	\newcommand{\C}{{\mathbb C}}
	\newcommand{\E}{{\mathbb E}}
	\newcommand{\F}{{\mathbb F}}
	\renewcommand{\O}{{\mathbb O}}
	\newcommand{\Z}{{\mathbb Z}}
	\newcommand{\N}{{\mathbb N}}
	\newcommand{\Q}{{\mathbb Q}}
	\renewcommand{\H}{{\mathbb H}}
	\newcommand{\X}{{\mathfrak X}}

	\newcommand{\Aa}{{\mathcal A}}
	\newcommand{\Bb}{{\mathcal B}}
	\newcommand{\Cc}{{\mathcal C}}    
	\newcommand{\Dd}{{\mathcal D}}
	\newcommand{\Ee}{{\mathcal E}}
	\newcommand{\Ff}{{\mathcal F}}
	\newcommand{\Gg}{{\mathcal G}}    
	\newcommand{\Hh}{{\mathcal H}}
	\newcommand{\Kk}{{\mathcal K}}
	\newcommand{\Ii}{{\mathcal I}}
	\newcommand{\Jj}{{\mathcal J}}
	\newcommand{\Ll}{{\mathcal L}}    
	\newcommand{\Mm}{{\mathcal M}}    
	\newcommand{\Nn}{{\mathcal N}}
	\newcommand{\Oo}{{\mathcal O}}
	\newcommand{\Pp}{{\mathcal P}}
	\newcommand{\Qq}{{\mathcal Q}}
	\newcommand{\Rr}{{\mathcal R}}
	\newcommand{\Ss}{{\mathcal S}}
	\newcommand{\Tt}{{\mathcal T}}
	\newcommand{\Uu}{{\mathcal U}}
	\newcommand{\Vv}{{\mathcal V}}
	\newcommand{\Ww}{{\mathcal W}}
	\newcommand{\Xx}{{\mathcal X}}
	\newcommand{\Yy}{{\mathcal Y}}
	\newcommand{\Zz}{{\mathcal Z}}
	
	\newcommand{\Kb}{{\mathbf K}}
	\newcommand{\Gb}{{\mathbf G}}
	\newcommand{\Lb}{{\mathbf L}}
    \newcommand{\pb}{{\mathbf p}}
    \newcommand{\qb}{{\mathbf q}}
    \newcommand{\Qb}{{\mathbf Q}}
    \newcommand{\yb}{{\mathbf y}}
    \newcommand{\xb}{{\mathbf x}}

	\renewcommand{\a}{{\mathfrak a}}
	\renewcommand{\b}{{\mathfrak b}}
	\newcommand{\e}{{\mathfrak e}}
	\renewcommand{\k}{{\mathfrak k}}
	\newcommand{\m}{{\mathfrak m}}
	\newcommand{\pg}{{\mathfrak p}}
	\newcommand{\g}{{\mathfrak g}}
	\newcommand{\gl}{{\mathfrak gl}}
	\newcommand{\h}{{\mathfrak h}}
	\renewcommand{\l}{{\mathfrak l}}
	\newcommand{\sm}{{\mathfrak m}}
	\newcommand{\n}{{\mathfrak n}}
	\newcommand{\s}{{\mathfrak s}}
	\renewcommand{\o}{{\mathfrak o}}
	\renewcommand{\so}{{\mathfrak so}}
	\renewcommand{\u}{{\mathfrak u}}
	\newcommand{\su}{{\mathfrak su}}

	\newcommand{\ssl}{{\mathfrak sl}}
	\newcommand{\ssp}{{\mathfrak sp}}
	\renewcommand{\t}{{\mathfrak t }}
	
	\newcommand{\zt}{{\tilde z}}
	\newcommand{\xt}{{\tilde x}}
	\newcommand{\Ht}{\widetilde{H}}
	\newcommand{\ut}{{\tilde u}}
	\newcommand{\Mt}{{\widetilde M}}
	\newcommand{\Llt}{{\widetilde{\mathcal L}}}
	\newcommand{\yt}{{\tilde y}}
	\newcommand{\vt}{{\tilde v}}
	\newcommand{\Ppt}{{\widetilde{\mathcal P}}}
	\newcommand{\op}{\mathrm{op}}
	\newcommand{\wh}{\widehat}
	\newcommand{\bp }{{\bar \partial}}
	
	\newcommand{\Remark}{{\it Remark}}
	\newcommand{\Proof}{{\it Proof}}
	\newcommand{\ad}{{\rm ad}}
	\newcommand{\Om}{{\Omega}}
	\newcommand{\om}{{\omega}}
	\newcommand{\eps}{{\varepsilon}}
	\newcommand{\Di}{{\rm Diff}}
	
	\renewcommand{\a}{{\mathfrak a}}
	\renewcommand{\b}{{\mathfrak b}}
	\renewcommand{\k}{{\mathfrak k}}
	\renewcommand{\l}{{\mathfrak l}}
	\renewcommand{\o}{{\mathfrak o}}
	\renewcommand{\u}{{\mathfrak u}}
	\renewcommand{\t}{{\mathfrak t }}
	\newcommand{\Cinf}{C^{\infty}}
	\newcommand{\la}{\langle}
	\newcommand{\ra}{\rangle}
	\newcommand{\half}{\scriptstyle\frac{1}{2}}
	\newcommand{\p}{{\partial}}
	\newcommand{\notsub}{\not\subset}
	\newcommand{\iI}{{I}}               
	\newcommand{\bI}{{\partial I}}      
	\newcommand{\LRA}{\Longrightarrow}
	\newcommand{\LLA}{\Longleftarrow}
	\newcommand{\lra}{\longrightarrow}
	\newcommand{\LLR}{\Longleftrightarrow}
	\newcommand{\lla}{\longleftarrow}
	\newcommand{\INTO}{\hookrightarrow}
	
	\newcommand{\QED}{\hfill$\Box$\medskip}
	\newcommand{\UuU}{\Upsilon _{\delta}(H_0) \times \Uu _{\delta} (J_0)}
	\newcommand{\bm}{\boldmath}

	\newcommand{\GL}{{\rm GL}}
	\newcommand{\SL}{{\rm SL}}
	\newcommand{\SO}{{\rm SO}}
	\newcommand{\G}{{\rm G_2}}
	\newcommand{\Spin}{{\rm Spin(7)}}
	\newcommand{\smallr}{\mathrm{small}}
	\newcommand{\Comm}{{\mathrm Comm}}
	
\newcommand{\Meas}{\mathbf{Meas}\,}

	\newenvironment{nouppercase}{%
		\let\uppercase\relax%
		\renewcommand{\uppercasenonmath}[1]{}}{}

	\setcounter{tocdepth}{1}
	
	\title[Empirical  Hodge Laplacians  and Harmonic forms]{Empirical Hodge Laplacians: Spectral Convergence and Harmonic Forms from Point Clouds}
	
	\author{H\^ong V\^an L\^e}
	\address{Institute of Mathematics of the Czech Academy of Sciences, \v Zitn\'a 25, 115 67 Praha 1, Czech Republic}
	\email{hvle@math.cas.cz}
	\date{July 13, 2026}
	
	\begin{abstract}Let $M^n\subset\mathbb R^d$ be a closed, connected, orientable $C^4$-smooth Riemannian submanifold of dimension $n\ge3$. We construct, for each degree $0\le k\le n$, a family of deformed Hodge Laplacians $\Delta_t^k$, $t>0$, defined in terms of the extrinsic geometry of $M^n$, and prove that $\Delta_t^k$ converges uniformly to the classical Hodge Laplacian $\Delta^k$ as $t\to0^+$. Given an i.i.d. uniformly distributed point cloud $S_m\subset M^n$, we define empirical Hodge operators $\widehat\Delta_{t,S_m}^k$. Under the scaling $t=m^{-1/(2n)}$, we prove uniform consistency in probability and compact Mosco convergence in probability of the associated quadratic forms. Consequently, the empirical spectral cluster near zero contains exactly the $k$-th Betti number $b_k$ of eigenvalues, counted with multiplicity, and converges in the transported discrete $L^2$-sense to the space of harmonic $k$-forms. We also construct consistent empirical estimators of the tangent projection, the second fundamental form, the Riemannian curvature tensor, and the Weitzenb\"ock curvature endomorphisms. As applications, we obtain consistent recovery of the Betti numbers and harmonic representatives of de Rham cohomology, as well as of the Pontryagin forms and their periods. Evaluating these periods on a homology basis recovers the corresponding real Pontryagin classes from sampled data.	
	\end{abstract}

	\keywords{Hodge Laplacian,    extrinsic metric, second fundamental form, Riemannian curvature, Pontryagin characteristic class,  manifold learning}
	\subjclass[2020]{Primary: 62R40; Secondary: 58A14, 55N10, 53C40}
	
	\maketitle
	
	\tableofcontents

\section{Introduction}\label{sec:intro}

The modern paradigm of spectral manifold learning was largely
catalyzed by the seminal work of Belkin and Niyogi
\cites{BN2003,BN2008,BN20062008}, who showed that the scalar
Laplace--Beltrami operator can be approximated from discrete
point-cloud data by graph Laplacians. This approach and its subsequent
refinements have become central tools in dimensionality reduction,
spectral clustering, and geometric data analysis. The nullspace of the
scalar Laplacian identifies the connected components of the underlying
space and therefore represents the degree-zero cohomology group  $H^0$.
 It does not, however, directly provide harmonic
representatives of higher-degree de Rham cohomology.

To access higher-degree invariants, one must pass from scalar functions
to differential $k$-forms. Unlike scalar functions, differential
forms require the coherent estimation of tangent spaces and their
exterior powers from discrete samples. Many existing constructions of
discrete Hodge Laplacians are based on combinatorial complexes, such as
\v{C}ech or Vietoris--Rips complexes, constructed from the data. Such
complexes can exhibit rapid combinatorial growth, and their comparison
with the smooth Hodge Laplacian involves a different discrete-to-smooth
analytic framework.

In this paper, we pursue an extrinsic and operator-theoretic approach.
We estimate the tangent projections and their exterior powers by
localized kernel methods and use smooth spatial cutoff functions to
construct empirical Hodge Laplacians directly on the resulting
projected discrete bundle. This extends the graph-Laplacian paradigm
from scalar functions to differential forms while retaining the
ambient Euclidean representation of the data.

Our first construction is deterministic. For a closed Riemannian
submanifold
\[
M^n\subset\mathbb R^d,
\qquad n\ge2,
\]
we define, in every degree $0\le k\le n$, a one-parameter family of
deformed Hodge operators
\[
\Delta_t^k,\qquad t>0,
\]
in terms of the extrinsic geometry of $M^n$. We prove pointwise convergence to the classical Hodge Laplacian
in Theorem~\ref{thm:pwconvergencel}. For sufficiently regular forms,
we further establish the uniform estimate
\[
\Delta_t^k=\Delta^k+O(t)
\]
as $t\to0^+$; see Theorem~\ref{thm:analytic_limit}.

We then turn to empirical geometry. Let
\[
S_m=\{x_1,\ldots,x_m\}\sim\mu^m
\]
be an i.i.d.\, sample from the uniform probability measure on $M^n$.
Following the local-PCA framework of Aamari and Levrard
\cite{AL2019}, we construct empirical tangent projections and prove
their uniform convergence in probability; see
Proposition~\ref{prop:empitangent_clean}. From these projections and
localized Gaussian moments, we construct empirical estimators of the
second fundamental form, its associated curvature tensors, and the
Weitzenb\"ock curvature endomorphisms. We prove uniform convergence in
probability of these geometric quantities under the standing sampling
regime.

These ingredients are assembled into empirical Hodge operators
\[
\widehat\Delta^k_{t,S_m}.
\]
For $n\ge3$, and under the scaling
\[
t=m^{-1/(2n)},
\]
we prove uniform consistency in probability on $C^4$-smooth
differential forms; see Theorem~\ref{thm:empiconvergence}. We further
establish compact Mosco convergence of the associated empirical
quadratic forms. Consequently, for every fixed degree $k$, the
empirical spectral cluster near zero contains exactly
\[
b_k=\dim\mathcal H^k(M^n)
\]
eigenvalues, counted with multiplicity, with probability tending to
$1$. Moreover, this discrete spectral subspace admits an isometric
identification with the harmonic space
\[
\mathcal H^k(M^n)=\ker\Delta^k
\]
whose discrepancy from the empirical restriction map tends to zero in
the discrete $L^2$-norm; see
Theorem \ref{thm:harmonic_cluster}. In particular, the procedure
consistently recovers the Betti numbers and provides a transported
$L^2$-approximation of the harmonic representatives of de Rham
cohomology.

The curvature estimators also lead to characteristic-class
applications. We construct empirical Pontryagin forms and prove
convergence of the associated periods and characteristic numbers
considered in this paper; see
Theorems~\ref{thm:pontryagin} and
\ref{thm:persistent_cycles}. Thus the same point-cloud framework
recovers both low-energy Hodge spectral information and extrinsic
curvature data.

The paper is organized as follows. In Section \ref{sec:pre}, we fix
the notation and collect the geometric and analytic preliminaries.

In Section \ref{sec:modified}, we construct the deterministic deformed
Hodge operators $\Delta_t^k$, derive their asymptotic expansions, and
prove their convergence to the intrinsic Hodge Laplacians.

In Section \ref{sec:empi}, we develop the empirical tangent,
second-fundamental-form, and curvature estimators. We then define the
empirical Hodge operators, prove uniform consistency and compact Mosco
convergence, and establish convergence of the empirical harmonic
spectral clusters.

In Section \ref{sec:learn}, we apply the curvature convergence results
to the recovery of Pontryagin forms and their associated periods from
uniformly sampled point-cloud data.

In Section \ref{sec:fin}, we compare our construction with related
connection-Laplacian frameworks
\cites{SW2012,SW2017} and discuss further directions. In particular,
although empirical eigenforms admit exact continuous Nystr\"om
extensions, proving uniform convergence of these extensions requires
additional low-energy regularity estimates. Such estimates would also
provide a possible route toward recovering products of harmonic forms
and, ultimately, the de Rham cohomology ring.

Finally, Appendix \ref{sec:empitangent} contains the proof of the
consistency of the empirical tangent projections stated in
Proposition~\ref{prop:empitangent_clean}  and Appendix \ref{sec:density} collects several analytic density lemmas
used in the convergence proofs.

\section*{Acknowledgements}   
This   research  was supported by the Institute of Mathematics,  Czech Academy of Sciences (RVO: 67985840).

\section{Preliminaries}\label{sec:pre}

In this section, we fix notation and recall several standard facts from
Riemannian geometry; see, for instance, \cites{KN1963, KN1969, Jost2017}.
We also collect some facts concerning the extrinsic geometry of
Riemannian submanifolds, which will be used in later sections.

Let $(M,g)$ be a Riemannian submanifold of dimension $n$ of a
Riemannian manifold $(N,\bar g)$, and let $1\le k\le n$. The inner
product $\langle\cdot,\cdot\rangle_g$ on the fibers of $TM$ induces
an inner product, again denoted by $\langle\cdot,\cdot\rangle_g$, on
the fibers of $\Lambda^k T^*M$. We denote by $\nabla$ the
Levi--Civita connection on $M$, as well as the induced connection on
$\Lambda^kT^*M$, and by $\nabla^*$ its formal adjoint. For a vector
bundle $E$ over $M$, we denote by $\Gamma(E)$ the space of smooth
sections of $E$ and by $\Omega^k(M,E)$ the space of $E$-valued
$k$-forms on $M$. We write $\Omega^k(M)$ for
$\Gamma(\Lambda^kT^*M)$  and  $\X (M)$ for  $\Gamma (TM)$.

 The curvature tensor of the Levi--Civita connection will be denoted by
 $R\in \Omega^2(M,\mathfrak{so}(TM))$; thus, for vector fields
 $X,Y,Z$ on $M$,
 \[
 R(X,Y)Z
 =
 \nabla_X\nabla_YZ-\nabla_Y\nabla_XZ-\nabla_{[X,Y]}Z.
 \]
 
 The Ricci tensor is defined by
 \[
 \mathrm{Ric}_p(v,w)
 \coloneqq
 \sum_{i=1}^n
 \langle R(v,e_i)e_i,w\rangle_g,
 \]
 where $\{e_i\}_{i=1}^n$ is an orthonormal basis of $T_pM$.

Denote by $T^\perp M$ the normal bundle over $M$ and by $\mathrm{Sym}(TM \times TM, T^\perp M)$ the vector bundle over $M$ whose fibers consist of all $T^\perp  M$-valued symmetric bilinear forms on $TM$. The difference between the ambient Levi-Civita covariant derivative $\bar{\nabla}$ on $(N, \bar g)$ and the intrinsic Levi-Civita covariant derivative $\nabla$ on $(M, g)$ is captured by the   second fundamental form $B \in \Gamma\big(M, \mathrm{Sym}(TM \times TM, T^\perp M)\big)$, as defined in the Gauss formula.

\begin{proposition}[{\bf The Gauss Formula}]
	\label{prop:gauss_formula}
	For any tangent vector fields $X, Y \in \mathfrak{X}(M)$, the ambient derivative decomposes into tangential and normal components:
	\begin{equation}
		\bar{\nabla}_X Y = \nabla_X Y + B(X, Y). \label{eq:gausseq}
	\end{equation} 
\end{proposition}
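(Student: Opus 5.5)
The plan is to treat the Gauss formula as essentially a definition of $B$ together with a verification that the resulting object has the stated properties. Fix $X,Y\in\X(M)$ and extend them locally to vector fields $\bar X,\bar Y$ on a neighborhood in $N$; one can do this in adapted coordinates in which $M$ is a coordinate slice. Along $M$, set $\bar\nabla_XY := (\bar\nabla_{\bar X}\bar Y)|_M$. This is independent of the chosen extensions, because $\bar\nabla_{\bar X}\bar Y(p)$ depends only on $\bar X(p)=X(p)$ and on the values of $\bar Y$ along a curve through $p$ tangent to $X(p)$, and such a curve can be taken inside $M$. Using the orthogonal splitting $TN|_M = TM\oplus T^\perp M$, write
\[
\bar\nabla_XY = (\bar\nabla_XY)^\top + (\bar\nabla_XY)^\perp .
\]
The task is then to show two things: that $(\bar\nabla_XY)^\top = \nabla_XY$, and that $B(X,Y):=(\bar\nabla_XY)^\perp$ is a section of $\mathrm{Sym}(TM\times TM, T^\perp M)$. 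Together these give \eqref{eq:gausseq}.

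For the tangential part, define $\tilde\nabla_XY := (\bar\nabla_XY)^\top$. It is $C^\infty(M)$-linear in $X$ and satisfies the Leibniz rule in $Y$, since $\bar\nabla$ has both properties and orthogonal projection is $C^\infty(M)$-linear; so $\tilde\nabla$ is a connection on $TM$.
\begin{itemize}
\item It is torsion free: $\tilde\nabla_XY-\tilde\nabla_YX = ([\bar X,\bar Y]|_M)^\top = [X,Y]$, because the bracket of fields tangent to $M$ is tangent to $M$ and restricts to the intrinsic bracket.
\item It is metric: for $Z\in\X(M)$, $X\langle Y,Z\rangle = \langle \bar\nabla_XY,Z\rangle + \langle Y,\bar\nabla_XZ\rangle$, and the normal components drop out when paired with tangent vectors.
\end{itemize}
By uniqueness of the Levi--Civita connection, $\tilde\nabla=\nabla$.

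For the normal part, $B(X,Y)=\bar\nabla_XY-\nabla_XY$ is $C^\infty(M)$-linear in $X$, because both terms are. Symmetry follows from torsion-freeness of both connections:
\[
B(X,Y)-B(Y,X) = [X,Y]-[X,Y] = 0 .
\]
Symmetry then also gives $C^\infty(M)$-linearity in $Y$. So $B$ is tensorial, symmetric and normal-valued, which is exactly the claim.

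The only step needing care is the well-definedness of $\bar\nabla_XY$ for fields defined only on $M$, together with the fact that the restricted bracket is the intrinsic bracket. Both follow from working in slice coordinates $(x^1,\dots,x^n,y^{1},\dots)$ with $M=\{y=0\}$. There, $\bar\nabla_{\bar X}\bar Y = \bar X(\bar Y^a)\partial_a + \bar X^a\bar Y^b\,\bar\Gamma^c_{ab}\partial_c$, and since $\bar X$ has no $\partial_y$-components along $M$, only tangential derivatives of $\bar Y$ appear. In the paper's setting $N=\R^d$ with $\bar\nabla$ the flat derivative, this reduces to the statement that the directional derivative of $Y$ along $X$ is computed within $M$.
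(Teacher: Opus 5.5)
Your argument is correct, and it is the standard textbook proof. The paper gives no proof of its own: it lists this as a preliminary from the cited Riemannian-geometry texts and says $B$ is ``as defined in the Gauss formula,'' so it takes the normal part of $\bar\nabla_XY$ in \eqref{eq:gausseq} as the definition of $B$, which is your starting point. Your extra checks are exactly what justify that definition: independence of the extensions, identification of the tangential part with $\nabla$ via torsion-freeness, metric compatibility and uniqueness of the Levi--Civita connection, and symmetry and tensoriality of $B$.
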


\begin{definition}[{\bf The Shape Operator}]
	\label{def:shape_operator}
	For a fixed normal vector field $\xi \in \Gamma  (T^\perp M) $, the {\it shape operator} (or {\it Weingarten Map})\index{Shape Operator} \index{Weingarten Map} $A_\xi: \X(M) \to \X (M)$ is the self-adjoint linear operator defined by the identity
	\begin{equation}
		\la A_\xi X, Y\ra_g = \la B(X, Y), \xi\ra _{\bar g}. \label{eq:shapeoperator}
	\end{equation}
\end{definition}

\begin{proposition}[{\bf The Weingarten Equation}]\label{thm:weingarten}
	For any $X \in \X (M)$ and normal vector field $\xi \in  \Gamma  (T^\perp M)$, the derivative of the normal field is
	\begin{equation}
		\bar\nabla_X \xi = -A_\xi X +  \nabla_X^\perp \xi, \label{eq:weingarten}
	\end{equation}
	where $\nabla_X^\perp \xi \coloneqq \pi^\perp(\bar{\nabla}_X \xi)$ is the {\it normal connection} on $T^\perp M$.
\end{proposition}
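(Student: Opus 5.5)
The plan is to split $\bar\nabla_X\xi$ orthogonally into its tangential and normal parts and identify each one. At each point $p\in M$ we have $T_pN=T_pM\oplus T_p^\perp M$, with orthogonal projections $\pi^\top$ and $\pi^\perp$, so
\[
\bar\nabla_X\xi=\pi^\top(\bar\nabla_X\xi)+\pi^\perp(\bar\nabla_X\xi).
\]
The normal part is $\nabla^\perp_X\xi$ by the very definition in the statement. Nothing needs to be proved there beyond checking that $\nabla^\perp$ is a metric connection on $T^\perp M$, which is not required for the identity itself. It therefore remains to show that $\pi^\top(\bar\nabla_X\xi)=-A_\xi X$.

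For the tangential part I will test against an arbitrary tangent vector field $Y\in\X(M)$. Since $\xi$ is normal, $\langle \xi,Y\rangle_{\bar g}=0$ identically on $M$. Differentiating this in the direction $X$, which is tangent to $M$, and using that $\bar\nabla$ is compatible with $\bar g$ gives
\[
0=X\langle\xi,Y\rangle_{\bar g}=\langle\bar\nabla_X\xi,Y\rangle_{\bar g}+\langle\xi,\bar\nabla_XY\rangle_{\bar g}.
\]
By the Gauss formula (Proposition~\ref{prop:gauss_formula}), $\bar\nabla_XY=\nabla_XY+B(X,Y)$. Here $\nabla_XY$ is tangent, so it is orthogonal to $\xi$, and we get $\langle\xi,\bar\nabla_XY\rangle_{\bar g}=\langle B(X,Y),\xi\rangle_{\bar g}$. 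By the defining identity \eqref{eq:shapeoperator} this equals $\langle A_\xi X,Y\rangle_g$. Hence
\[
\langle\pi^\top\bar\nabla_X\xi,Y\rangle_g=\langle\bar\nabla_X\xi,Y\rangle_{\bar g}=-\langle A_\xi X,Y\rangle_g
\]
for all tangent $Y$. Nondegeneracy of $g$ on $T_pM$ then gives $\pi^\top\bar\nabla_X\xi=-A_\xi X$, and combining this with the normal part yields \eqref{eq:weingarten}.

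The only delicate point is justifying that $\bar\nabla_X\xi$ and $\bar\nabla_XY$ make sense for fields defined only along $M$, and that the Leibniz/metric rule applies to them. The standard route is to extend $\xi$ and $Y$ locally to vector fields on $N$. One then observes that $\bar\nabla_X$ at $p$ depends only on the values of the field along a curve tangent to $X_p$, hence only on the values along $M$, so the result is independent of the extension. The metric compatibility identity holds for the extensions, and restricting it to $M$ gives the displayed equation. I expect no real obstacle beyond this bookkeeping.
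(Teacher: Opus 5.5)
Your proof is correct and complete. You split $\bar\nabla_X\xi$ orthogonally, differentiate $\langle\xi,Y\rangle_{\bar g}=0$ along $X$, and combine the Gauss formula with the defining identity \eqref{eq:shapeoperator} of $A_\xi$; this is the standard textbook derivation, and your remark that the result does not depend on the chosen extensions is right. The paper gives no proof of its own here, recalling the Weingarten equation only as one of the standard facts of Section~\ref{sec:pre} (with references to Kobayashi--Nomizu and Jost), so there is no separate argument to compare against.
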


\begin{proposition}[{\bf Gauss Equation in Flat Space}]
	\label{prop:gauss_flat}
	In the case where the ambient manifold is the Euclidean space $(\mathbb{R}^d, \langle \cdot, \cdot \rangle)$, the ambient curvature vanishes ($\bar{R} \equiv 0$) and the intrinsic curvature of $M$ is given by
	\begin{equation}
		g(R(X, Y)Z, W) = \langle B(X, W), B(Y, Z)\rangle - \langle B(X, Z), B(Y, W) \rangle . \label{eq:gaussflat}
	\end{equation}
\end{proposition}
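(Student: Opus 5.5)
The plan is to derive the identity from the Gauss formula \eqref{eq:gausseq}, applied twice, using that the ambient connection $\bar\nabla$ of $\mathbb R^d$ is the flat connection, i.e.\ ordinary directional differentiation of vector-valued functions. Extend $X,Y,Z,W$ to tangent vector fields near a point $p\in M$. Since $\bar R\equiv0$,
\[
0=\bar\nabla_X\bar\nabla_YZ-\bar\nabla_Y\bar\nabla_XZ-\bar\nabla_{[X,Y]}Z .
\]
The whole proof consists of expanding each term by the Gauss formula, splitting into tangential and normal parts, and pairing with $W$.

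First I write $\bar\nabla_YZ=\nabla_YZ+B(Y,Z)$ and differentiate in the direction $X$. For the tangential piece, \eqref{eq:gausseq} gives $\bar\nabla_X(\nabla_YZ)=\nabla_X\nabla_YZ+B(X,\nabla_YZ)$. For the normal piece $\xi=B(Y,Z)\in\Gamma(T^\perp M)$, the Weingarten equation \eqref{eq:weingarten} gives $\bar\nabla_X\xi=-A_{B(Y,Z)}X+\nabla^\perp_X B(Y,Z)$. The analogous expansion holds with $X,Y$ swapped. For the last term, \eqref{eq:gausseq} gives $\bar\nabla_{[X,Y]}Z=\nabla_{[X,Y]}Z+B([X,Y],Z)$.

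Next I take the $\bar g$-inner product with the tangent vector $W$. All normal-valued terms ($B(\cdot,\cdot)$ and $\nabla^\perp$ terms) drop out, and the tangential parts yield
\[
0=g(R(X,Y)Z,W)-\langle A_{B(Y,Z)}X,W\rangle_g+\langle A_{B(X,Z)}Y,W\rangle_g .
\]
By the defining identity \eqref{eq:shapeoperator},
\[
\langle A_{B(Y,Z)}X,W\rangle_g=\langle B(X,W),B(Y,Z)\rangle,
\qquad
\langle A_{B(X,Z)}Y,W\rangle_g=\langle B(Y,W),B(X,Z)\rangle .
\]
Substituting these gives exactly \eqref{eq:gaussflat}. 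Tensoriality of both sides then shows the identity is independent of the chosen extensions.

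The only delicate points are bookkeeping. One must check that the sign convention for $R$ fixed in the preliminaries matches the flat identity above. One must also note that the shape operator is self-adjoint because $B$ is symmetric, which in turn follows from torsion-freeness of both connections: the normal part of $\bar\nabla_XY-\bar\nabla_YX=[X,Y]$ vanishes. This symmetry is used implicitly when applying \eqref{eq:shapeoperator} in both orders, so I would verify it first.
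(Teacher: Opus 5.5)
Your argument is correct: expanding $\bar\nabla_X\bar\nabla_YZ-\bar\nabla_Y\bar\nabla_XZ-\bar\nabla_{[X,Y]}Z=0$ with the Gauss and Weingarten formulas and pairing with $W$ is the standard textbook derivation. The paper gives no proof of its own; it recalls \eqref{eq:gaussflat} as a standard fact from the references cited at the start of Section~\ref{sec:pre}, with the same sign convention for $R$ that you checked. As a minor aside, your final step does not use the symmetry of $B$ that you planned to verify first: \eqref{eq:shapeoperator} and \eqref{eq:weingarten} already fix the argument order, so only the symmetry of the Euclidean inner product, $\langle B(Y,W),B(X,Z)\rangle=\langle B(X,Z),B(Y,W)\rangle$, is needed.
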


\

Let $H \in \Gamma(M^n, T^\perp  M^n)$ denote the mean curvature of $(M^n, g)$, defined for any $x \in M^n$ by
$$ H(x) \coloneqq \sum_{i=1}^n B(e_i, e_i), $$
where $\{e_i\}_{i=1}^n$ is an orthonormal basis of $T_x M^n$ (note that we do not normalize by $1/n$).

Denote by $\| \cdot \|_{\R^d}$ the Euclidean norm in $\R^d$ and by $\| \cdot \|$ the norm on $\Lambda TM$ and $\Lambda T^*M$ induced by the Riemannian metric $g$. If $E, F$ are Euclidean vector bundles over $M$, then $\mathrm{End}(E, F)$ is equipped with the operator norm, denoted by $\| \cdot \|_{\mathrm{op}}$.  Generally, we use $|\cdot|$ for scalar absolute values, $\|\cdot\|$ for Euclidean norms on finite-dimensional tensor spaces, and $\|\cdot\|_{op}$ for operator norms.

Let $d_M$ denote the Riemannian distance on $M$.  For $p \in M$ denote by $\exp_p  ^M$  the exponential map $T_p M \to M$.

\begin{lemma}[{\bf Extrinsic-Intrinsic Distance Lemma}]
	\label{lem:intrextr}  \cite{LMPT2026}*{Lemma D.2.72}
	Let $(M,g) \subset (N^d, \bar g)$ be a  compact   $C^3$-smooth Riemannian submanifold of a  smooth $d$-dimensional Riemannian manifold $(N^d, \bar g)$.
	There exist
	$r>0$ and $C>0$ such that, for every $p\in M$ and every
	$\mathbf x\in T_pM$ with $\|\mathbf x\|<r$, if
	\[
	x=\exp_p^M(\mathbf x),
	\]
	then
\begin{align}
	0
	\leq
	\|\mathbf x\|^2-d_N(p,x)^2
	\leq
	C\|\mathbf x\|^4. \label{eq:intrextr}
	\end{align}	
	Since
	\[
	d_M(p,x)=\|\mathbf x\|
	\]
	in the chosen normal ball, this is equivalently
	\[
	0
	\leq
	d_M(p,x)^2-d_N(p,x)^2
	\leq
	C\,d_M(p,x)^4.
	\]
\end{lemma}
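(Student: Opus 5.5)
The plan is to compare the intrinsic geodesic $\gamma(s)=\exp_p^M(s\mathbf u)$, with $\mathbf u=\mathbf x/\|\mathbf x\|$ and $0\le s\le \ell:=\|\mathbf x\|$, against ambient distances. The two inequalities will be handled separately, and the constants will be made uniform in $p$ using compactness of $M$.

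\emph{Lower bound.} This is immediate. Since $\gamma$ is a curve in $N$ of $\bar g$-length $\ell$ joining $p$ to $x$, we get $d_N(p,x)\le \ell$. Hence $\|\mathbf x\|^2-d_N(p,x)^2\ge0$.

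\emph{Upper bound.} Since $\|\mathbf x\|^2-d_N^2=(\ell-d_N)(\ell+d_N)\le 2\ell(\ell-d_N)$, it suffices to show $\ell-d_N(p,x)\le C\ell^3$.

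\begin{enumerate}
\item Work in ambient normal coordinates at $p$, so that $N$ is locally $\R^d$ with metric $\bar g_{ij}(y)=\delta_{ij}+O(|y|^2)$ and $d_N(p,y)=|y|$ exactly. This holds uniformly for $p$ in a compact neighborhood of $M$, because $N$ is smooth.
\item In these coordinates, $\gamma$ is a $C^3$ curve (we need $M\in C^3$) with $\gamma(0)=0$ and $\dot\gamma(0)=\mathbf u$, a $\bar g_p$-unit vector.
\item Its coordinate acceleration is $\ddot\gamma=B(\dot\gamma,\dot\gamma)-\Gamma(\gamma)(\dot\gamma,\dot\gamma)$. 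This uses the Gauss formula (Proposition~\ref{prop:gauss_formula}) and $\nabla_{\dot\gamma}\dot\gamma=0$, with Christoffel symbols $\Gamma(y)=O(|y|)$.
\item Taylor expansion then gives $\gamma(s)=s\mathbf u+\tfrac{s^2}{2}\mathbf b+O(s^3)$ with $\mathbf b=B_p(\mathbf u,\mathbf u)\perp\mathbf u$. The $O(s^3)$ bound is uniform, since $B$ and its derivative are bounded on compact $M$.
\item Therefore $|\gamma(\ell)|^2=\ell^2+\tfrac{\ell^4}{4}|\mathbf b|^2+\ell\cdot O(\ell^3)$. The cross term $\ell\mathbf u\cdot\ell^2\mathbf b$ vanishes by orthogonality, which leaves $|\gamma(\ell)|^2\ge \ell^2-C\ell^4$.
\end{enumerate}

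The main obstacle is keeping the error at order exactly $\ell^4$ rather than $\ell^3$. Two things have to be controlled.

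\begin{itemize}
\item The cubic Taylor term $\tfrac{s^3}{6}\dddot\gamma$ pairs with $s\mathbf u$ and contributes $O(\ell^4)$. That is acceptable, and it needs only a uniform bound on $\dddot\gamma$, which is where $C^3$ regularity of $M$ enters.
\item The Christoffel term is $O(s)\cdot|\dot\gamma|^2$. It contributes $O(s^3)$ to $\gamma$, and hence only $O(\ell^4)$ to $|\gamma|^2$.
\end{itemize}

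Here $|\cdot|$ is the coordinate Euclidean norm, so $d_N(p,x)=|\gamma(\ell)|$ exactly by the Gauss lemma in $N$. Combining gives $d_N^2\ge\ell^2-C\ell^4$, which is the claim.

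Finally, $r$ is chosen smaller than the injectivity radius of $M$, which is positive by compactness, and small enough that $\gamma([0,\ell])$ stays inside a uniform ambient normal ball. Then $d_M(p,x)=\|\mathbf x\|$, which yields the equivalent reformulation.
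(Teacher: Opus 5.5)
Your argument is correct and complete under the stated $C^3$ hypothesis. The paper gives no proof of Lemma~\ref{lem:intrextr} in the text; it only cites \cite{LMPT2026}.

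The one point of comparison is Remark~\ref{rem:extint}. It reports that the cited proof yields, for $N=\mathbb R^d$, the sharper expansion $\|\exp_x(v)-x\|^2=\|v\|^2-\frac1{12}\|B_x(v,v)\|^2+O(\|v\|^5)$. Your Taylor expansion along the geodesic gives exactly this if you carry it one order further. In the flat case $\ddot\gamma=B(\dot\gamma,\dot\gamma)$ is normal, so $\langle\ddot\gamma,\dot\gamma\rangle\equiv0$. Differentiating gives $\langle\dddot\gamma(0),\mathbf u\rangle=-|\mathbf b|^2$, hence $|\gamma(\ell)|^2=\ell^2+\frac{\ell^4}{4}|\mathbf b|^2-\frac{\ell^4}{3}|\mathbf b|^2+O(\ell^5)$.

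The trade-off is regularity. You bound the cubic remainder crudely, so you only need $\ddot\gamma$ to be Lipschitz, i.e.\ $C^3$ (indeed $C^{2,1}$) regularity of $M$. The sharp coefficient $-\frac1{12}$ with an $O(\|v\|^5)$ remainder requires a fourth-order Taylor remainder, and hence one more derivative.

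Two cosmetic remarks. First, the reduction to $\ell-d_N(p,x)\le C\ell^3$ is never used, since you bound $d_N(p,x)^2$ directly. Second, the Christoffel contribution is already contained in your cubic remainder, because $\ddot\gamma(0)=\mathbf b$ exactly, so it need not be tracked separately.
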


In what follows, we specialize to the case where $(N, \bar g) = (\R^d, \la \cdot, \cdot \ra)$. Unless otherwise stated, we also assume that $(M^n, g)$ is a smooth Riemannian submanifold of $\R^d$ of dimension $n$.

In this case, the geometry of the  second fundamental form $B$ is related to the {\it reach} $\tau_{M}$ of $M$, introduced by Federer \cite{Federer1959}. Denoting by $d(\cdot, M)$ the distance from points in $\R^d$ to $M$, we first define the {\it medial axis} $\mathrm{Med}(M)$ of $M$ by
$$ \mathrm{Med}(M) \coloneqq \{ z \in \R^d : \, \exists p \not= q \in M, \: \|z-p\|_{\R^d} = \|z-q\|_{\R^d}  = d(z, M)\}. $$
Then 
$$ \tau_M \coloneqq   \inf_{z\in \mathrm{Med}(M)} d(z, M) = \inf_{p\in M} d\big(p, \mathrm{Med}(M)\big). $$

 For
$0<r<\tau_M$, the normal map
\[
\mathcal E:\{(p,\xi)\in T^\perp M:\|\xi\|<r\}\longrightarrow\R^d,
\qquad \mathcal E(p,\xi)=p+\xi,
\]
is injective, and its image is the open  tubular neighborhood
\[
\operatorname{Tub}_r(M)=\{z\in\R^d:d(z,M)<r\}.
\]
The nearest-point projection $\pi_0:\operatorname{Tub}_{\tau_M}(M)\to M$ is well
defined. 

For $t > 0$, define the {\it extrinsic Gaussian kernel} $\Phi_t: M^n \times M^n \to \R$ by  
\begin{equation}
	\Phi_t(x, y) \coloneqq \frac{1}{(4\pi t)^{n/2}} \exp\left( -\frac{\|x-y\|_{\R^d}^2}{4t} \right). \label{eq:phit}
\end{equation}

Note that while $\Phi_t$ is defined using the ambient distance in $\mathbb{R}^d$, its restriction to $M^n$ admits the same small-time asymptotic expansion as the intrinsic heat kernel up to higher order (see Remark \ref{rem:extint}).

For $r>0$ and $p\in M$, define the extrinsic closed ball in $M$ as
\begin{equation}
	D_r(p) \coloneqq \{ y \in M : \|y-p\|_{\R^d} \le r \}. \label{eq:extb}
\end{equation}

To ensure that our empirical differential operators remain  continuous with respect to the spatial parameter $p \in M^n$, we avoid hard indicator functions. Instead, we localize our kernels using a smooth ``soft support'' cut-off.
 Fix a smooth nonincreasing bump function $\chi: \mathbb{R} \to [0,1]$ such that $\chi(s) = 1$ for $s \le 1/2$ and $\chi(s) = 0$ for $s \ge 1$. For any fixed radius $\delta > 0$, we define the smooth spatial cut-off function 
\begin{equation}
\chi_\delta(p, y) \coloneqq \chi\Big(\frac{\|y-p\|}{\delta}\Big).\label{eq:cutoff}
\end{equation}

 Denote by $\mu$ the uniform distribution on $M$, i.e., 
$$d\mu(p) \coloneqq \frac{d\vol_g(p)}{\vol_g(M)}.$$

Consequently,  by \cite{Gray1974}*{Corollary (2.10)}
\begin{equation}
d\mu(\exp_p v)
=\frac{1}{\vol_g(M)}
\left(1-\frac16\operatorname{Ric}_p(v,v)+O(\|v\|^3)\right)dv\label{eq:gray}
\end{equation}
where $dv$ is the Lebesgue measure on $T_p M^n$.

\begin{lemma}[\bf Properties of the exponential map and soft support]\label{lem:compact-exponential-open-subset}
	Let $M^n \subset \mathbb{R}^d$ be a compact $C^3$-smooth submanifold with reach $\tau_{M^n} > 0$. Let $\delta \in (0, \tau_{M^n}/4)$.
	
	(a) \textbf{(Geometric bounds):} For any $p \in M^n$, the operator norm of the second fundamental form $B(p)$ is uniformly bounded by the reach:
	\begin{equation}
		\|B(p)\|_{\mathrm{op}} \le \frac{1}{\tau_{M^n}}. \label{eq:nsw}
	\end{equation}
	Furthermore, the intrinsic injectivity radius of the submanifold is bounded below by the reach:
	\begin{equation}
		\mathrm{inj}(M^n) \ge \frac{\tau_{M^n}}{4}. \label{eq:inj}
	\end{equation}

(b) \textbf{(Smooth cut-off properties):}
Let $\chi:\mathbb R\to[0,1]$ be a smooth bump function such that
$\chi(s)=1$ for $s\le 1/2$ and $\chi(s)=0$ for $s\ge 1$. The spatial
cut-off function
\[
\chi_\delta(p,y)\coloneqq \chi(\|y-p\|_{\mathbb R^d}/\delta)
\]
is smooth on $M^n\times M^n$, has support contained in the extrinsic
ball $D_\delta(p)$ in the $y$-variable, and satisfies
$\chi_\delta(p,y)=1$ for all $y\in D_{\delta/2}(p)$.

	(c) \textbf{(Asymptotic integration equivalence):} Let $f: M^n \to \mathbb{R}^N$ be a measurable function with bounded supremum norm. For any $k > 0$, as $t \to 0^+$, the difference between integrating against the smooth localized kernel and the unlocalized global kernel is exponentially suppressed:
	\begin{equation}
		\left\| \int_{M^n} f(y) \Phi_t(p, y) \chi_\delta(p, y) \,d\mu(y) - \int_{M^n} f(y) \Phi_t(p, y) \,d\mu(y) \right\| = O(t^k). \label{eq:bnest1}
	\end{equation}
	Consequently, integrating against the soft support $\chi_\delta(p, y)$ yields the exact same asymptotic Taylor expansion in $t$ as integrating over the entire manifold or integrating against the hard indicator $\mathbf{1}_{D_\delta(p)}(y)$.
\end{lemma}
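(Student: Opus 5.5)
The plan is to prove the three parts separately. Part (a) follows from Federer's theory of reach, part (b) is elementary, and part (c) is a Gaussian tail estimate.

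For (a), I would use the standard characterization of reach for a $C^2$ submanifold (Federer \cite{Federer1959}; see also Niyogi--Smale--Weinberger). For every $p\in M^n$ and every unit normal $\xi\in T_p^\perp M^n$, the open ball of radius $\tau_{M^n}$ centred at $p+\tau_{M^n}\xi$ contains no point of $M^n$. Take a unit-speed geodesic $\gamma$ of $M^n$ with $\gamma(0)=p$ and $\dot\gamma(0)=v$. By the Gauss formula \eqref{eq:gausseq}, $\gamma''(0)=B(v,v)$. Expanding $\|\gamma(s)-p-\tau\xi\|^2\ge\tau^2$ to second order in $s$ gives
\[
s^2\bigl(1-\tau\langle B(v,v),\xi\rangle\bigr)+O(s^3)\ge 0 .
\]
Hence $\langle B(v,v),\xi\rangle\le 1/\tau_{M^n}$ for all unit $v$ and $\xi$. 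Since $B$ is symmetric, polarization over $v$ together with the supremum over $\xi$ yields \eqref{eq:nsw}, with the operator norm read as $\sup_{\|v\|=1}\|B(v,v)\|$.

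For \eqref{eq:inj}, I would invoke Klingenberg's lemma: $\mathrm{inj}(M^n)\ge\min\{\pi/\sqrt{K_{\max}},\ \ell/2\}$, where $\ell$ is the length of the shortest closed geodesic. By the Gauss equation \eqref{eq:gaussflat} and the bound just proved, sectional curvatures satisfy $K\le 2/\tau_{M^n}^2$, so $\pi/\sqrt{K_{\max}}\ge \pi\tau_{M^n}/\sqrt2>\tau_{M^n}/4$. For the closed-geodesic term I would cite the known fact that any closed geodesic of a submanifold with reach $\tau$ has length at least $2\pi\tau$: its curvature in $\R^d$ is $\|B(\dot\gamma,\dot\gamma)\|\le1/\tau$, and Fenchel's theorem gives total curvature at least $2\pi$. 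The two estimates together give \eqref{eq:inj}. \textbf{This is the main obstacle.} I would rely on the literature here (e.g. Aamari et al.) rather than reprove it.

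Part (b) is immediate. Since $\chi$ is constant near $0$, $\chi_\delta(p,y)=1$ for $\|y-p\|\le\delta/2$. This removes the only non-smooth point of $(p,y)\mapsto\|y-p\|$, namely the diagonal, so $\chi_\delta$ is $C^\infty$ on $M^n\times M^n$ in the ambient sense and as smooth as $M^n$ after restriction. The support and unit-value statements follow directly from the corresponding properties of $\chi$.

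For (c), the difference of the two integrals equals $\int f(y)\Phi_t(p,y)(\chi_\delta-1)\,d\mu$, and the integrand vanishes unless $\|y-p\|\ge\delta/2$. On that region,
\[
\Phi_t\le(4\pi t)^{-n/2}e^{-\delta^2/(16t)} .
\]
Since $\mu$ is a probability measure, the norm of the difference is at most $\|f\|_\infty(4\pi t)^{-n/2}e^{-\delta^2/(16t)}=O(t^k)$ for every $k$, uniformly in $p$. The same bound controls the difference with the hard indicator $\mathbf 1_{D_\delta(p)}$. Therefore all three integrals share the same asymptotic expansion in powers of $t$.
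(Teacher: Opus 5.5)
Your proof is correct. Parts (b) and (c) match the paper's argument essentially line for line. In particular, the paper also bounds the difference by $\|f\|_{L^\infty}(4\pi t)^{-n/2}e^{-\delta^2/(16t)}$ on the complement of $D_{\delta/2}(p)$.

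The difference is in part (a). The paper gives no argument there: it cites \cite{NSW2008}*{Proposition 6.1} for \eqref{eq:nsw} and \cite{AB2006}*{Corollary 1.4} for \eqref{eq:inj}. You supply actual proofs.

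\textbf{The bound on $B$.} Your expansion of $\|\gamma(s)-p-\tau\xi\|^2\ge\tau^2$ is essentially the Niyogi--Smale--Weinberger osculating-ball argument. Your hedge about how to read $\|B(p)\|_{\mathrm{op}}$ is unnecessary. Polarization, $B(v,w)=\tfrac14\bigl(B(v+w,v+w)-B(v-w,v-w)\bigr)$, together with $\|v+w\|^2+\|v-w\|^2=4$ for unit $v,w$, already gives $\sup_{\|v\|=\|w\|=1}\|B(v,w)\|\le 1/\tau_{M^n}$.

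\textbf{The injectivity radius.} You combine Klingenberg's lemma, the Gauss-equation curvature bound, and Fenchel's theorem applied to closed geodesics viewed as space curves of curvature $\|B(\dot\gamma,\dot\gamma)\|\le1/\tau_{M^n}$. Contrary to your label, this step is not an obstacle. Once Klingenberg's lemma and Fenchel's theorem (valid in every $\R^d$) are granted, the argument is complete. Since $M^n$ is $C^3$, the induced metric is $C^2$, which is enough for the comparison theorems involved.

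Your route also gives a much sharper bound than the stated $\tau_{M^n}/4$, namely $\mathrm{inj}(M^n)\ge\pi\tau_{M^n}/\sqrt2$. In fact it gives $\pi\tau_{M^n}$, because the Gauss equation yields $K(v,w)=\langle B(v,v),B(w,w)\rangle-\|B(v,w)\|^2\le 1/\tau_{M^n}^2$, not merely $2/\tau_{M^n}^2$.

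\textbf{Comparison.} The paper's citation route is shorter and draws on comparison geometry valid for subspaces of spaces of curvature bounded above. Yours is self-contained and elementary, but specific to a Euclidean ambient space.
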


\begin{proof}
Assertion (a):	Equation \eqref{eq:nsw}    is due to  Niyogi, Smale and Weinberger 	\cite{NSW2008}*{Proposition 6.1}.
Equation  \eqref{eq:inj} follows from \cite{AB2006}*{Corollary 1.4}.

	Assertion (b) follows from standard differential geometry and the definition of the smooth bump function $\chi_{\delta}$. The only possible issue is smoothness along the diagonal $p=y$.
	However, $\chi$ is constant on a neighborhood of $0$, and hence
	$\chi(\|y-p\|_{\mathbb R^d}/\delta)$ is smooth there as well.
	
	To prove (c), we evaluate the difference between the integrals. Since $\chi_\delta(p, y) = 1$ on $D_{\delta/2}(p)$, the integrand vanishes inside this smaller ball. Thus, the integration error is strictly confined to the complement $M^n \setminus D_{\delta/2}(p)$, where the distance satisfies $\|y-p\| \ge \delta/2$.
	
	Using the supremum bound of $f$, the difference is bounded by:
	\begin{align*}
		E_t &\le \int_{M^n \setminus D_{\delta/2}(p)} \|f(y)\| \Phi_t(p,y) \big(1 - \chi_\delta(p,y)\big) \,d\mu(y) \\
		&\le \|f\|_{L^\infty} \int_{M^n \setminus D_{\delta/2}(p)} \frac{1}{(4\pi t)^{n/2}} e^{-\frac{\|y-p\|^2}{4t}} \,d\mu(y).
	\end{align*}
	Because $\|y-p\| \ge \delta/2$ in this domain, we have $e^{-\|y-p\|^2/4t} \le e^{-\delta^2/16t}$. Therefore:
	$$ E_t \le \|f\|_{L^\infty}  \frac{1}{(4\pi t)^{n/2}} e^{-\frac{\delta^2}{16t}}. $$
	Because the exponential term $e^{-c/t}$ decays to zero faster than any polynomial $t^k$ as $t \to 0^+$, we conclude that $E_t = O(t^k)$ for any arbitrarily large integer $k$. This establishes \eqref{eq:bnest1} and completes  the proof of Lemma \ref{lem:compact-exponential-open-subset}.
\end{proof}

\begin{lemma}[\bf Taylor expansion of tangent vector fields in RNC]
	\label{lem:tangentemb}
	Let $M^n\subset\mathbb R^d$ be a Riemannian submanifold, let
	$p\in M^n$, and let $\{e_i\}_{i=1}^n$ be an orthonormal basis of
	$T_pM$. In Riemannian normal coordinates centered at $p$, write
	$y=\exp_p(v)$, where $v=\sum_i v^i e_i\in T_pM$. If $Y$ is a smooth
	tangent vector field on $M^n$, regarded as an $\mathbb R^d$-valued
	function via the inclusion $T_yM^n\subset T_y\mathbb R^d\simeq
	\mathbb R^d$, then, as $v\to0$,
	\begin{align}
		Y(v)
		= & 
		Y(p)
		+
		\sum_j v^j\big(\nabla_jY+B(e_j,Y)\big)
		\nonumber\\
		&\quad
		+
		\frac12\sum_{j,l}v^jv^l
		\Big(
		\nabla_j\nabla_lY
		+
		B(e_j,\nabla_lY)
		+
		\nabla_j^\perp\big(B(e_l,Y)\big)
		-
		A_{B(e_l,Y)}e_j
		\Big)\nonumber\\
		&+
		O(\|v\|^3),
		\label{eq:piY}
	\end{align}
	where all coefficient terms on the right-hand side are evaluated at
	$p$, and $\nabla_j\coloneqq\nabla_{e_j}$, $\nabla_j^\perp \coloneqq\nabla_{e_j}^\perp$.
\end{lemma}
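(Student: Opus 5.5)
The plan is to Taylor-expand the $\mathbb R^d$-valued function $v\mapsto Y(\exp_p v)$ along radial geodesics. Fix $v\in T_pM$ and let $\gamma(s)=\exp_p(sv)$ be the unit-speed-scaled geodesic with $\gamma(0)=p$, $\dot\gamma(0)=v$. Put $f(s)=Y(\gamma(s))\in\mathbb R^d$. Then Taylor's theorem gives
\[
Y(\exp_p v)=f(1)=f(0)+f'(0)+\tfrac12 f''(0)+O(\|v\|^3),
\]
with the remainder uniform in $p$ and in the direction of $v$ by compactness and $C^3$ regularity. The whole proof therefore reduces to computing $f'(0)$ and $f''(0)$ in terms of intrinsic and normal data at $p$. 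Each of these is homogeneous in $v$, of degree $1$ and $2$ respectively, so once the expressions below are obtained we simply expand $v=\sum v^je_j$.

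\emph{First derivative.} Since $\mathbb R^d$ is flat, ordinary differentiation along $\gamma$ is the ambient covariant derivative $\bar\nabla_{\dot\gamma}$. The Gauss formula \eqref{eq:gausseq} gives
\[
f'(s)=\nabla_{\dot\gamma}Y+B(\dot\gamma,Y).
\]
At $s=0$ this is $\sum_j v^j\big(\nabla_jY+B(e_j,Y)\big)$.

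\emph{Second derivative.} We differentiate $f'(s)$ again with $\bar\nabla_{\dot\gamma}$, treating its two pieces separately.

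For the tangential piece $Z=\nabla_{\dot\gamma}Y$, the Gauss formula gives $\bar\nabla_{\dot\gamma}Z=\nabla_{\dot\gamma}Z+B(\dot\gamma,Z)$. Because $\gamma$ is a geodesic, $\nabla_{\dot\gamma}\dot\gamma=0$, so $\nabla_{\dot\gamma}(\nabla_{\dot\gamma}Y)=\nabla^2_{\dot\gamma,\dot\gamma}Y$. At $s=0$ this equals $\sum v^jv^l\nabla_j\nabla_lY$, where $\nabla_j\nabla_l$ is read as the second covariant derivative. The second term gives $\sum v^jv^lB(e_j,\nabla_lY)$.

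For the normal piece $\xi(s)=B(\dot\gamma,Y)$, the Weingarten equation \eqref{eq:weingarten} gives
\[
\bar\nabla_{\dot\gamma}\xi=-A_\xi\dot\gamma+\nabla^\perp_{\dot\gamma}\xi.
\]
The shape-operator term is $-\sum v^jv^lA_{B(e_l,Y)}e_j$. For the normal-connection term, geodesy again gives $\nabla^\perp_{\dot\gamma}\big(B(\dot\gamma,Y)\big)=(\nabla^\perp_{\dot\gamma}B)(\dot\gamma,Y)+B(\dot\gamma,\nabla_{\dot\gamma}Y)$.

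The main obstacle is bookkeeping: this last step produces $\sum v^jv^l\nabla^\perp_j\big(B(e_l,Y)\big)$ only if that expression is interpreted as the derivative along $e_j$ of $B(e_l,Y)$ with $e_l$ extended by the radial (normal-coordinate) frame. That extension satisfies $\nabla e_l(p)=0$ and, along radial lines, $\sum v^l e_l=\dot\gamma/s$-parallel. I will state this convention explicitly. With it, the $B(e_j,\nabla_lY)$ contribution arising from $\nabla^\perp(B(\dot\gamma,Y))$ is already contained in $\nabla_j^\perp(B(e_l,Y))$ and is not double-counted. Likewise $\nabla_j\nabla_lY$ means $\nabla_{e_j}(\nabla_{e_l}Y)$ with normal-coordinate frames, which at $p$ agrees with the symmetrized second covariant derivative after contraction with $v^jv^l$.

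Collecting the pieces gives exactly the quadratic term in \eqref{eq:piY}. Finally, the $O(\|v\|^3)$ bound is uniform for $\|v\|$ less than the injectivity radius, using \eqref{eq:inj} and boundedness of third derivatives of $Y\circ\exp_p$.
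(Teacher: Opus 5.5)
Your proof is correct, and it reaches the result by a slightly different route from the paper's.

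\textbf{The paper's route.} The paper never restricts to a curve. It differentiates the $\mathbb R^d$-valued function $Y$ directly along the normal-coordinate fields $\partial_j,\partial_l$, using that coordinate differentiation equals $\bar\nabla$ on the whole chart. This gives $\partial_j\partial_lY=\bar\nabla_{\partial_j}\bigl(\nabla_{\partial_l}Y+B(\partial_l,Y)\bigr)$. One application each of the Gauss and Weingarten formulas then produces the four terms of \eqref{eq:piY}, with $e_l$ silently read as $\partial_l$. This is shorter, and it yields all mixed second partials, which is more than the Taylor expansion needs.

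\textbf{Your route.} You compute only the quadratic form $f''(0)$ along the radial geodesic and kill connection terms with $\nabla_{\dot\gamma}\dot\gamma=0$. You arrive at the invariant expression $\nabla^2_{v,v}Y+2B(v,\nabla_vY)+(\nabla_vB)(v,Y)-A_{B(v,Y)}v$, where $\nabla_vB$ is the covariant derivative of $B$ for the connection induced by $\nabla$ and $\nabla^\perp$. This makes explicit a convention the statement leaves implicit: $\nabla_j\nabla_lY$ and $\nabla^\perp_j\bigl(B(e_l,Y)\bigr)$ must be computed with an extension of $e_l$ satisfying $\nabla e_l(p)=0$. It also shows that the second copy of $B(v,\nabla_vY)$ is hidden inside $\nabla^\perp_j\bigl(B(e_l,Y)\bigr)$.

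\textbf{A slip to fix.} In normal coordinates, $\dot\gamma(s)=\sum_l v^l\partial_l|_{\gamma(s)}$ holds exactly, not $\dot\gamma/s$. Using this identity with constant coefficients $v^l$, you can drop the Leibniz expansion and the discussion of double counting altogether. By tensoriality in the differentiating direction, along $\gamma$ one has
\[
\nabla^\perp_{\dot\gamma}B(\dot\gamma,Y)=\sum_{j,l}v^jv^l\,\nabla^\perp_{\partial_j}\bigl(B(\partial_l,Y)\bigr),
\]
and likewise
\[
\nabla_{\dot\gamma}\nabla_{\dot\gamma}Y=\sum_{j,l}v^jv^l\,\nabla_{\partial_j}\nabla_{\partial_l}Y.
\]
At $s=0$ your computation then becomes literally the paper's mixed-partial computation contracted with $v^jv^l$. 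This shows the two routes coincide, and your conventions are exactly the paper's coordinate-field reading.
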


\begin{proof}[Proof of Lemma \ref{lem:tangentemb}]   Let $(v^1,\dots,v^n)$ be Riemannian normal coordinates centered at $p$, associated with the orthonormal basis $\{e_j\}_{j=1}^n \subset T_pM^n$. Then
	\[
	\left. \p_j\coloneqq \frac{\partial}{\partial v^j}\right|_p=e_j.
	\]
	Viewing a tangent vector field $Y$ on $M^n$ as an $\mathbb R^d$-valued function through the embedding $M^n\subset\mathbb R^d$, derivatives at the base point may be computed using the ambient flat connection:
	\[
	\frac{\partial Y}{\partial v^j}(0)
	=
	\bar\nabla_{e_j}Y\big|_p.
	\]

	1) We consider the Taylor expansion 
	\begin{equation}\label{eq:taylorexppi}  
		Y = Y(0) + \sum_j v^j \frac{\partial Y}{\partial v^j}(0) + \frac{1}{2} \sum_{j, l} v^j v^l \frac{\partial^2 Y}{\partial v^j \partial v^l}(0) + O(\|v\|^3).
	\end{equation}
	Let $\bar{\nabla}$ be the flat connection in $\mathbb{R}^d$. The first-order expansion in \eqref{eq:piY} for $Y$ is
	\begin{equation}
		\frac{\partial Y}{\partial v^j} = \bar{\nabla}_j(Y) \stackrel{\eqref{eq:gausseq}}{=} \nabla_j(Y) + B(\p_j, Y). \label{eq:1piY}
	\end{equation}
	The second-order expansion of $Y$ is 
	\begin{align} 
		\frac{\partial^2 Y}{\partial v^j \partial v^l} &= \bar{\nabla}_j(\nabla_l Y + B(\p_l, Y)) \nonumber \\
		&\stackrel{\eqref{eq:gausseq}}{=} \nabla_j \nabla_l Y + B(\p_j, \nabla_l Y) + \bar{\nabla}_j \big(B(\p_l, Y)\big) \nonumber \\
		&\stackrel{\eqref{eq:weingarten}}{=} \nabla_j \nabla_l Y + B(\p_j, \nabla_l Y) - A_{B(\p_l, Y)}\p_j + \nabla_j^\perp \big(B(\p_l, Y)\big). \label{eq:TpiY2}
	\end{align}	
	Now we derive \eqref{eq:piY} from \eqref{eq:taylorexppi}, \eqref{eq:1piY}, and \eqref{eq:TpiY2}, taking into account that the zero-th order of the expansion in the right-hand side of \eqref{eq:taylorexppi} is $Y(0)$.
\end{proof} 

\section{Deformed Hodge Laplacians and their uniform convergence}\label{sec:modified} 

In this section, we introduce a family of deformed Hodge Laplacians $\Delta^k_t$, $t \in \R_{> 0}$, and show the pointwise convergence of $\Delta^k_t$ to the continuous Hodge Laplacian $\Delta^k$ as $t \to 0^+$ (Theorem \ref{thm:pwconvergencel}). Then we establish the convergence in $C^0$-norm of $\Delta^k_t$ to $\Delta^k$ as $t \to 0^+$ (Theorem \ref{thm:analytic_limit}).

Let $M^n \subset \R^d$ be a compact smooth Riemannian submanifold and $0	 \le k \le n$. For $x \in M^n$, we identify $T_x M^n \subset T_x \R^d$ as a subspace in $\R^d$ via the canonical splitting $T\R^d = \R^d \times \R^d$. Let 
\begin{equation}
\Pi_x: \Lambda^k \R^d \to \Lambda^k T_x M^n\label{eq:orthoproj}
\end{equation}
denote the orthogonal projection operator. Denote by
\begin{equation}
\Pi_x^*: \Lambda^k T_x^* M^n \to \Lambda^k (\R^d)^*\label{eq:pullback}
\end{equation}
its adjoint operator. Denote by
\begin{equation}
	R_x^*: \Lambda^k (\R^d)^* \to \Lambda^k T_x^* M^n\label{eq:restr}
\end{equation}
the restriction operator, and by 
\begin{equation}
	i_x: \Lambda^k T_x M^n \to \Lambda^k \R^d\label{eq:inclusion}
\end{equation}
its adjoint, the inclusion operator.

For $v \in T_x M^n$, let $v \wedge: \Lambda^k T_x M^n \to \Lambda^{k+1} T_x M^n$ denote the exterior product with $v$. Denote by $i_v: \Lambda^{k+1} T_x^* M \to \Lambda^k T_x^* M$ the adjoint of $v \wedge$. For $w^* \in T_x^* M^n$, let $w^* \wedge: \Lambda^k T_x^* M^n \to \Lambda^{k+1} T_x^* M^n$ denote the exterior product with $w^*$. Denote by $i_{w^*}: \Lambda^{k+1} T_x M^n \to \Lambda^k T_x M$ the adjoint of $w^* \wedge$.

Recall that $H$ and $B$ denote the mean curvature and the second fundamental form of $(M^n, g)$, respectively. We define a section $\End_H(B) \in \Gamma(\End \Lambda^* T^*M^n)$ as follows. For $\om(x) \in \Lambda^k T^*_x M^n$, we set
\begin{equation}
	\End_H(B) \om(x) \coloneqq \sum_{j,l} \la H, B(e_j, e_l) \ra e_j^* \wedge i_{e_l} \omega(x), \label{eq:endh}
\end{equation}
where $\{e_i\}_{i=1}^n$ is an orthonormal basis of $T_xM ^n$, and $\{e_i^*\}_{i=1}^n$ is its dual basis. 

Under the metric identification $\Lambda^kT^*M^n\simeq\Lambda^kTM^n$, the full Weitzenb\"ock potential is given by Jost \cite{Jost2017}*{Theorem 4.3.3} as
\begin{equation}
	\mathcal{R}_k\omega \coloneqq \sum_{i,j} e_i \wedge i_{e_j^*} (R(e_i, e_j)\omega).
	\label{eq:weitzenboeck}
\end{equation}

To account for the curvature artifact arising from the ambient Euclidean space via the Gauss equation, we define the partial trace operator:
\begin{equation}
	\mathcal{R}_k^{(1)}\omega \coloneqq \sum_{j,l,p} R(e_j,e_p,e_j,e_l) e_p\wedge i_{e_l^*}\omega.
	\label{eq:weitzenboecko}
\end{equation}

\begin{theorem}[\bf Pointwise convergence]
	\label{thm:pwconvergencel}
	Let $M^n\subset\R^d$ be a compact smooth Riemannian submanifold, and let $0\leq k\leq n$. Define the deformed Hodge operator $\Delta_t^k:\Omega^k(M^n)\to\Omega^k(M^n)$ by
	\begin{align}
		\Delta_t^k\omega(x)
		&\coloneqq
		R_x^*\left(
		\frac1t\int_M
		\Phi_t(x,y)
		\bigl(\Pi_x^*\omega(x)-\Pi_y^*\omega(y)\bigr)
		\,d\vol_g(y)
		\right)
		\nonumber\\
		&\qquad
		-\End_H(B)\omega(x)-\mathcal{R}_k^{(1)}\omega(x) - \mathcal{R}_k\omega(x),
		\label{eq:hodgeambient}
	\end{align}
	where $\Phi_t$ is defined by \eqref{eq:phit}. 
	Then, for every $x\in M^n$ and every smooth $k$-form $\omega$,
	\[
	\lim_{t\to 0^+}\Delta_t^k\omega(x)
	=
	\Delta^k\omega(x),
	\]
	where $\Delta^k$ is the classical Hodge Laplacian.
\end{theorem}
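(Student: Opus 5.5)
The plan is to compute the small-$t$ asymptotics of the integral term in \eqref{eq:hodgeambient} at a fixed point $x$. We expand everything in Riemannian normal coordinates $y=\exp_x(v)$ and identify the resulting second-order operator with the Weitzenb\"ock form of $\Delta^k$.

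\textbf{Step 1: localize and change variables.} By Lemma~\ref{lem:compact-exponential-open-subset}(c) we may insert the cutoff $\chi_\delta(x,y)$ at the cost of $O(t^\infty)$. We then pass to normal coordinates using the volume expansion \eqref{eq:gray}, applied to $d\vol_g$ without the $1/\vol_g(M)$ normalization. Lemma~\ref{lem:intrextr} gives $\|x-y\|^2=\|v\|^2-q_4(v)+O(\|v\|^5)$, where $q_4(v)=\frac1{12}\|B(v,v)\|^2$ is the standard quartic term from the Taylor expansion $y=x+v+\frac12B(v,v)+\frac16(\dots)+O(\|v\|^4)$. Hence
\[
\Phi_t(x,y)=(4\pi t)^{-n/2}e^{-\|v\|^2/4t}\bigl(1+q_4(v)/4t+\dots\bigr).
\]
After the substitution $v=\sqrt t\,u$, the quartic correction contributes at order $t$, the same order as the second-derivative terms. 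This correction therefore cannot be discarded; it must be tracked.

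\textbf{Step 2: Taylor expand the integrand.} Write $F(y)=\Pi_y^*\omega(y)\in\Lambda^k(\R^d)^*$ and expand $F(\exp_x v)$ to second order. This is the differential-form analogue of Lemma~\ref{lem:tangentemb}. Viewing $\Pi^*_y\omega(y)$ through the metric identification as an $\R^d$-wedge of tangent vectors, we apply \eqref{eq:piY} factor by factor, using the Leibniz rule for $\bar\nabla=\nabla+B$ and the Weingarten formula \eqref{eq:weingarten}. The first-order terms are odd in $v$, so they integrate to zero against the Gaussian; the cross terms with the odd cubic correction to the volume and to $\|x-y\|^2$ vanish for the same reason. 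Applying $R_x^*$, which kills all components carrying a normal slot, the surviving second-order tangential coefficient is
\[
\tfrac12\sum_{j,l}v^jv^l\bigl(\nabla_j\nabla_l\omega-\mathcal A_{jl}\omega\bigr),
\]
where $\mathcal A_{jl}$ collects the terms $-A_{B(e_l,\cdot)}e_j$ acting in each slot as a derivation. Using the Gaussian moments $\int u^ju^l=2\delta_{jl}$ and $\int u^ju^lu^au^b=4(\delta\delta+\delta\delta+\delta\delta)$, the main term becomes $-\sum_j\nabla_j\nabla_j\omega=\nabla^*\nabla\omega$, plus a zeroth-order endomorphism. That endomorphism combines three sources: the trace $\sum_j A_{B(e_j,\cdot)}e_j$ acting as a derivation, the Ricci term from \eqref{eq:gray}, and the $q_4$ term. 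The latter two multiply $\omega(x)-\omega(x)=0$ at leading order, so they contribute only through the product with the first-order terms, and those vanish by parity.

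\textbf{Step 3: identify the endomorphism.} We compute the derivation $\sum_{j}\langle B(e_j,e_p),B(e_j,e_l)\rangle\,e_p\wedge i_{e_l^*}$ together with $\End_H(B)$. By the Gauss equation \eqref{eq:gaussflat},
\[
\sum_j\langle B(e_j,e_p),B(e_j,e_l)\rangle=\langle H,B(e_p,e_l)\rangle-\sum_j R(e_j,e_p,e_j,e_l),
\]
up to the sign convention, which is exactly $\End_H(B)+\mathcal R^{(1)}_k$. The integral term therefore tends to $\nabla^*\nabla\omega+\End_H(B)\omega+\mathcal R^{(1)}_k\omega$. Subtracting these two endomorphisms and then applying the Weitzenb\"ock identity $\Delta^k=\nabla^*\nabla+\mathcal R_k$, with the sign convention of \eqref{eq:weitzenboeck} from Jost, yields the limit $\Delta^k\omega(x)$. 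This step must be checked against the sign of $\mathcal R_k$ in \eqref{eq:hodgeambient}.

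\textbf{Main obstacle.} The delicate part is the bookkeeping of the zeroth-order terms in Step 2–3. Three points need care: the normal components of $\Pi_y^*\omega(y)$ at second order that survive $R_x^*$ only through the projection of $-A_{B}e_j$; the precise sign conventions relating \eqref{eq:gaussflat}, \eqref{eq:weitzenboecko} and \eqref{eq:weitzenboeck}; and the confirmation that no cross term between the $O(\|v\|)$ displacement and the $O(\|v\|^3)$ corrections survives. I would first carry out the computation for $k=0$, where it must reproduce the Belkin--Niyogi limit with no endomorphism, and for $k=1$. This fixes the signs before treating general $k$ via the derivation property of the endomorphisms on $\Lambda^k$.
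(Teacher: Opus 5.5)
Your route is the paper's own: expand $i_y\omega(y)$ to second order in normal coordinates, discard the odd Gaussian moments, and identify the limit of the integral term with $-\Pi_x\sum_j\partial_j\partial_j\omega=\nabla^*\nabla\omega+\mathbf Q\omega$ via the derivation extension of Lemma~\ref{lem:tangentemb}. You then rewrite the drift $\mathbf Q$ by the Gauss equation and conclude with Weitzenb\"ock.

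\textbf{The defect is in Step~3.} The two signs you leave open are not matters of convention. They are fixed by \eqref{eq:gaussflat} and \eqref{eq:weitzenboeck}, and the Weitzenb\"ock sign you commit to makes the final step fail.
\begin{itemize}
\item \emph{Gauss sign.} Taking $(X,Y,Z,W)=(e_j,e_p,e_j,e_l)$ in \eqref{eq:gaussflat} gives $\sum_j\langle B(e_j,e_p),B(e_j,e_l)\rangle=\langle H,B(e_p,e_l)\rangle+\sum_jR(e_j,e_p,e_j,e_l)$, with a plus sign. Only then is the drift literally $\End_H(B)+\mathcal R^{(1)}_k$; your displayed minus sign would give $\End_H(B)-\mathcal R^{(1)}_k$.
\item \emph{Weitzenb\"ock sign.} For $\mathcal R_k$ as defined in \eqref{eq:weitzenboeck}, the identity is $\Delta^k=\nabla^*\nabla-\mathcal R_k$, not $\nabla^*\nabla+\mathcal R_k$. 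Already for $k=1$, pair symmetry gives $\mathcal R_1Z=\sum_{i,j}\langle R(e_i,e_j)Z,e_j\rangle e_i=-\Ric(Z)$, while Bochner's formula is $\Delta^1=\nabla^*\nabla+\Ric$.
\item \emph{Consequence.} Since \eqref{eq:hodgeambient} subtracts $\mathcal R_k$, the limit of $\Delta^k_t\omega(x)$ is $\nabla^*\nabla\omega-\mathcal R_k\omega$. With your sign this equals $\Delta^k\omega-2\mathcal R_k\omega$ and the theorem fails. With the correct sign it is exactly $\Delta^k\omega$, which is how the paper concludes.
\item \emph{Quick check.} On the unit sphere with $k=1$: $\mathbf QZ=Z$, $\End_H(B)Z=nZ$, $\mathcal R^{(1)}_1Z=-(n-1)Z$. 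The limit is then $\nabla^*\nabla Z+(n-1)Z=\Delta^1Z$.
\end{itemize}

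\textbf{A minor point on Step~1.} The quartic kernel correction and the volume corrections do not need to be tracked for the pointwise limit. After the prefactor $1/t$ they multiply $\omega(x)-\omega(y)=-\sqrt t\,u^j\partial_j\omega+O(t)$. So they contribute an odd $O(\sqrt t)$ term, which integrates to zero, plus $O(t)$. This is what you eventually say in Step~2, and the paper simply absorbs it into its $O(t)$ remainder. Also, the cross term of an odd cubic volume correction with the odd first-order term is even, so parity does not remove it; it vanishes in the limit only because it is $O(t)$.
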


The proof of Theorem \ref{thm:pwconvergencel} shall be given after the proof of Lemma \ref{lem:projsecond}. For computational simplicity, we shall prove the adjoint version of Theorem \ref{thm:pwconvergencel} for $k$-vector fields throughout, identifying $\Lambda^k T^*M^n \cong \Lambda^k TM^n$ via the Riemannian metric. This is justified since $\Delta^k$ is self-adjoint and 
\begin{equation}
	(e_j^* \wedge i_{e_l})^* = e_l \wedge i_{e_j^*}. \label{eq:muladjoint}
\end{equation}
Throughout this section, using the Riemannian metric $g$, we identify $T^*M^n$ with $TM^n$, and therefore identify differential $k$-forms with sections of $\Lambda^k TM^n$. 

\begin{remark}[\bf Extrinsic Gaussian kernel vs. the intrinsic one]\label{rem:extint}
	Fix $x\in M^n$ and identify $x$ with the origin in $T_xM^n$. Let
	$y=\exp_x(v)$, where $v\in T_xM^n\cong\R^n$. Lemma~\ref{lem:intrextr}
	gives the basic estimate
	\[
	\|y-x\|_{\R^d}^2=\|v\|^2+O(\|v\|^4).
	\]
	For the $O(t)$ expansion below, we also use the parity-refined
	Euclidean expansion established in the proof of the cited lemma:
	\[
	\|\exp_x(v)-x\|_{\R^d}^2
	=
	\|v\|^2-\frac1{12}\|B_x(v,v)\|^2+O(\|v\|^5).
	\]
	After setting $v=\sqrt t\,u$, this yields
	\begin{align}
	\Phi_t\bigl(x,\exp_x(\sqrt t\,u)\bigr)
	&=
	\frac{e^{-\|u\|^2/4}}{(4\pi t)^{n/2}}
	\Bigl(
	1+\frac{t}{48}\|B_x(u,u)\|^2 \nonumber\\
	&\hspace{38mm}
	+O\bigl(t^{3/2}\|u\|^5+t^2\|u\|^8\bigr)
	\Bigr).
	\label{eq:Phitapp}
	\end{align}
	The quartic correction is even in $u$. Hence its product with the
	leading first-order, odd Taylor term of the section integrates to zero;
	the $O(t^{3/2}\|u\|^5)$ remainder contributes only $O(t)$ after the
	prefactor $t^{-1}$ is taken into account. This parity information is
	used in the proof of Theorem~\ref{thm:analytic_limit}.
\end{remark}

For notational simplicity, we write $\omega(y)$ instead of $i_y \om(y)$; i.e., we treat $\omega(y)$ as a vector-valued function in $\mathbb{R}^N$ where $N = \binom{d}{k}$. We expand $\omega(y)$ around $x=0$:
\begin{equation}
	\omega(y) = \omega(0) + v^j \partial_j \omega(0) + \frac{1}{2} v^j v^l \partial_j \partial_l \omega(0) + O(\|v\|^3), \label{eq:omexp}
\end{equation}
where $\partial_j \coloneqq \partial/\partial v^j$. We define the operator $B: T_x M^n \times \Lambda^k T_x M^n \to \Lambda^k T_x \R^d$ as follows:
\begin{equation}
	B(v, \om) \coloneqq \sum_{p=1}^n B(v, e_p) \wedge i_{e_p^*}\om. \label{eq:Bk}
\end{equation}
(This is the natural derivation extension of $B$ to $\Lambda^k TM^n$).

\begin{lemma}\label{lem:projsecond} 
	Let $\om = i_*\om$ be a $k$-vector field on $M^n$ regarded as an $\R^N$-valued function on $M^n$. Then we have
	\begin{equation}
		\frac{\partial \om}{\partial v^j} = \nabla_j \om + B(e_j, \om), \label{eq:taylor1k}
	\end{equation}
	\begin{equation}
		\Pi_x \Big( \sum_j \frac{\partial^2 \om}{\partial v^j \partial v^j} \Big) = \sum_j \Big( \nabla_j \nabla_j \om - \mathbf{Q}_{j, j}\om \Big), \label{eq:projsecondorder}
	\end{equation}
	where  
	\begin{equation}
		\mathbf{Q}_{j, p} \om \coloneqq A_{B(e_j, \om)} e_p \coloneqq \sum_{l=1}^n A_{B(e_j, e_l)} e_p \wedge i_{e_l^*}\om. \label{eq:weingartenk}
	\end{equation}
\end{lemma}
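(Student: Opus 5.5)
The plan is to reduce both identities to the vector-field case, Lemma~\ref{lem:tangentemb}, using the fact that the ambient flat connection $\bar\nabla$ on $\Lambda^k\R^d$ acts as a derivation on wedge products. Work in Riemannian normal coordinates centered at $x$, with orthonormal frame $\{e_i\}$. By linearity, it suffices to treat a decomposable $k$-vector field $\om=Y_1\wedge\cdots\wedge Y_k$, where the $Y_a$ are tangent vector fields near $x$; locally every $k$-vector field is a sum of such terms with smooth coefficient functions, and the coefficients are handled by the Leibniz rule on both sides. Since $\bar\nabla$ is flat and $\om$ is viewed as an $\R^N$-valued function, we have $\partial\om/\partial v^j=\bar\nabla_{\partial_j}\om=\sum_a Y_1\wedge\cdots\wedge\bar\nabla_jY_a\wedge\cdots\wedge Y_k$.

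For \eqref{eq:taylor1k}, insert the Gauss formula \eqref{eq:gausseq}, $\bar\nabla_jY_a=\nabla_jY_a+B(e_j,Y_a)$. The tangential terms reassemble into $\nabla_j\om$, because $\nabla$ is also a derivation. The normal terms give $\sum_a Y_1\wedge\cdots\wedge B(e_j,Y_a)\wedge\cdots\wedge Y_k$. Expanding $Y_a=\sum_p\la Y_a,e_p\ra e_p$, one checks that this equals $\sum_pB(e_j,e_p)\wedge i_{e_p^*}\om$, which is \eqref{eq:Bk}. The check uses the standard identity that a derivation extension of an endomorphism $L$ is $\sum_p Le_p\wedge i_{e_p^*}$. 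It is valid at every point near $x$ if a local orthonormal frame is used, so \eqref{eq:taylor1k} holds as an identity of functions and not only at $x$.

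For \eqref{eq:projsecondorder}, differentiate \eqref{eq:taylor1k} once more in $v^j$ and evaluate at $x$. There $\partial_j=e_j$, and in normal coordinates $\nabla_{\partial_j}\partial_j=0$ at $x$, so second coordinate derivatives coincide with $\nabla_j\nabla_j$ after projection. The term $\bar\nabla_j(\nabla_j\om)$ splits by the first step into $\nabla_j\nabla_j\om+B(e_j,\nabla_j\om)$. The second piece lies in $\bigoplus_{r\ge1}\Lambda^{k-r}TM\wedge\Lambda^r T^\perp M$, so $\Pi_x$ kills it. For $\bar\nabla_j\bigl(\sum_pB(e_j,e_p)\wedge i_{e_p^*}\om\bigr)$, apply the derivation rule once more. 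Differentiating the factor $i_{e_p^*}\om$ or the frame produces terms that still contain a normal factor, and $\Pi_x$ annihilates them. The only surviving piece is the one in which $\bar\nabla_j$ hits the normal vector $B(e_j,e_p)$. By the Weingarten equation \eqref{eq:weingarten}, this equals $-A_{B(e_j,e_p)}e_j+\nabla^\perp_j(\cdot)$. The $\nabla^\perp$ part is again normal and is killed. What remains after projection is $-\sum_pA_{B(e_j,e_p)}e_j\wedge i_{e_p^*}\om$, which is $-\mathbf Q_{j,j}\om$ by \eqref{eq:weingartenk}, using the symmetry of $B$ to match the indices. Summing over $j$ gives the claim.

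The main obstacle is the bookkeeping of which terms survive $\Pi_x$. Throughout, the frame $e_p$ is a local orthonormal frame, for instance parallel along radial geodesics, whose derivatives at $x$ vanish. The point to verify carefully is that every term still containing at least one normal wedge factor lies in the kernel of $\Pi_x$. This follows from the orthogonal decomposition $\Lambda^k\R^d=\bigoplus_r\Lambda^{k-r}T_xM\otimes\Lambda^rT_x^\perp M$. The mixed term $B(e_j,\nabla_j\om)$ and the term in which the derivative falls on $i_{e_p^*}\om$ are the ones to track explicitly.
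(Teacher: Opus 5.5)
Your proposal is correct and follows essentially the paper's route: you extend the Gauss and Weingarten formulas from the vector-field case (Lemma~\ref{lem:tangentemb}) to $k$-vectors through the derivation property, and you discard every term with a normal wedge factor under $\Lambda^k\Pi_x$. The one difference is that you obtain \eqref{eq:projsecondorder} by differentiating \eqref{eq:taylor1k} once more, rather than applying the $k=1$ formula \eqref{eq:TpiY2} factorwise; this makes explicit how the mixed Leibniz terms are handled, which the paper's one-line derivation remark leaves implicit.
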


\begin{proof} 
	1) The first assertion \eqref{eq:taylor1k} for the case $k = 1$ follows from \eqref{eq:1piY}, noting that $\partial/\partial v^j$ and $\nabla_j$ act on $\Gamma(M^n, \Lambda^k \R^d)$ and $\Gamma(M^n, \Lambda^k TM^n)$ respectively, and $B(v, e_p) \wedge i_{e_p^*}$ acts on $\Lambda^k T_x M^n$ as a derivation.
	
	2)  For $k=1$, equation \eqref{eq:TpiY2} gives, after taking the tangential projection and evaluating at the center of the Riemannian normal coordinates,
	\[
	\Pi_x\left(
	\frac{\partial^2Y}{\partial v^j\partial v^j}
	\right)
	=
	\nabla_j\nabla_jY-A_{B(e_j,Y)}e_j.
	\]
	Both the ambient and intrinsic connections extend as derivations to exterior powers. Hence, for a $k$-vector field $\omega$,
	\[
	\Pi_x\left(
	\frac{\partial^2\omega}{\partial v^j\partial v^j}
	\right)
	=
	\nabla_j\nabla_j\omega-\mathbf Q_{j,j}\omega.
	\]
	Summing over $j$ proves \eqref{eq:projsecondorder}.
\end{proof}

\begin{proof}[Proof of Theorem \ref{thm:pwconvergencel}] 
	Set
	\begin{equation}
		\Lb_t \coloneqq \Delta_t^k + \End_H(B) + \mathcal{R}_k^{(1)} + \mathcal{R}_k. \label{eq:lt}
	\end{equation}
	For $t > 0$, let $u \coloneqq v/\sqrt{t}$. By \eqref{eq:gray}, we have
	\begin{equation}
		d\vol_g \Big(\exp_x	 (\sqrt{t}u )\Big)  =  t^{n/2} \Big (1 - \frac{t}{6} \Ric_{ij}(x)u^i u^j + O(t^{3/2}\|u\|^3)\Big) du. \label{eq:taylorvolume}
	\end{equation}
	By \eqref{eq:hodgeambient} and \eqref{eq:omexp}, taking into account \eqref{eq:taylorvolume}, Remark \ref{rem:extint}, and Lemma \ref{lem:compact-exponential-open-subset}, we have:
	\begin{align}
		\Lb_t \om(x) &\stackrel{\eqref{eq:gray}}{=} - \Pi_x \Big( \frac{1}{t(4\pi)^{n/2}} \sum_{j, l} \int_{T_xM} e^{-\frac{\|u\|^2}{4}} \big(1 + O(t\|u\|^4)\big) \frac{t}{2} u^j u^l \partial_j \partial_l \om(x) \nonumber \\
		&\quad \times \big( 1 - \frac{t}{6} \Ric_{ij}(x)u^i u^j + O(t^{3/2}\|u\|^3) \big) du \Big) \label{eq:piyric} \\
		&= -\Pi_x \Big( \frac{1}{(4\pi)^{n/2}} \sum_{j} \int_{\R^n} e^{-\frac{\|u\|^2}{4}} \frac{1}{2} u^j u^j \partial_j \partial_j \om(x) du \Big) + O(t), \label{eq:piyg}
	\end{align}	
	since the Gaussian integral of the first-order term in the Taylor expansion \eqref{eq:omexp} of $\om$ vanishes by symmetry, and
	\[\int_{\R^n} u^j u^l e^{-\frac{\|u\|^2}{4}} du = 0 \quad \text{if } j \neq l.\]
	
	Using the Taylor expansion of $\omega$ up to order four, and using the vanishing of the odd Gaussian moments, the contribution of the third-order term is zero and the first nonzero remainder is of order $t$. Thus, the integral simplifies to:
	\begin{equation}
		\Lb_t \omega(x) = -\Pi_x \left( \sum_j \partial_j \partial_j \omega(x) \right) + O(t). \label{eq:hodgeambientt}
	\end{equation}
	Using \eqref{eq:projsecondorder}, we have
	\begin{equation}
		-\Pi_x \left( \sum_j \partial_j \partial_j \omega(x) \right) = -\sum_j \nabla_j \nabla_j \omega(x) + \sum_j \mathbf{Q}_{j,j}\omega(x). \label{eq:integralexp}
	\end{equation}
	The first term in the right-hand side of \eqref{eq:integralexp} relates to the Connection Laplacian $\nabla^* \nabla \omega$.
	
	For $\omega \in \Lambda^k T_xM^n$, the total drift $\mathbf{Q} \omega$ is defined by:
	\begin{equation}
		\mathbf{Q} \omega \coloneqq \sum_{j=1}^n \mathbf{Q}_{j,j} \omega \stackrel{\eqref{eq:weingartenk}}{=} \sum_{j=1}^n \sum_{l=1}^n A_{B(e_j, e_l)}(e_j) \wedge i_{e^*_l} \om. \label{eq:totaldrift}
	\end{equation}
	
	Using the property \eqref{eq:shapeoperator} of the shape operator, $\langle A_{\xi}(X), Z \rangle = \langle B(X, Z), \xi \rangle$, we express the vector $A_{B(e_j, e_l)}(e_j)$ in the tangent basis $\{e_p\}$ as:
	\begin{equation}
		A_{B(e_j, e_l)}(e_j) = \sum_{p=1}^n \langle B(e_j, e_p), B(e_j, e_l) \rangle e_p.
	\end{equation}
	Substituting this into the drift summation \eqref{eq:totaldrift}:
	\begin{equation}
		\mathbf{Q} \omega = \sum_{j, l, p} \langle B(e_j, e_p), B(e_j, e_l) \rangle e_p \wedge i_{e_l^*}\omega. \label{eq:totaldrift1}
	\end{equation}
	
	As derived in \eqref{eq:integralexp}, taking into account \eqref{eq:totaldrift} and \eqref{eq:totaldrift1}, the projection of the ambient Hessian is:
	\begin{equation}
		\Pi_x \Big( \sum_j \partial_j \partial_j \om \Big) = \sum_j \nabla_j \nabla_j \om - \mathbf{Q}\om(x). \label{eq:term2}
	\end{equation}
	
	\begin{lemma}\label{lem:secondterm}
		We have
		\begin{equation}
			\Pi_x \Big( \sum_j \partial_j \partial_j \om \Big) = \sum_j \nabla_j \nabla_j \om - \mathcal{R}^{(1)}_k \om - \End_H(B) \om. \label{eq:term2a}
		\end{equation}
	\end{lemma}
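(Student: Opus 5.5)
The plan is to start from the identity \eqref{eq:term2}, already established, which says $\Pi_x\bigl(\sum_j\partial_j\partial_j\omega\bigr)=\sum_j\nabla_j\nabla_j\omega-\mathbf Q\omega$. Given this, Lemma~\ref{lem:secondterm} is equivalent to the purely algebraic, pointwise identity
\[
\mathbf Q\omega=\mathcal R^{(1)}_k\omega+\End_H(B)\omega
\]
on $\Lambda^kT_xM^n$. No further analysis is needed. I would compare coefficients of the elementary derivations $e_p\wedge i_{e_l^*}$, using the expression \eqref{eq:totaldrift1}:
\[
\mathbf Q\omega=\sum_{j,l,p}\langle B(e_j,e_p),B(e_j,e_l)\rangle\, e_p\wedge i_{e_l^*}\omega .
\]

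The key step is the Gauss equation in flat space, Proposition~\ref{prop:gauss_flat}. I will read $R(X,Y,Z,W)$ as $g(R(X,Y)Z,W)$, consistent with \eqref{eq:gaussflat}. With $X=e_j$, $Y=e_p$, $Z=e_j$, $W=e_l$ this gives
\[
R(e_j,e_p,e_j,e_l)=\langle B(e_j,e_l),B(e_p,e_j)\rangle-\langle B(e_j,e_j),B(e_p,e_l)\rangle .
\]
Summing over $j$ and using $H=\sum_jB(e_j,e_j)$ then yields
\[
\sum_j\langle B(e_j,e_p),B(e_j,e_l)\rangle=\sum_jR(e_j,e_p,e_j,e_l)+\langle H,B(e_p,e_l)\rangle .
\]
Substituting this into \eqref{eq:totaldrift1}, the first sum reproduces exactly $\mathcal R^{(1)}_k\omega$ as defined in \eqref{eq:weitzenboecko}. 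The second sum gives
\[
\sum_{l,p}\langle H,B(e_p,e_l)\rangle\, e_p\wedge i_{e_l^*}\omega .
\]

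It remains to identify this last operator with $\End_H(B)$. Recall that \eqref{eq:endh} defines $\End_H(B)$ on forms with coefficients $\langle H,B(e_j,e_l)\rangle$ multiplying $e_j^*\wedge i_{e_l}$. Under the metric identification $\Lambda^kT^*M^n\cong\Lambda^kTM^n$ that is in force in this section, together with the adjoint relation \eqref{eq:muladjoint}, the operator $e_j^*\wedge i_{e_l}$ corresponds to $e_j\wedge i_{e_l^*}$, up to transposition of the indices $j,l$. Since the coefficient matrix $\langle H,B(e_j,e_l)\rangle$ is symmetric in $(j,l)$, because $B$ is symmetric, relabeling $j\to p$ shows that the two operators coincide, whichever of the two identifications is used. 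This proves \eqref{eq:term2a}.

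The main obstacle I expect is bookkeeping of conventions rather than any substantive difficulty. One has to fix the index convention of $R(\cdot,\cdot,\cdot,\cdot)$ in \eqref{eq:weitzenboecko} so that it matches \eqref{eq:gaussflat}, and any sign discrepancy there would change which multiple of $\End_H(B)$ appears. I would therefore check the identity on a test case, for example the round sphere $S^n\subset\R^{n+1}$ with $k=1$. There $B(X,Y)=-\langle X,Y\rangle\nu$ and $H=-n\nu$, so $\mathbf Q=\mathrm{id}$. The Ricci-type partial trace $\sum_j R(e_j,e_p,e_j,e_l)$ equals $-(n-1)\delta_{pl}$ and $\End_H(B)=n\,\mathrm{id}$, and these add up to $\mathbf Q=\mathrm{id}$. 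This confirms the sign convention.
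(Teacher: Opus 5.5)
Your proposal is correct and follows essentially the same route as the paper's proof. Both reduce, via \eqref{eq:term2}, to the pointwise identity $\mathbf Q=\mathcal R^{(1)}_k+\End_H(B)$. Both then trace the flat Gauss equation over $j$, writing $\sum_j\langle B(e_j,e_p),B(e_j,e_l)\rangle$ as $\sum_jR(e_j,e_p,e_j,e_l)+\langle H,B(e_p,e_l)\rangle$, and substitute this into \eqref{eq:totaldrift1}.

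You add two points that the paper leaves implicit, and both are correct. Symmetry of $\langle H,B(e_j,e_l)\rangle$ makes the form/multivector identification in \eqref{eq:endh} harmless. The round-sphere test confirms the sign convention.
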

	
	\begin{proof}[Proof of Lemma \ref{lem:secondterm}]
		Using the Gauss Equation \eqref{eq:gaussflat}:
		\[ R(e_j, e_p, e_j, e_l) =  \langle B(e_j, e_l), B(e_p, e_j)\rangle  - \langle B(e_j, e_j), B(e_p, e_l)\rangle, \]
		we rearrange for the $B \cdot B$ product in the right-hand side of \eqref{eq:totaldrift1}:
		\begin{equation}
			\sum_j \langle B(e_j, e_p), B(e_j, e_l) \rangle = \underbrace{\sum_j \langle B(e_j, e_j), B(e_p, e_l) \rangle}_{\langle H, B(e_p, e_l) \rangle} + \underbrace{\sum_j R(e_j, e_p, e_j, e_l)}_{\text{Curvature Term}}. \label{eq:BB}
		\end{equation}
		
		Substituting \eqref{eq:BB} back into the right-hand side of \eqref{eq:totaldrift1}, taking into account \eqref{eq:weitzenboecko}, the defining equation \eqref{eq:endh} of $\End_H(B)$, and noting that $H = \sum_j B(e_j, e_j)$, we obtain \eqref{eq:term2a} from \eqref{eq:term2}. 
		This completes the proof of Lemma \ref{lem:secondterm}.
	\end{proof}
	
	Now we derive Theorem \ref{thm:pwconvergencel}. From \eqref{eq:hodgeambientt} and Lemma \ref{lem:secondterm}, taking the limit as $t \to 0^+$, the integral operator converges exactly to:
	\[ \lim_{t \to 0^+} \Lb_t \omega(x) = -\sum_j \nabla_j \nabla_j \omega(x) + \mathcal{R}_k^{(1)}\omega(x) + \End_H(B)\omega(x). \]
	Recall from \eqref{eq:lt} that $\Delta_t^k \omega(x) = \Lb_t \omega(x) - \End_H(B)\omega(x) - \mathcal{R}_k^{(1)}\omega(x) - \mathcal{R}_k\omega(x)$. Substituting the limit of $\Lb_t \omega(x)$ yields:
	\[ \lim_{t \to 0^+} \Delta_t^k \omega(x) = \nabla^* \nabla \omega(x) - \mathcal{R}_k(\omega(x)). \]
	By the classical Weitzenb\"ock formula (see, e.g., \cite{Jost2017}*{Theorem 4.3.3}), this establishes:
	\[ \lim_{t \to 0^+} \Delta_t^k \omega(x) = \Delta^k \omega(x). \]
\end{proof}

\begin{theorem}[\bf Uniform convergence of deformed operators]
	\label{thm:analytic_limit}
	Let $M^n$ be a compact smooth Riemannian submanifold of $\mathbb{R}^d$
	and let $0\le k\le n$. There exist constants $t_0>0$ and
	$C(M^n)>0$ such that, for every
	$\omega\in C^4(\Lambda^k TM^n)$ and every $0<t<t_0$,
	\begin{equation}
		\|\Delta_t^k\omega-\Delta^k\omega\|_{C^0(M^n)}
		\le
		C(M^n)t\|\omega\|_{C^4(M^n)}.
		\label{eq:limnormc}
	\end{equation}
\end{theorem}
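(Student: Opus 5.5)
We make the pointwise argument behind Theorem~\ref{thm:pwconvergencel} quantitative and uniform in $x\in M^n$. Write $\Lb_t$ as in \eqref{eq:lt}. Since the correction terms $\End_H(B)$, $\mathcal R_k^{(1)}$ and $\mathcal R_k$ are the same in $\Delta_t^k$ and in the limit, and the Weitzenb\"ock formula converts the limit into $\Delta^k$, it suffices to prove
\[
\sup_{x\in M^n}\Bigl\|\Lb_t\omega(x)+\Pi_x\Bigl(\sum_j\partial_j\partial_j\omega(x)\Bigr)\Bigr\|\le C t\|\omega\|_{C^4}.
\]
Lemma~\ref{lem:secondterm} then identifies $-\Pi_x(\sum_j\partial_j\partial_j\omega)$ with $\nabla^*\nabla\omega+\mathcal R_k^{(1)}\omega+\End_H(B)\omega$ exactly, with no error term.

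\textbf{Step 1 (localization).} Fix $\delta<\tau_M/4$, so that $\delta$ is below the injectivity radius by \eqref{eq:inj}. Insert the cutoff $\chi_\delta(x,y)$. The integrand $\Pi_x^*\omega(x)-\Pi_y^*\omega(y)$ is bounded by $2\|\omega\|_{C^0}$, so Lemma~\ref{lem:compact-exponential-open-subset}(c), applied after the factor $1/t$, shows that removing the far region costs $O(t^{-1-n/2}e^{-\delta^2/16t})\|\omega\|_{C^0}\le Ct\|\omega\|_{C^4}$, uniformly in $x$. On the remaining region we pass to normal coordinates $y=\exp_x(v)$ with $\|v\|\lesssim\delta$.

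\textbf{Step 2 (uniform Taylor expansions).} Regard $\omega$ as the $\Lambda^k\R^d$-valued function $y\mapsto i_y\omega(y)$ and expand it in $v$ to third order with Lagrange remainder:
\[
\omega(y)=\omega(0)+v^j\partial_j\omega+\tfrac12 v^jv^l\partial_j\partial_l\omega+\tfrac16 v^jv^lv^m\partial^3_{jlm}\omega+\rho(v),\qquad \|\rho(v)\|\le C\|v\|^4\|\omega\|_{C^4}.
\]
The constant $C$ must be independent of $x$. Because $M^n$ is compact, the normal-coordinate charts $\exp_x$, the second fundamental form, and their derivatives up to the needed order are uniformly bounded over $x$ and $\|v\|\le\delta$. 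Hence the derivatives of $v\mapsto i\omega(\exp_x v)$ up to order four are bounded by $C\|\omega\|_{C^4}$.

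Next expand the kernel and the measure:
\begin{itemize}
\item the kernel by \eqref{eq:Phitapp} of Remark~\ref{rem:extint}, whose correction term $1+\frac t{48}\|B(u,u)\|^2$ is even in $u$;
\item the volume by \eqref{eq:taylorvolume}, whose correction term $1-\frac t6\Ric(u,u)$ is also even.
\end{itemize}
Both expansions have remainders with polynomial moments controlled uniformly in $x$, and $R_x^*\Pi_x^*$ is the identity on $\Lambda^kT_x^*M$. After the substitution $v=\sqrt t\,u$, the prefactor $t^{-1}\cdot t^{n/2}/(4\pi t)^{n/2}$ is harmless. The terms then contribute as follows:
\begin{itemize}
\item The linear term $\sqrt t\,u^j\partial_j\omega$ is odd. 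Against the even corrections it integrates to zero, and against the $O(t^{3/2}\|u\|^5)$ kernel remainder and the $O(t^{3/2}\|u\|^3)$ volume remainder it gives $t^{-1}\cdot t^{1/2}\cdot t^{3/2}=O(t)$.
\item The quadratic term gives the main part $-\Pi_x(\sum_j\partial_j\partial_j\omega)$, using the Gaussian moments $\int u^ju^l e^{-|u|^2/4}du$. Its products with the even $O(t)$ corrections give $O(t)\|\omega\|_{C^2}$.
\item The cubic term is odd, so its leading contribution vanishes; cross terms are $O(t^{3/2}\cdot t^{1/2}/t)=O(t)$.
\item The quartic remainder gives $t^{-1}\cdot t^2\int\|u\|^4e^{-|u|^2/4}\,du=O(t)\|\omega\|_{C^4}$.
\end{itemize}
All Gaussian moments are finite, and truncating the $u$-integral at $\|u\|\le\delta/\sqrt t$ only changes terms by exponentially small amounts, which is Step 1 again.

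\textbf{Main obstacle.} The delicate point is the odd linear term. It has size $t^{-1/2}$ before symmetrization, so an $O(t)$ bound requires two things. First, every correction that multiplies it at relative order $t^{1/2}$ must be odd-free; this is exactly the parity-refined expansion $\|\exp_x v-x\|^2=\|v\|^2-\frac1{12}\|B(v,v)\|^2+O(\|v\|^5)$ together with the evenness of the Ricci term in \eqref{eq:gray}. Second, the remainders in these expansions must hold uniformly in $x$. I would verify the uniformity by tracing the constant in Lemma~\ref{lem:intrextr} and in Gray's expansion back to uniform $C^3$/$C^4$ bounds on the embedding over the compact $M^n$, which supply the constant $C(M^n)$ and the threshold $t_0$. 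Combining Steps 1 and 2 with Lemma~\ref{lem:secondterm} and the Weitzenb\"ock formula then yields \eqref{eq:limnormc}.
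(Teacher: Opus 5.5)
Your proposal is correct and follows essentially the same route as the paper: expand $\omega$, the extrinsic kernel and the volume density in normal coordinates; use Gaussian symmetry and the parity-refined expansion of $\|\exp_x v-x\|^2$ to kill the odd contributions; bound the fourth-order remainder by $\|\omega\|_{C^4}$; and convert $-\Pi_x\sum_j\partial_j^2\omega$ into $\nabla^*\nabla\omega+\mathcal R^{(1)}_k\omega+\End_H(B)\omega$ via Lemma~\ref{lem:secondterm} and the Weitzenb\"ock formula. The only difference is bookkeeping: the paper derives a pointwise constant $C_1(x)\|\omega\|_{C^4}$ and then takes a maximum over the compact $M^n$, whereas you get uniform constants directly from uniform bounds on the normal-coordinate charts, which is the more careful form of the same argument. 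One small detail remains: the kernel-expansion remainder is Gaussian-dominated on $\|v\|\le\delta$ only if $\delta$ is small enough that $C\delta^2<1$, with $C$ the constant of Lemma~\ref{lem:intrextr}. You are free to choose $\delta$ this small, since $\delta$ does not enter $\Delta_t^k$.
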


\begin{proof}
	By the Taylor expansion of the Gaussian integral in \eqref{eq:hodgeambientt}, and taking into account the curvature substitution from Lemma \ref{lem:secondterm}, the pointwise error satisfies:
	\begin{align}
		\|\Delta^k_t \om(x) - \Delta^k \om(x)\| \le C(x, \om) t \label{eq:limnormca}
	\end{align}		
	for sufficiently small $t$. 
	
	The constant $C(x, \om)$ encapsulates the remainder terms of the expansion. By \eqref{eq:piyric}, \eqref{eq:Phitapp}, and Lemma \ref{lem:intrextr}, the odd-order terms in the Gaussian integral vanish by symmetry. The first non-vanishing remainder arises from the fourth-order derivatives in the Taylor expansion \eqref{eq:omexp} integrating against the fourth moments of the Gaussian, which scale exactly as $\mathcal{O}(t)$. Consequently, the remainder is strictly controlled by the fourth spatial derivatives of $\omega$, yielding:
	\begin{equation}
		C(x, \om) \le C_1(x) \| \om \|_{C^4} \label{eq:limnormcb}
	\end{equation}   
	for a strictly positive continuous function $C_1(x)$ depending only on the local geometry of $M^n$. Since $M^n$ is compact, $C_1(x)$ achieves a uniform global maximum $C(M^n) = \sup_{x \in M^n} C_1(x) < \infty$. This establishes the uniform bound \eqref{eq:limnormc} and completes the proof.
\end{proof}

\begin{remark}[\bf Higher norm convergence]\label{rem:higher}
	The preceding theorem is stated only in the $C^0$-norm, which is the
	form needed in the empirical convergence arguments below. By
	differentiating the same local small-time expansion in Riemannian
	normal coordinates, one obtains analogous $C^l$-estimates. More
	precisely, for every integer $l\ge0$ there exist constants
	$t_l>0$ and $C_l(M^n)>0$ such that
	\[
	\|\Delta_t^k\omega-\Delta^k\omega\|_{C^l(M^n)}
	\le
	C_l(M^n)t\|\omega\|_{C^{l+4}(M^n)}
	\]
	for all $\omega\in C^{l+4}(\Lambda^k TM^n)$ and all $0<t<t_l$.
	We shall not use these higher norm estimates in the sequel.
\end{remark}

\section{Empirical Hodge Laplacians and their spectral convergence}\label{sec:empi}

Let $(M^n,g)\subset\mathbb R^d$ be a closed, oriented,
$n$-dimensional $C^3$-smooth Riemannian submanifold, where   $n \ge 3$, endowed with
the induced metric $g$. Denote by $\mu$ the uniform probability
measure on $M^n$, that is,
\[
d\mu=\frac{d\vol_g}{\vol_g(M^n)}.
\]
The tangent, second-fundamental-form, and curvature estimators
constructed below require only $C^3$-regularity. For
Theorem~\ref{thm:empiconvergence} and the harmonic-cluster convergence
result of Theorem~\ref{thm:harmonic_cluster}, we additionally assume
that $M^n$ is $C^4$-smooth; for the latter theorem, we also assume
that $M^n$ is connected. 
We define the empirical Hodge operators
\[
\widehat\Delta^k_{t,S_m}
\]
and prove their uniform consistency on $C^4$-smooth differential
forms in Theorem~\ref{thm:empiconvergence}.
Finally, we establish compact Mosco convergence of the corresponding empirical
quadratic forms and deduce convergence in probability of the
empirical harmonic spectral cluster; see
Theorem~\ref{thm:harmonic_cluster}.

 Throughout this section, $S_m=\{x_1,\ldots,x_m\}\sim\mu^m$ is an
i.i.d. sample. Unless otherwise stated, in all asymptotic convergence
results we use the scaling
\[
t=t_m=m^{-\frac{1}{2n}}.
\]
In particular, $t_m\to0$ as $m\to\infty$. The empirical
estimators and operators introduced below are nevertheless defined for
every $t>0$.

Note that the dimension of a compact submanifold $M^n \subset \R^d$ can be estimated directly from a uniformly sampled point cloud $S_m$ \cite{SW2012}*{Section 2, p.7}, so we assume in this section that the intrinsic dimension $n$ is known. Singer and Wu also proposed an algorithm to detect the orientability of $M^n$ from finite point data sets \cite{SW2011}, so we assume that $M^n$ is oriented.

Throughout this section, the symbols $C, C_1, C_2, \ldots$ denote positive constants that may depend on the intrinsic dimension $n$, the ambient dimension $d$, and the geometry of $M^n$ (e.g., the reach $\tau_{M^n}$), but are independent of the sample size $m$ and the deformation parameter $t \in \R^+$.

\subsection{Empirical projection $(\hat{\Pi}_{t,S_m})_x$}\label{subs:eproj}

Estimating the orthogonal projection $\Pi_x : \mathbb{R}^d \to T_x M^n$ is equivalent to estimating the tangent space $T_x M^n$. 
Let $S_m = (x_1,\ldots,x_m) \in (M^n)^m$.
For $p \in M^n$ and $\delta > 0$, denote by $D_\delta(p) \subset M^n$ the extrinsic ball of radius $\delta$ centered at $p$. 
Following \cite{AL2019}*{Section 3.1}, we define the local covariance matrix at $p \in M^n$:
\begin{equation}
	\Sigma_{t,S_m}(p) \coloneqq \frac{1}{m} \sum_{j=1}^m \Phi_t(p,x_j) (x_j-p)(x_j-p)^\top \chi _\delta(p, x_j), \label{eq:LPCA_clean}
\end{equation}
where $\Phi_t$ is defined in \eqref{eq:phit}: 
\begin{equation*}
	\Phi_t(x, y) \coloneqq \frac{1}{(4\pi t)^{n/2}} \exp\left( -\frac{\|x-y\|_{\R^d}^2}{4t} \right)
\end{equation*}
and 		the empirical projection
		\begin{equation}
			(\hat{\Pi}_{t,S_m})_p : \R^d \to \R^d \label{eq:empiproj_clean}
		\end{equation}
		is defined as the orthogonal projection onto the span of the top $n$ eigenvectors of $\Sigma_{t,S_m}(p)$. Note that we regard both the empirical projection $(\hat{\Pi}_{t,S_m})_p$ and the true projection $\Pi_p: \R^d \to T_p M^n$ as linear operators from $\R^d$ to $\R^d$.
		
		\medskip
		
		\begin{remark}[\bf Gaussian vs.\ compactly supported kernels]  \label{rem:empitangent}
			In the local PCA literature, compactly supported kernels localized to a radius $h \asymp \sqrt{t}$ are standard. While the Gaussian kernel $\Phi_t$ shares this characteristic scale, its moments are defined by its infinite tails. If one were to truncate the Gaussian kernel precisely at a shrinking radius $D_{\sqrt{t}}(p)$, the truncation would alter the kernel's higher-order moments, introducing non-negligible bias into the second-order expansions required for curvature estimation.
			
			To resolve this, our construction of $\Sigma_{t, S_m}$ utilizes a \emph{fixed} radius $\delta > 0$ that is independent of $t$. Because the Gaussian tail decays exponentially as $\exp(-\delta^2 / 4t)$, the truncation error is $o(t^k)$ for any integer $k \ge 1$. This implies the kernel ``self-localizes'':
			\begin{itemize}
				\item The local covariance matrix captures the full un-truncated Gaussian moments up to exponentially small corrections, avoiding truncation bias in the expected values.
				\item The effective region contributing to the covariance remains concentrated in an $O(\sqrt{t})$-neighborhood, preserving the concentration rates.
			\end{itemize}
		\end{remark}
		
		\medskip

	\begin{proposition}[\bf Properties of empirical projections]
		\label{prop:empitangent_clean}
		Let $M^n\subset\mathbb R^d$ be a compact $C^3$-smooth submanifold of
		dimension $n\ge2$ with reach $\tau_M>0$, and let
		$\delta\in(0,\tau_{M^n}/4)$ be fixed. There exist constants
		$t_0>0$, $C_0>0$, and $C>0$, depending only on the geometry of $M^n$,
		such that the following holds.
		
		Let $S_m=(x_1,\ldots,x_m)$ be an i.i.d. sample from the uniform
		probability measure $\mu$ on $M^n$. Suppose that $0<t<t_0$ and
		\begin{equation}
			t=m^{-1/(2n)}.
			\label{eq:optimal_scaling}
		\end{equation}
		Then, with probability at least $1-m^{-2/n}$,
		\begin{equation}
			\sup_{p\in M}
			\|(\hat\Pi_{t,S_m})_p-\Pi_p\|_{\mathrm{op}}
			\le Ct.
			\label{eq:empoj_clean}
		\end{equation}
		On the same high-probability event, the map
		\[
		p\longmapsto (\hat\Pi_{t,S_m})_p
		\]
		is continuous on $M^n$.
		
		Assume in addition that $n\ge3$.
		Then, on an event of probability at least \(1-2m^{-2/n}\), the following Lipschitz
		transition estimate also holds: for all sufficiently small $t$, there
		exists a constant $C'>0$, depending only on the geometry of $M^n$, such
		that
		\begin{equation}
			\Big\|
			\big(\hat\Pi_p\hat\Pi_y-\Pi_p\Pi_y\big)
			-
			\big(\hat\Pi_p^2-\Pi_p^2\big)
			\Big\|_{\mathrm{op}}
			\le
			C't\,\|y-p\|_{\mathbb R^d}
			\label{eq:empirical_lipschitz_transition}
		\end{equation}
		uniformly in $p,y\in M^n$, where
		\[
		\hat\Pi_q\coloneqq(\hat\Pi_{t,S_m})_q.
		\]
	\end{proposition}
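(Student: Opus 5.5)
The plan is the standard three-step route for local PCA: expand the population covariance, bound the empirical covariance uniformly by concentration, and convert a perturbation of the covariance into a perturbation of its top-$n$ eigenspace with Davis--Kahan.

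\emph{Population covariance.} I first study the expected covariance $\Sigma_t(p)\coloneqq\E\,\Sigma_{t,S_m}(p)=\int_M\Phi_t(p,y)(y-p)(y-p)^\top\chi_\delta(p,y)\,d\mu(y)$. By Lemma~\ref{lem:compact-exponential-open-subset}(c) the cutoff changes it only by $O(t^k)$. I then write $y=\exp_p(\sqrt t\,u)$ and expand $y-p=\sqrt t\,u+\tfrac t2B_p(u,u)+O(t^{3/2}\|u\|^3)$, using Lemma~\ref{lem:tangentemb}. Together with \eqref{eq:Phitapp} and \eqref{eq:gray}, and the vanishing of odd Gaussian moments, this gives
\[
\Sigma_t(p)=\frac{2t}{\vol_g(M)}\,\Pi_p+t^2E(p),\qquad \sup_p\|E(p)\|_{\mathrm{op}}\le C.
\]
The cross term $\sqrt t\,u\otimes\tfrac t2 B(u,u)$ is odd in $u$, so it contributes nothing. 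Hence the block of $\Sigma_t(p)$ mapping $T_pM$ into the normal space is $O(t^2)$. The normal--normal block is also $O(t^2)$, while the spectral gap between the tangent and normal blocks is of order $t$.

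\emph{Concentration.} Each summand of $\Sigma_{t,S_m}(p)$ has operator norm at most $Ct^{-n/2}\sup_s s^2e^{-s^2/4t}\le Ct^{1-n/2}$. Its variance proxy is $\E[\Phi_t^2\|y-p\|^4]\le Ct^{2-n/2}$. Matrix Bernstein at a fixed $p$ therefore gives a deviation of order $\sqrt{t^{2-n/2}\log m/m}+t^{1-n/2}\log m/m$.

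To make this uniform in $p$, I take a $t^{3}$-net of $M$, which has polynomially many points in $m$, and take a union bound over it. Between net points I use that $p\mapsto\Sigma_{t,S_m}(p)$ is Lipschitz with constant $Ct^{-n/2-1/2}$, which follows from smoothness of $\Phi_t$ and $\chi_\delta$.

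With $t=m^{-1/(2n)}$ we have $1/m=t^{2n}$. The deviation is then $\lesssim t^{1-n/4+n}\sqrt{\log m}$, which is $\le t^2$ for $n\ge2$. The failure probability can be tuned to $m^{-2/n}$.

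\emph{Eigenspace perturbation.} With total perturbation $O(t^2)$ against a gap $\asymp t$, Davis--Kahan gives $\|\hat\Pi_p-\Pi_p\|_{\mathrm{op}}\le Ct^2/t=Ct$. This is \eqref{eq:empoj_clean}. Continuity of $p\mapsto\hat\Pi_p$ follows because $\Sigma_{t,S_m}(p)$ is continuous in $p$ (by Lemma~\ref{lem:compact-exponential-open-subset}(b)) and, on this event, the gap is uniformly positive. The Riesz projection $\frac{1}{2\pi i}\oint(z-\Sigma)^{-1}dz$ is then continuous in $\Sigma$.

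\emph{Lipschitz transition (main obstacle).} The left side of \eqref{eq:empirical_lipschitz_transition} equals $\hat\Pi_p(\hat\Pi_y-\hat\Pi_p)-\Pi_p(\Pi_y-\Pi_p)$, since $\hat\Pi_p^2=\hat\Pi_p$ and $\Pi_p^2=\Pi_p$. It therefore suffices to show that the error map $D(q)\coloneqq\hat\Pi_q-\Pi_q$ is Lipschitz with constant $Ct$, because
\[
\hat\Pi_p(\hat\Pi_y-\hat\Pi_p)-\Pi_p(\Pi_y-\Pi_p)=D(p)(\hat\Pi_y-\hat\Pi_p)+\Pi_p\bigl(D(y)-D(p)\bigr).
\]
Here $\|\hat\Pi_y-\hat\Pi_p\|\le C\|y-p\|$, using $\|\Pi_y-\Pi_p\|\le C\|y-p\|$ from \eqref{eq:nsw}.

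To get this Lipschitz bound I write $\hat\Pi_q$ and $\Pi_q$ as Riesz integrals of $\Sigma_{t,S_m}(q)/t$ and of $\Sigma_t(q)/t$ respectively. Differentiating in $q$, the bound reduces to a uniform estimate on the gradient in $q$ of the normalized deviation $(\Sigma_{t,S_m}-\Sigma_t)(q)/t$, plus a bound on the $q$-derivative of the bias term $tE(q)$.

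The gradient deviation is again an empirical process. Its summands are larger by a factor $t^{-1/2}$, so the uniform bound becomes $t^{-1}\cdot t^{1-n/4+n-1/2}\sqrt{\log m}$. It is precisely here that $n\ge3$ should be needed to make this $\le Ct$, which is why that hypothesis appears only in the second part. A second application of Bernstein plus a net produces the extra $m^{-2/n}$ in the failure probability.

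For the bias term, I need $\nabla_q E$ to be bounded, which requires the $C^3$ regularity of $M$. This bias derivative is the delicate point; if it fails, I would fall back to second-order Taylor control of the moment expansion in $q$.
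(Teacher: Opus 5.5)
Your plan follows the paper's proof in Appendix~\ref{sec:empitangent} essentially step for step:
\begin{itemize}
\item a population expansion with an order-$t$ gap and an $O(t^2)$ tangent--normal block;
\item matrix Bernstein over a net, giving an $O(t^2)$ uniform deviation;
\item Davis--Kahan, and a Riesz contour for continuity;
\item for \eqref{eq:empirical_lipschitz_transition}, a differentiated Riesz formula fed by a derivative-level Bernstein bound, with $n\ge3$ entering exactly as in Lemma~\ref{lem:derivative_covariance_concentration}, plus a uniform $C^1$ bound on the bias.
\end{itemize}
Your reduction to a $Ct$-Lipschitz bound for $q\mapsto\hat\Pi_q-\Pi_q$ is equivalent to the paper's bound on $D_yA_p(y)$.

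\textbf{The net mesh.} One slip is easy to fix: a $t^3$-net is too coarse. With your Lipschitz constant $Ct^{-(n+1)/2}$, the interpolation error is $Ct^{(5-n)/2}$, which is not $O(t^2)$ for any $n\ge2$. Even the sharp constant $Ct^{(1-n)/2}$ only rescues $n\le3$. Take mesh $t^A$ with $A$ growing with $n$, as the paper does, and likewise for the derivative-level net. The net stays polynomial in $m$, so the union bound costs only a $\log m$ factor.

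\textbf{The bias derivative.} The real gap is the point you yourself flag. Boundedness of $D_qE$ does not follow from $C^3$ regularity. The paper asserts the same thing in \eqref{eq:populationC1} and \eqref{eq:population_projector_derivative}, so it has the same gap.

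The reason is that the $t^2$-coefficient of the tangent--normal block of $\Sigma_t(q)$ is a Gaussian fourth moment of the \emph{cubic} Taylor coefficient of the local graph of $M$ over $T_qM$. Under bare $C^3$ this coefficient is merely continuous in $q$.

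Differentiating under the integral, the leading terms give exactly $\tfrac{2t}{\vol_g(M)}D\Pi_q$, and the cubic graph terms cancel by parity. However, the $o(\|v\|^3)$ graph remainder leaves
\[
\Bigl\|D_q\Sigma_t-\tfrac{2t}{\vol_g(M)}D\Pi_q\Bigr\|_{\mathrm{op}}=o(t^{3/2}),
\]
and nothing better. After dividing by the gap, this gives an $o(\sqrt t)$ Lipschitz constant for $\Pi_{t,\delta}-\Pi$, not an $O(t)$ one. A graph remainder such as $\|v\|^{7/2}$, which is still $C^3$, makes it genuinely of order $t^{3/4}$. So the factor $t$ in \eqref{eq:empirical_lipschitz_transition} is out of reach by this route under bare $C^3$. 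No second-order Taylor control in $q$ recovers the missing power.

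What closes the step is a Lipschitz third derivative. For example, the $C^4$ hypothesis of Theorem~\ref{thm:empiconvergence} suffices, and that is the only place the transition estimate is used. Then the graph remainder is $O(\|v\|^4)$, the remainder above is $O(t^2)$, $\sup_t\|E_t\|_{C^1}<\infty$, and your argument goes through.
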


	\begin{proof} 
		Our proof follows \cite{AL2019} but is self-contained because the fixed-radius truncation differs slightly from their framework. Since multiplying the covariance matrix by a positive scalar does not change its eigenspaces, the normalization conventions used in \cite{AL2019} and in \eqref{eq:LPCA_clean} are equivalent for tangent space estimation. Although \cite{AL2019} employs a compactly supported kernel localized at scale $h$, our covariance matrix uses the Gaussian kernel restricted to the fixed neighborhood $D_\delta(p)$. 
		
		To guarantee that the empirical projection matrix $\hat{\Pi}_{t, S_m}$ is well-defined, we must ensure a strict spectral separation between the tangential and normal subspaces of the local covariance matrix. By the local Taylor expansion of the manifold, the tangential eigenvalues scale as $O(t)$ while the normal eigenvalues, driven by the extrinsic curvature, scale as $O(t^2 \|B\|_{L^\infty}^2)$. Therefore, there exists a critical bandwidth threshold $t_0 > 0$, depending entirely on the reach and maximum curvature of $M^n$, such that for all $t < t_0$, the spectral gap $\lambda_n - \lambda_{n+1} \ge c t > 0$ is  bounded below by $ct$ uniformly in $p \in M^n$. We assume hereafter that $t < t_0$ is sufficiently small to maintain this eigengap, allowing us to apply the Davis-Kahan theorem to bound the projection error. We postpone the detailed proof to Appendix \ref{sec:empitangent}. 
	\end{proof}
		
		\begin{remark}\label{rem:scaling} 
			Note that our convergence rate is better than that in \cite{AL2019}*{Theorem 2}, as we achieve an $O(t)$-rate of convergence compared to their $O(\sqrt{t})$-rate. The trade-off is that for practical computation, their compactly supported ball $D_{\sqrt{t}}(p)$ is shrinking as $t \to 0^+$, whereas ours requires integrating over a fixed radius. Furthermore, our condition \eqref{eq:optimal_scaling}  differs from the condition that $t^{n/2} \asymp \frac{\log m}{m}$ in \cite{AL2019}.
		\end{remark}   
		
\

We identify $\mathbb{R}^d$ with $(\mathbb{R}^d)^*$ via the Euclidean metric, and therefore identify the restriction operator $R_x^*$ with $\Pi_x$.

\medskip

\begin{corollary}[Consistency and continuity of empirical transition operators]\label{cor:empitr_clean}
	Under the assumption  \eqref{eq:optimal_scaling} of Proposition \ref{prop:empitangent_clean}, with probability at least $1 - m^{-2/n}$ over i.i.d. $S_m\sim \mu^m$, we have
	\begin{align}
		\sup_{x,y \in M}
		\|\Lambda^k\big((\hat{\Pi}_{t,S_m})_x (\hat{\Pi}_{t,S_m})_y\big ) - \Lambda^k(\Pi_x \Pi_y)\|_{\mathrm{op}}
		&\le 
		2	k C t .
		\label{eq:empoj2_clean}
	\end{align}
		
	Furthermore, on the same high-probability event, the  mapping
	\[
	\hat{\Pi}_{t, S_m}: M^n \to \End (\R^d), \quad p \mapsto (\hat{\Pi}_{t, S_m})_p
	\]
	is continuous.  
\end{corollary}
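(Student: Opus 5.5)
We work throughout on the high-probability event $\mathcal A_m$ of Proposition~\ref{prop:empitangent_clean}, which has probability at least $1-m^{-2/n}$. On $\mathcal A_m$ we have $\sup_p\|\hat\Pi_p-\Pi_p\|_{\mathrm{op}}\le Ct$, and $p\mapsto\hat\Pi_p$ is continuous. Both assertions of Corollary~\ref{cor:empitr_clean} are then deterministic consequences of these two facts, so no further probabilistic input is needed.

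\textbf{Step 1.} We first bound the product of projections, splitting it as
\[
\hat\Pi_x\hat\Pi_y-\Pi_x\Pi_y=(\hat\Pi_x-\Pi_x)\hat\Pi_y+\Pi_x(\hat\Pi_y-\Pi_y).
\]
Since $\hat\Pi_y$ and $\Pi_x$ are orthogonal projections, both have operator norm at most $1$. This gives
\[
\|\hat\Pi_x\hat\Pi_y-\Pi_x\Pi_y\|_{\mathrm{op}}\le 2Ct
\]
uniformly in $x,y\in M$.

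\textbf{Step 2.} We pass to exterior powers. For linear maps $A,A'$ on $\R^d$ with $\|A\|_{\mathrm{op}},\|A'\|_{\mathrm{op}}\le1$, we use the telescoping identity
\[
\Lambda^kA-\Lambda^kA'=\sum_{i=1}^{k}A'^{\wedge(i-1)}\wedge(A-A')\wedge A^{\wedge(k-i)},
\]
interpreted as an operator on $\Lambda^k\R^d$ with the induced inner product. It follows that
\[
\|\Lambda^kA-\Lambda^kA'\|_{\mathrm{op}}\le k\|A-A'\|_{\mathrm{op}}.
\]
One checks this bound on decomposable $k$-vectors $v_1\wedge\cdots\wedge v_k$ and extends it using the operator-norm bound $\|\Lambda^kA\|_{\mathrm{op}}\le\|A\|_{\mathrm{op}}^k$. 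Since the norm is not generated by decomposable vectors, it is cleaner to argue directly with the tensor power $A^{\otimes k}$ restricted to antisymmetric tensors. On the full tensor power we have $\|A^{\otimes k}-A'^{\otimes k}\|\le k\|A-A'\|$, by telescoping and the multiplicativity of the tensor operator norm. Restriction to the invariant antisymmetric subspace does not increase the norm. We apply this with $A=\hat\Pi_x\hat\Pi_y$ and $A'=\Pi_x\Pi_y$; both are products of orthogonal projections and hence contractions. Combined with Step 1, this yields \eqref{eq:empoj2_clean} with the constant $2kCt$.

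\textbf{Step 3.} Continuity on the same event follows immediately from the continuity statement in Proposition~\ref{prop:empitangent_clean}. It also propagates to $(x,y)\mapsto\Lambda^k(\hat\Pi_x\hat\Pi_y)$, since products and exterior powers are polynomial in the matrix entries.

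The only step needing care is the norm compatibility in Step 2. One must identify the operator norm on $\Lambda^k\R^d$ with the norm induced from the tensor inner product, up to the standard normalization. Because $\Lambda^k\R^d$ is an invariant subspace of $(\R^d)^{\otimes k}$ and the normalization is a constant multiple of the inner product, operator norms are unaffected, so the constant $k$ survives.
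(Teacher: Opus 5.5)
Your proof is correct and is essentially the paper's argument. The paper first uses functoriality $\Lambda^k(AB)=\Lambda^kA\,\Lambda^kB$ to split the difference at the level of $\Lambda^k\R^d$, then applies $\|\Lambda^kA-\Lambda^kB\|_{\mathrm{op}}\le k\max(\|A\|_{\mathrm{op}},\|B\|_{\mathrm{op}})^{k-1}\|A-B\|_{\mathrm{op}}$ to each projection separately; you split in $\R^d$ first and apply the same Lipschitz bound once to the products, reaching the same constant $2kCt$, and your tensor-power argument (telescoping $A^{\otimes k}-A'^{\otimes k}$, then restricting to the invariant antisymmetric subspace) justifies that bound, which the paper only attributes to multilinearity.
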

\begin{proof}  Using $\Lambda ^k (AB) = \Lambda ^k (A) \Lambda ^k (B)$  by functoriality  of exterior power, we write 
	\begin{align}
		\Lambda^k\big((\hat{\Pi}_{t,S_m})_x (\hat{\Pi}_{t,S_m})_y\big ) - \Lambda^k(\Pi_x \Pi_y)  & =  \big(\Lambda^k (\hat{\Pi}_{t,S_m})_x -\Lambda ^k \Pi_x\big)\Lambda ^k  (\hat{\Pi}_{t,S_m})_y\nonumber\\
		&+  \Lambda  ^k \Pi_x \big(\Lambda ^k  (\hat{\Pi}_{t,S_m})_y-  \Lambda ^k  \Pi_y \big).\label{eq:projkdecomp}
	\end{align}
	Using multilinearity of the exterior power,
	\[
	\|\Lambda^k A-\Lambda^k B\|_{\mathrm{op}}
	\le
	k\max(\|A\|,\|B\|)^{k-1}\|A-B\|,
	\] 
	we derive \eqref{eq:empoj2_clean}    from \eqref{eq:empoj_clean}  and \eqref{eq:projkdecomp}, taking into account
	$$ \| \Lambda^k A\|_{\mathrm{op}} \le \|A \|^k_{\mathrm{op}},$$
	$$\| \Lambda ^k  \Pi_x\|_{\mathrm{op}}  = 1, $$
	$$\|(\hat{\Pi}_{t,S_m})_x\|_{\mathrm{op}} =\|\Pi_x\|_{\mathrm{op}} =1.$$
	The continuity  statement  follows from     the similar   assertion in Proposition \ref{prop:empitangent_clean}.
\end{proof}

\subsection{The Empirical Construction of $\hat{B}$ and $\widehat\End_H(B)$}\label{subs:B}

Our construction of the empirical second fundamental form $\hat{B}$ of $M^n$ is based on the following observation.
\begin{lemma}\label{lem:B}
	Assume that $Y$ is a smooth vector field on a compact
	$C^3$-smooth submanifold $M^n\subset\R^d$.  Let $\Pi_x^\perp$ denote the projection to the normal space $(T_x M^n)^\perp \subset \R^d$.   Let $\Phi_t$ denote the extrinsic Gaussian kernel defined in \eqref{eq:phit}. Then
	\begin{equation}
\lim_{t\to0^+}
		\Pi_x\otimes\Pi_x^\perp
		\left(
		\frac1{2t}
		\int_M
		\Phi_t(x,y)(y-x)\otimes Y(y)\,	d\vol_g(y)
		\right)
		=		
		\sum_{j=1}^ne_j\otimes B(e_j,Y(x)).
		\label{eq:B}
	\end{equation}
\end{lemma}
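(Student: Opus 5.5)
The plan is to work in Riemannian normal coordinates centered at $x$, exactly as in the proof of Theorem~\ref{thm:pwconvergencel}. Write $y=\exp_x(v)$ with $v=\sqrt t\,u$, and use Lemma~\ref{lem:compact-exponential-open-subset}(c) to restrict the integral to a fixed normal ball, at an $O(t^\infty)$ cost. Three expansions are then substituted. First, for the position vector, Lemma~\ref{lem:tangentemb} applied to the embedding (equivalently, $\partial_j y=e_j$, $\partial_j\partial_l y=B(e_j,e_l)$ at $x$) gives
\[
y-x=\sum_j v^je_j+\tfrac12\sum_{j,l}v^jv^lB(e_j,e_l)+O(\|v\|^3).
\]
Second, Lemma~\ref{lem:tangentemb} for $Y$ gives $Y(y)=Y(x)+\sum_l v^l(\nabla_lY+B(e_l,Y))+O(\|v\|^2)$. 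Third, Remark~\ref{rem:extint} and \eqref{eq:taylorvolume} give the kernel and volume factors $(4\pi t)^{-n/2}e^{-\|u\|^2/4}t^{n/2}(1+O(t\|u\|^4)+O(t\|u\|^2))\,du$.

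Next, apply $\Pi_x\otimes\Pi_x^\perp$ term by term. The tangential projection kills the normal quadratic part of $y-x$, leaving $\Pi_x(y-x)=\sum_j v^je_j+O(\|v\|^3)$; indeed the third-order part is also controlled, but $O(\|v\|^3)$ is enough. On the second factor, $\Pi_x^\perp Y(x)=0$ because $Y(x)$ is tangent, so the zeroth-order term vanishes. This is the key cancellation: without it, the $O(1/\sqrt t)$ term $\frac{1}{2t}\int v^j\,du\otimes Y(x)$ would vanish only by parity, and its product with the even kernel corrections would still need to be checked. Also $\Pi_x^\perp(\nabla_lY)=0$, so the first-order normal part is exactly $\sum_l v^lB(e_l,Y(x))$.

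The leading term of the integral is therefore
\[
\frac{1}{2t}\frac{1}{(4\pi)^{n/2}}\int_{\R^n}e^{-\|u\|^2/4}\sum_{j,l}t\,u^ju^l\,e_j\otimes B(e_l,Y(x))\,du .
\]
Using $\int u^ju^le^{-\|u\|^2/4}du=2\delta_{jl}(4\pi)^{n/2}$, this equals $\sum_je_j\otimes B(e_j,Y(x))$, which is the claimed limit in \eqref{eq:B}.

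It remains to check that every other term is $o(1)$. Each remaining term has the form $t^{-1}\cdot t^{a/2}$ times a Gaussian moment, where $a$ is the total degree in $v$. The products are
\begin{itemize}
\item (order $1$ in $v$)$\times$(order $2$ in $v$), giving degree $3$;
\item (order $3$)$\times$(order $1$), giving degree $4$;
\item (order $2$)$\times$(order $1$)$\times$(kernel correction $O(t\|u\|^4)$), giving degree $3$ plus an extra factor of $t$.
\end{itemize}
All of these are bounded by $C t^{1/2}$, since Gaussian moments of every order are finite. Odd terms vanish anyway, but parity is not even needed, because $t^{-1}t^{3/2}\to0$.

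The main obstacle is justifying the remainder bounds uniformly with only $C^3$ regularity. The $O(\|v\|^2)$ remainder for $Y$ and the $O(\|v\|^3)$ remainder for the embedding must be controlled on the fixed ball of radius $\delta$, not just infinitesimally. To handle this, I would use Taylor's theorem with integral remainder in normal coordinates on the ball of radius $\tau_M/4$, which lies within the injectivity radius by \eqref{eq:inj}, together with compactness of $M$. The Gaussian weight then makes all polynomial remainders integrable, and the tail outside the ball is exponentially small.
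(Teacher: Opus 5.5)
Your proposal is correct and follows the same route as the paper, whose proof is only a two-line sketch. It expands in Riemannian normal coordinates via \eqref{eq:piY} as in the proof of Theorem~\ref{thm:pwconvergencel}, uses the Gaussian second moments $\int u^ju^l e^{-\|u\|^2/4}\,du=2\delta_{jl}(4\pi)^{n/2}$ to produce the factor $2$ that cancels the $\frac{1}{2t}$, and bounds the rest by $O(\sqrt t)$. Your version fills in what the paper leaves implicit (the vanishing of $\Pi_x^\perp Y(x)$ and $\Pi_x^\perp\nabla_lY$, the localization, and the remainder bookkeeping), with one cosmetic slip: the kernel and volume corrections should be paired with the leading (order~$1$)$\times$(order~$1$) product, which contributes $O(t)$ and is still negligible.
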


\begin{proof} 
	As in the proof of Theorem~\ref{thm:pwconvergencel}, using the Taylor
	expansion \eqref{eq:piY}, we obtain
	\begin{align*}
	&\Pi_x\otimes\Pi_x^\perp
	\left(
	\frac1t\int_{M^n}\Phi_t(x,y)(y-x)\otimes Y(y)\,d\vol_g(y)
	\right)\\
	&\qquad=
	2\sum_{j=1}^n e_j\otimes B(e_j,Y(x))+O(\sqrt t).
	\end{align*}
	Taking the limit as $t\to0$ completes the proof.
\end{proof}

For $v \in \R^d$, we define the contraction operator
$$ v^\#: \R^d \otimes \R^d \to \R^d, \qquad (w_1 \otimes w_2) \mapsto \la v, w_1 \ra w_2. $$
The normal-coordinate expansion used in the proof of Lemma~\ref{lem:B}, together with the corresponding Gaussian-moment estimates, gives, uniformly for $p\in M^n$ and $v,w\in T_pM^n$,
\[
\mathcal B_t(p)(v,w)
=
B_p(v,w)+R_t(p;v,w),
\]
where
\begin{equation}
	\|R_t(p;v,w)\|
	\le
	C\sqrt t\,\|v\|\,\|w\|	\label{eq:B_uniform_bias}
\end{equation}
 and $C$  is  a constant  depending only on  $M^n$. \footnote{Simplifying bookkeeping,  we   denote  this constant  by $C$   although we used  $C$ in \eqref{eq:empoj_clean}.}
Indeed, since $M^n$ is compact and $C^3$-smooth, the local coordinate charts, the coefficients occurring in the Taylor expansions, and their remainders can be controlled uniformly in $p$; the contribution from the complement of a fixed normal neighborhood is exponentially small in $t^{-1}$.

Since $B_p$ is symmetric, symmetrization does not change the limit,
and hence
\[
\sup_{p\in M^n}
\|\mathcal B_t^{\mathrm{sym}}(p)-B_p\|_{\mathrm{op}}
\le C\sqrt t.
\]
The same estimates hold for the ambient extensions:
\begin{equation}
	\sup_{p\in M^n}
	\|\widetilde{\mathcal B}_t(p)-\widetilde B_p\|_{\mathrm{op}}
	+
	\sup_{p\in M^n}
	\|\widetilde{\mathcal B}^{\mathrm{sym}}_t(p)-\widetilde B_p\|_{\mathrm{op}}
	\le C\sqrt t.
	\label{eq:B_extended_uniform_bias}
\end{equation}

\begin{corollary}[\bf Deformation of the second fundamental form]\label{cor:B}
	For $t \in \R_{+}$, let $\mathcal{B}_t(x): T_x M^n \times T_x M^n \to (T_x M^n)^\perp$ be the linear operator defined by
	\begin{align}
\mathcal B_t(x)(v,w)
&\coloneqq
\Pi_x^\perp v^\#\Bigg(
\frac{\vol_g(M^n)}{2t}
\int_{M^n}\Phi_t(x,y)(y-x)\nonumber\\
&\hspace{35mm}\otimes\Pi_y(i_xw)\,
\chi_\delta(x,y)\,d\mu(y)
\Bigg).
\label{eq:bintegral}
\end{align}
	where $i_x: T_x M^n \to \R^d$ is the canonical inclusion mapping.  
	Then we have
	\begin{equation}
		\Bb_t(v, w) = B(v, w) + O(\sqrt t). \label{eq:corB}
	\end{equation}
\end{corollary}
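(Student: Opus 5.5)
The plan is to reduce Corollary~\ref{cor:B} to the Taylor expansion already behind Lemma~\ref{lem:B}, applied with a specific choice of vector field. First, the soft cut-off can be removed at negligible cost. Since $d\mu=d\vol_g/\vol_g(M^n)$, the factor $\vol_g(M^n)$ in \eqref{eq:bintegral} turns the integral into one against $d\vol_g$. The integrand $\Phi_t(x,y)(y-x)\otimes\Pi_y(i_xw)$ is bounded by $\mathrm{diam}(M^n)\|w\|$ times $\Phi_t$. Hence Lemma~\ref{lem:compact-exponential-open-subset}(c) replaces $\chi_\delta$ by $1$, or equivalently restricts to a normal ball of radius $\min(\delta/2,\mathrm{inj}(M^n))$, at a cost of $O(t^k)$ for every $k$, uniformly in $x$.

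Next, I would define the local tangent vector field $Y(y)\coloneqq\Pi_y(i_xw)$ on that neighbourhood. It is a $C^2$ tangent field with $Y(x)=w$, and I would compute its covariant derivative at $x$. Differentiate $Y(y)=\Pi_y w$ in $\R^d$ along $e_j$ and use $\bar\nabla_j\Pi=\Pi\,\bar\nabla_j\Pi^\perp$-type identities. Together with the Weingarten equation \eqref{eq:weingarten} this gives
\[
\partial_jY(x)=\bar\nabla_{e_j}(\Pi\, w)=B(e_j,w)-A_{\Pi^\perp w}e_j=B(e_j,w),
\]
since $w$ is tangent at $x$. Thus $\nabla_jY(x)=0$ and $\partial_jY=B(e_j,Y(x))$, in agreement with \eqref{eq:1piY}. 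This is also exactly the first-order input used in Lemma~\ref{lem:B}.

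Then I would expand in normal coordinates with $y=\exp_x(\sqrt t\,u)$, using $y-x=\sqrt t\,u^ie_i+\tfrac t2B(u,u)+O(t^{3/2}\|u\|^3)$. I would insert \eqref{eq:Phitapp} for the kernel and \eqref{eq:taylorvolume} for the volume, and write $Y(y)=w+\sqrt t\,u^jB(e_j,w)+O(t\|u\|^2)$ from \eqref{eq:piY}. Apply $\Pi_x\otimes\Pi_x^\perp$ and then contract with $v^\#$. The zeroth-order term $w$ has zero normal part, so it drops out. The leading surviving term is
\[
\frac{1}{2t}(4\pi)^{-n/2}\int e^{-\|u\|^2/4}\,t\,\langle v,u\rangle u^jB(e_j,w)\,du=B(v,w),
\]
using the second moment $\int u^iu^je^{-\|u\|^2/4}(4\pi)^{-n/2}du=2\delta_{ij}$. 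This is the content of Lemma~\ref{lem:B}, now read as an estimate rather than a limit.

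The main obstacle is bounding the remainder by $C\sqrt t\,\|v\|\|w\|$ uniformly in $x$; the product terms are routine to list. The next-order terms are of size $t^{3/2}/t$: the tangential factor $\sqrt t u$ paired with the $O(t\|u\|^2)$ part of $Y$, and corrections from the kernel and volume density. All of these carry Gaussian moments of bounded order, so they are $O(\sqrt t)$. Parity would even kill the odd ones, but the statement only asks for $O(\sqrt t)$. The tangential part of $y-x$ is exactly $\sqrt t\,u$ only to leading order, and its correction is $O(t^{3/2}\|u\|^3)$, which again contributes $O(\sqrt t)$. Uniformity comes from compactness and $C^3$-regularity: the Taylor remainders of $\exp_x$, $\Pi_y$ and the density are controlled by $C^3$ norms, which are uniformly bounded on normal balls of a fixed radius, using the bounds of Lemma~\ref{lem:compact-exponential-open-subset}(a). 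Linearity in $v$ and $w$ gives the factor $\|v\|\|w\|$, and this yields \eqref{eq:corB}, which is consistent with \eqref{eq:B_uniform_bias}.
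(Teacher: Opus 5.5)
Your proposal is correct and follows essentially the paper's route: you read off the corollary from the normal-coordinate expansion behind Lemma~\ref{lem:B}, applied to the tangent field $y\mapsto\Pi_y(i_xw)$ with $Y(x)=w$, after removing $\chi_\delta$ via Lemma~\ref{lem:compact-exponential-open-subset}(c) and using compactness to make the $O(\sqrt t)$ remainder uniform. One harmless slip: the correct identity is $\bar\nabla_{e_j}(\Pi w)=B(e_j,\Pi w)+A_{\Pi^\perp w}e_j$, with a plus rather than a minus sign, but the shape-operator term vanishes at $x$ because $\Pi_x^\perp w=0$, so your conclusion $\partial_jY(x)=B(e_j,w)$ is unaffected.
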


\medskip

For notational simplicity, we shall omit $i_x$ in the formulas below, identifying a vector $v \in T_x M^n$ with its image $i_x v$ in $\R^d$. 

Based on Corollary \ref{cor:B}, we define the empirical second fundamental form $\hat{B}$ and its symmetrization $\hat B ^{sym}$ at any point $p \in M^n$ as follows. For $t \in \R_{+}$ and a point cloud $S_m = \{x_1, \dots, x_m\} \subset M^n$, recall that the empirical orthonormal basis $\{(\hat{e}_i)_{t, S_m}(p)\}_{i=1}^n$ consists of the top $n$ eigenvectors of the covariance matrix $\Sigma_{t, S_m}(p)$ defined in \eqref{eq:LPCA_clean}. We set
For brevity, write
\[
\hat e_i(p)\coloneqq(\hat e_i)_{t,S_m}(p).
\]
Then
\begin{align}
(\hat B_{t,S_m})_p(\hat e_i(p),\hat e_k(p))
&\coloneqq
(\hat\Pi_{t,S_m})_p^\perp\circ\hat e_i(p)^\#
\Bigg[
\frac{\vol_g(M^n)}{2mt}
\sum_{j=1}^m\Phi_t(p,x_j)(x_j-p)\nonumber\\
&\hspace{34mm}\otimes
\hat\Pi_{x_j}\hat e_k(p)\,\chi_\delta(p,x_j)
\Bigg],
\label{eq:empiB}
\end{align}
and
\begin{equation}
(\hat B^{\mathrm{sym}}_{t,S_m})_p(\hat e_i(p),\hat e_k(p))
\coloneqq
\frac12\Bigl(
(\hat B_{t,S_m})_p(\hat e_i(p),\hat e_k(p))
+
(\hat B_{t,S_m})_p(\hat e_k(p),\hat e_i(p))
\Bigr).
\label{eq:empiBs}
\end{equation}
We also symmetrize
	\begin{equation}
	\Bb^{sym}_t(v, w) \coloneqq \frac{1}{2}\big (\Bb_t(v, w) + \Bb_t (w, v)\big) . \label{eq:corBs}
\end{equation}

For $p \in M^n$, $t \in \R_{+}$, and $S_m \subset M^n$, we extend $\Bb_t(p)$, $B(p)$,  $(\hat{B}_{t, S_m})_p$,  and their symmetrized versions to act as linear operators from $\R^d \times \R^d \to \R^d$ as follows. For $u, v \in \R^d$, we set
\begin{equation}
	\tilde{B}_p(u, v) \coloneqq B_p(\Pi_p u, \Pi_p v), \label{eq:Bext}
\end{equation}
\begin{equation}
	\tilde{\Bb}_t(p)(u, v) \coloneqq \Pi_p^\perp u^\# \frac{\vol_g(M^n)}{2t} \int_{M^n} \Phi_t(p, y)(y-p) \otimes \Pi_y \Pi_p(v)\chi_\delta (p, y) d\mu(y), \label{eq:Btext}
\end{equation} 

\begin{equation}
	\tilde{\Bb}^{sym}_t(p)(u, v) \coloneqq \frac{1}{2}\Big ( \tilde{\Bb}_t(p)(u, v) + \tilde{\Bb}_t (p)(v, u)\Big). \label{eq:Btexts}
\end{equation} 

\begin{align} 
	\widetilde{(\hat{B}_{t, S_m})}_p(u, v) &\coloneqq (\hat{\Pi}_{t, S_m})^\perp_p u^\# \Big[ \frac{\vol_g(M^n)}{2mt} \sum_{j=1}^m \Phi_t(p, x_j) (x_j - p)\nonumber\\
	& \otimes (\hat{\Pi}_{t, S_m})_{x_j} (\hat{\Pi}_{t, S_m})_p \chi_{\delta} (p, x_j) v \Big], \label{eq:Bempiext}
\end{align}
\begin{equation} 
	\widetilde{(\hat{B}^{sym}_{t, S_m})}_p(u, v) \coloneqq \frac{1}{2}\Big ( 	\widetilde{(\hat{B}_{t, S_m})}_p(u, v) + 	\widetilde{(\hat{B}_{t, S_m})}_p(v, u) \Big ). \label{eq:Bempiexts}
\end{equation}

Denote by $\hat{T}_p M^n$ the empirical tangent space.

\begin{lemma}\label{lem:Bextend} 
	The restriction of $\tilde{B}_p$ and $\tilde{\Bb}^{sym}_t$ to $T_p M^n \times T_p M^n$, and of $\widetilde{(\hat{B}^{sym}_{t, S_m})}_p$ to $\hat{T}_p M^n \times \hat{T}_p M^n$, is exactly equal to $B_p$, $\Bb_t ^{sym}(p)$, and $(\hat{B}^{sym}_{t, S_m})_p$, respectively. Furthermore, we have
	\begin{equation}
		\lim_{t \to 0^+} \| \tilde{\Bb}^{sym}_t(p) - \tilde B_p \|_{\mathrm{op}} = 0 
	\end{equation}
	for any $p \in M^n$. Consequently, letting 
	
$$	\tilde H^{sym}_t(p)
	\coloneqq
	\sum_{i=1}^d
	\tilde{\mathcal B}^{sym}_t(p)(e_i,e_i), $$
	then  
	$$ \lim_{t \to 0^+} \tilde{H}^{sym}_t(p) = H(p). $$
\end{lemma}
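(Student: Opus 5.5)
The plan has three parts: check the restriction identities directly from the definitions, reduce the operator-norm convergence to the already established bias bound on the tangent space, and read off the mean curvature as a trace.

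\textbf{Step 1: restriction identities.} For $u,v\in T_pM^n$ we have $\Pi_pu=u$ and $\Pi_pv=v$. Hence $\tilde B_p(u,v)=B_p(u,v)$ by \eqref{eq:Bext}. In \eqref{eq:Btext} the factor $\Pi_y\Pi_p(v)$ becomes $\Pi_y(i_pv)$, so $\tilde{\Bb}_t(p)(u,v)$ coincides with \eqref{eq:bintegral}. Symmetrizing both sides, \eqref{eq:Btexts} agrees with \eqref{eq:corBs}.

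The empirical case is the same argument with $\hat\Pi_p$ in place of $\Pi_p$. For $u,v\in\hat T_pM^n$ we have $(\hat\Pi_{t,S_m})_pv=v$. Writing $u,v$ in the empirical basis $\hat e_i(p)$, the bilinearity of \eqref{eq:Bempiext} and of \eqref{eq:empiB} in their two slots gives equality on basis pairs, and hence everywhere. Symmetrizing yields the claim for $\widetilde{(\hat B^{sym}_{t,S_m})}_p$.

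\textbf{Step 2: operator-norm convergence.} Given arbitrary $u,v\in\R^d$, split $u=\Pi_pu+\Pi_p^\perp u$ and $v=\Pi_pv+\Pi_p^\perp v$.

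\begin{itemize}
\item In the second slot, $\tilde{\Bb}_t(p)(u,v)$ depends on $v$ only through $\Pi_pv$.
\item In the first slot, the contraction $u^\#$ pairs $u$ with $y-p$.
\end{itemize}

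The normal part of the first slot is handled by the Euclidean expansion $y-p=v'+\tfrac12B_p(v',v')+O(\|v'\|^3)$, where $y=\exp_p(v')$. This gives $\langle\Pi_p^\perp u,y-p\rangle=O(\|v'\|^2)$. Inserted into the $\tfrac1{2t}$-weighted Gaussian integral, the contribution is controlled by second Gaussian moments: it is $O(1)\cdot t/t$ times the leading term $\Pi_y\Pi_pv=\Pi_pv+O(\|v'\|)$. After projecting by $\Pi_p^\perp$ and using that the first-order normal part $B(v',\Pi_pv)$ is odd, what remains is an $O(\sqrt t)$ remainder.

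At this point the quadratic term $\tfrac1{4t}\int\Phi_t\,\langle\Pi_p^\perp u,B_p(v',v')\rangle\,\Pi_p^\perp\Pi_y\Pi_pv$ needs separate treatment. Its even part pairs $B(v',v')$ with the zeroth-order term $\Pi_p^\perp\Pi_pv=0$, so it vanishes at leading order. The surviving terms are cubic in $v'$ (odd, hence zero after integration) or of order $t^{3/2}/t$.

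This gives $\|\tilde{\Bb}_t(p)(u,v)-\tilde{\Bb}_t(p)(\Pi_pu,\Pi_pv)\|\le C\sqrt t\|u\|\|v\|$. Combined with \eqref{eq:B_uniform_bias} on $T_pM^n\times T_pM^n$, this yields $\|\tilde{\Bb}_t(p)-\tilde B_p\|_{\mathrm{op}}\le C\sqrt t$. Symmetrizing preserves the bound, since $\tilde B_p$ is symmetric. This is precisely \eqref{eq:B_extended_uniform_bias}, so if that estimate is accepted as stated earlier, this step is immediate.

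\textbf{Step 3: mean curvature.} Choose the orthonormal basis $\{e_i\}_{i=1}^d$ of $\R^d$ adapted to the splitting $T_pM^n\oplus(T_pM^n)^\perp$. Then
\[
\sum_{i=1}^d\tilde B_p(e_i,e_i)=\sum_{i\le n}B_p(e_i,e_i)=H(p),
\]
and the trace is basis-independent. Since
\[
\|\tilde H^{sym}_t(p)-H(p)\|\le d\,\|\tilde{\Bb}^{sym}_t(p)-\tilde B_p\|_{\mathrm{op}}\to0,
\]
the limit $\tilde H^{sym}_t(p)\to H(p)$ follows.

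\textbf{Main obstacle.} The delicate point is the normal-input part of Step 2. Off the tangent space there is no a priori reason for $\tilde{\Bb}_t$ to vanish, so we must verify via parity and the second-order Taylor expansion that the contributions from $\Pi_p^\perp u$ are $o(1)$ rather than $O(1)$. A possible $O(1)$ term of the form $\langle\Pi_p^\perp u,B(v',v')\rangle\,\Pi_p^\perp B(v',\Pi_pv)/t$ is odd of total degree three and therefore integrates to zero.
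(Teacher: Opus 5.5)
Your proof is correct and follows the same outline as the paper's: the restriction identities come straight from the definitions, the operator-norm limit comes from the Taylor expansion of $y\mapsto\Pi_y\Pi_pv$ (Lemma~\ref{lem:tangentemb}) combined with Gaussian parity, and the mean-curvature limit comes from the trace bound. Your explicit parity argument, showing that the normal component $\Pi_p^\perp u$ in the first slot contributes only $o(1)$, fills in a point that the paper's one-line ``follows from the first'' leaves implicit, since $\tilde{\Bb}_t(p)$ is not determined by its restriction to $T_pM^n\times T_pM^n$.
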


\begin{proof} 
	The first assertion of Lemma \ref{lem:Bextend} is straightforward from the definitions. The second assertion follows from the first, taking into account Lemma \ref{lem:tangentemb}. The final assertion regarding the mean curvature follows immediately from the uniform convergence of the extended fundamental form.    
\end{proof}

\medskip

\begin{proposition}[\bf The empirical second fundamental form]
	\label{prop:probempib}
	Assume that $M^n\subset\mathbb R^d$ is a compact $C^3$-smooth
	submanifold of dimension $n\ge2$ with reach $\tau_{M^n}>0$, and fix
	$\delta\in(0,{\tau_{M^n}}/4)$. Let $S_m\sim\mu^m$ be i.i.d. and set
	$t=m^{-1/(2n)}$. Then, for all sufficiently large $m$, with probability
	at least $1-3m^{-2/n}$,
	\begin{equation}
		\sup_{p\in M^n}
		\| \widetilde{(\hat B_{t,S_m})}_p-\tilde B_p\|_{\mathrm{op}}
		\le C_B\sqrt t.
		\label{eq:probempib}
	\end{equation}
where $C_B$ depends only on the geometry of $M^n$. Furthermore, on the
same high-probability event, for all sufficiently
small $t$, the maps
\[
p\longmapsto \hat\Pi_{t,S_m}(p)
\]
and
\[
p\longmapsto \widetilde{(\hat B_{t,S_m})}_p
\in \End(\mathbb R^d\times\mathbb R^d,\mathbb R^d)
\]
are continuous.
Consequently, on the same high-probability event, the map
\[
p\longmapsto \widetilde{(\hat B^{sym}_{t,S_m})}_p
\in \End(\mathbb R^d\times\mathbb R^d,\mathbb R^d)
\]
is continuous and satisfies
\[
\sup_{p\in M}
\left\|
\widetilde{(\hat B^{sym}_{t,S_m})}_p-\tilde B_p
\right\|_{\mathrm{op}}
\le C_B\sqrt t .
\]
\end{proposition}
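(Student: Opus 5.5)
The plan is a bias--variance split, carried out on the ambient extensions so that everything is a linear operator on $\mathbb R^d$. For $p\in M^n$ we write
\[
\widetilde{(\hat B_{t,S_m})}_p-\tilde B_p
=\bigl(\widetilde{(\hat B_{t,S_m})}_p-\widetilde{\mathcal B}^{\,\mathrm{emp}}_t(p)\bigr)
+\bigl(\widetilde{\mathcal B}^{\,\mathrm{emp}}_t(p)-\widetilde{\mathcal B}_t(p)\bigr)
+\bigl(\widetilde{\mathcal B}_t(p)-\tilde B_p\bigr).
\]
Here $\widetilde{\mathcal B}^{\,\mathrm{emp}}_t(p)$ is the empirical sum \eqref{eq:Bempiext} with every $\hat\Pi$ replaced by the true projections $\Pi_p,\Pi_{x_j},\Pi_p^\perp$. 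The third term is the deterministic bias, and \eqref{eq:B_extended_uniform_bias} already gives it the bound $C\sqrt t$ uniformly in $p$. So the work lies in the first two terms.

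For the first term (the projection error), we work on the event of Proposition~\ref{prop:empitangent_clean}, which has probability at least $1-m^{-2/n}$ and on which $\sup_q\|\hat\Pi_q-\Pi_q\|_{\mathrm{op}}\le Ct$. On this event we replace, one factor at a time, $\hat\Pi_p^\perp$ by $\Pi_p^\perp$, $\hat\Pi_p$ by $\Pi_p$, and $\hat\Pi_{x_j}$ by $\Pi_{x_j}$. Each swap costs $Ct$ times the operator norm of
\[
\frac{1}{2mt}\sum_j\Phi_t(p,x_j)\,\|x_j-p\|\,\chi_\delta(p,x_j).
\]
Its expectation is of order $t^{-1}\cdot\sqrt t=t^{-1/2}$, because the first absolute Gaussian moment is of order $\sqrt t$. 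The swaps therefore contribute $Ct\cdot t^{-1/2}=C\sqrt t$, provided this scalar empirical sum stays within a constant factor of its mean uniformly in $p$. That uniformity follows from the same concentration argument as the next step, at the cost of one further $m^{-2/n}$ in the failure probability.

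For the second term (the variance), we have a sum of i.i.d. bounded matrix-valued random variables $Z_j(p)=\Phi_t(p,x_j)(x_j-p)\otimes\Pi_{x_j}\Pi_p\,\chi_\delta(p,x_j)$. Since $\|Z_j\|\le Ct^{-n/2}\delta$ and $\E\|Z_j\|^2\le Ct^{-n/2}\cdot t$, the matrix Bernstein inequality at a fixed $p$ gives deviation $\lesssim \sqrt{t^{1-n/2}\log(\cdot)/m}+t^{-n/2}\log(\cdot)/m$. After dividing by $t$ and inserting $m=t^{-2n}$, this is $O\bigl(t^{\,n-1/2-n/4}\sqrt{\log m}\bigr)$, far below $\sqrt t$ for $n\ge2$.

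To make the bound uniform in $p$, we take a $t^{N}$-net of $M^n$ of cardinality $O(t^{-nN})$ and use a union bound. We then extend off the net using Lipschitz continuity in $p$ of $\Phi_t,\chi_\delta,\Pi_p$, whose Lipschitz constant is polynomial in $t^{-1}$. We choose the confidence level so that the total failure probability is at most $m^{-2/n}$.

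Finally we collect the three events, giving probability at least $1-3m^{-2/n}$, and conclude \eqref{eq:probempib}. The symmetrized bound follows from the triangle inequality, since $\tilde B_p$ is symmetric. Continuity in $p$ is then immediate. The map $p\mapsto\hat\Pi_{t,S_m}(p)$ is continuous on this event by Proposition~\ref{prop:empitangent_clean}, because the eigengap makes the spectral projection a continuous function of $\Sigma_{t,S_m}(p)$. The map $p\mapsto\widetilde{(\hat B_{t,S_m})}_p$ is a finite sum of products of continuous functions of $p$: the kernel, the smooth cutoff, $x_j-p$, and $\hat\Pi_p,\hat\Pi_p^\perp$. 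The symmetrization is continuous as a consequence.

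The main obstacle is the first term. The factor $1/t$ in front of an $O(t)$ projection error leaves no room, so we must check that the kernel-weighted first moment really is $O(\sqrt t)$ uniformly, not $O(1)$. A careless bound on $\sum\Phi_t\|x_j-p\|$ by $\delta\sum\Phi_t$ would lose the factor $\sqrt t$ and destroy the rate.
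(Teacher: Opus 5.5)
Your proposal is correct and follows essentially the same route as the paper: the same split into bias, projection-swap error and Monte Carlo error, with the swaps controlled by the $O(t)$ projector error times the $O(t^{-1/2})$ kernel-weighted absolute first moment, and the same accounting of three failure events. The differences are minor. You use matrix Bernstein with an $\varepsilon$-net where the paper invokes the Gin\'e--Guillou VC bound (Lemmas~\ref{lem:MC_kernel_vector} and~\ref{lem:MC_scalar_kernel}), and you bound the $\hat\Pi_p^\perp$ swap crudely by $O(\sqrt t)$ rather than by the paper's $O(t)$ (obtained from the odd-moment cancellation in $\E_\mu[F_p]$), which still suffices for the stated rate.
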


\begin{proof}
	Work on the high-probability projector event of
	Proposition~\ref{prop:empitangent_clean}. On this event, for
	$t\le t_0$, the map
	\[
	\widetilde{(\hat B_{t,S_m})}:M^n\longrightarrow
	\mathrm{Lin}(\R^d\times\R^d,\R^d)
	\]
	is continuous.

	For each $p \in M^n$, we decompose the estimation error into an analytical bias and a stochastic fluctuation:
	\begin{equation}
		(\widetilde{\hat B_{t, S_m}})_p - \tilde B_p = \underbrace{(\widetilde{\hat B_{t, S_m}})_p - \tilde\Bb_t(p)}_{\text{Stochastic error}} + \underbrace{(\tilde\Bb_t(p) - \tilde B_p)}_{\text{Bias}}. \label{eq:error}
	\end{equation}
	
	By Lemma \ref{lem:Bextend} and  Eq. \eqref{eq:B_extended_uniform_bias}, there exists a constant $C_1 > 0$ such that for $t$ sufficiently small we have
	\begin{equation}
		\|\tilde \Bb_t(p) - \tilde B_p\|_{\mathrm{op}} \le C_1 \sqrt t. \label{eq:anbias_new}
	\end{equation}
	
	For a fixed sample $S_m = (x_1, \ldots, x_m) \in (M^n)^m$, we isolate the internal summation operators:
	\begin{align}
		\mathrm{Sum}_{t, S_m} &\coloneqq \frac{1}{2m} \sum_{j=1}^m \Phi_t(p,x_j) \frac{(x_j - p)}{t} \otimes \hat{\Pi}_{x_j}\hat{\Pi}_{p} \chi_\delta (p, x_j), \label{eq:sum}\\
		\mathrm{Sum}^{\mathrm{true}}_t &\coloneqq \frac{1}{2m} \sum_{j=1}^m \Phi_t(p,x_j) \frac{(x_j - p)}{t} \otimes \Pi_{x_j} \Pi_{p}   \chi_\delta (p,x_j). \label{eq:sumtrue}
	\end{align}
	
	We split the stochastic error of the fundamental form into three components:
	Define
\begin{align*}
E_1&\coloneqq
\|(\hat\Pi_p^\perp-\Pi_p^\perp)\mathrm{Sum}_{t,S_m}\|_{\mathrm{op}},\\
E_2&\coloneqq
\|\Pi_p^\perp(\mathrm{Sum}_{t,S_m}-\mathrm{Sum}^{\mathrm{true}}_t)\|_{\mathrm{op}},\\
E_3&\coloneqq
\left\|\Pi_p^\perp\mathrm{Sum}^{\mathrm{true}}_t
-\frac{\tilde\Bb_t(p)}{\vol_g(M^n)}\right\|_{\mathrm{op}}.
\end{align*}
Then
\begin{equation}
\|(\widetilde{\hat B_{t,S_m}})_p-\tilde\Bb_t(p)\|_{\mathrm{op}}
\le\vol_g(M^n)(E_1+E_2+E_3).
\label{eq:sterror}
\end{equation}
	\medskip
	\noindent
	\underline{Step 1.} {\it Bounding $\mathrm{Sum}^{\mathrm{true}}_t$  and $E_3$.}
	Let $F_p :  M^n \to \R^d \otimes  \End (\R^d)$ be defined  by
	$$ F_p(y) \coloneqq \Phi_t(p,y) \frac{y-p}{t} \otimes \Pi_y\Pi_p \chi_\delta(p, y).$$ Then by \eqref{eq:Btext}
	
	\begin{equation}
		\tilde \Bb_t  (p) =  \frac{\vol_g(M^n)}{2} \Pi ^\perp_p \E_\mu [F_p] ,\label{eq:tbbt}
	\end{equation}
	\begin{equation}
		\mathrm{Sum}^{\mathrm{true}}_t = \frac{1}{2}\left \{\E_{\mu}[F_p] + \left( \frac{1}{m}\sum_{j=1}^m F_p(x_j) - \E_{\mu}[F_p] \right)\right\}. \label{eq:sumtrue2}
	\end{equation}
	
	Since  $\|\Pi ^\perp_p\|_{\mathrm{op}} \le 1$, by \eqref{eq:tbbt}, \eqref{eq:sumtrue2}:	
	\begin{equation}
		E_3=  \Big\|  \Pi_p^\perp  \mathrm{Sum }_t^{\mathrm{true}}- \frac{\tilde \Bb_t (p)}{\vol_g(M^n)}\Big\|_{\mathrm{op}}\le \frac{1}{2} \left\| \frac{1}{m}\sum_{j=1}^m F_p(x_j) - \E_{\mu}[F_p] \right\|.\label{eq:lastterm}
		\end{equation}
	
	In normal coordinates $y = \exp_p(\sqrt{t}u)$, taking into account the volume distortion $d\mu(y) = (\vol_g (M^n) ^{-1}) t^{n/2}(1+O(t|u|^2))du$, the expected value evaluates to:
	\begin{equation}
		\E_\mu[F_p] = \frac1{\vol_g(M^n)}\int_{\R^n} \frac{1}{(4\pi)^{n/2}} e^{-|u|^2/4} \left( t^{-1/2}u + O(1) \right) \otimes \big(\Pi_p + O(\sqrt{t})\big) du.
	\end{equation}
	Because the leading odd term $t^{-1/2}u$ integrates to exactly zero against the symmetric Gaussian measure, the first non-vanishing contribution is bounded by a constant. Hence, 
	\begin{equation}
		\|\E_\mu[F_p]\|_{\mathrm{op}} \le C_2. \label{eq:expf}
	\end{equation}
	
	By Lemma \ref{lem:MC_kernel_vector}, the Monte Carlo error is bounded by:
	\begin{equation}
		\Big \|\frac{1}{m}\sum_{j=1}^m F_p(x_j) - \E_{\mu}[F_p] \Big \| = O\left(\sqrt{\frac{\log m}{m t^{n/2+1}}}\right) \label{eq:fluchtf} 
	\end{equation} 
	with probability at least $1-m^{-2}$. Because $n \ge 2$, we have $1-m^{-2} \ge 1 - m^{-\frac{2}{n}}$. Furthermore, under the scaling $t = m^{-1/(2n)}$, this fluctuation decays as $o(\sqrt{t})$. Combining \eqref{eq:sumtrue2}, \eqref{eq:expf}, and \eqref{eq:fluchtf}, we conclude that with probability at least $1-m^{-2}$,
	\begin{equation}
		\|\mathrm{Sum}^{\mathrm{true}}_t\|_{\mathrm{op}} \le C_3 \label{eq:sumtrueest}
	\end{equation}
	and  by \eqref{eq:lastterm}, \eqref{eq:fluchtf}

	\begin{equation}
		E_3
		=
		\|\Pi_p^\perp\mathrm{Sum}^{\mathrm{true}}_t(p)
		-
		\frac{\tilde \Bb_t (p)}{\vol_g (M^n)}\|_{\mathrm{op}}
		\le
	\frac{1}{2}	\left\|
		\frac1m\sum_{j=1}^mF_p(x_j)-\E_\mu[F_p]
		\right\|_{\mathrm{op}}
		 = o (\sqrt t) \label{eq:extlastterm}
	\end{equation}
	for $t$ sufficiently small.
	
	\medskip
	\noindent
	\underline{Step 2.} {\it Defining the High-Probability Geometric Event.}
	
	Define the projector difference tensor:
	\[
	\Delta_{t, S_m} \Pi (x,y) \coloneqq (\hat\Pi_{t, S_m})_x(\hat\Pi_{t, S_m})_y - \Pi_x\Pi_y.
	\]
	Let $\Omega_{t, m}$ be the event that the empirical projectors are uniformly well-behaved over nearby points  and $ \hat \Pi_{t, S_m}$ is   continuous:
	\begin{equation}
\begin{aligned}
\Omega_{t,m}\coloneqq\bigg\{S_m\in(M^n)^m:\;&
\sup_{\substack{x,y\in M^n\\ \|x-y\|\le\delta}}
\|\Delta_{t,S_m}\Pi(x,y)\|_{\mathrm{op}}\le2Ct,\\
&\hat\Pi_{t,S_m}\in C\bigl(M^n,\End(\R^d)\bigr)
\bigg\}.
\end{aligned}
\label{omtm}
\end{equation}
	where $C$ is the universal bound constant from Corollary \ref{cor:empitr_clean}. By Corollary \ref{cor:empitr_clean}, for sufficiently small $t$, 
	$$\mu^m(\Omega_{t, m}) \ge 1-m^{-2/n}.$$
	
	\medskip
	\noindent
	\underline{Step 3.} {\it Decoupling and Bounding $E_2$ and $E_1$.}
	
	We evaluate $E_2$ exclusively conditional on the event $\Omega_{t,m}$. Applying the triangle inequality to the empirical sum to bring the operator norm inside the integral, we obtain:
	\begin{align}
		E_2 &\le \|\mathrm{Sum}_{t, S_m} - \mathrm{Sum}^{\mathrm{true}}_t\|_{\mathrm{op}} \nonumber \\
		&\le \frac{1}{2m} \sum_{j=1}^m \Phi_t(p,x_j) \frac{\|x_j - p\|}{t} \|\Delta_{t, S_m}\Pi(x_j,p)\|_{\mathrm{op}}\chi_{\delta} (p, x_j) \nonumber \\
		&\le (C t) \left( \frac{1}{m} \sum_{j=1}^m \Phi_t(p,x_j) \frac{\|x_j - p\|}{t} \chi_{\delta}(p, x_j) \right). \label{eq:E2_decoupled}
	\end{align}  
	Let 
	\[
	A_p(x) \coloneqq \Phi_t(p,x) \frac{\|x - p\|}{t} \chi_{\delta}(p, x).
	\]
	By integrating in normal coordinates against the Gaussian measure (as detailed in the proof of Theorem \ref{thm:pwconvergencel} and taking into account Lemma \ref{lem:compact-exponential-open-subset}), we obtain
	\begin{equation}
		\E_\mu[A_p] \le C_4 t^{-1/2}. \label{eq:Apexp}
	\end{equation}
	By Lemma \ref{lem:MC_scalar_kernel}, the empirical sum $\frac{1}{m} \sum A_p(x_j)$ converges to its expectation uniformly over $p$ with a stochastic error of $o(\sqrt{t})$ with probability $1-m^{-2} \ge 1-m^{-\frac{2}{n}}$. Taking into account \eqref{eq:Apexp} and \eqref{eq:E2_decoupled}, we conclude that
	\begin{equation}
		E_2 \le C t \cdot \left( C_4 t^{-1/2} + o(\sqrt{t}) \right) \le C_5 \sqrt{t} \label{eq:E2_final}
	\end{equation}
	with probability at least $1 - 2m^{-\frac{2}{n}}$ (the sum of the failure probabilities of $\Omega_{t,m}$ and Lemma \ref{lem:MC_scalar_kernel}).
	
	Returning to $E_1$, we observe that on $\Omega_{t, m}$, we specifically have $\|\hat{\Pi}_p^\perp - \Pi_p^\perp\|_{\mathrm{op}} \le C t$. Therefore, taking into account  \eqref{eq:sumtrueest}, \eqref{eq:E2_decoupled}, \eqref{eq:E2_final},  we obtain
	\begin{align}
E_1
&\le
\|\hat\Pi_p^\perp-\Pi_p^\perp\|_{\mathrm{op}}
\Bigl(
\|\mathrm{Sum}^{\mathrm{true}}_t\|_{\mathrm{op}}
+
\|\mathrm{Sum}_{t,S_m}-\mathrm{Sum}^{\mathrm{true}}_t\|_{\mathrm{op}}
\Bigr)\nonumber\\
&\le Ct\bigl(C_3+C_5\sqrt t\bigr)
\le C_6t.
\label{eq:e1}
\end{align}
	with probability at least $1 - 3m^{-\frac{2}{n}}$ (adding the failure probability of Lemma \ref{lem:MC_kernel_vector} required to bound $\mathrm{Sum}^{\mathrm{true}}_t$).  
	
\medskip
	\noindent
	{\it Conclusion.}
	Taking into account the error decomposition \eqref{eq:error}, the
	analytical bias bound \eqref{eq:anbias_new}, the bound
	\eqref{eq:extlastterm} for $E_3$, and the uniform bounds for $E_1$
	and $E_2$, and absorbing the fixed factor
	$\vol_g(M^n)$ into the constants, we obtain
	\[
	\begin{aligned}
		\sup_{p\in M^n}
		\|\widetilde{(\hat B_{t, S_m})}_p-\tilde B_p\|_{\mathrm{op}}
		&\le
		C_1\sqrt t+o(\sqrt t)+C_6t+C_5\sqrt t\nonumber\\
		&\le C\sqrt t
	\end{aligned}
	\]
	for all sufficiently small $t$. By the union bound applied to the
	geometric event $\Omega_{t,m}$ and the scalar- and tensor-valued
	Monte Carlo events, this estimate holds simultaneously with probability
	at least
	\[
	1-3m^{-2/n}.
	\]
	This proves the first assertion of
	Proposition~\ref{prop:probempib}.
	
	On the same high-probability event, the map
	\[
	p\longmapsto\hat\Pi_{t,S_m}(p)
	\]
	is continuous by Proposition \ref{prop:empitangent_clean}. The
	definition \eqref{eq:Bempiext} is a finite sum of continuous
	expressions involving $p$, $\hat\Pi_{t,S_m}(p)$, and the smooth
	kernel and cut-off functions. Consequently,
	\[
	p\longmapsto
	\widetilde{(\hat B_{t,S_m})}_p
	\]
	is continuous on the same event.
	
	Finally, symmetrization preserves continuity. Since $B_p$ is
	symmetric, for $u,v\in\mathbb R^d$,
	\[
	\begin{aligned}
		\bigl(
		\widetilde{(\hat B^{\mathrm{sym}}_{t, S_m})}_p-\tilde B_p
		\bigr)(u,v)
		&=
		\frac12
		\bigl((\widetilde{\hat B_{t,S_m}})_p-\tilde B_p\bigr)(u,v)\nonumber\\
		&\quad+
		\frac12
		\bigl( (\widetilde{\hat B_{t,S_m}})_p-\tilde B_p\bigr)(v,u).
	\end{aligned}
	\]
	Therefore,
	\[
	\sup_{p\in M^n}
	\left\|
	\widetilde{(\hat B^{\mathrm{sym}}_{t,S_m})}_p-\tilde B_p
	\right\|_{\mathrm{op}}
	\le
	\sup_{p\in M}
	\left\|
	\widetilde{(\hat B_{t,S_m})}_p-\tilde B_p
	\right\|_{\mathrm{op}}
	\le C\sqrt t.
	\]
	This proves the remaining assertions.
	\end{proof}

\

For $S_m \in (M^n)^m$ and $t \in \R^+$, we set
$$  (\hat H^{sym}_{t, S_m})_p \coloneqq \sum_{i=1}^n (\hat{B}^{sym}_{t, S_m})_p (\hat{e}_i(p) , \hat{e}_i(p)), $$
where $\{ \hat{e}_i(p) \}_{i=1}^n$ is an orthonormal basis of the empirical tangent space $(\hat{\Pi}_{t, S_m})_p$.  The definition of $ (\hat H^{sym}_{t, S_m})_p $ is independent of the empirical orthonormal basis.
We define the empirical version $(\widehat{\Ww}_{t, S_m})_p$ of $\End_H(B)$ acting on $\Lambda^k \R^d$ by (cf. \eqref{eq:endh}  and \eqref{eq:muladjoint}):

\begin{equation}
	(\widehat{\Ww}_{t,S_m})_p(\omega) \coloneqq \sum_{j,l = 1} ^n \left\langle (\hat H^{sym}_{(t, S_m)})_p, (\hat{B}^{sym}_{(t,S_m)})_p \big( \hat{e}_j, \hat{e}_l \big) \right\rangle  \hat{e}_l \wedge  i_{(\hat{e}_j)^*} \hat{\Pi}_p \omega. \label{eq:wtsm}
\end{equation}
Here, $\hat{\Pi}_p$ is the shorthand notation for $\Lambda^k \hat{\Pi}_p$, and $(\hat{e}_j)^*$ denotes the dual covector with respect to the ambient Euclidean metric. This is a straightforward matrix multiplication involving the components of $\hat{B}^{sym}_{t, S_m}$ and $\hat H^{sym}_{t, S_m}$.

We also identify $\End_H(B)$ acting on $\Lambda^k TM^n$ with its ambient extension,  denoted by $\widetilde \End_H(B)$, acting on the space of $\Lambda^k (\R^d)$-valued functions on $M^n$ as follows: 
\begin{equation}
\widetilde\End_H(B)_p(\om) \coloneqq i_*\End_H(B)_p(\Pi_p \om). \label{eq:tendhb}
\end{equation} 
Here, $\Pi_p$ is the shorthand notation for $\Lambda^k \Pi_p$.

\begin{theorem}[\bf Consistency and continuity of $\widehat{\Ww}_{t, S_m}$]\label{thm:empiendb} 
	Let $M^n\subset\mathbb R^d$ be a closed $C^3$-smooth submanifold
	of dimension $n\ge2$, and let $\mu$ be the uniform distribution on
	$M^n$. Assume that $(t,m)$ satisfies \eqref{eq:optimal_scaling}, i.e.,
	$t=m^{-1/(2n)}$. Then, for sufficiently small $t$, with
	$\mu^m$-probability at least $1-3m^{-2/n}$ over the choice of
	$S_m\in(M^n)^m$, the estimator $\widehat{\Ww}$ satisfies:
	\begin{equation}
		\sup_{p \in M^n} \| (\widehat{\mathcal{W}}_{t, S_m})_p - \widetilde\End_H(B)_p \|_{\mathrm{op}} \le C_7 \sqrt{t}, \label{eq:curvest_sup}
	\end{equation}
	where $C_7$ depends only on the geometry of $M^n$ and on $k$. Furthermore,   on the same high-probability event,   the maps $\hat\Pi_{t, S_m}$ and  $\widehat{\mathcal{W}}_{t, S_m}: M^n \to \End (\Lambda ^k \R^d)$  are continuous.  
\end{theorem}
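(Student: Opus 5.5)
We will work throughout on the high-probability event of Proposition~\ref{prop:probempib}, whose probability is at least $1-3m^{-2/n}$. On this event two bounds hold simultaneously:
\[
\sup_p\|\hat\Pi_p-\Pi_p\|_{\mathrm{op}}\le Ct,
\qquad
\sup_p\|\widetilde{(\hat B^{sym}_{t,S_m})}_p-\tilde B_p\|_{\mathrm{op}}\le C_B\sqrt t,
\]
and both $p\mapsto\hat\Pi_p$ and $p\mapsto\widetilde{(\hat B^{sym}_{t,S_m})}_p$ are continuous. The key step is to rewrite $\widehat{\mathcal W}_{t,S_m}$ and $\widetilde\End_H(B)$ in basis-free ambient form, so that the comparison no longer involves eigenvectors. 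Such eigenvectors are neither canonically defined nor continuous in $p$.

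\textbf{Basis-free rewriting.} Let $\{\hat e_i\}$ be an orthonormal basis of $\hat T_pM^n$. Then
\[
(\hat H^{sym}_{t,S_m})_p=\sum_{i=1}^d \widetilde{(\hat B^{sym}_{t,S_m})}_p(\epsilon_i,\epsilon_i)
\]
for any orthonormal basis $\{\epsilon_i\}$ of $\mathbb R^d$. The reason is that $\widetilde{(\hat B^{sym})}_p$ vanishes whenever one argument lies in $\ker\hat\Pi_p$: the second argument is composed with $\hat\Pi_p$ in \eqref{eq:Bempiext}, and symmetrization covers the first. Here we use the restriction statement of Lemma~\ref{lem:Bextend}. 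In the same way, \eqref{eq:wtsm} equals
\[
\sum_{a,b=1}^d\big\langle \hat H_p,\widetilde{(\hat B^{sym})}_p(\epsilon_a,\epsilon_b)\big\rangle\,\hat\Pi_p\epsilon_b\wedge i_{(\hat\Pi_p\epsilon_a)^*}\Lambda^k\hat\Pi_p\omega .
\]
The analogous ambient formula holds for $\widetilde\End_H(B)_p$, with $\tilde B_p$, $\tilde H_p=\sum_a\tilde B_p(\epsilon_a,\epsilon_a)$ and $\Pi_p$ in place of the hatted objects. This is a consequence of \eqref{eq:endh}, \eqref{eq:muladjoint} and \eqref{eq:tendhb}. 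I expect this step to be the main obstacle. One must check carefully that the index placement of $\hat e_l\wedge i_{\hat e_j^*}$ in \eqref{eq:wtsm} matches the adjoint convention \eqref{eq:muladjoint} applied to \eqref{eq:endh}. One must also check that the contraction is independent of the chosen basis, by invariance of the trace $\sum_a \epsilon_a\otimes\epsilon_a=\mathrm{Id}$. Once both points are in place, the estimator is visibly a fixed polynomial expression in the entries of $\hat\Pi_p$ and $\widetilde{(\hat B^{sym})}_p$.

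\textbf{Consistency.} Both operators have the form $\mathcal P(\hat\Pi_p,\widetilde{\hat B}{}^{sym}_p)$ and $\mathcal P(\Pi_p,\tilde B_p)$ for one multilinear map $\mathcal P$. This map is quadratic in the $B$-slot, with $\hat H$ itself linear in $B$, and of bounded degree, depending on $k$, in the projector slot. We telescope the difference, replacing one factor at a time. Each single-factor difference is bounded by $Ct$ for projector factors, using $\|\Lambda^k A-\Lambda^k A'\|\le k\|A-A'\|$ as in Corollary~\ref{cor:empitr_clean}. It is bounded by $C_B\sqrt t$ for $B$-factors. The remaining factors are bounded uniformly: $\|\Pi\|=\|\hat\Pi\|=1$, $\|\tilde B_p\|\le 1/\tau_{M^n}$ by \eqref{eq:nsw}, and $\|\widetilde{\hat B}{}^{sym}_p\|\le 1/\tau_{M^n}+C_B\sqrt t$. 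The norm of $\hat H_p-\tilde H_p$ is at most $d\,C_B\sqrt t$. Summing the telescoped terms gives \eqref{eq:curvest_sup}, with $C_7$ depending on $n,d,k,\tau_{M^n}$ and $C_B$. Since $t\le\sqrt t$ for small $t$, the $O(t)$ contributions are absorbed into $C_7\sqrt t$.

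\textbf{Continuity.} Continuity of $\widehat{\mathcal W}_{t,S_m}$ follows directly from the basis-free formula. It is a polynomial in $\hat\Pi_p$ and $\widetilde{(\hat B^{sym}_{t,S_m})}_p$, and both are continuous on the event by Proposition~\ref{prop:probempib}. This avoids any need to choose the eigenvectors $\hat e_i(p)$ continuously.
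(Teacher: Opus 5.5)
Your proposal is correct and follows essentially the paper's route: rewrite both operators as one basis-independent multilinear contraction of $(H_p,\tilde B_p,\Pi_p)$, respectively of the hatted triple, then expand the difference factor by factor using the $O(\sqrt t)$ and $O(t)$ bounds on the common good event, and read off continuity from the same contraction formula.

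One justification should be corrected. The value $\widetilde{(\hat B^{sym}_{t,S_m})}_p(u,v)$ need not vanish for $u\in\ker\hat\Pi_p$: in \eqref{eq:Bempiext} the first slot is paired with the unprojected vector $x_j-p$, and symmetrization only halves that term. Your ambient trace identity for $(\hat H^{sym}_{t,S_m})_p$ still holds, because in an adapted basis the only extra terms are the diagonal values $\widetilde{(\hat B_{t,S_m})}_p(\nu,\nu)$ with $\nu\in\ker\hat\Pi_p$, and these vanish through the second slot. Your formula for $\widehat{\mathcal W}_{t,S_m}$ needs no such vanishing at all, since the projections $\hat\Pi_p\epsilon_a$ appear explicitly.
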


\begin{proof}
	Note that the operator $\End_H(B)_p$ at $p \in M^n$ is defined via the contraction of the mean curvature $H$ and the second fundamental form $B$. 
	Namely, for any orthonormal basis $\{e_i\}_{i=1}^n$ of the range of $\Pi_p$: 
	\begin{equation}
	\widetilde	\End_H(B)_p = \sum_{j, l=1}^n \langle H_p, B_p(e_j, e_l) \rangle e_l \wedge   i_{ e_j^*}\Pi_p.\label{eq:tendhbc}
	\end{equation}
	
	This sum is a canonical tensor contraction and is invariant under an orthogonal change of basis $\{e_i\}_{i=1}^n \to \{e_i'\}_{i=1}^n$. Consequently, the operator is a smooth function of the triplet $(\Pi_p, B_p, H_p)$ viewed as operators on the ambient space $\mathbb{R}^d$. Specifically, we can write:
	\begin{equation}
		\widetilde\End_H(B)_p(\omega) = \mathcal{C} \big( H_p \otimes \tilde B_p \otimes \Pi_p \otimes \omega \big),
	\end{equation}
	where $\mathcal{C}$ is a multilinear map representing the internal contractions and exterior/interior products. Clearly, we also have
	$$ (\widehat{\Ww}_{t, S_m})_p(\omega) = \mathcal{C} \big( (\hat{H}^{sym}_{t, S_m})_p \otimes \widetilde{(\hat{B}^{sym}_{t, S_m})_p} \otimes \hat{\Pi}_p \otimes \omega \big). $$
	
	The continuity assertion follows from
	Proposition~\ref{prop:probempib}: continuity of
	$p\mapsto\widetilde{(\hat B^{\mathrm{sym}}_{t,S_m})}_p$ implies
	continuity of $p\mapsto(\hat H^{\mathrm{sym}}_{t,S_m})_p$; together
	with the continuity of $p\mapsto\hat\Pi_p$ from
	Proposition~\ref{prop:empitangent_clean}, the contraction formula above
	shows that $p\mapsto\widehat{\mathcal W}_{t,S_m}(p)$ is continuous.
	
	By our previous results, if $(t, m)$ satisfy the scaling condition \eqref{eq:optimal_scaling}, then:
	\begin{itemize}
		\item $\sup_{p} \| \hat{\Pi}_p - \Pi_p \|_{\text{op}} \le Ct$ with probability at least $1 - m^{-2/n}$ (from Proposition \ref{prop:empitangent_clean}).
		\item $\sup_{p} \| \widetilde{(\hat{B}^{sym}_{t, S_m})_p} - \tilde B_p \|_{\text{op}} \le C_4\sqrt{t}$ with probability at least $1 - 3m^{-2/n}$ (from Proposition \ref{prop:probempib}).
	\end{itemize}
		
	Using the ambient extensions, for any ambient orthonormal basis
	$\{e_i\}_{i=1}^d$, we have
	\[
	(\hat H^{sym}_{t, S_m})_p
	=
	\sum_{i=1}^d \widetilde{(\hat{B}^{sym}_{t, S_m})_p}(e_i,e_i),
	\qquad
	H_p
	=
	\sum_{i=1}^d \tilde B_p(e_i,e_i).
	\]
	Hence
	\[
	\|(\hat H^{sym}_{t,S_m})_p-H_p\|
	\le
	d\,
	\|\widetilde{(\hat B^{sym}_{t,S_m})_p}-\tilde B_p\|_{\mathrm{op}}.
	\]
	Therefore, by Proposition~\ref{prop:probempib},
	\[
	\sup_{p\in M^n}\|(\hat H^{sym}_{t, S_m})_p-H_p\|
	\le C\sqrt t 
	\]
	with probability  at least $1- 3 m^{-2/n}$.

	Let $\Delta \Pi_p = \hat{\Pi}_p - \Pi_p$, $\Delta B_p = \widetilde{(\hat{B}^{sym}_{t, S_m})_p} - \tilde B_p$, and $\Delta H_p = (\hat H^{sym}_{t, S_m})_p- H_p$. Because $M^n$ is compact and smooth, the operators $H, B$, and $\Pi$ are uniformly bounded in norm by a constant $K(M^n)$.
	
	By multilinearity and the uniform boundedness of all operators involved, the difference $\widehat{\Ww}_p - \widetilde\End_H(B)_p$ expands into a finite sum of terms, each containing at least one factor among $\Delta H, \Delta B, \Delta \Pi$. Hence:
	\begin{align}
		\|\widehat{\Ww}_p - \widetilde\End_H(B)_p\|_{\mathrm{op}} &\le \| \Cc\big ((\hat{H}^{sym}_{t, S_m})_p, \widetilde{(\hat{B}^{sym}_{t, S_m})_p}, \hat{\Pi}_p\big) - \Cc(H, \tilde B, \Pi)_p \|_{\mathrm{op}} \nonumber \\
		&\le \| \Cc(\Delta H, \tilde B, \Pi)_p \|_{\mathrm{op}} + \| \Cc(H, \Delta B, \Pi)_p \|_{\mathrm{op}} \nonumber \\
		&\quad + \| \Cc(H, \tilde B, \Delta \Pi)_p \|_{\mathrm{op}} + O(\|\Delta\|^2).
	\end{align}
	where, for brevity,
	\[
	\|\Delta\|
	\coloneqq
	\|\Delta H\|+\|\Delta B\|_{\mathrm{op}}+\|\Delta\Pi\|_{\mathrm{op}}.
	\]
	Each term on the RHS is bounded by the product of the norms of its constituents. Since 
	\[
	\|\Delta H\|=O(\sqrt t),\qquad
	\|\Delta B\|_{\mathrm{op}}=O(\sqrt t),\qquad
	\|\Delta\Pi\|_{\mathrm{op}}=O(t).
	\]
	all quadratic terms are $o(\sqrt{t})$. Substituting the uniform bounds into the linear terms yields:
	\begin{equation}
		\|\widehat{\Ww}_p - \widetilde\End_H(B)_p\|_{\mathrm{op}} \le C_8 \|\Delta H\| + C_9 \|\Delta B\|_{\mathrm{op}} + K_3 \|\Delta \Pi\|_{\mathrm{op}}.
	\end{equation}
	Substituting the rates $O(\sqrt{t})$, $O(\sqrt{t})$, and $O(t)$ respectively, we obtain:
	\begin{equation}
		\sup_{p \in M^n} \| \widehat{\Ww}_p - \widetilde\End_H(B)_p \|_{\mathrm{op}} \le C_7\sqrt{t}
	\end{equation}
	with probability at least $1 - 3m^{-\frac{2}{n}}$ for $t$ sufficiently small. This completes the proof of the  first assertion of  Theorem \ref{thm:empiendb}.
\end{proof}

\subsection{Recovering the Riemannian curvature tensor and Weitzenb\"ock potentials}\label{subs:weitz}
Using the Gauss equation \eqref{eq:gaussflat}, we define for each $t \in \R_+$, $p \in M^n$, and $S_m \in (M^n)^m$ the empirical Riemannian curvature tensor
$$ \widetilde{(\hat R_{t, S_m})_p} : \R^d \times \R^d \times \R^d \times \R^d \to \R $$ 
as follows:
\begin{align}\label{eq:empirc}
	\widetilde{(\hat R_{t, S_m})_p} (X, Y, Z, W) \coloneqq  &   \la \widetilde{(\hat{B}^{sym}_{t, S_m})_p} (X,W), \widetilde{(\hat{B}^{sym}_{t, S_m})_p} (Y, Z)\ra\nonumber\\
	& -\la \widetilde{(\hat{B}^{sym}_{t, S_m})_p}(X, Z), \widetilde{(\hat{B}^{sym}_{t, S_m})_p} (Y, W) \ra ,
\end{align}
where $\la \cdot , \cdot \ra$ is the ambient Euclidean metric.

We also extend the Riemannian curvature $R_p: T_p M^n \times T_p M^n \times T_p M^n \times T_p M^n \to \R$ to an operator
$$ \tilde R_p : \R^d \times \R^d \times \R^d \times \R^d \to \R$$ 
by 
\[
\tilde R_p(X,Y,Z,W)
\coloneqq
R_p(\Pi_pX,\Pi_pY,\Pi_pZ,\Pi_pW).
\]

Then the Gauss equation \eqref{eq:gaussflat} also holds for $\tilde R$, i.e., 
\begin{equation*}
	\tilde R_p (X, Y, Z, W) =  \la \tilde B_p (X,W), \tilde B_p (Y, Z)\ra - \la \tilde B_p (X, Z), \tilde B_p (Y, W) \ra.
\end{equation*}

Recalling the degree-preserving Weitzenb\"ock potential acting on $\Lambda^\ast TM^n$ \eqref{eq:weitzenboeck}, and the partial trace operator \eqref{eq:weitzenboecko}:
\begin{align*}
	&\mathcal{R}\omega = \sum_{i,j = 1}  e_i \wedge i_{e_j^*} \left( \sum_{p,l} R(e_i, e_j, e_p, e_l) e_l \wedge i_{e_p^*}\omega \right),\\
	&\Rr^{(1)}\omega 
	=
	\sum_{j,l,s}
	R(e_j,e_s,e_j,e_l)\,e_s\wedge i_{e_l^\ast}\omega.\nonumber
\end{align*}
For $0\le k\le n$, we denote their restriction to $\Lambda^kTM^n$ by
\[
\Rr_k
\coloneqq
\Rr\big|_{\Lambda^kTM^n}, \qquad  \Rr_k^{(1)}
\coloneqq
\Rr^{(1)}\big|_{\Lambda^kTM^n}.
\]
When the degree is clear from the context, we simply write $\Rr$ and $\Rr^{(1)}$, respectively.

For $p \in M^n$ we extend the Weitzenb\"ock potential and the partial trace operator $\Rr_p, \Rr^{(1)}_p \in \End (\Lambda^* T_p M^n)$ to operators 
$\widetilde {\Rr}_p ,  \widetilde {\Rr}^{(1)}_p \in \End (\Lambda^k \R^d)$ as follows (cf. \eqref{eq:tendhb}):
\begin{align}	
	\widetilde {\Rr}_p (\om) \coloneqq i_* (\Rr_p) (\Pi_p \om),\qquad 	\widetilde {\Rr}^{(1)}_p (\om) \coloneqq i_* (\Rr^{(1)}_p) (\Pi_p \om) \label{eq:twrk}
\end{align}
where, as before, abusing notation, $\Pi_p$ denotes the extension of the projection $\Pi_p$ to $\Lambda^* \R^d$. 

We define the empirical Weitzenb\"ock potential and the partial trace operator $(\widehat {\Rr}_{t, S_m})_p, (\widehat {\Rr}^{(1)}_{t, S_m})_p \in \End (\Lambda^* \R^d)$ by (cf. \eqref{eq:wtsm}):
\begin{align}
	&	(\widehat {\Rr}_{t, S_m})_p (\om) \coloneqq \sum_{a, b = 1}^n \hat e_a\wedge i_{\hat e_b^*} \Big (\sum_{c,d = 1}^n \widetilde{(\hat R_{t, S_m})_p} (\hat e_a, \hat e_b, \hat e_c, \hat e_d)\hat e_d \wedge i_{\hat e_c^*} (\om)\Big),\label{eq:ewf}\\	
	&	(\widehat {\Rr}^{(1)}_{t, S_m})_p (\om) \coloneqq \sum_{a, b, c =1}^n 	\widetilde{(\hat R_{t, S_m})_p} (\hat e_a, \hat e_b, \hat e_a, \hat e_c)\hat e_b \wedge i_{\hat e_c^*} (\om).\label{eq:ewp}
\end{align}
where $\{\hat e_a\}_{a=1}^n$ is an orthonormal basis of the empirical tangent space $(\hat \Pi _{t, S_m} )_p$ and $\{(\hat e_a)^*\}$ denote the dual covectors with respect to the ambient Euclidean metric.

\begin{remark}\label{rem:wind} 
	The definitions in \eqref{eq:ewf}, \eqref{eq:ewp} are independent of the chosen empirical orthonormal basis. A compact way to make this transparent is to introduce a basis-independent ambient contraction.
	Let $S$ be a four-linear form on $\R^d$, and let $P$ be an orthogonal projection on $\R^d$. Fix an ambient orthonormal basis $\{E_\alpha\}_{\alpha=1}^d$ and define
	\begin{align}	
		\mathfrak B_k(S,P)(\omega)
		\coloneqq
		&\sum_{\alpha,\beta=1}^d PE_\alpha \wedge i_{(PE_\beta)^*}
		\Big (	\sum_{\gamma, \delta =1}^d S(PE_\alpha,PE_\beta,PE_\gamma,PE_\delta)\nonumber\\
		&\quad\cdot
		PE_\delta\wedge
		i_{(PE_\gamma)^*}
		\bigl((\Lambda^kP)\omega\bigr)\Big)
		\label{eq:contractionf}\\		
		\mathfrak C_k(S,P)(\omega)
		\coloneqq
		&\sum_{\alpha,\beta,\gamma=1}^d
		S(PE_\alpha,PE_\beta,PE_\alpha,PE_\gamma)
		\nonumber\\
		&\quad\cdot
		PE_\beta\wedge
		i_{(PE_\gamma)^*}
		\bigl((\Lambda^kP)\omega\bigr).
		\label{eq:contraction}
	\end{align}
	The right-hand sides of \eqref{eq:contractionf} and \eqref{eq:contraction} are independent of the ambient orthonormal basis because every repeated index represents contraction with the Euclidean metric. If $\{e_a\}_{a=1}^n$ is an orthonormal basis of $\operatorname{range}P$, then \eqref{eq:contraction} is equal to
	\[
	\sum_{a,b,c=1}^n
	S(e_a,e_b,e_a,e_c)
	e_b\wedge i_{e_c^*}\bigl((\Lambda^kP)\omega\bigr).
	\]
	Consequently, setting $	\widetilde{\Rr} ^{(1)}_{k,q}\coloneqq \widetilde \Rr_q ^{(1)} \big | _{\Lambda ^k \R^d}$, we have
	\[
	\widetilde{\Rr}^{(1)}_{k,q}
	=\mathfrak C_k(\widetilde R_q,\Pi_q)
	\]
	and, setting $\widehat{\Rr}^{(1)}_{k,t,S_m}(q)\coloneqq (\widehat {\Rr}^{(1)}_{t, S_m})_q \big | _{\Lambda ^k \R^d}$, we have
	\[
	\widehat{\Rr}^{(1)}_{k,t,S_m}(q)
	=\mathfrak C_k\left(
	\widetilde{(\widehat R_{t,S_m})_q},\widehat\Pi_q
	\right).
	\]
	Similarly, 
	\[
	\widehat{\Rr}_{k,t,S_m}(q)
	=\mathfrak B_k\left(
	\widetilde{(\widehat R_{t,S_m})_q},\widehat\Pi_q
	\right).
	\]
\end{remark}

\begin{theorem}[\bf Consistency of $(\hat R_{t, S_m})_p$]\label{thm:Rcurv} 
	Let $M^n$ be a closed $C^3$-smooth submanifold in $\R^d$ of dimension $n \ge 2$, and let $t = m ^{- \frac{1}{2n}}$. 
	
	1) Then, for all sufficiently large $m$, with probability at least $1 - 3m^{-2/n}$ over the choice of i.i.d. $S_m\sim \mu^m$, the estimator $\hat R_{t, S_m}$ satisfies:
	\begin{equation}
		\sup_{p \in M^n} \| \widetilde{(\hat R_{t, S_m})_p} - \tilde R_p \|_{\mathrm{op}} \le C \sqrt{t}, \label{eq:curvest_supr}
	\end{equation}
	where $C$ depends only on the geometry of $M^n$. 
	
	2) Consequently there exists a constant $C'$ such that
	\begin{align}
		\sup_{p \in M^n} \| (\widehat {\Rr}_{t, S_m})_p- \widetilde {\Rr}_p \|_{\mathrm{op}} \le C' \sqrt{t} \label{eq:weitz_supr}\\
		\sup_{p \in M^n} \| (\widehat {\Rr}^{(1)}_{t, S_m})_p- \widetilde {\Rr}^{(1)}_p \|_{\mathrm{op}} \le C' \sqrt{t} \label{eq:weitz_supro}
	\end{align}
	where $C'$ depends only on the geometry of $M^n$ and $k$.
\end{theorem}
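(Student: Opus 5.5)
The plan is to work throughout on the high-probability event of Proposition~\ref{prop:probempib}, which has probability at least $1-3m^{-2/n}$. On this event we have
\[
\sup_p\|\widetilde{(\hat B^{sym}_{t,S_m})}_p-\tilde B_p\|_{\mathrm{op}}\le C_B\sqrt t,
\qquad
\sup_p\|\hat\Pi_p-\Pi_p\|_{\mathrm{op}}\le Ct,
\]
the latter coming from Proposition~\ref{prop:empitangent_clean}, since that event is contained in the $B$-event. Both assertions then reduce to perturbation bounds for multilinear expressions with uniformly bounded inputs. No new probabilistic input is needed, so the failure probability stays $3m^{-2/n}$.

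For part 1), set $\Delta B_p\coloneqq\widetilde{(\hat B^{sym}_{t,S_m})}_p-\tilde B_p$. By \eqref{eq:empirc} and the extended Gauss equation for $\tilde R_p$, the difference $\widetilde{(\hat R_{t,S_m})_p}-\tilde R_p$ is a difference of two bilinear expressions in the quadratic map $B\mapsto\langle B(X,W),B(Y,Z)\rangle-\langle B(X,Z),B(Y,W)\rangle$. Expanding $\hat B=\tilde B+\Delta B$ gives cross terms $\langle\Delta B(X,W),\tilde B(Y,Z)\rangle$, the symmetric counterparts, and quadratic terms in $\Delta B$. We bound these using $\|\tilde B_p\|_{\mathrm{op}}\le\|B_p\|_{\mathrm{op}}\le\tau_{M^n}^{-1}$ from \eqref{eq:nsw}, which gives
\[
\|\widetilde{(\hat R_{t,S_m})_p}-\tilde R_p\|_{\mathrm{op}}\le 4\|\Delta B_p\|\,\|\tilde B_p\|+2\|\Delta B_p\|^2\le C\sqrt t
\]
uniformly in $p$ for $t$ small.

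For part 2), we use the basis-free contractions of Remark~\ref{rem:wind}:
\[
\widehat{\Rr}_{k}=\mathfrak B_k(\hat R,\hat\Pi),\quad \widetilde{\Rr}_k=\mathfrak B_k(\tilde R,\Pi),\quad
\widehat{\Rr}^{(1)}_{k}=\mathfrak C_k(\hat R,\hat\Pi),\quad \widetilde{\Rr}^{(1)}_k=\mathfrak C_k(\tilde R,\Pi).
\]
For $\tilde{\Rr}_k$ one still has to check that it equals $\mathfrak B_k(\tilde R_p,\Pi_p)$, which follows exactly as for $\mathfrak C_k$ by choosing the ambient basis adapted to $T_pM^n$. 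The maps $\mathfrak B_k$ and $\mathfrak C_k$ are polynomial: linear in $S$, and of bounded degree in $P$ (at most $8$ copies of $P$ plus $\Lambda^kP$). Hence on the bounded set $\{\|S\|\le K,\ \|P\|\le1\}$ they are Lipschitz, with constants depending only on $d,k,K$. The constant $K$ is uniform because $\|\tilde R_p\|\le2\tau^{-2}$ and $\|\hat R\|\le\|\tilde R\|+C\sqrt t$. We also use $\|\Lambda^k\hat\Pi-\Lambda^k\Pi\|\le k\|\hat\Pi-\Pi\|$, as in the proof of Corollary~\ref{cor:empitr_clean}. Telescoping then gives
\[
\|\mathfrak B_k(\hat R,\hat\Pi)-\mathfrak B_k(\tilde R,\Pi)\|\le L\big(\|\hat R-\tilde R\|+\|\hat\Pi-\Pi\|\big)\le L(C\sqrt t+Ct)\le C'\sqrt t,
\]
and the same bound holds for $\mathfrak C_k$.

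The main obstacle is the basis issue in \eqref{eq:ewf} and \eqref{eq:ewp}: these are written with an empirical basis $\hat e_a$ that cannot be compared directly with $e_a$. Remark~\ref{rem:wind} removes the difficulty by expressing everything through projections, which are uniformly close. Beyond that, one only has to track that the operator norms of wedge and interior products by unit vectors are bounded by constants depending on $k$, so that the Lipschitz constant $L$ is uniform in $p$.
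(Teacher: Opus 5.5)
Your proposal is correct and is essentially the paper's proof. Part 1 subtracts the two Gauss formulas and uses the uniform bounds on $\tilde B_p$ and $\widetilde{(\hat B^{sym}_{t,S_m})}_p$ on the event of Proposition~\ref{prop:probempib}, and part 2 uses the Lipschitz bound on bounded sets for the polynomial contractions $\mathfrak B_k,\mathfrak C_k$ of Remark~\ref{rem:wind}, all on the same event of probability at least $1-3m^{-2/n}$.

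The one input you (like the paper) take on trust is the identification $\widehat{\Rr}_{k,t,S_m}(p)=\mathfrak B_k\bigl(\widetilde{(\hat R_{t,S_m})_p},\hat\Pi_p\bigr)$ and its $\mathfrak C_k$ analogue. This holds only after $\omega$ is replaced by $(\Lambda^k\hat\Pi_p)\omega$, a factor missing from \eqref{eq:ewf}--\eqref{eq:ewp} as written: for $k\ge2$ and $\omega=\hat e_1\wedge\nu$ with $\nu\perp\hat T_pM^n$, \eqref{eq:ewp} returns the generally nonzero Ricci-type term $\sum_{a,b}\widetilde{(\hat R_{t,S_m})_p}(\hat e_a,\hat e_b,\hat e_a,\hat e_1)\,\hat e_b\wedge\nu$, while $\widetilde{\Rr}^{(1)}_p\omega=O(t)$, so part 2 must be read with that projection inserted.
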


\begin{proof}
	1) Set
	\[
	D_q
	\coloneqq
	(\widetilde{\widehat B^{\mathrm{sym}}_{t,S_m}})_q
	-\widetilde B_q.
	\]
	For unit vectors $X,Y,Z,W\in\R^d$, subtraction of the two Gauss formulas gives
	\begin{align*}
		&\left|
		\widetilde{(\widehat R_{t,S_m})_q}(X,Y,Z,W)
		-\widetilde R_q(X,Y,Z,W)
		\right|\\
		&\le
		\left|\la D_q(X,W),
		(\widetilde{\widehat B^{\mathrm{sym}}_{t,S_m}})_q(Y,Z)\ra\right|
		+
		\left|\la\widetilde B_q(X,W),D_q(Y,Z)\ra\right|\\
		&\quad+
		\left|\la D_q(X,Z),
		(\widetilde{\widehat B^{\mathrm{sym}}_{t,S_m}})_q(Y,W)\ra\right|
		+
		\left|\la\widetilde B_q(X,Z),D_q(Y,W)\ra\right|.
	\end{align*}
	Since $\widetilde B$ is uniformly bounded and
	$\widetilde{(\widehat B^{\mathrm{sym}}_{t,S_m})}$ is uniformly bounded on the good event, this yields
	\[
	\sup_{q\in M^n}
	\left\|
	\widetilde{(\widehat R_{t,S_m})_q}-\widetilde R_q
	\right\|_{\mathrm{op}}
	\le C\sqrt t.
	\]
	
	2) The second assertion follows from the first one using the same argument in the proof of Theorem \ref{thm:empiendb}.  
	The maps $\mathfrak B_k$ and $\mathfrak C_k$ in \eqref{eq:contraction} are polynomial and multilinear in the coefficients of $S$ and $P$. Hence, on uniformly bounded sets, there is a constant $C_{d,k}$ such that
	
	\[
	\|\mathfrak B_k(S,P)-\mathfrak B_k(T,Q)\|_{\mathrm{op}}
	\le
	C_{d,k}
	\left(
	\|S-T\|_{\mathrm{op}}+\|P-Q\|_{\mathrm{op}}
	\right),
	\]
	\[
	\|\mathfrak C_k(S,P)-\mathfrak C_k(T,Q)\|_{\mathrm{op}}
	\le
	C_{d,k}
	\left(
	\|S-T\|_{\mathrm{op}}+\|P-Q\|_{\mathrm{op}}
	\right).
	\]
	Apply this with
	\[
	S=\widetilde{(\widehat R_{t,S_m})_q},
	\quad
	T=\widetilde R_q,
	\quad
	P=\widehat\Pi_q,
	\quad
	Q=\Pi_q.
	\]
	Then
	\begin{align*}
		&\sup_{q\in M^n}
		\left\|
		\widehat{\mathcal R}_{k,t,S_m}(q)
		-\widetilde{\mathcal R}_{k,q}
		\right\|_{\mathrm{op}}\\
		&\qquad\le
		C_{d,k}
		\left(
		\sup_{q\in M^n}
		\left\|
		\widetilde{(\widehat R_{t,S_m})_q}-\widetilde R_q
		\right\|_{\mathrm{op}}
		+
		\sup_{q\in M^n}\|\widehat\Pi_q-\Pi_q\|_{\mathrm{op}}
		\right)\\
		&\qquad\le C_{d,k}(\sqrt t+t)
		\le C_k\sqrt t.
	\end{align*}
	This estimate holds on the same event of probability at least
	$1-3m^{-2/n}$. This proves \eqref{eq:weitz_supr}. In the same way we obtain \eqref{eq:weitz_supro}.
	
\end{proof}

\subsection{Empirical Hodge Laplacians and their convergence}

We identify $\Lambda^k (\R^d)^*$ with $\Lambda^k \R^d$ via the Euclidean metric.
Identifying $\om \in \Lambda^k T_p M^n$ with its image $i_* \om \in \Lambda^k \R^d$, we extend the Laplacian operator $\Delta^k : \Gamma (\Lambda^k TM^n) \to \Gamma (\Lambda^k TM^n)$ to an operator, denoted by $\widetilde \Delta^k$, acting on smooth $\Lambda^k \R^d$-valued functions on $M^n$ as follows:
\begin{equation}
	\widetilde \Delta^k \om \coloneqq i_*\Delta^k \Pi (\om), \label{eq:deltaext}
\end{equation}
where $\Pi(\omega)(p)\coloneqq\Pi_p\omega(p)$ and $i_*$ is the
map induced by the inclusion $i$, as in \eqref{eq:inclusion}.

Denote by $\Meas(\Xx, \Yy)$ the space of measurable mappings from a measurable space $\Xx$ to a measurable space $\Yy$. For $t \in \R_{+}$, $S_m \in (M^n)^m$, and $\om \in C^4 (M^n, \Lambda^k \R^d)$, we define the empirical Hodge Laplacian
\begin{align}
	(\hat{\Delta}_{t, S_m}) &: C(M^n, \Lambda^k \R^d) \to \Meas(M^n, \Lambda^k\R^d),\nonumber\\
	(\hat{\Delta}_{t, S_m})_p \om &\coloneqq (\hat{\Lb}_{t, S_m})_p \om - (\widehat{\Ww}_{t, S_m})_p \om -  (\widehat{\Rr}_{t, S_m})_p \om -  (\widehat{\Rr}^{(1)}_{t, S_m})_p \om, 
	\label{eq:hodgeempi}
\end{align}
where the empirical diffusion operator incorporates the volume scaling and the distance cutoff:
\begin{align}
(\hat{\Lb}_{t,S_m})_p\omega
&\coloneqq
\frac{\vol_g(M^n)}{mt}
\sum_{j=1}^m\Phi_t(p,x_j)\chi_\delta(p,x_j)\nonumber\\
&\qquad\times
\Bigl(
(\hat\Pi_{t,S_m})_p\omega(p)
-(\hat\Pi_{t,S_m})_p(\hat\Pi_{t,S_m})_{x_j}\omega(x_j)
\Bigr).
\label{eq:lbnew}
\end{align}
and $\Phi_t$ is defined in \eqref{eq:phit}.

Next, we extend the operator $\Delta^k_t$ defined in \eqref{eq:hodgeambient} to an operator,  denoted by $\widetilde\Delta^k_t$, acting on smooth $\Lambda^k \R^d$-valued functions on $M^n$ as follows, cf. \eqref{eq:deltaext}:
\begin{equation}
\widetilde	\Delta^k_t \om \coloneqq i_*\Delta^k_t \Pi (\om). \label{eq:deltakext}
\end{equation}
We also extend the operator $\Lb_t$ to an operator,  denoted by $\widetilde\Lb_t$, acting on the space of $\Lambda^k\R^d$-valued functions on $M^n$ as follows:
$$ \widetilde\Lb_t \omega \coloneqq i_*\Lb_t \Pi \om. $$
Since projections $\Pi_x: \R^d \to T_xM^n$ do not increase the norm, Theorem \ref{thm:analytic_limit} is also valid for the extension $\widetilde\Delta^k_t$.

\begin{theorem}[\bf Consistency and continuity of empirical Hodge Laplacians]\
	\label{thm:empiconvergence}	Let $M^n\subset\mathbb R^d$ be a closed orientable $C^4$-smooth
	submanifold of dimension $n\ge 3$, and let $\mu$ be the uniform
	distribution on $M^n$.  Let
	$\omega\in C^4(M^n,\Lambda^k\mathbb R^d)$, and assume that $(t,m)$
	satisfy \eqref{eq:optimal_scaling}, i.e.,
	\[
	t=m^{-\frac{1}{2n}}.
	\]
	Then, for all sufficiently small $t$, with probability at least
	$1-6m^{-2/n}$ over i.i.d. samples $S_m\sim\mu^m$, we have
	\begin{equation}
		\sup_{p\in M}
		\|\hat\Delta^k_{t,S_m}\omega(p)-\widetilde\Delta^k\omega(p)\|
		\le
		C_{12}\sqrt t\,\|\omega\|_{C^4}.
		\label{eq:hodgesup}
	\end{equation}
	Here $C_{12}$ depends only on the geometry of $M^n$. Furthermore, on
	the same high-probability event,
	\[
	\hat\Delta^k_{t,S_m}:
	C^4(M^n,\Lambda^k\mathbb R^d)
	\to
	C(M^n,\Lambda^k\mathbb R^d)
	\]
	is a continuous linear operator.
\end{theorem}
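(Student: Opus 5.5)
The plan is a bias--variance decomposition built on the definitions \eqref{eq:hodgeempi} and \eqref{eq:deltakext}. Write
\[
\hat\Delta^k_{t,S_m}\omega-\widetilde\Delta^k\omega
=
\bigl(\hat\Delta^k_{t,S_m}\omega-\widetilde\Delta^k_t\omega\bigr)
+
\bigl(\widetilde\Delta^k_t\omega-\widetilde\Delta^k\omega\bigr).
\]
Theorem~\ref{thm:analytic_limit}, in its extended version, bounds the second bracket by $C t\|\omega\|_{C^4}$ uniformly in $p$. The first bracket splits into a diffusion part $\hat\Lb_{t,S_m}\omega-\widetilde\Lb_t\omega$ and three zeroth-order parts:
\[
(\widehat\Ww_{t,S_m})_p-\widetilde\End_H(B)_p,\qquad
(\widehat\Rr_{t,S_m})_p-\widetilde\Rr_p,\qquad
(\widehat\Rr^{(1)}_{t,S_m})_p-\widetilde\Rr^{(1)}_p.
\]
These act on $\omega(p)$, so Theorem~\ref{thm:empiendb} and Theorem~\ref{thm:Rcurv} bound them by $C\sqrt t\,\|\omega\|_{C^0}$ on their common good event. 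Those events all sit inside the $3m^{-2/n}$ event of Proposition~\ref{prop:probempib}. A few more Monte Carlo and Lipschitz-transition events will be added for the diffusion term, so the union bound gives the total failure probability $6m^{-2/n}$.

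The diffusion term is the heart of the proof. I split it into three pieces.

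\textbf{(a) Projector replacement at the centre.} Replace $\hat\Pi_p\omega(p)-\hat\Pi_p\hat\Pi_{x_j}\omega(x_j)$ by $\Pi_p\omega(p)-\Pi_p\Pi_{x_j}\omega(x_j)$. Write $\omega(x_j)=\omega(p)+(\omega(x_j)-\omega(p))$. The piece involving $\omega(p)$ produces the operator
\[
(\hat\Pi_p-\hat\Pi_p\hat\Pi_{x_j})-(\Pi_p-\Pi_p\Pi_{x_j})
=
-\bigl[(\hat\Pi_p\hat\Pi_{x_j}-\Pi_p\Pi_{x_j})-(\hat\Pi_p^2-\Pi_p^2)\bigr],
\]
using $\hat\Pi_p^2=\hat\Pi_p$ and $\Pi_p^2=\Pi_p$. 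Its operator norm is at most $C't\|x_j-p\|$ by the Lipschitz transition estimate \eqref{eq:empirical_lipschitz_transition}; this is exactly where $n\ge3$ enters. Lifting to $\Lambda^k$ goes through the functoriality argument of Corollary~\ref{cor:empitr_clean}. The piece involving $\omega(x_j)-\omega(p)$ is bounded by $2kCt\cdot\|\omega\|_{C^1}\|x_j-p\|$ via \eqref{eq:empoj2_clean}. After the prefactor $1/t$, both pieces are controlled by
\[
\|\omega\|_{C^1}\cdot\frac1m\sum_j\Phi_t(p,x_j)\|x_j-p\|\chi_\delta(p,x_j).
\]
Arguing as for $A_p$ in \eqref{eq:Apexp} together with Lemma~\ref{lem:MC_scalar_kernel}, this sum is $O(\sqrt t)$ uniformly in $p$.

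\textbf{(b) Variance of the true-projector sum.} With true projectors in place, compare the empirical average of
\[
G_p(y)=\vol_g(M^n)\,t^{-1}\Phi_t(p,y)\chi_\delta(p,y)\bigl(\Pi_p\omega(p)-\Pi_p\Pi_y\omega(y)\bigr)
\]
with its expectation. By Lemma~\ref{lem:compact-exponential-open-subset}(c), that expectation equals $\widetilde\Lb_t\omega(p)$ up to $O(t^k)$. A first-order Taylor expansion gives $\|G_p\|_\infty\lesssim t^{-n/2-1/2}\|\omega\|_{C^1}$ and a variance of order $t^{-n/2-1}\|\omega\|_{C^1}^2$. A Bernstein bound, made uniform in $p$ through a net on $M^n$ and the Lipschitz dependence of $G_p$ on $p$ (as in Lemma~\ref{lem:MC_kernel_vector}), gives the fluctuation
\[
O\Bigl(\sqrt{\log m/(m\,t^{n/2+1})}\Bigr)\|\omega\|_{C^1}.
\]
With $m=t^{-2n}$ this is $O(t^{3n/4-1/2}\sqrt{\log m})=o(\sqrt t)$ for $n\ge3$.

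\textbf{(c) Continuity.} On the good event, $\hat\Pi$, $\widehat\Ww$, $\widehat\Rr$ and $\widehat\Rr^{(1)}$ are continuous in $p$. The sum \eqref{eq:lbnew} is a finite combination of smooth kernels, these maps, and the values $\omega(p)$, $\omega(x_j)$. Hence $p\mapsto\hat\Delta^k_{t,S_m}\omega(p)$ is continuous, and linearity in $\omega$ is evident. Boundedness from $C^4$ to $C^0$ follows from the estimate itself.

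The main obstacle is step (a). A naive bound $\|\hat\Pi-\Pi\|=O(t)$ multiplied by $t^{-1}\Phi_t$ does not decay, so the cancellation in the $\omega(p)$ term supplied by \eqref{eq:empirical_lipschitz_transition} is essential. I would verify carefully that this transition estimate transfers to $\Lambda^k$ with an extra $\|x_j-p\|$ factor. If it does not transfer cleanly, the fallback is to expand $\Lambda^k(\hat\Pi_p\hat\Pi_y)$ multilinearly and apply the degree-one estimate factor by factor.
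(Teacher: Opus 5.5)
Your proposal is correct and follows the paper's proof essentially step for step. The shared skeleton is:
\begin{itemize}
\item the split into analytic bias and empirical error;
\item the intermediate operator built from the true projectors on the sample;
\item the identical decomposition $G_p(y)=A_p(y)(\omega(p)-\omega(y))+(A_p(p)-A_p(y))\omega(p)$, controlled by Corollary~\ref{cor:empitr_clean}, the Lipschitz transition estimate and Lemma~\ref{lem:MC_scalar_kernel};
\item the Monte Carlo term handled as in Lemma~\ref{lem:densitybounded} (your Bernstein-plus-net variant works equally well);
\item the zeroth-order terms taken from Theorems~\ref{thm:empiendb} and~\ref{thm:Rcurv}.
\end{itemize}

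One small correction concerns boundedness of $\hat\Delta^k_{t,S_m}$ as an operator. You should not read it off from \eqref{eq:hodgesup}, because the Monte Carlo event there depends on $\omega$. Use instead, as the paper does, the deterministic bound $\|\hat\Delta^k_{t,S_m}\omega\|_{C^0}\le C_{t,S_m}\|\omega\|_{C^0}$. For fixed $t$ and $S_m$, it holds on the event where $\hat\Pi$, $\widehat\Ww$, $\widehat\Rr$, $\widehat\Rr^{(1)}$ are continuous.
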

\begin{proof}[Proof of Theorem \ref{thm:empiconvergence}]
	We split the error into an analytical bias and an empirical error:
	\begin{equation}
		\hat{\Delta}_{t,S_m}\omega - \widetilde \Delta^k \omega = \underbrace{\big(\hat{\Delta}_{t,S_m}\omega - \widetilde\Delta_t^k \omega\big)}_{\text{empirical error}} + \underbrace{\big(\widetilde\Delta_t^k \omega - \widetilde\Delta^k \omega\big)}_{\text{analytic bias}}.
		\label{eq:main_split}
	\end{equation}
	
	By Theorem \ref{thm:analytic_limit} (see the remark after \eqref{eq:deltakext}), we have:
	\begin{equation}
		\sup_{p \in M^n} \|\widetilde \Delta_t^k \omega(p) - \widetilde\Delta^k \omega(p)\| \le C_2(M^n) t \|\om\|_{C^4}.
		\label{eq:analytic_bias}
	\end{equation}
	
	To handle the empirical error, we define an intermediate operator using the true geometric projectors evaluated on the empirical sample:
	\begin{equation}
		\tilde{\Lb}_{t, S_m}\omega(p) \coloneqq \frac{\vol_g(M^n)}{m t} \sum_{j=1}^m \Phi_t(p, x_j) \Big( \Pi_p \om(p) -\Pi_{p}\Pi_{x_j} \om(x_j) \Big) \chi_\delta(p, x_j).
		\label{eq:lbtilde}
	\end{equation}			
	We split the error of the diffusion part into a projection error and a Monte Carlo error:
	\begin{align}
		\hat{\Lb}_{t,S_m}\omega - \widetilde\Lb_t \omega &= \underbrace{\Big( \hat{\Lb}_{t,S_m}\omega - \tilde{\Lb}_{t,S_m}\omega \Big)}_{\text{projection error}} + \underbrace{\Big( \tilde{\Lb}_{t,S_m}\omega - \widetilde\Lb_t \omega \Big)}_{\text{Monte Carlo error}}.\label{eq:deltaker}
	\end{align}
	
	\medskip
	\noindent

	\underline{Step 1.} {\it Estimating the projection error.}
		
	Let
	\[
	G_p(y)
	\coloneqq
	(\hat\Pi_p-\Pi_p)\omega(p)
	-
	(\hat\Pi_p\hat\Pi_y-\Pi_p\Pi_y)\omega(y).
	\]
	Then
	\[
	\hat {\Lb}_{t,S_m}\omega(p)-\widetilde\Lb_{t,S_m}\omega(p)
	=
	\frac{\operatorname{vol}(M^n)}{mt}
	\sum_{j=1}^m
	\Phi_t(p,x_j)\chi_\delta(p,x_j)G_p(x_j).
	\]
	Moreover,
	\[
	G_p(p)=0,
	\]
	because
	\[
	(\hat\Pi_p-\Pi_p)\omega(p)
	-
	(\hat\Pi_p^{2}-\Pi_p^{2})\omega(p)
	=
	(\hat\Pi_p-\Pi_p)\omega(p)
	-
	(\hat\Pi_p-\Pi_p)\omega(p)
	=
	0.
	\]
	On the event of Corollary~\ref{cor:empitr_clean}, the map
	\[
	y\mapsto \hat\Pi_p\hat\Pi_y-\Pi_p\Pi_y
	\]
	is uniformly \(O(t)\) in operator norm. 
	Let
	\[
	A_p(y)\coloneqq
	\hat\Pi_p\hat\Pi_y-\Pi_p\Pi_y .
	\]
	Then,  $A_p (p) \om (p) = 	(\hat\Pi_p-\Pi_p)\omega(p)$, and 
	\[
	G_p(y)=A_p(p)\omega(p)-A_p(y)\omega(y).
	\]
	Hence
	\[
	G_p(y)
	=
	A_p(y)\big(\omega(p)-\omega(y)\big)
	+
	\big(A_p(p)-A_p(y)\big)\omega(p).
	\]
	By Corollary \ref{cor:empitr_clean}, we have
	\[
	\|A_p(y)\|_{\mathrm{op}}\le Ct
	\]
	uniformly in $p,y$.  
	Under the hypotheses of the final part of
	Proposition \ref{prop:empitangent_clean}, one also has
	
	\begin{align}
	\Big \|		\Big [	\Lambda^k(\widehat\Pi_p\widehat\Pi_y)		-		
		\Lambda^k(\Pi_p\Pi_y)	\Big ]		-		
		\big[		\Lambda^k(\widehat\Pi_p^2)		-		
		\Lambda^k(\Pi_p^2)
		\big ]	\Big \|_{\mathrm{op}}
		\le C_k t\,\|y-p\|_{\R^d}.
		\label{eq:apmean_exterior}
	\end{align}
	
	Indeed, this follows from the multilinearity of the exterior power,
	the estimate \eqref{eq:apmean}, the uniform $O(t)$-projection error,
	and the uniform Lipschitz continuity of $p\mapsto\Pi_p$.

	 By \eqref{eq:apmean_exterior}
	\[
	\|A_p(y)-A_p(p)\|_{\mathrm{op}}
	\le Ct\|y-p\|.
	\]

	Hence
	\[
	\|G_p(y)\|
	\le
	Ct\|\omega(y)-\omega(p)\|
	+
	Ct\|y-p\|\|\omega\|_{C^0}
	\le
	Ct\|y-p\|\|\omega\|_{C^1}.
	\]

	Therefore
	\[
	\|\hat\Lb_{t,S_m}\omega(p)-\widetilde\Lb_{t,S_m}\omega(p)\|
	\le
	Ct\|\omega\|_{C^1}
	\frac1m
	\sum_{j=1}^m
	\Phi_t(p,x_j)
	\frac{\|x_j-p\|}{t}
	\chi_\delta(p,x_j).
	\]
	By Lemma~\ref{lem:MC_scalar_kernel}, the last empirical average is
	bounded uniformly by \(C t^{-1/2}+o(\sqrt t)\). Hence
	\begin{equation}
	\sup_{p\in M}
	\|\hat\Lb_{t,S_m}\omega(p)-\widetilde\Lb_{t,S_m}\omega(p)\|
	\le
	C\sqrt t\,\|\omega\|_{C^1}.  \label{eq:proj_error}
	\end{equation}

	\medskip
	\noindent
	\underline{Step 2.} {\it Estimating the Monte Carlo error}. 
	For each $x\in M^n$ and $t>0$, define the vector-valued function
	\[
	f_{x,t}(y)
	\coloneqq
	\frac1t\Phi_t(x,y)
	\bigl(
	\Pi_x\omega(x)-\Pi_x\Pi_y\omega(y)
	\bigr)
	\chi_\delta(x,y), 	\qquad
	y\in M^n.
	\]
Introduce
\[
\widetilde\Lb _{t, \delta} \om (x) \coloneqq \vol_g (M^n)  \E_\mu [f_{x, t}].
\]
	We have
	\begin{equation}\label{eq:decomptl}
\widetilde \Lb_{t, S_m}  -\widetilde \Lb_t =  (\widetilde \Lb_{t, S_m}- \widetilde  \Lb_{t, \delta} )  + (\widetilde  \Lb_{t, \delta} - \widetilde \Lb_t).
\end{equation}	
By Lemma~\ref{lem:compact-exponential-open-subset}(c), the last term on the right-hand side of \eqref{eq:decomptl} is exponentially small in $t^{-1}$. Now we estimate the first term in the right-hand side of \eqref{eq:decomptl}.		
			\begin{equation}
\widetilde{\Lb}_{t,S_m}\omega(x)-\widetilde{\Lb}_{t,\delta}\omega(x)
=
\vol_g(M^n)\left[
\frac1m\sum_{j=1}^m f_{x,t}(x_j)
-\mathbb E_{y\sim\mu}[f_{x,t}(y)]
\right].
\label{eq:MCerror}
\end{equation}
	By Lemma \ref{lem:densitybounded}, and after absorbing the fixed factor
	$\vol_g(M^n)$ into the constant, there exists
	$C_{11}>0$, depending only on the geometry of $M^n$, such that
	\begin{equation}
		\sup_{x\in M^n}
		\left\|
		\frac1m\sum_{j=1}^m f_{x,t}(x_j)
		-	
		\mathbb E_{y\sim\mu}[f_{x,t}(y)]
		\right\|
		\le
		C_{11}\|\omega\|_{C^1}
		\sqrt{\frac{\log m}{m\,t^{n/2+1}}}
		\label{eq:mc_error1}
	\end{equation}
	with probability at least $1-m^{-2}$. Under the scaling
	$t=m^{-1/(2n)}$, Lemma \ref{lem:densitybounded} also gives
	\[
	\sqrt{\frac{\log m}{m\,t^{n/2+1}}}
	=o(\sqrt t)
	\]
	for every $n\ge2$.

	Combining \eqref{eq:deltaker}, \eqref{eq:proj_error}, \eqref{eq:decomptl}, \eqref{eq:MCerror}, and \eqref{eq:mc_error1}, the total diffusion operator error is bounded by:
	\begin{align}
&\sup_{x\in M^n}
\|\hat\Lb_{t,S_m}\omega(x)-\widetilde\Lb_t\omega(x)\|\nonumber\\
&\qquad\le
C\sqrt t\,\|\omega\|_{C^1}
+C_{11}\|\omega\|_{C^1}
\sqrt{\frac{\log m}{m\,t^{n/2+1}}}
+O(e^{-c/t})\|\omega\|_{C^1}\nonumber\\
&\qquad=O(\sqrt t)\|\omega\|_{C^1}.
\label{eq:L_total}
\end{align}

	\underline{Step 3.} {\it Zeroth-order term and conclusion.}
	
	Recalling \eqref{eq:curvest_sup} and \eqref{eq:weitz_supr}, we have under the scaling $t = m^{-\frac{1}{2n}}$:
	\begin{align}
		\sup_{x\in M^n} \| \widehat{\Ww}_{t,S_m}(x) -\widetilde \End_H(B)(x) \|_{\mathrm{op}} &\le C_7 \sqrt{t}	
		\label{eq:W_total}\\
		\sup_{x \in M^n} \| \widehat{\Rr}_{k, t,S_m}(x) -\widetilde \Rr_k(x) \|_{\mathrm{op}} &\le C_k \sqrt{t} \label{eq:Rktotal}\\
		\sup_{x \in M^n} \| \widehat{\Rr}^{(1)}_{k, t,S_m}(x) -\widetilde \Rr^{(1)}_k(x) \|_{\mathrm{op}} &\le C_k \sqrt{t} \label{eq:Rktotalo}
	\end{align}
	for sufficiently small $t$ with probability at least $1 - 3m^{-\frac{2}{n}}$ over the choice of $S_m$.		
	
	Combining the split \eqref{eq:main_split}, the analytic bias \eqref{eq:analytic_bias} ($O(t)$), and using the decomposition of the empirical error:
	\begin{align*}
		\hat{\Delta}_{t,S_m} - \widetilde \Delta_t^k &= (\hat{\Lb}^k_{t,S_m} -\widetilde {\Lb}^k_{t}) - \big( \widehat{\Ww}_{t,S_m} -\widetilde \End_H(B)\big)\\
		&\quad - \big(\widehat{\Rr}_{k, t,S_m} -\widetilde \Rr_k\big) - \big(\widehat{\Rr}^{(1)}_{k, t,S_m} -\widetilde \Rr^{(1)}_k\big),
	\end{align*}
	taking into account the decoupled diffusion error \eqref{eq:L_total} ($O(\sqrt t)$), and the zeroth-order errors \eqref{eq:W_total}, \eqref{eq:Rktotal}, and \eqref{eq:Rktotalo} ($O(\sqrt{t})$), we obtain:
	\begin{equation}
		\sup_{x\in M^n} \| \hat{\Delta}_{t,S_m}\omega(x) - \widetilde\Delta^k \omega(x) \| \le C_{12} \sqrt{t} \|\om\|_{C^4}
	\end{equation}
	for sufficiently small $t$, with probability at least $1 - 6m^{-\frac{2}{n}}$ over the choice of $S_m$. This holds on the intersection of the following high-probability events:
	\begin{itemize}
		\item the differentiated projection event: failure at most $2 m^{-2/n}$,
		\item the concentration event from Lemma \ref{lem:densitybounded}: failure at most $m^{-2} \le m^{-2/n}$,
		\item the common event for $\hat B, \widehat \Ww, \widehat \Rr, \widehat \Rr^{(1)}$: failure at most $3 m^{-2/n}$.
	\end{itemize}
	This completes the proof of the first assertion of Theorem \ref{thm:empiconvergence}.
	
	\medskip
	
	On the same event, the maps
	\[
	p\mapsto\widehat\Pi_{t,S_m}(p),\qquad
	p\mapsto\widehat{\mathcal W}_{t,S_m}(p),\qquad
	p\mapsto\widehat{\mathcal R}_{k,t,S_m}(p),\qquad
	p\mapsto\widehat{\mathcal R}^{(1)}_{k,t,S_m}(p)
	\]
	are continuous. It follows from the finite-sum definition that
	$\widehat{\Lb}^k_{t,S_m}\omega$, and hence
	$\widehat\Delta^k_{t,S_m}\omega$, is continuous in $p$. Moreover,
	for fixed $t$ and $S_m$,
	\[
	\|\widehat\Delta^k_{t,S_m}\omega\|_{C^0}
	\le
	C_{t,S_m}\|\omega\|_{C^0}
	\le
	C_{t,S_m}\|\omega\|_{C^4}.
	\]
	Thus
	\[
	\widehat\Delta^k_{t,S_m}:
	C^4(M^n,\Lambda^k\mathbb R^d)
	\longrightarrow
	C(M^n,\Lambda^k\mathbb R^d)
	\]
	is a continuous linear operator.  This completes the  proof of Theorem \ref{thm:empiconvergence}.
	\end{proof}

To prove the  convergence theorem \ref{thm:harmonic_cluster}  for empirical harmonic cluster  we need some preparation. 
For each sample point \(x_i\in S_m\), let
\[
P_i^{(k)}
\coloneqq
\Lambda^k\widehat\Pi_{x_i},
\qquad
E_i^{(k)}
\coloneqq
\operatorname{Ran}P_i^{(k)}.
\]
For an arbitrary \(x\in M^n\), set
\[
P_x^{(k)}
\coloneqq
\Lambda^k\widehat\Pi_x,
\qquad
E_x^{(k)}
\coloneqq
\operatorname{Ran}P_x^{(k)}.
\]

Set
\begin{equation}
	\Hh_{m,k}:=\bigoplus_{i=1}^m E_i^{(k)},
	\qquad
	\langle u,v\rangle_{m}
	:=\frac1m\sum_{i=1}^m\langle u(x_i),v(x_i)\rangle.
	\label{eq:discrete-tangent-space}
\end{equation}

For every continuous differential $k$-form $\omega$ on $(M^n,g)$,
identified with a $k$-vector field via the metric $g$, define its
empirical restriction by
\begin{equation}
	R_m^k\omega\in\Hh_{m,k},
	\qquad
	(R_m^k\omega)(x_i)
	\coloneqq
	P_{x_i}^{(k)}\omega(x_i).
	\label{eq:rkm}
\end{equation}

Denote by $\mu_m = \frac{1}{m}\sum_{i=1}^m \delta_{x_i}$ the empirical probability measure associated with the sample $S_m$, and let $\mu$ be the uniform probability measure on $M^n$. To rigorously compare discrete sections on $S_m$ with continuum sections on $M^n$, we utilize the $TL^2$ framework introduced by Garc\'ia Trillos and Slep\v{c}ev \cite{GarciaTrillos2015} (see also \cite{Trillos2020}). 

Let $T_m:M^n\to S_m$ be an optimal transport map realizing the
$\infty$-Wasserstein distance $W_\infty(\mu,\mu_m)$; thus
\[
(T_m)_*\mu=\mu_m,
\qquad
\sup_{x\in M^n}d_M(x,T_m(x))=W_\infty(\mu,\mu_m).
\]
For a sequence of discrete ambient sections $v_m \in L^2(S_m, \Lambda^k \mathbb{R}^d, \mu_m)$ and a continuum section $\omega \in L^2(M^n, \Lambda^k \mathbb{R}^d, \mu)$, we say that $v_m$ converges to $\omega$ in the \emph{transported $L^2(\mu)$-topology} (or strong $TL^2$ sense) if
\begin{equation}
	\| v_m \circ T_m - \omega \|_{L^2(M^n, \Lambda^k \mathbb{R}^d, \mu)} \longrightarrow 0 \quad \text{as } m \to \infty.
\end{equation}
Similarly, $v_m$ converges weakly to $\omega$ in the transported $L^2(\mu)$-topology if $v_m \circ T_m \rightharpoonup \omega$ weakly in $L^2(M^n, \Lambda^k \mathbb{R}^d, \mu)$. In the remainder of this section, we assume that $M^n$ is a connected manifold.	

Following Kuwae and Shioya
\cite{Kuwae2003}*{Section~2.5, Definitions~2.11--2.13},
we use 
the following notion of compact Mosco convergence adapted
to the transported $L^2$-topology. Let
\[
F_m:L^2(S_m,\mu_m)\to(-\infty,+\infty]
\]
and
\[
F:L^2(M^n,\mu)\to(-\infty,+\infty]
\]
be lower semicontinuous quadratic forms. We say that $F_m$ converges
compactly to $F$ in the Mosco sense with respect to the transported
$L^2(\mu)$-topology if the following three conditions hold.

\begin{enumerate}
	\item[\rm (i)] \emph{Liminf inequality.} If $u_m\in L^2(S_m,\mu_m)$
	converges weakly to $u\in L^2(M^n,\mu)$ in the transported
	$L^2(\mu)$-sense, then
	\[
	F(u)\le \liminf_{m\to\infty}F_m(u_m).
	\]	
	\item[\rm (ii)] \emph{Recovery sequence.} For every
	$u\in L^2(M^n,\mu)$, there exists a sequence
	$u_m\in L^2(S_m,\mu_m)$ converging strongly to $u$ in the
	transported $L^2(\mu)$-sense such that
	\[
	F(u)\ge \limsup_{m\to\infty}F_m(u_m).
	\]	
	\item[\rm (iii)] \emph{Compactness.} If a sequence
	$u_m\in L^2(S_m,\mu_m)$ satisfies
	\[
	\sup_m\bigl(\|u_m\|_{L^2(\mu_m)}^2+F_m(u_m)\bigr)<+\infty,
	\]
	then it has a subsequence which converges strongly in the transported
	$L^2(\mu)$-sense to some $u\in L^2(M^n,\mu)$.
\end{enumerate}
Thus, in the terminology of
\cite{Kuwae2003}*{Definition~2.13}, compact convergence consists
of Mosco convergence together with asymptotic compactness.

\begin{remark}[\bf Probabilistic compact Mosco convergence and spectral limits]\
	\label{rem:prob_mosco}
	For each $m$ and each realization
	$S_m\in (M^n)^m$, let $F_m^{S_m}$ denote the corresponding empirical
	quadratic form, and choose a transport map $T_m^{S_m}$ satisfying the
	transport estimate used below.
	
	We say that $F_m$ converges compactly in the Mosco sense to $F$
	\emph{in probability}, in the uniform-good-event sense used in this
	paper, if there exist measurable events
	\[
	\mathcal G_m\subset (M^n)^m,
	\qquad
	\mu^m(\mathcal G_m)\longrightarrow1,
	\]
	such that the following holds: for every deterministic sequence of
	realizations
	\[
	S_m\in\mathcal G_m
	\]
	for all sufficiently large $m$, the corresponding deterministic
	sequence of transported forms $F_m^{S_m}$ converges compactly in the
	Mosco sense to $F$, namely, it satisfies conditions {\rm (i)}, {\rm
		(ii)}, and {\rm (iii)} above.
	
	This definition does not assert that a randomly chosen sequence of
	samples belongs to $\mathcal G_m$ eventually almost surely; no
	Borel--Cantelli argument is required. Rather, it asserts deterministic
	compact Mosco convergence uniformly over all sufficiently large
	realizations belonging to the good events.
	
	The deterministic spectral-convergence results of Kuwae and Shioya
	\cite{Kuwae2003} may therefore be applied to every deterministic
	selection $S_m\in\mathcal G_m$. In particular, for each fixed
	eigenvalue index $j$,
	\[
	\sup_{S_m\in\mathcal G_m}
	\left|
	\lambda_j(F_m^{S_m})-\lambda_j(F)
	\right|
	\longrightarrow0.
	\]
	Indeed, if this were false, one could choose a deterministic sequence
	$S_m\in\mathcal G_m$ along which the convergence fails, contradicting
	the defining property above. Consequently, for every $\varepsilon>0$,
	\[
	\mu^m\left\{
		S_m:
		\left|
		\lambda_j(F_m^{S_m})-\lambda_j(F)
		\right|>\varepsilon
		\right\}
	\le
	\mu^m(\mathcal G_m^c)
	\longrightarrow0.
	\]
	The same argument applies to spectral projections associated with a
	fixed isolated spectral cluster. Thus compact Mosco convergence in the
	above sense implies convergence in probability of the corresponding
	eigenvalues and isolated spectral subspaces.
\end{remark}

\begin{proposition}[\bf Scalar compact convergence for the localized
	Gaussian energy]\
	\label{prop:scalar_gaussian_mosco}
	Let $M^n\subset\mathbb R^d$ be a closed, connected, orientable
	$C^4$-smooth submanifold of dimension $n\ge3$, let $\mu$ be its
	uniform probability measure, let $S_m\sim\mu^m$, and set
	\[
	t=m^{-1/(2n)},
	\qquad
	h_m\coloneqq\sqrt t=m^{-1/(4n)}.
	\]
The scalar empirical energies 
	\[
	\mathcal E_{m,t}(u)
	\coloneqq
	\frac{\vol_g(M^n)}{2m^2t}
	\sum_{i,j=1}^m
	\Phi_t(x_i,x_j)\chi_\delta(x_i,x_j)
	|u(x_i)-u(x_j)|^2
	\]
converge compactly in the Mosco sense, in probability, with respect to the transported
	$L^2(\mu)$-topology, to the functional
	\[
	\mathcal E(u)
	\coloneqq
	\begin{cases}
		\displaystyle\int_{M^n}\|\nabla u\|^2\,d\mu,
		& u\in H^1(M^n,\mu) \\
		+\infty,
		&u\in L^2(M^n,\mu)\setminus H^1(M^n,\mu).
	\end{cases}
	\]
\end{proposition}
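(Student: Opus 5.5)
The plan is to reduce the statement to the deterministic $\Gamma$-/Mosco-convergence theory for graph Dirichlet energies of Garc\'ia Trillos and Slep\v{c}ev \cite{GarciaTrillos2015}, adapted to a manifold and to the Gaussian-times-cutoff kernel.

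\textbf{Normalization.} First I would check that the normalization is right. For smooth $u$, the expansion used in the proof of Theorem~\ref{thm:pwconvergencel} (normal coordinates, \eqref{eq:gray}, Remark~\ref{rem:extint}) gives
\[
\frac{\vol_g(M^n)}{2t}\int_{M^n}\Phi_t(x,y)\chi_\delta(x,y)|u(y)-u(x)|^2\,d\mu(y)=|\nabla u(x)|^2+O(\sqrt t).
\]
This uses the Gaussian second moment $(4\pi)^{-n/2}\int u_1^2e^{-|u|^2/4}du=2$. So the constant in front of $\mathcal E$ is $1$, and no surface-tension factor appears.

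\textbf{Good events.} Next I would fix the good events $\mathcal G_m$. On $\mathcal G_m$:
\begin{itemize}
\item $W_\infty(\mu,\mu_m)\le C(\log m)^{1/n}m^{-1/n}$; this holds with probability $\to1$ by the known $\infty$-transport estimates on compact manifolds, valid for $n\ge3$.
\item The uniform concentration bounds of Lemma~\ref{lem:MC_scalar_kernel} and Lemma~\ref{lem:densitybounded} hold.
\end{itemize}
The key quantitative fact is $\varepsilon_m\coloneqq W_\infty/h_m\to0$, since $h_m=m^{-1/(4n)}$ is much larger than $m^{-1/n}(\log m)^{1/n}$. By Remark~\ref{rem:prob_mosco}, it then suffices to prove (i)--(iii) for an arbitrary deterministic sequence $S_m\in\mathcal G_m$.

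\textbf{(ii) Recovery sequence.} For $u\in C^2(M^n)$, take $u_m=u|_{S_m}$. Then $u_m\circ T_m\to u$ uniformly, since $|u(T_m x)-u(x)|\le \|u\|_{C^1}W_\infty$. The energy $\mathcal E_{m,t}(u_m)$ is a V-statistic. Its diagonal terms vanish. Its off-diagonal part concentrates around its mean, the displayed nonlocal integral, by the same Bernstein/uniform Monte Carlo estimates (Lemma~\ref{lem:densitybounded}). The integrand is bounded by $Ct^{-n/2}\|u\|_{C^1}^2$, and the variance is $O(1/(mt^{n/2}))\to0$. Hence $\limsup\mathcal E_{m,t}(u_m)\le\mathcal E(u)$. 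For general $u\in H^1$, I would use density of $C^2$ in $H^1$ and a diagonal argument, which is standard since the $\Gamma$-limsup is lower semicontinuous.

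\textbf{(i) Liminf and (iii) compactness.} These are the main obstacle. Let $\tilde u_m=u_m\circ T_m$. Because $d_M(x,T_mx)\le W_\infty=\varepsilon_mh_m$, I would bound the kernel from below. For any $R>0$, consider the truncated radial profile $\eta_R(s)=e^{-s^2/4}\mathbf 1_{s\le R}$. Since $e^{-s^2/4}$ is nonincreasing, and using Lemma~\ref{lem:intrextr} to pass between $\|\cdot\|_{\R^d}$ and $d_M$ at scale $h_m\ll\delta$ (so $\chi_\delta=1$), I obtain
\[
\Phi_t(T_mx,T_my)\chi_\delta\ \ge\ (1-o(1))\,(4\pi t)^{-n/2}\,\eta_R\bigl(d_M(x,y)/(h_m(1-2\varepsilon_m))\bigr).
\]
Hence $\mathcal E_{m,t}(u_m)$ dominates, up to a factor $1+o(1)$, a nonlocal continuum energy of $\tilde u_m$ with kernel $\eta_R$ at bandwidth $h_m(1-2\varepsilon_m)$. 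I would then proceed in three steps:
\begin{enumerate}
\item Apply the nonlocal-to-local results of Bourgain--Brezis--Mironescu/Ponce type, transplanted to $M^n$ via normal charts with the volume expansion \eqref{eq:gray}, as in \cite{GarciaTrillos2015}. Together with the standard mollification argument, this gives compactness of $\tilde u_m$ in $L^2(\mu)$ (Kolmogorov--Riesz), which is (iii).
\item The same results give the liminf inequality with constant $\sigma_R\int|\nabla u|^2$ in place of $\int|\nabla u|^2$.
\item Let $R\to\infty$, so that $\sigma_R\uparrow1$. This yields (i).
\end{enumerate}
Weak transported convergence in (i) can be upgraded to strong convergence whenever $\liminf\mathcal E_{m,t}(u_m)<\infty$, using the compactness just proved. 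Otherwise (i) is trivial.

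The delicate points I expect to spend the most effort on are the uniformity of these comparisons over $S_m\in\mathcal G_m$ and the extrinsic/intrinsic distance correction. These are handled by $\varepsilon_m\to0$ and the $O(d_M^4)$ bound of Lemma~\ref{lem:intrextr}, which is negligible relative to $h_m^2$ on the effective support $d_M\le Rh_m$.
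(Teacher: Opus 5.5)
Your proposal is correct. For the recovery step it is essentially the paper's argument, but for the liminf and compactness conditions it takes a different route.

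\textbf{Liminf and compactness.} The paper never forms a continuum nonlocal functional. It truncates the Gaussian with a smooth cutoff $\vartheta_R$ and rescales it to a Lipschitz kernel $\overline\eta_R$ supported in the unit ball. It then invokes the discrete-to-continuum interpolation operators $I_{m,R}$ of \cite{Trillos2020} (Lemma~14(ii), eq.~(4.7)), which satisfy
$\|I_{m,R}u_m-u_m\circ T_m\|_{L^2(\mu)}^2\le C_Rh_m^2\,\mathcal E^{(R)}_{m,t}(u_m)$ and
$\frac{\sigma_R}{2}\int\|\nabla I_{m,R}u_m\|^2\,d\mu\le(1+o(1))\,\mathcal E^{(R)}_{m,t}(u_m)$.
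The liminf then follows from weak lower semicontinuity of the Dirichlet integral, compactness follows from Rellich, and $R\to\infty$ is taken at the end with $\sigma_R\to2$.

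You instead push the discrete energy forward exactly through $T_m$. You bound $\Phi_t(T_mx,T_my)\chi_\delta$ from below by a truncated kernel in $d_M(x,y)$, and then appeal to Bourgain--Brezis--Mironescu/Ponce nonlocal-to-local compactness and $\Gamma$-liminf on $M^n$. This is the original Garc\'ia Trillos--Slep\v{c}ev $TL^2$ strategy.

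The trade-offs are as follows. Your route is purely qualitative and requires transplanting the nonlocal theory to a curved compact manifold through charts. In exchange, it tolerates a hard truncation $\mathbf 1_{s\le R}$ and avoids any interpolation machinery. The paper's route needs the Lipschitz-kernel hypotheses of \cite{Trillos2020}. In exchange, it comes with quantitative estimates and reduces everything to standard $H^1$ facts.

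A small simplification is available in your lower bound. With your notation $\varepsilon_m=W_\infty/h_m$, neither the bandwidth reduction $h_m(1-2\varepsilon_m)$ nor Lemma~\ref{lem:intrextr} is needed. Chord $\le$ arc gives $\|T_mx-T_my\|\le d_M(x,y)+2W_\infty$. Together with positivity of the untruncated Gaussian, this yields the factor $e^{-\varepsilon_mR-\varepsilon_m^2}\to1$ directly.

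\textbf{Recovery.} The paper uses the identity $\mathcal E_{m,t}(u|_{S_m})=\langle\widehat\Delta^0_{t,S_m}u,u\rangle_m$, Theorem~\ref{thm:empiconvergence} (hence $C^4$ test functions), and the law of large numbers. Your direct V-statistic variance bound, or equivalently Lemma~\ref{lem:densitybounded} applied after polarization, achieves the same thing for $u\in C^2$.

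In both cases the universal good events must be built diagonally over a countable dense core. The paper carries this out explicitly; you only sketch it, so you should include it.
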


\begin{proof}  By the infinity-transport estimate
	\cite{Trillos2020}*{Theorem 2,   Eq. (1.14)}, on an event
	of probability tending to  $1$ \footnote{More precisely, \cite{Trillos2020}*{Theorem~2} shows that,
		for every $\beta>1$, the transport estimate holds with probability
		at least $1-Cm^{-\beta}$, where the constants depend on the geometric
		data and on $\beta$. Combining this estimate with the quantitative
		probability bound in Theorem~\ref{thm:empiconvergence} yields a
		corresponding quantitative probability statement in
		Proposition~\ref{prop:scalar_gaussian_mosco}. Since $n\ge3$, one may
		choose $\beta>1>2/n$, so the transport failure probability is of
		smaller order than $m^{-2/n}$.},
	 there exists a transport map
	\(T_m:M^n\to S_m\) such that
	\begin{equation}
	\varepsilon_m
	\coloneqq
	\sup_{x\in M^n}d_{M^n}(x,T_m(x))
	\le
	C\frac{(\log m)^{1/n}}{m^{1/n}}. \label{eq:trillos2020}
\end{equation}

	Since \(h_m=\sqrt t=m^{-1/(4n)}\), it follows from
	\eqref{eq:trillos2020} that
	\[
	\frac{\varepsilon_m}{h_m}
	\le
	C(\log m)^{1/n}m^{-3/(4n)}
	\longrightarrow0.
	\]
	
	Thus the transport displacement is negligible compared with the
	interaction scale $\sqrt t$ of the Gaussian kernel.
	
	The transport and interpolation estimates below hold on measurable
	events $\mathcal G_m^{\mathrm{tr}}$ with
	$\mu^m(\mathcal G_m^{\mathrm{tr}})\to1$. The liminf and compactness
	arguments are deterministic for every sequence of realizations lying in
	these events eventually. Universal good events for the recovery
	condition are constructed at the end of the recovery argument.
	
	Write
	\[
	\eta(z)
	\coloneqq
	(4\pi)^{-n/2}e^{-|z|^2/4},
	\qquad
	\Phi_t(x,y)
	=	
	h_m^{-n}
	\eta\left(\frac{x-y}{h_m}\right).
	\]
	For each $R>0$, choose a nonincreasing cutoff
	$\vartheta_R:[0,\infty)\to[0,1]$ such that
	\[
	\vartheta_R=1\quad\text{on }[0,R],
	\qquad
	\vartheta_R=0\quad\text{on }[R+1,\infty),
	\]
	and set
	\begin{equation}
	\eta_R(z)
	\coloneqq
	\eta(z)\vartheta_R(|z|).\label{eq:etar}
	\end{equation}
	Let
	\begin{equation}
	\sigma_R
	\coloneqq
	\int_{\mathbb R^n}z_1^2\eta_R(z)\,dz.\label{eq:surface_tension}
	\end{equation}
	Then $0\le\eta_R\le\eta$, and dominated convergence gives
	\[
	\sigma_R\longrightarrow
	\int_{\mathbb R^n}z_1^2\eta(z)\,dz
	=2.
	\]
	
	For fixed $R$ and sufficiently large $m$,
	\[
	(R+1)h_m<\frac{\delta}{2}.
	\]
	Consequently, $\chi_\delta(x_i,x_j)=1$ whenever
	$\eta_R((x_i-x_j)/h_m)\ne0$. Define
	\[
	\mathcal E_{m,t}^{(R)}(u)
	\coloneqq
	\frac{\vol_g(M^n)}
	{2m^2h_m^{n+2}}
	\sum_{i,j=1}^m
	\eta_R\left(\frac{x_i-x_j}{h_m}\right)
	|u(x_i)-u(x_j)|^2.
	\]
	Since the integrands are nonnegative and $\eta_R\le\eta$,
	\[
	\mathcal E_{m,t}(u)
	\ge
	\mathcal E_{m,t}^{(R)}(u).
	\]
	 For each fixed $R\ge1$,  recalling \eqref{eq:etar}, set
	\[
	a_R\coloneqq\int_{\mathbb R^n}\eta_R(z)\,dz,
	\qquad
	H_{m,R}\coloneqq(R+1)h_m,
	\]
	and define
	\[
	\overline\eta_R(z)
	\coloneqq
	\frac{(R+1)^n}{a_R}\,\eta_R\bigl((R+1)z\bigr).
	\]
	Then $\overline\eta_R$ is a nonnegative, radial, decreasing,
	Lipschitz kernel supported in the unit ball and satisfying
	\[
	\int_{\mathbb R^n}\overline\eta_R(z)\,dz=1.
	\]
	Moreover,
	\[
	h_m^{-n}\eta_R\left(\frac{x-y}{h_m}\right)
	=	
	a_RH_{m,R}^{-n}
	\overline\eta_R\left(\frac{x-y}{H_{m,R}}\right),
	\]
	and,  recalling \eqref{eq:surface_tension}, we have
	\[
	\sigma_{\overline\eta_R}
	=	
	\frac{\sigma_R}{a_R(R+1)^2}.
	\]

	Since
	\[
	\frac{\varepsilon_m}{H_{m,R}}
	=	
	\frac{\varepsilon_m}{(R+1)h_m}
	\longrightarrow0,
	\]
	the hypotheses of the interpolation estimates in
	\cite{Trillos2020}*{Lemma~14(ii) and equation~(4.7)} apply to the
	kernel $\bar\eta_R$ with bandwidth $H_{m,R}$. Hence there exist
	interpolation operators
	\[
	I_{m,R}:L^2(S_m,\mu_m)\longrightarrow H^1(M^n,\mu)
	\]
	such that
	\[
	\left\|
	I_{m,R}u_m-u_m\circ T_m
	\right\|_{L^2(\mu)}^2
	\le
	C_RH_{m,R}^2\,\mathcal G_{m,R}(u_m),
	\]
	and
	\[
	\frac{\sigma_{\overline\eta_R}}{2}
	\int_{M^n}\|\nabla I_{m,R}u_m\|^2\,d\mu
	\le
	(1+o(1))\mathcal G_{m,R}(u_m),
	\]
	where $\mathcal G_{m,R}$ denotes the graph energy formed with the
	normalized kernel $\bar\eta_R$ at scale $H_{m,R}$.
	
	By the above rescaling,
	\[
	\mathcal G_{m,R}(u_m)
	=		\frac{1}{a_R(R+1)^2}\,
	\mathcal E_{m,t}^{(R)}(u_m).
	\]
	Multiplying the gradient estimate by $a_R(R+1)^2$, and absorbing the
	factor $H_{m,R}^2=(R+1)^2h_m^2$ into the $R$-dependent constant in
	the $L^2$-estimate, we obtain
	\begin{equation}
		\left\|
		I_{m,R}u_m-u_m\circ T_m
		\right\|_{L^2(\mu)}^2
		\le
		C_Rh_m^2\mathcal E_{m,t}^{(R)}(u_m),
		\label{eq:scalar-interpolation-L2}
	\end{equation}
	and
	\begin{equation}
		\frac{\sigma_R}{2}
		\int_{M^n}\|\nabla I_{m,R}u_m\|^2\,d\mu
		\le
		(1+o(1))\mathcal E_{m,t}^{(R)}(u_m).
		\label{eq:scalar-interpolation-energy}
	\end{equation}
	Here the $o(1)$-term is taken as $m\to\infty$ for fixed $R$.
	Since $R$ is fixed and $h_m\to0$, the support of the truncated
	kernel lies inside the region where the spatial cutoff
	$\chi_\delta$ is identically one for all sufficiently large $m$.

	\smallskip
	\noindent
	{\it Mosco liminf inequality.}
	Suppose that
	\[
	u_m\circ T_m\rightharpoonup u
	\quad\text{weakly in }L^2(M^n,\mu).
	\]
	We may assume that
	\[
	\liminf_{m\to\infty}\mathcal E_{m,t}(u_m)<\infty
	\]
	and pass to a subsequence realizing this liminf. For every fixed
	$R>0$, the sequence
	$\mathcal E_{m,t}^{(R)}(u_m)$ is bounded. It follows from \eqref{eq:scalar-interpolation-L2}, the boundedness of
	$\mathcal E_{m,t}^{(R)}(u_m)$, and $h_m\to0$ that
	\[
	I_{m,R}u_m-u_m\circ T_m
	\longrightarrow0
	\quad\text{strongly in }L^2(\mu).
	\]
	Since
	\[
	u_m\circ T_m\rightharpoonup u
	\quad\text{weakly in }L^2(\mu),
	\]
	the transported functions are uniformly bounded in $L^2(\mu)$.
	The preceding strong convergence therefore implies that
	${I_{m,R}u_m}$ is also uniformly bounded in $L^2(\mu)$, and,
	moreover,
	\[
	I_{m,R}u_m\rightharpoonup u
	\quad\text{weakly in }L^2(\mu).
	\]
	Together with \eqref{eq:scalar-interpolation-energy}, this shows that
	${I_{m,R}u_m}$ is bounded in $H^1(M^n)$.
	 Weak lower
	semicontinuity therefore gives
	\[
	\frac{\sigma_R}{2}
	\int_{M^n}\|\nabla u\|^2\,d\mu
	\le
	\liminf_{m\to\infty}
	\mathcal E_{m,t}^{(R)}(u_m)
	\le
	\liminf_{m\to\infty}
	\mathcal E_{m,t}(u_m).
	\]
	Letting $R\to\infty$ and using $\sigma_R\to2$, we obtain
	\[
	\mathcal E(u)
	\le
	\liminf_{m\to\infty}\mathcal E_{m,t}(u_m).
	\]
	
	\smallskip
	\noindent
	{\it Recovery sequence.}
	Let $u\in C^\infty(M^n)$, and define
	\[
	u_m(x_i)\coloneqq u(x_i).
	\]
	Since $u$ is uniformly continuous and $\varepsilon_m\to0$,
	\[
	u_m\circ T_m\longrightarrow u
	\quad\text{strongly in }L^2(M^n,\mu).
	\]
	Moreover, in degree $k=0$, the empirical Hodge operator is precisely
	the scalar localized Gaussian diffusion operator since all  operators  $(\widehat\Ww _{t, S_m})_p$, $(\widehat \Rr_{t, S_m})_p$,  $(\widehat \Rr^{(1)}_{t, S_m})_p$ vanish for $k =0$. 
	Therefore,
	\begin{equation}
	\mathcal E_{m,t}(u_m)
	=
	\left\langle
	\widehat\Delta^0_{t,S_m}u,u
	\right\rangle_m.\label{eq:energy0}
	\end{equation}
	Indeed, writing $u_i=u(x_i)$ and using the symmetry
	\[
	\Phi_t(x_i,x_j)\chi_\delta(x_i,x_j)
	=
	\Phi_t(x_j,x_i)\chi_\delta(x_j,x_i),
	\]
	we obtain
	\[
	\begin{aligned}
		\left\langle
		\widehat\Delta^0_{t,S_m}u,u
		\right\rangle_m
		&=
		\frac{\vol_g(M^n)}{m^2t}
		\sum_{i,j=1}^m
		\Phi_t(x_i,x_j)\chi_\delta(x_i,x_j)
		(u_i-u_j)u_i\\
		&=
		\frac{\vol_g(M^n)}{2m^2t}
		\sum_{i,j=1}^m
		\Phi_t(x_i,x_j)\chi_\delta(x_i,x_j)
		|u_i-u_j|^2\\
		&=
		\mathcal E_{m,t}(u|_{S_m}).
	\end{aligned}
	\]
	
	By \eqref{eq:energy0}
	\[
	\mathcal E_{m,t}(u_m)
	=  \langle
	\widehat\Delta^0_{t,S_m}u,u
	\rangle_m = 	\frac{1}{m}\sum_{ i =1} ^m  u(x_i) (\widehat\Delta^0_{t,S_m}u)(x_i).
	\]
Hence
	\[
	\begin{aligned}
		&\left|
		\mathcal E_{m,t}(u_m)
		-	
		\int_{M^n}u\,\Delta_gu\,d\mu
		\right|\\
		&\quad\le
		\|u\|_{C^0(M^n)}
		\left\|
		\widehat\Delta^0_{t,S_m}u-\Delta_gu
		\right\|_{C^0(S_m)}\\
		&\qquad+
		\left|
		\frac1m\sum_{i=1}^m
		u(x_i)\Delta_gu(x_i)
		-
		\int_{M^n}u\,\Delta_gu\,d\mu
		\right|.
	\end{aligned}
	\]
	The first term converges to zero in probability by
	Theorem~\ref{thm:empiconvergence}, and the second converges to zero
	almost surely by the law of large numbers. Therefore,
	\[
	\mathcal E_{m,t}(u_m)
	\longrightarrow
	\int_{M^n}u\,\Delta_gu\,d\mu
	\]
	in probability. Since $M^n$ is closed and
	$\Delta_g=-\operatorname{div}\nabla$, integration by parts gives
	\[
	\int_{M^n}u\,\Delta_gu\,d\mu
	=	
	\int_{M^n}\|\nabla u\|^2\,d\mu.
	\]
	Consequently,
	\[
	\mathcal E_{m,t}(u_m)
	\longrightarrow
	\int_{M^n}\|\nabla u\|^2\,d\mu
	\]
	in probability.
	
	To obtain the uniform-good-event formulation of
	Remark~\ref{rem:prob_mosco}, fix a countable set
	\[
	\mathcal D=\{u^\ell:\ell\ge1\}\subset C^\infty(M^n)
	\]
	which is dense in $H^1(M^n,\mu)$. For each $L\ge1$, the transport
	estimate and the preceding empirical-operator and law-of-large-numbers
	estimates for the finitely many functions $u^1,\ldots,u^L$ yield an
	integer $M_L$ and measurable events $\mathcal G_{m,L}$ such that, for
	all $m\ge M_L$,
	\[
	\mu^m(\mathcal G_{m,L})\ge1-L^{-1},
	\]
	and all corresponding recovery errors are at most $L^{-1}$. Choose
	$M_L$ increasing, set
	\[
	N_m\coloneqq\max\{L:M_L\le m\},
	\qquad
	\mathcal G_m\coloneqq\mathcal G_{m,N_m},
	\]
	and enlarge $M_L$, if necessary, so that the deterministic transport
	errors for the first $L$ core functions are also at most $L^{-1}$.
	Then $N_m\to\infty$ and $\mu^m(\mathcal G_m)\to1$. 
	
	Fix $u\in H^1(M^n,\mu)$. Choose $u^{\ell_q}\in\mathcal D$ such that
	\[
	\|u^{\ell_q}-u\|_{H^1}\le q^{-1}.
	\]
	For each $q$, choose $m_q$ sufficiently large that $m_q<m_{q+1}$,
	$N_m\ge\ell_q$ for $m\ge m_q$, and, for every realization
	$S_m\in\mathcal G_m$ and every $m\ge m_q$,
	\[
	\bigl|(R_m u^{\ell_q})\circ T_m-u^{\ell_q}\bigr|_{L^2}
	\le q^{-1},
	\qquad
	\mathcal E_{m,t}(R_m u^{\ell_q})
	\le
	\mathcal E(u^{\ell_q})+q^{-1}.
	\]
	Set
	\[
	q(m)\coloneqq\max{q:m_q\le m},
	\qquad
	v_m\coloneqq R_m u^{\ell_{q(m)}}.
	\]
	Then $q(m)\to\infty$,
	\[
	v_m\circ T_m\longrightarrow u
	\quad\text{strongly in }L^2(M^n,\mu),
	\]
	and, since $u^{\ell_q}\to u$ in $H^1(M^n,\mu)$,
	\[
	\limsup_{m\to\infty}\mathcal E_{m,t}(v_m)
	\le
	\lim_{q\to\infty}\mathcal E(u^{\ell_q})
	=
	\mathcal E(u).
	\]
	Thus ${v_m}$ is a recovery sequence for $u$.

 If
	$u\in L^2(M^n,\mu)\setminus H^1(M^n,\mu)$, choose any strongly
	transported approximation of $u$ by restrictions of smooth functions;
	the recovery inequality is automatic because $\mathcal E(u)=+\infty$.
	
	\smallskip
	\noindent
	{\it Asymptotic compactness.}
	Suppose that
	\[
	\sup_m
	\left(
	\|u_m\|_{L^2(\mu_m)}^2+
	\mathcal E_{m,t}(u_m)
	\right)<\infty.
	\]
	Fix $R_0>0$ with $\sigma_{R_0}>0$. Since
	\[
	\mathcal E_{m,t}^{(R_0)}(u_m)
	\le
	\mathcal E_{m,t}(u_m),
	\]
	\eqref{eq:scalar-interpolation-energy} shows that
	${I_{m,R_0}u_m}$ is bounded in $H^1(M^n)$.
	By the Rellich compactness theorem, it has a subsequence converging
	strongly in $L^2(M^n,\mu)$. In view of
	\eqref{eq:scalar-interpolation-L2}, the corresponding transported
	functions $u_m\circ T_m$ converge strongly to the same limit.
	Thus ${u_m}$ is precompact in the transported
	$L^2(\mu)$-topology.
\end{proof}

For $t, C  > 0$  and $v \in \Hh_{m, k}$  we set
\[	
\mathcal{Q}_{m,t}(v) \coloneqq \langle \widehat{\Delta}^k_{t,S_m}v, v \rangle_m, \qquad 
\mathcal{Q}_{m,t} ^C(v) \coloneqq  \mathcal{Q}_{m,t}(v) + C\|v\|_m^2.
\]
Recall that we identify  $TM^n$ with $T^* M^n$ via the Riemannian metric $g$.
\begin{lemma}[\bf Mosco convergence of the empirical Hodge forms]
	\label{lem:Hodge_Mosco}\
	Let $M^n\subset\mathbb R^d$ be a closed connected orientable
	$C^4$-smooth submanifold of dimension $n\ge3$, let $0\le k\le n$,
	and set $t=m^{-1/(2n)}$. Then the transported empirical quadratic
	forms on the bundle space $\Hh_{m,k}$:
	\[
	\mathcal{Q}_{m,t}(v) 
	\]
	converge in probability, in the Mosco sense, to the continuous Hodge energy 
	\begin{equation}
		\mathcal{Q}_k(\omega) \coloneqq \langle\Delta^k\omega, \omega\rangle_{L^2(\mu)} = \int_{M^n} \|\nabla \omega\|^2 \, d\mu - \int_{M^n} \langle \mathcal{R}_k \omega, \omega \rangle \, d\mu \label{eq:qk}
	\end{equation}
	on $L^2(\Lambda^k TM^n,\mu)$. The form domain of $\mathcal Q_k$ is
	\[
	\operatorname{Dom}(\mathcal Q_k)
	=
	H^1(\Lambda^kTM^n,\mu).
	\]
	There exists $C>0$ such that every sequence
	$\{v_m\}\subset\Hh_{m,k}$ satisfying
	\[
	\sup_m
	\left(
	\|v_m\|_m^2+
	\Qq^{C}_{m,t}(v_m)
	\right)<\infty
	\]
	is precompact in the transported $L^2(\mu)$-topology.
\end{lemma}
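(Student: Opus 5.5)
The plan is to reduce everything to the scalar result, Proposition~\ref{prop:scalar_gaussian_mosco}, applied componentwise in $\Lambda^k\mathbb R^d$, plus a uniformly bounded zeroth-order perturbation.

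\textbf{Step 1: the discrete diffusion energy is a vector graph Dirichlet energy.} For $v\in\Hh_{m,k}$ write $v_i=v(x_i)\in E_i^{(k)}$, so that $P_i^{(k)}v_i=v_i$. By \eqref{eq:lbnew},
\[
\langle\widehat\Lb_{t,S_m}v,v\rangle_m=\frac{\vol_g(M^n)}{m^2t}\sum_{i,j}\Phi_t(x_i,x_j)\chi_\delta(x_i,x_j)\,\langle v_i-P_i^{(k)}P_j^{(k)}v_j,v_i\rangle .
\]
Since $P_i^{(k)}$ is self-adjoint and fixes $v_i$, and $P_j^{(k)}$ fixes $v_j$, we get
\[
\langle P_i^{(k)}P_j^{(k)}v_j,v_i\rangle=\langle v_j,v_i\rangle .
\]
Symmetrizing in $(i,j)$, as in \eqref{eq:energy0}, then gives
\[
\langle\widehat\Lb_{t,S_m}v,v\rangle_m=\sum_\alpha\mathcal E_{m,t}(v^\alpha),
\]
where $v^\alpha$ are the components of $v$ in a fixed orthonormal basis of $\Lambda^k\mathbb R^d$. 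The remaining terms of \eqref{eq:hodgeempi}, namely $\widehat\Ww$, $\widehat\Rr$ and $\widehat\Rr^{(1)}$, are bounded in operator norm on the good event of Theorems~\ref{thm:empiendb} and \ref{thm:Rcurv}, because they converge uniformly to bounded tensors. Hence
\[
\Big|\Qq_{m,t}(v)-\sum_\alpha\mathcal E_{m,t}(v^\alpha)\Big|\le C\|v\|_m^2 .
\]
Choosing this $C$ in $\Qq^C_{m,t}$, a bound on $\|v_m\|_m^2+\Qq^C_{m,t}(v_m)$ bounds every $\mathcal E_{m,t}(v_m^\alpha)$. The compactness part of Proposition~\ref{prop:scalar_gaussian_mosco} then yields a strongly transported convergent subsequence, which proves the precompactness claim.

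\textbf{Step 2: liminf inequality.} Let $v_m\circ T_m\rightharpoonup\omega$ with $\liminf\Qq_{m,t}(v_m)<\infty$. Pass to a subsequence realizing the liminf. Weak convergence bounds $\|v_m\|_m$, so Step 1 bounds the energies and the convergence is in fact strong.

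First, the limit is tangent. Since $v_m(T_m x)\in\operatorname{Ran}\Lambda^k\widehat\Pi_{T_m x}$, and since $\|\widehat\Pi-\Pi\|_{\mathrm{op}}\le Ct$ with $d_M(x,T_mx)\le\varepsilon_m\to0$, we have $\|(I-\Lambda^k\Pi)(v_m\circ T_m)\|_{L^2}\to0$. Hence $\Pi\omega=\omega$.

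Second, the scalar liminf applied to each component gives
\[
\liminf\sum_\alpha\mathcal E_{m,t}(v_m^\alpha)\ge\int_{M^n}\sum_j\|\partial_j\omega\|^2\,d\mu .
\]
By \eqref{eq:taylor1k}, $\partial_j\omega=\nabla_j\omega+B(e_j,\omega)$, and the two summands are orthogonal (tangential versus mixed-normal). The right-hand side therefore equals $\int\|\nabla\omega\|^2+\int\sum_j\|B(e_j,\omega)\|^2$.

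Third, strong transported convergence, the uniform convergence of the zeroth-order estimators, and continuity in $p$ (to absorb the displacement $x\mapsto T_mx$) give convergence of the zeroth-order terms to $\int\langle(\End_H(B)+\Rr^{(1)}_k+\Rr_k)\omega,\omega\rangle$.

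It remains to verify the pointwise algebraic identity
\[
\sum_j\|B(e_j,\omega)\|^2=\langle\mathbf Q\omega,\omega\rangle=\langle(\End_H(B)+\Rr^{(1)}_k)\omega,\omega\rangle .
\]
This comes from \eqref{eq:totaldrift1} and Lemma~\ref{lem:secondterm}, using $\langle A_{B(e_j,e_l)}e_j,e_p\rangle=\langle B(e_j,e_p),B(e_j,e_l)\rangle$ and the derivation structure of \eqref{eq:Bk}. With it, the liminf is exactly $\int\|\nabla\omega\|^2-\int\langle\Rr_k\omega,\omega\rangle=\Qq_k(\omega)$, which is \eqref{eq:qk} by the Weitzenb\"ock formula. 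The same computation identifies $\operatorname{Dom}(\Qq_k)=H^1$.

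\textbf{Step 3: recovery sequence.} For smooth $\omega$, take $v_m=R_m^k\omega$. Then $v_m\circ T_m\to\omega$ strongly, using $\|\widehat\Pi-\Pi\|\le Ct$ and uniform continuity. Next,
\[
\Qq_{m,t}(v_m)=\frac1m\sum_i\langle\widehat\Delta^k_{t,S_m}\omega(x_i),P_i^{(k)}\omega(x_i)\rangle .
\]
In the empirical operator \eqref{eq:lbnew} one has $\widehat\Pi_{x_j}\omega(x_j)=P_j^{(k)}\omega(x_j)$, so evaluating on $v_m$ agrees with evaluating on $\omega$. Theorem~\ref{thm:empiconvergence} and the law of large numbers then give convergence in probability to $\langle\Delta^k\omega,\omega\rangle_{L^2(\mu)}$. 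For general $\omega\in H^1$ we use the same countable dense set and diagonal good-event construction as at the end of the proof of Proposition~\ref{prop:scalar_gaussian_mosco}; for $\omega\notin H^1$ the inequality is trivial.

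The main obstacle is Step 2. It requires the precise bookkeeping that the ambient energy $\sum_j\|\partial_j\omega\|^2$ exceeds the connection energy by exactly the term cancelled by $\widehat\Ww+\widehat\Rr^{(1)}$. It also requires checking that the scalar interpolation argument tolerates $\Lambda^k\mathbb R^d$-valued data whose fibre constraint moves with $\widehat\Pi$.
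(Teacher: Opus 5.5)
Your proposal is correct and follows essentially the same route as the paper. Both write the diffusion part of $\mathcal Q_{m,t}$ as the componentwise scalar energy $\sum_\alpha\mathcal E_{m,t}(v^\alpha)$, absorb the bounded zeroth-order potentials into the constant $C$, obtain compactness and the liminf from Proposition~\ref{prop:scalar_gaussian_mosco} together with the fibre-constraint argument that the limit is tangent, cancel the surplus normal energy $\sum_j\|B(e_j,\omega)\|^2$ by $\widehat{\mathcal W}+\widehat{\mathcal R}^{(1)}$, and build the recovery sequence from $R_m^k\omega$ with the diagonal good-event construction; the only differences are that you derive the identity $\sum_j\|B(e_j,\omega)\|^2=\langle(\End_H(B)+\mathcal R^{(1)}_k)\omega,\omega\rangle$ from \eqref{eq:taylor1k} and \eqref{eq:totaldrift1}, which the paper merely asserts, and that the fibre-constraint concern you flag at the end does not arise, because the scalar interpolation is applied to the unconstrained ambient components.
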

\begin{proof}
	By the definition of the empirical Hodge operator, its quadratic form splits into the principal empirical diffusion energy and the zeroth-order potential terms.
	Indeed, writing
	\[
	v_i=v(x_i),
	\qquad
	K_{ij}=\Phi_t(x_i,x_j)\chi_\delta(x_i,x_j),
	\]
	and recalling that
	$v_i\in E_i^{(k)}=\operatorname{Ran}P_i^{(k)}$, we obtain
	\[
	\begin{aligned}
		\bigl\langle\widehat{\Lb}_{t,S_m}^kv,v\bigr\rangle_m
		&=
		\frac{\vol_g(M^n)}{m^2t}
		\sum_{i,j=1}^m
		K_{ij}
		\bigl\langle P_i^{(k)}(v_i-v_j),v_i\bigr\rangle\\
		&=
		\frac{\vol_g(M^n)}{m^2t}
		\sum_{i,j=1}^m
		K_{ij}\langle v_i-v_j,v_i\rangle.
	\end{aligned}
	\]
	Here we used the self-adjointness of $P_i^{(k)}$ and the identity
	$P_i^{(k)}v_i=v_i$. Since $K_{ij}=K_{ji}$, symmetrization in
	$i$ and $j$ yields
	\[
	\begin{aligned}
		2\sum_{i,j=1}^m
		K_{ij}\langle v_i-v_j,v_i\rangle
		&=
		\sum_{i,j=1}^m
		K_{ij}
		\left(
		\langle v_i-v_j,v_i\rangle
		+
		\langle v_j-v_i,v_j\rangle
		\right)\\
		&=
		\sum_{i,j=1}^m
		K_{ij}|v_i-v_j|^2.
	\end{aligned}
	\]
	Consequently,
	\[
	\bigl\langle\widehat{\Lb}_{t,S_m}^kv,v\bigr\rangle_m
	=	
	\frac{\vol_g(M^n)}{2m^2t}
	\sum_{i,j=1}^m
	\Phi_t(x_i,x_j)\chi_\delta(x_i,x_j)
	|v(x_i)-v(x_j)|^2.
	\]
	Substituting this identity into the definition of
	$\mathcal Q_{m,t}$ gives
	\begin{align}
		\mathcal{Q}_{m,t}(v)
		&=
		\frac{\vol_g(M^n)}{2m^2t}
		\sum_{i,j=1}^m
		\Phi_t(x_i,x_j)
		|v(x_i)-v(x_j)|^2
		\chi_\delta(x_i,x_j)
		\nonumber\\
		&\qquad
		-	
		\bigl\langle
		(\widehat{\Ww}_{t,S_m}
		+\widehat{\Rr}_{k,t,S_m}
		+\widehat{\Rr}^{(1)}_{k,t,S_m})v,
		v
		\bigr\rangle_m.
		\label{eq:mosco_split}
	\end{align}
	
	The first term evaluates differences directly in the flat ambient Euclidean space $\Lambda^k \mathbb{R}^d$. Let $\{e_\alpha\}_{\alpha=1}^D$ be a fixed orthonormal basis for $\Lambda^k \mathbb{R}^d$ (where $D = \binom{d}{k}$). For any discrete section $v \in \Hh_{m,k}$, we decompose it globally into its scalar components $v(x) = \sum_\alpha v^\alpha(x) e_\alpha$. The ambient difference energy term in  the right-hand side of \eqref{eq:mosco_split} splits  into a finite sum of scalar discrete Dirichlet energies:
	\begin{equation}
		\mathcal{Q}_{\mathrm{diff}, m, t}(v) = \sum_{\alpha=1}^D \left( \frac{\vol_g (M^n)}{2m^2t} \sum_{i,j=1}^m \Phi_t(x_i,x_j) |v^\alpha(x_i) - v^\alpha(x_j)|^2 \chi_\delta(x_i, x_j) \right).\label{eq:qdmt}
	\end{equation}
	
	Let $T_m: M^n \to S_m$ with $(T_m)_*\mu = \mu_m$ be the optimal transport maps defining the transported $L^2(\mu)$-topology, and set
	\begin{equation}
	\widetilde{v}_m(x) \coloneqq v_m(T_m(x)).\label{eq:tildevm}
	\end{equation}
	
	Recalling \eqref{eq:trillos2020}, on an event of probability tending to $1$,
	\begin{equation}
		\varepsilon_m
		\coloneqq
		\sup_{x\in M^n}d_M(x,T_m(x))
		\le
		C\frac{(\log m)^{1/n}}{m^{1/n}}. 
	\end{equation}
	
	Because the discrete forms satisfy the empirical fiber constraint at the sample points, by \eqref{eq:tildevm}, we have
	\[
	P_{T_m(x)}^{(k)}\widetilde{v}_m(x) = \widetilde{v}_m(x).
	\]
	Inserting this into the continuum constraint and applying the triangle inequality, we obtain:
	\begin{align}
\|(I-\Pi_x^{(k)})\widetilde v_m(x)\|
&\le
\|(P_{T_m(x)}^{(k)}-\Pi_{T_m(x)}^{(k)})\widetilde v_m(x)\|
+\|(\Pi_{T_m(x)}^{(k)}-\Pi_x^{(k)})\widetilde v_m(x)\|\nonumber\\
&\le
\Bigl(
\|P_{T_m(x)}^{(k)}-\Pi_{T_m(x)}^{(k)}\|_{\mathrm{op}}
+\|\Pi_{T_m(x)}^{(k)}-\Pi_x^{(k)}\|_{\mathrm{op}}
\Bigr)\|\widetilde v_m(x)\|.
\label{eq:Iminuspkx}
\end{align}
 The first term in the right-hand side of \eqref{eq:Iminuspkx} tends uniformly to zero by the empirical tangent convergence established in Proposition \ref{prop:empitangent_clean}. The second term tends uniformly to zero because the optimal transport distance satisfies
	\[
	\sup_{x \in M^n} d_M(x, T_m(x)) \longrightarrow 0,
	\]
	and the projector $x \mapsto \Pi_x^{(k)}$ is smooth, hence Lipschitz on the compact manifold $M^n$. 
	
	Since $\widetilde{v}_m$ is uniformly bounded in $L^2(M^n, \mu)$, the right-hand side of \eqref{eq:Iminuspkx} vanishes in $L^2(M^n, \mu)$ as $m \to \infty$. Consequently, if $\widetilde{v}_m \to v$ strongly (or weakly) in $L^2(M^n, \mu)$, then the limit satisfies
	\[
	(I-\Pi_x^{(k)})v(x) = 0 \quad \text{for a.e. } x \in M^n.
	\]
	Thus, the transported limit $v$ is an intrinsic $k$-form.

We next identify the limiting contribution of the diffusion term in
\eqref{eq:mosco_split}. Since
\[
\mathcal Q_{\mathrm{diff},m,t}(v)
=
\sum_{\alpha=1}^D
\mathcal E_{m,t}(v^\alpha),
\]
the scalar compact Mosco convergence of
Proposition~\ref{prop:scalar_gaussian_mosco}, applied componentwise,
shows that the scalar limit of the diffusion part is the functional
\[
\mathcal Q_{\mathcal L}(\zeta)
\coloneqq
\sum_{\alpha=1}^D
\int_{M^n}\|\nabla_M\zeta^\alpha\|^2\,d\mu,
\qquad
\zeta=\sum_{\alpha=1}^D\zeta^\alpha e_\alpha
\in L^2(M^n,\Lambda^k\mathbb R^d).
\]
Equivalently, if $\overline{\nabla}$ denotes the flat connection on
the trivial bundle $M^n\times\Lambda^k\mathbb R^d$, then
\[
\mathcal Q_{\mathcal L}(\zeta)
=
\int_{M^n}\|\overline{\nabla}\zeta\|^2\,d\mu
\]
whenever $\zeta\in H^1(M^n,\Lambda^k\mathbb R^d)$.

By the fiber-constraint argument above, every transported limit of
discrete sections in $\Hh_{m,k}$ lies in the intrinsic subbundle
$\Lambda^kTM^n$. We now identify
$\mathcal Q_{\mathcal L}$ on this intrinsic subbundle. Let
$\omega\in H^1(\Lambda^kTM^n,\mu)$, and first suppose that
$\omega$ is smooth. Choose a local orthonormal tangent frame
$\{E_i\}_{i=1}^n$. Since $\{e_\alpha\}_{\alpha=1}^D$ is a constant
orthonormal basis of the flat ambient space
$\Lambda^k\mathbb R^d$, we have
\[
\sum_{\alpha=1}^D\|\nabla_M\omega^\alpha\|^2
=
\sum_{i=1}^n
\|\overline{\nabla}_{E_i}\omega\|^2.
\]
The Euclidean covariant derivative decomposes into tangential and
normal components:
\[
\overline{\nabla}_{E_i}\omega
=
\nabla_{E_i}\omega
+
\nabla_{E_i}^{\perp}\omega,
\]
where $\nabla$ is the intrinsic Levi-Civita connection on
$\Lambda^kTM^n$. These two components are orthogonal in
$\Lambda^k\mathbb R^d$. Therefore
\[
\|\overline{\nabla}\omega\|^2
=
\|\nabla\omega\|^2
+
\sum_{i=1}^n\|\nabla_{E_i}^{\perp}\omega\|^2.
\]
The normal contribution is a zeroth-order expression determined by the
second fundamental form. With the notation used in the pointwise
expansion of the deterministic operator, it is
\[
\sum_{i=1}^n\|\nabla_{E_i}^{\perp}\omega\|^2
=
\bigl\langle
(\End_H(B)+\mathcal R^{(1)}_k)\omega,\omega
\bigr\rangle.
\]
Consequently,
\begin{align}
	\mathcal Q_{\mathcal L}(\omega)
	&=
	\sum_{\alpha=1}^D
	\int_{M^n}\|\nabla_M\omega^\alpha\|^2\,d\mu
	\nonumber\\
	&=
	\int_{M^n}\|\nabla\omega\|^2\,d\mu
	+
	\int_{M^n}
	\bigl\langle
	(\End_H(B)+\mathcal R^{(1)}_k)\omega,\omega
	\bigr\rangle\,d\mu.
	\label{eq:q1limit}
\end{align}
By density, the same identity holds for every
$\omega\in H^1(\Lambda^kT^*M^n,\mu)$.

Thus the diffusion term in \eqref{eq:mosco_split} has as its continuum
limit the intrinsic rough gradient energy together with the extrinsic
zeroth-order contribution
\[
\End_H(B)+\mathcal R^{(1)}_k.
\]
The remaining terms in \eqref{eq:mosco_split} are precisely the
empirical zeroth-order potentials which will cancel this extrinsic
contribution and leave the intrinsic Hodge energy.

	Choose $C>0$ so large that, on the common good event,
	\[
	CI - \widehat{\mathcal W}_{t,S_m} - \widehat{\mathcal R}_{k,t,S_m} - \widehat{\mathcal R}^{(1)}_{k,t,S_m}
	\]
	is nonnegative at every sample point. Then
	\begin{align}
		\Qq^C_{m, t} (v)  &= \Qq_{m, t} (v) + C \| v\|^2_m\nonumber\\
		&= \Qq_{\mathrm{diff},m, t}(v) + \la (CI - \widehat{\mathcal W}_{t,S_m} - \widehat{\mathcal R}_{k,t,S_m} - \widehat{\mathcal R}^{(1)}_{k,t,S_m}) v, v \ra_m\nonumber\\
		&\ge \Qq_{\mathrm{diff},m, t}(v). \label{eq:q2limit}
	\end{align}
	
	We prove Mosco convergence of $\mathcal Q_{m,t}^C$; subtracting the fixed term $C\|\cdot\|_m^2$ then gives the asserted convergence of $\mathcal Q_{m,t}$.
	
	\smallskip
	\noindent
	{\it Liminf inequality.} 
	Suppose that $v_m\in\Hh_{m,k}$ converges weakly in the transported $L^2$-topology to $v$, and assume without loss of generality that 
	\[
	\liminf_{m\to\infty}\mathcal Q_{m,t}^C(v_m)<\infty.
	\]
	Pass to a subsequence (still denoted by $v_m$) that realizes this $\liminf$ as an actual limit. By \eqref{eq:q2limit}, the ambient diffusion energies $\mathcal Q_{\mathrm{diff},m,t}(v_m)$ are uniformly bounded along this subsequence. 
	
	Writing $v_m = \sum_{\alpha=1}^{D} v_m^\alpha e_\alpha$ in a fixed ambient orthonormal basis, Proposition \ref{prop:scalar_gaussian_mosco} applies to each scalar component. Since $D<\infty$, we may extract a further subsequence along which $v_m$ converges \emph{strongly} in the transported $L^2$-topology. Because the original sequence converged weakly to $v$, the strong limit of this sub-subsequence must also be $v$.
	
	The empirical fiber constraint gives $P_{T_m(x)}^{(k)}\widetilde{v}_m(x) = \widetilde{v}_m(x)$.  
	Since
	\[
	(I-\Pi^{(k)})\widetilde v_m\longrightarrow0
	\quad\text{strongly in }L^2,
	\]
	while
	\[
	\widetilde v_m\longrightarrow v
	\quad\text{strongly in }L^2,
	\]
	the boundedness of the multiplication operator $(I-\Pi^{(k)})$ implies
	\[
	(I-\Pi^{(k)})v=0.
	\]
	Thus, $v$ is an intrinsic $k$-form.
	
	Because we are now operating on a strongly convergent subsequence, the scalar $\liminf$ inequality in Proposition \ref{prop:scalar_gaussian_mosco} applies component-wise to the diffusion energy:
	\begin{equation}
		\liminf_{m\to\infty} \mathcal Q_{\mathrm{diff},m,t}(v_m) \ge \sum_{\alpha=1}^{D} \int_{M^n}\|\nabla_Mv^\alpha\|^2\,d\mu = \mathcal{Q}_{\mathcal{L}}(v). \label{eq:q3limit}
	\end{equation}
	We claim that  
	\begin{equation}
		\begin{aligned}
			&\left\langle \bigl( \widehat{\mathcal W}_{t,S_m} + \widehat{\mathcal R}_{k,t,S_m} + \widehat{\mathcal R}^{(1)}_{k,t,S_m} \bigr)v_m,v_m \right\rangle_m \\
			&\qquad\longrightarrow \int_{M^n} \left\langle \bigl( \End_H(B) + \mathcal R_k + \mathcal R^{(1)}_k \bigr)v,v \right\rangle\,d\mu.\label{eq:q5limit}
		\end{aligned}
	\end{equation}
	
	To show \eqref{eq:q5limit}, let 
	\[ Z_m \coloneqq \widehat{\Ww}_{t, S_m} + \widehat{\mathcal R}_{k,t,S_m} + \widehat{\mathcal R}^{(1)}_{k,t,S_m}, \qquad Z\coloneqq \End_H(B) + \mathcal R_k + \mathcal R^{(1)}_k. \]
	Then 
	\[
	\begin{aligned}
		\sup_{x\in M^n} \|Z_m(T_m(x))-Z(x)\|_{\mathrm{op}}
		&\le
		\sup_{y\in M^n}\|Z_m(y)-Z(y)\|_{\mathrm{op}}\\
		&\quad+
		\sup_{x\in M^n}\|Z(T_m(x))-Z(x)\|_{\mathrm{op}}
		\longrightarrow0.
	\end{aligned}
	\]
	Here the first term tends to zero by uniform empirical convergence, whereas the second tends to zero by continuity of $Z$ and $\varepsilon_m\to0$. Since $\widetilde v_m\to v$ strongly in $L^2(\mu)$, it follows that
	\[
	\left\langle Z_mv_m,v_m\right\rangle_m \longrightarrow \int_{M^n}\langle Z(x)v(x),v(x)\rangle\,d\mu(x)
	\]
	which is exactly \eqref{eq:q5limit}.	
	Combining the estimates in \eqref{eq:q1limit}, \eqref{eq:q3limit}, and \eqref{eq:q5limit}, and observing the perfect algebraic cancellation of the $\End_H(B)$ and $\mathcal{R}^{(1)}_k$ trace artifacts, we recover exactly \eqref{eq:qk}:
	
	\[
	\liminf_{m\to\infty} \mathcal Q_{m,t}^C(v_m) \ge \mathcal Q_k(v)+C\|v\|_{L^2(\mu)}^2.
	\]
	
	\smallskip
	\noindent
	{\it Recovery sequence.}
	Let $\omega\in C^\infty(\Lambda^kTM^n)$, and  recall  \eqref{eq:rkm}
	\[
	(R_m^k\omega)(x_i) = P_{x_i}^{(k)}\omega(x_i).
	\]
	Set
	\[
	v_m\coloneqq R_m^k\omega, \qquad v_m(x_i)=P_{x_i}^{(k)}\omega(x_i).
	\]
	Recalling definition  \eqref{eq:tildevm}  of  $\tilde v_m$,    then  we have
	\[
	\| \tilde  v_m  (x)-\om (x)\|\le \|\big( P^{(k)}_{T_m (x)}- \Pi^{(k)}_{T_m (x)}\big)\om \big(T_m (x)\big)\|  + \|\om \big(T_m (x)\big) - \om (x)\|.
	\] 
The first term  vanishes   by Proposition \ref{prop:empitangent_clean}, and the second  by smoothness of $\om$ and $\eps_m\to 0$.	Hence $v_m\to\om$ strongly in the transported $L^2(\mu)$-topology. Moreover, because the empirical operator depends only on the projected sample values,   and
\[
\hat \Pi_{x_j} v_m  (x_j) = \hat \Pi_{x_j} \hat \Pi_{x_j} \om (x_j) = \hat\Pi_{x_j} \om (x_j), 
\]
so replacing   $\om(x_j)$ by $v_m (x_j)$ in the operator  leaves  the result unchanged.  Thus, 
	\[
	\widehat\Delta^k_{t,S_m}v_m = \bigl( \widehat\Delta^k_{t,S_m}\omega \bigr)\big|_{S_m}.
	\]
	Therefore, the discrete energy evaluates exactly to
	\[
	\mathcal Q_{m,t}(v_m) = \left\langle \widehat\Delta^k_{t,S_m}\omega, R_m^k\omega \right\rangle_m.
	\]
	
	Now we compute 
	\begin{align}
		&\Big | \left\langle \widehat\Delta^k_{t,S_m}\omega, R_m^k\omega \right\rangle_m -  \int_{M^n} \left\langle \Delta^k\omega, \omega \right\rangle \, d\mu  \Big |\nonumber \\
		&\le \sup_{x\in M^n} \Big | \left\langle \widehat\Delta^k_{t,S_m}\omega(x), P_x^{(k)}\om (x) \right\rangle  - \la \Delta^k \om (x), \om (x) \ra \Big |\nonumber \\
		&\quad+ \left | \frac{1}{m}\sum_{i=1}^m \la \Delta^k \om (x_i), \om (x_i) \ra - \int_{M^n} \left\langle \Delta^k\omega, \omega \right\rangle \, d\mu \right |.\label{eq:hmrec1}	
	\end{align}
	The first term of the right-hand side of \eqref{eq:hmrec1} tends uniformly to zero  since
	\begin{align*}
&\left|
\left\langle\widehat\Delta^k_{t,S_m}\omega(x),P_x^{(k)}\omega(x)\right\rangle
-\langle\Delta^k\omega(x),\omega(x)\rangle
\right|\\
&\qquad\le
\|(\widehat\Delta^k_{t,S_m}-\Delta^k)\omega\|_{C^0}\|\omega\|_{C^0}
+
\|\widehat\Delta^k_{t,S_m}\omega\|_{C^0}
\|P_x^{(k)}-\Pi_x^{(k)}\|_{\mathrm{op}}
\|\omega\|_{C^0}.
\end{align*}
	by the uniform consistency estimate of Theorem~\ref{thm:empiconvergence}
	and Proposition~\ref{prop:empitangent_clean}, noting that $\Pi_x\om (x) = \om (x)$.  The second  term  of the right-hand side of \eqref{eq:hmrec1} tends to zero by the law of large numbers. Hence 
	\begin{equation*}
		\mathcal Q_{m,t}(v_m) \longrightarrow \int_{M^n} \left\langle \Delta^k\omega, \omega \right\rangle \, d\mu = \mathcal Q_k(\omega).
	\end{equation*}
	Taking into account that
	\[\|v_m\|_m \to \| \om\|_{L^2(\mu)}\] 
	we conclude that
	\[ \Qq^C_{m, t} (v_m) \to \Qq_k (\om) + C\| \om\|^2_{L^2 (\mu)}.\]
	To make the recovery event independent of the particular form, fix a
	countable smooth core
	\[
	\mathcal D_k=\{\omega^\ell:\ell\ge1\}
	\subset C^\infty(\Lambda^kTM^n)
	\]
	dense in $H^1(\Lambda^kTM^n,\mu)$. For each $L$, intersect the
	universal scalar-Mosco and geometric good events with the uniform
	consistency and law-of-large-numbers events appearing above for
	$\omega^1,\ldots,\omega^L$. As in the scalar proof, choose an increasing
	sequence $M_L$ and define $N_m$ and $\mathcal G_m$ diagonally so that
	$N_m\to\infty$, $\mu^m(\mathcal G_m)\to1$, and all recovery errors for
	the first $N_m$ core forms tend to zero. Hence, for every deterministic
	sequence $S_m\in\mathcal G_m$ eventually, a slow diagonal selection
	from $\mathcal D_k$ yields a recovery sequence for every element of the
	form domain. This verifies condition~{\rm(ii)} in the precise sense of
	Remark~\ref{rem:prob_mosco}.
	
	\smallskip
	\noindent
	{\it Asymptotic compactness.}  
	If $\sup_m (\|v_m\|_m^2+\mathcal Q_{m,t}^C(v_m))<\infty$, then by \eqref{eq:q2limit}, the ambient diffusion energies $\mathcal{Q}_{\mathrm{diff},m,t}(v_m)$ are uniformly bounded. Proposition \ref{prop:scalar_gaussian_mosco}, applied to the finitely many ambient scalar components, yields precompactness in the transported $L^2(\mu)$-topology. Thus $\mathcal Q_{m,t}^C$ converges compactly in the Mosco sense to
	\[
	\mathcal Q_k^C = \mathcal Q_k+C\|\cdot\|_{L^2(\mu)}^2.
	\]
	Since strong transported $L^2$-convergence implies convergence of the corresponding norms, addition or subtraction of the discrete and continuum mass terms preserves Mosco convergence. Consequently, $\mathcal Q_{m,t}$ converges compactly in the Mosco sense to $\mathcal Q_k$. 
\end{proof}

\begin{lemma}[Self-adjointness of the discrete empirical Hodge Laplacian]\label{lem:self_adjoint}
	For every $t>0$ and every sample $S_m$, the restriction of
	$\widehat\Delta^k_{t,S_m}$ to $\Hh_{m,k}$ is self-adjoint with respect to the normalized inner product \eqref{eq:discrete-tangent-space}.
\end{lemma}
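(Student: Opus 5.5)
The plan is to verify symmetry of $\langle\widehat\Delta^k_{t,S_m}v,w\rangle_m$ in $v,w\in\Hh_{m,k}$ separately for the diffusion term $\widehat{\Lb}_{t,S_m}$ and for each of the three zeroth-order terms $\widehat{\Ww}_{t,S_m}$, $\widehat{\Rr}_{k,t,S_m}$, $\widehat{\Rr}^{(1)}_{k,t,S_m}$.

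\textbf{Invariance of $\Hh_{m,k}$.} First I will check that $\widehat\Delta^k_{t,S_m}$ maps $\Hh_{m,k}$ into itself, so that the restriction is a genuine endomorphism. By \eqref{eq:lbnew}, $(\widehat{\Lb}_{t,S_m}v)(x_i)$ is of the form $P_i^{(k)}(\cdots)$, so it lies in $E_i^{(k)}$. By \eqref{eq:wtsm}, \eqref{eq:ewf} and \eqref{eq:ewp}, the three potentials at $x_i$ are built from the operators $\hat e_a\wedge i_{\hat e_b^*}$, where $\{\hat e_a\}$ is an orthonormal basis of $\operatorname{Ran}\widehat\Pi_{x_i}$. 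Hence they also take values in $E_i^{(k)}$. Basis-independence is already recorded in Remark~\ref{rem:wind}.

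\textbf{Diffusion part.} For $v\in\Hh_{m,k}$ I have $P_j^{(k)}v_j=v_j$ and $P_i^{(k)}v_i=v_i$. Writing $K_{ij}=\Phi_t(x_i,x_j)\chi_\delta(x_i,x_j)$, this gives $(\widehat{\Lb}v)_i=\frac{\vol_g(M^n)}{mt}\sum_jK_{ij}\big(v_i-P_i^{(k)}v_j\big)$. Pairing with $w_i\in E_i^{(k)}$ and using the self-adjointness of the orthogonal projector $P_i^{(k)}$ together with $P_i^{(k)}w_i=w_i$, I obtain
\[
\langle\widehat{\Lb}v,w\rangle_m=\frac{\vol_g(M^n)}{m^2t}\sum_{i,j}K_{ij}\big(\langle v_i,w_i\rangle-\langle v_j,w_i\rangle\big).
\]
Since $K_{ij}=K_{ji}$, swapping $i\leftrightarrow j$ in the cross term shows that this expression is symmetric in $(v,w)$. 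This is the same computation that produced the first line of \eqref{eq:mosco_split}, now carried out with two different arguments.

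\textbf{Zeroth-order parts.} Here everything is pointwise, so it suffices to show that each operator is symmetric on $\Lambda^k\operatorname{Ran}\widehat\Pi_{x_i}$. The basic fact I will use is $(\hat e_l\wedge i_{\hat e_j^*})^*=\hat e_j\wedge i_{\hat e_l^*}$, as in \eqref{eq:muladjoint}. It follows that an operator $\sum_{j,l}c_{jl}\,\hat e_l\wedge i_{\hat e_j^*}$ is symmetric whenever $c_{jl}=c_{lj}$.
\begin{itemize}
\item[] For $\widehat{\Ww}$, the coefficient is $c_{jl}=\langle\hat H^{sym},\hat B^{sym}(\hat e_j,\hat e_l)\rangle$. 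It is symmetric in $j,l$ because $\hat B^{sym}$ is symmetric by construction \eqref{eq:empiBs}.
\item[] For $\widehat{\Rr}^{(1)}$, the coefficient is $\sum_a\widetilde{\hat R}(\hat e_a,\hat e_b,\hat e_a,\hat e_c)$. By \eqref{eq:empirc} this equals $\sum_a\big(\langle\hat B(a,c),\hat B(b,a)\rangle-\langle\hat B(a,a),\hat B(b,c)\rangle\big)$, which is symmetric in $b,c$ since $\hat B=\hat B^{sym}$.
\end{itemize}

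\textbf{Main obstacle: the full empirical Weitzenb\"ock term.} This is the only step requiring care. I will first record that $\widetilde{\hat R}$, being defined by the Gauss formula \eqref{eq:empirc} from a symmetric bilinear $\hat B^{sym}$, is antisymmetric in its first pair and in its last pair, and satisfies the pair symmetry $\hat R_{abcd}=\hat R_{cdab}$. Setting $A_{ab}\coloneqq\hat e_a\wedge i_{\hat e_b^*}$, the operator \eqref{eq:ewf} reads $\sum\hat R_{abcd}A_{ab}A_{dc}$. Its adjoint is $\sum\hat R_{abcd}A_{cd}A_{ba}$. I then relabel $(a,b,c,d)\mapsto(c',d',a',b')$, i.e.\ $a=c'$, $b=d'$, $c=a'$, $d=b'$, which turns the adjoint into $\sum\hat R_{c'd'a'b'}A_{a'b'}A_{d'c'}$. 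Pair symmetry gives $\hat R_{c'd'a'b'}=\hat R_{a'b'c'd'}$, so the adjoint equals the original operator.

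\textbf{Conclusion.} Summing the four pieces, $\widehat\Delta^k_{t,S_m}|_{\Hh_{m,k}}$ is self-adjoint for $\langle\cdot,\cdot\rangle_m$. No probabilistic event is needed; the only requirement is that $\widehat\Pi_{x_i}$ be well defined, which is built into the definition of the top-$n$ eigenprojection.
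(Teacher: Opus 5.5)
Your proposal is correct and follows the paper's proof. You split $\widehat\Delta^k_{t,S_m}$ into the same four summands. For the diffusion part you use $K_{ij}=K_{ji}$ and $(P_i^{(k)})^*=P_i^{(k)}$, phrased as symmetry of the bilinear form rather than the paper's block identity $W_{ij}^*=W_{ji}$. For $\widehat{\Ww}_{t,S_m}$ and $\widehat{\Rr}^{(1)}_{k,t,S_m}$ you use symmetric coefficients together with $(\hat e_l\wedge i_{\hat e_j^*})^*=\hat e_j\wedge i_{\hat e_l^*}$. For $\widehat{\Rr}_{k,t,S_m}$ you use adjoint reversal, relabeling, and the pair symmetry $\hat R_{cdab}=\hat R_{abcd}$ coming from the Gauss formula with symmetric $\hat B^{\mathrm{sym}}$. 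Your explicit check that $\Hh_{m,k}$ is invariant is a small addition that the paper leaves implicit.
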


\begin{proof}
	By \eqref{eq:hodgeempi},
	\[
	\widehat\Delta^k_{t,S_m}
	=\widehat{\Lb}^{k}_{t,S_m}
	-\widehat{\Ww}_{t,S_m}
	-\widehat{\Rr}_{k,t,S_m}
	-  \widehat{\Rr}^{(1)}_{k,t,S_m} .
	\]
	We verify that the four summands are self-adjoint on $\Hh_{m,k}$.
	
	Write $P_i=P_i^{(k)}$.  For $i\ne j$, the $(i,j)$ block of the diffusion operator is
	\[
	W_{ij}
	=-\frac{\vol_g(M^n)}{mt}\,
	\Phi_t(x_i,x_j)\chi_\delta(x_i,x_j)P_iP_j.
	\]
	Since the scalar kernel is symmetric and $P_i^*=P_i$, we have
	\[
	W_{ij}^*
	=-\frac{\vol_g(M^n)}{mt}\,
	\Phi_t(x_i,x_j)\chi_\delta(x_i,x_j)P_jP_i
	=W_{ji}.
	\]
	The diagonal blocks are scalar multiples of $P_i$ and are therefore self-adjoint.  Hence $\widehat{\Lb}^{k}_{t,S_m}$ is self-adjoint.
	
	At a fixed sample point  $x_i$, write the empirical mean-curvature potential as
	\[
	\widehat{\mathcal W}_{t,S_m}(x_i)
	=\sum_{a,b=1}^n c_{ab}\,
	\varepsilon(\hat e_b)\,\iota(\hat e_a^*)
	\]
	where $c_{ab}=c_{ba}$, $\varepsilon(v)$ denotes exterior multiplication by $v$, and $\iota(v^*)$ denotes contraction. $\widehat{\mathcal W}_{t,S_m}(x_i)$ is being regarded as an operator on $E_i ^{(k)}$. Since
	\[
	\varepsilon(v)^*=\iota(v^*),
	\qquad
	\iota(v^*)^*=\varepsilon(v),
	\]
	we obtain
	\[
	\bigl(\varepsilon(\hat e_b)\iota(\hat e_a^*)\bigr)^*
	=\varepsilon(\hat e_a)\iota(\hat e_b^*).
	\]
	The symmetry $c_{ab}=c_{ba}$ therefore implies
	$\hat{\mathcal W}_{t,S_m}^*(x_i)=\hat{\mathcal W}_{t,S_m}(x_i)$.

Similarly, for the partial trace operator, set
\[
\rho^{(1)}_{bc}\coloneqq \sum_{a=1}^n
\widetilde{(\hat R_{t,S_m})}_{x_i}
(\hat e_a,\hat e_b,\hat e_a,\hat e_c).
\]
Because the empirical curvature tensor is defined by the Gauss formula from the symmetric form $\hat B^{\mathrm{sym}}_{t, S_m}$, we have
\[
\rho^{(1)}_{bc} = \sum_{a=1}^n \left( \la \hat B^{\mathrm{sym}}_{t, S_m} (\hat e_a, \hat e_c), \hat B^{\mathrm{sym}}_{t, S_m}(\hat e_a, \hat e_b)\ra - \la  \hat B^{\mathrm{sym}}_{t, S_m} (\hat e_a, \hat e_a),  \hat B^{\mathrm{sym}}_{t, S_m} (\hat e_b, \hat e_c) \ra \right).
\]
Since $\hat B^{\mathrm{sym}}_{t,S_m}$ is symmetric in its arguments, $\rho^{(1)}_{bc}=\rho^{(1)}_{cb}$. Thus
\[
\widehat\Rr^{(1)}_{k,t,S_m}
=\sum_{b,c=1}^n\rho^{(1)}_{bc}\,
\varepsilon(\hat e_b)\iota(\hat e_c^*)
\]
is self-adjoint by the exact same adjoint calculation used for $\widehat{\mathcal W}_{t,S_m}(x_i)$.

Finally, we evaluate the full Weitzenb\"ock potential $\widehat{\Rr}_{k,t,S_m}$. Expanding it as a four-index sum yields:
\[
\widehat{\Rr}_{k,t,S_m} = \sum_{a,b,c,d=1}^n \hat R_{abcd} \, \varepsilon(\hat e_a)\iota(\hat e_b^*)\varepsilon(\hat e_d)\iota(\hat e_c^*),
\]
where $\hat R_{abcd} \coloneqq \widetilde{(\hat R_{t,S_m})}_{x_i}(\hat e_a,\hat e_b,\hat e_c,\hat e_d)$. Taking the adjoint reverses the sequence of the creation and annihilation operators:
\[
\widehat{\Rr}_{k,t,S_m}^* = \sum_{a,b,c,d=1}^n \hat R_{abcd} \, \varepsilon(\hat e_c)\iota(\hat e_d^*)\varepsilon(\hat e_b)\iota(\hat e_a^*).
\]
Relabeling the dummy summation indices $(c \to a, d \to b, b \to d, a \to c)$ yields:
\[
\widehat{\Rr}_{k,t,S_m}^* = \sum_{a,b,c,d=1}^n \hat R_{cdab} \, \varepsilon(\hat e_a)\iota(\hat e_b^*)\varepsilon(\hat e_d)\iota(\hat e_c^*).
\]
By the empirical Gauss formula,
\[
\hat R_{cdab} = \la \hat B^{\mathrm{sym}}_{t,S_m}(\hat e_c, \hat e_b), \hat B^{\mathrm{sym}}_{t,S_m}(\hat e_d, \hat e_a) \ra - \la \hat B^{\mathrm{sym}}_{t,S_m}(\hat e_c, \hat e_a), \hat B^{\mathrm{sym}}_{t,S_m}(\hat e_d, \hat e_b) \ra.
\]
By the symmetry of $\hat B^{\mathrm{sym}}_{t,S_m}$ and the symmetry of the inner product, this evaluates to exactly $\hat R_{abcd}$. Hence $\hat R_{cdab} = \hat R_{abcd}$, which implies $\widehat{\Rr}_{k,t,S_m}^* = \widehat{\Rr}_{k,t,S_m}$.

All four summands of $\widehat\Delta^k_{t,S_m}$ are self-adjoint on $\Hh_{m,k}$, which completes the proof.
\end{proof}

\begin{theorem}[\bf Convergence of the empirical
	harmonic cluster]\
	\label{thm:harmonic_cluster}
	Let $(M^n,g)\subset\R^d$ be a closed,  connected,  orientable 
	$C^4$-smooth submanifold of dimension $n\ge3$. For
	$0\le k\le n$, let
	\[
	\Hh^k(M^n)=\ker\Delta^k,
	\qquad
	b_k=\dim\Hh^k(M^n),
	\]
	and let $\lambda_{+,k}>0$ be the first positive eigenvalue of
	$\Delta^k$. Set
	\[
	\eta_k=\frac{\lambda_{+,k}}2.
	\]
	
	Let $Q_m^k$ denote the discrete $L^2$-orthogonal spectral
	projection of $\widehat\Delta^k_{t,S_m}$ onto the empirical
	eigenvalues contained in
	\[
	(-\eta_k,\eta_k),
	\]
	and set $t=m^{-1/(2n)}$.   Then the following assertions hold.
	
	\begin{enumerate}
		\item With probability tending to $1$, the interval
		$(-\eta_k,\eta_k)$ contains exactly $b_k$ empirical eigenvalues,
		counted with multiplicity. In particular,
		\[
		\dim\operatorname{Ran}Q_m^k=b_k.
		\]
		
		\item On the event in the first assertion, there exists an isometry
		\[
		U_m^{k,\mathrm{disc}}:
		\bigl(\Hh^k(M^n), \la \cdot, \cdot \ra_{L^2(\mu)}\bigr)\longrightarrow \bigl(\operatorname{Ran}Q_m^k, \la \cdot, \cdot \ra_m\bigr)
		\]
	such that
		\[
		\sup_{\substack{\omega\in\Hh^k(M^n)\\
				\|\omega\|_{L^2(\mu)}=1}}
		\left\|
		U_m^{k,\mathrm{disc}}\omega-R_m^k\omega
		\right\|_m
		\longrightarrow0
		\]
		in probability.
	\end{enumerate}

\end{theorem}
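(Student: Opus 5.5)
The proof will be deduced from the compact Mosco convergence of Lemma~\ref{lem:Hodge_Mosco}, combined with the abstract spectral convergence theory of Kuwae and Shioya \cite{Kuwae2003}, applied realization-by-realization on the good events $\mathcal G_m$ as in Remark~\ref{rem:prob_mosco}.

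First I will put the problem in a form where \cite{Kuwae2003} applies directly. By Lemma~\ref{lem:self_adjoint}, $\widehat\Delta^k_{t,S_m}$ is self-adjoint on $(\Hh_{m,k},\langle\cdot,\cdot\rangle_m)$, and its quadratic form is $\mathcal Q_{m,t}$. I will work with the shifted forms $\mathcal Q^C_{m,t}$ and $\mathcal Q^C_k=\mathcal Q_k+C\|\cdot\|^2_{L^2(\mu)}$. These are nonnegative on the good event by \eqref{eq:q2limit}, and they are the forms of the nonnegative self-adjoint operators $\widehat\Delta^k_{t,S_m}+C$ and $\Delta^k+C$. The shift moves every eigenvalue by exactly $C$, so nothing is lost.

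Next I will verify the hypotheses of Kuwae--Shioya's spectral theorem \cite{Kuwae2003}*{Section~2}. Lemma~\ref{lem:Hodge_Mosco} supplies Mosco convergence together with asymptotic compactness. The limit $\Delta^k+C$ has compact resolvent, since $M^n$ is closed and Rellich applies on $H^1(\Lambda^kTM^n)$. Transported strong $L^2$ convergence also gives convergence of norms and inner products, because $(T_m)_*\mu=\mu_m$ means $\langle u\circ T_m,v\circ T_m\rangle_{L^2(\mu)}=\langle u,v\rangle_m$.

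With these hypotheses, compact convergence gives two conclusions. First, for each fixed $j$, the $j$-th eigenvalue $\lambda_j(\widehat\Delta^k_{t,S_m})$, counted with multiplicity, converges to $\lambda_j(\Delta^k)$. Second, the spectral projections onto an interval whose endpoints are not eigenvalues of the limit converge strongly (indeed in norm, in the transported sense).

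The spectrum of $\Delta^k$ in $(-\eta_k,\eta_k)$ is exactly $\{0\}$ with multiplicity $b_k$, and $\pm\eta_k$ are not eigenvalues. So the first $b_k$ empirical eigenvalues tend to $0$ and the $(b_k+1)$-st tends to $\lambda_{+,k}>\eta_k$. There is one further point for assertion (1): no empirical eigenvalue may lie below $-\eta_k$. This is automatic, because the lowest eigenvalue also converges, to $0\ge0$. Hence on $\mathcal G_m$, for $m$ large, exactly $b_k$ eigenvalues lie in $(-\eta_k,\eta_k)$, and uniformity over $\mathcal G_m$ follows from the contradiction argument of Remark~\ref{rem:prob_mosco}.

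For (2), take an $L^2(\mu)$-orthonormal basis $\omega_1,\dots,\omega_{b_k}$ of $\Hh^k(M^n)$; these are smooth by elliptic regularity. Since $R^k_m\omega_a\to\omega_a$ strongly in the transported sense (shown in the recovery step of Lemma~\ref{lem:Hodge_Mosco}), convergence of spectral projections gives
\[
\|Q^k_mR^k_m\omega_a-R^k_m\omega_a\|_m\to0 .
\]
Consequently the Gram matrix $G_m=(\langle Q^k_mR^k_m\omega_a,Q^k_mR^k_m\omega_b\rangle_m)_{a,b}$ tends to the identity, again using $(T_m)_*\mu=\mu_m$ together with $\|R^k_m\omega_a\|_m\to\|\omega_a\|_{L^2(\mu)}$. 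I then set $U_m^{k,\mathrm{disc}}\omega_a=\sum_b(G_m^{-1/2})_{ba}\,Q^k_mR^k_m\omega_b$, which is orthonormal in $\operatorname{Ran}Q^k_m$ and hence an isometry onto it, since the dimensions agree by (1). The error $\|U\omega_a-R_m^k\omega_a\|_m$ is bounded by $\|G_m^{-1/2}-I\|$ plus the projection error. By finite-dimensionality, the supremum over the unit sphere of $\Hh^k$ is controlled by a constant times the maximum over the basis, which gives convergence in probability.

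The main obstacle I expect is the projection convergence. Kuwae--Shioya phrase their results for sequences of Hilbert spaces with an asymptotic identification. Here that identification is $u\mapsto u\circ T_m$, which is not onto $L^2(\mu)$ and maps $\Hh_{m,k}$ into ambient-valued sections, so I must check that it satisfies their axioms. Concretely, I will have to show that the norm defect vanishes and that the fiber constraint limit argument of Lemma~\ref{lem:Hodge_Mosco} gives the required compatibility. I will also need to ensure that the good events built there are uniform enough that the deterministic theorem applies to every selection $S_m\in\mathcal G_m$.
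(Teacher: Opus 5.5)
Your proof is correct. For assertion (1) it follows the paper's argument: eigenvalue convergence from Lemma~\ref{lem:Hodge_Mosco}, Remark~\ref{rem:prob_mosco} and \cite{Kuwae2003}, together with the ordering of eigenvalues (including $\widehat\lambda^{(k)}_{m,1}\to\lambda^{(k)}_1\ge0$), places exactly $b_k$ of them in $(-\eta_k,\eta_k)$.

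The genuine difference is how you obtain $\|(I-Q_m^k)R_m^k\omega\|_m\to0$ in assertion (2). You take it from the abstract convergence of spectral projections in the Kuwae--Shioya framework, applied to $R_m^k\omega_a\to\omega_a$. The paper instead argues directly:
\begin{itemize}
\item For harmonic $\omega$ one has $\widetilde\Delta^k\omega=0$, so Theorem~\ref{thm:empiconvergence} gives $\sup_{\|\omega\|_{L^2(\mu)}=1}\|\widehat\Delta^k_{t,S_m}R_m^k\omega\|_m\to0$.
\item Since $I-Q_m^k$ commutes with $\widehat\Delta^k_{t,S_m}$ and projects onto eigenvalues of modulus at least $\eta_k$, this yields $\eta_k\|(I-Q_m^k)R_m^k\omega\|_m\le\|\widehat\Delta^k_{t,S_m}R_m^k\omega\|_m$.
\item The same residual bound gives the lower count ``at least $b_k$'' in (1) independently.
\end{itemize}
The final step $J_m^k((J_m^k)^*J_m^k)^{-1/2}$ with $J_m^k=Q_m^kR_m^k$ is exactly your $G_m^{-1/2}$ orthonormalization.

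The paper's route is quantitative: the alignment error is $O(\sqrt t/\eta_k)$ plus the Gram-matrix error. It uses from \cite{Kuwae2003} only eigenvalue convergence, which is what excludes further eigenvalues from the cluster. In particular it avoids the verification you flag as your main obstacle, namely that the transported identification satisfies the Kuwae--Shioya axioms for convergence of spectral projections. Remark~\ref{rem:prob_mosco} records this projection convergence, but the proof of the theorem does not depend on it.

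Your route gives no rate, but it needs less regularity. Only continuity of the $\omega_a$ enters, to get $R_m^k\omega_a\to\omega_a$; the paper evaluates $\widehat\Delta^k_{t,S_m}$ on harmonic forms through Theorem~\ref{thm:empiconvergence} and hence uses their $C^4$ norms. In addition, your Gram-matrix convergence via $(T_m)_*\mu=\mu_m$ holds deterministically on the good events, replacing the paper's Hoeffding step.
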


\begin{proof}   Let
	\[
	\widehat\lambda_{m,1}^{(k)}
	\le
	\widehat\lambda_{m,2}^{(k)}
	\le\cdots
	\]
	be the empirical eigenvalues, counted with multiplicity. By Lemma \ref{lem:Hodge_Mosco}, Remark \ref{rem:prob_mosco}, and the spectral convergence theorem for
	compactly Mosco-convergent quadratic forms
	\cite{Kuwae2003}*{Section~5}, for every fixed $j$,
	\[
	\widehat\lambda_{m,j}^{(k)}
	\longrightarrow
	\lambda_j^{(k)}
	\]
	in probability.

	Let
	\[
	\{\omega_1,\ldots,\omega_{b_k}\}
	\]
	be an $L^2(\mu)$-orthonormal basis of
	$\mathcal H^k(M^n)$, and define
	\[
	v_{\omega_a}(x_i)
	\coloneqq
	P_{x_i}^{(k)}\omega_a(x_i).
	\]
	Set
	\[
	V_{m,k}
	\coloneqq
	\operatorname{span}_{\mathbb R}
	\{v_{\omega_1},\ldots,v_{\omega_{b_k}}\}.
	\]

	\underline{Step 1.} {\it  Spectral Dimension and No Pollution.} 
	Since the empirical operator depends only on the projected sample
	values, for every smooth ambient $k$-form $\omega$,
	\[
	\widehat\Delta^k_{t,S_m}
	\bigl(P^{(k)}\omega|_{S_m}\bigr)
	=	
	\bigl(\widehat\Delta^k_{t,S_m}\omega\bigr)|_{S_m}.
	\]
	Moreover, on the empirical-projector event,
	\[
	\sup_{x\in M^n}
	\|P_x^{(k)}-\Pi_x^{(k)}\|_{\mathrm{op}}
	\le C_kt.
	\]
	Consequently,
	\[
	\left|
	\langle v_{\omega_a},v_{\omega_b}\rangle_m
	-	
	\frac1m\sum_{i=1}^m
	\langle\omega_a(x_i),\omega_b(x_i)\rangle
	\right|
	\le C_kt.
	\]
Because the harmonic forms are smooth and bounded on the compact
manifold $M^n$, Hoeffding's inequality applies to the scalar functions
\[
x\longmapsto\langle\omega_a(x),\omega_b(x)\rangle
\]
and gives
\[
\langle v_{\omega_a},v_{\omega_b}\rangle_m\longrightarrow\delta_{ab}
\]
with exponentially high probability.	
	
	Since $\Hh^k(M^n)$ is finite-dimensional, all its $C^4$- and
	$L^2(\mu)$-norms are equivalent. Applying
	Theorem~\ref{thm:empiconvergence} to a fixed basis and intersecting
	the finitely many resulting events therefore implies
	\[
	\sup_{\substack{\omega\in\mathcal H^k(M^n)\\
			\|\omega\|_{L^2 (\mu)}=1}}
	\left\|
	\widehat\Delta^k_{t,S_m}
	\bigl(P^{(k)}\omega|_{S_m}\bigr)
	\right\|_m
	\longrightarrow0.
	\]
	Together with the Gram-matrix convergence, this yields
	\[
	\sup_{\substack{v\in V_{m,k},\\ \|v\|_m=1}}
	\|\widehat\Delta^k_{t,S_m}v\|_m
	\longrightarrow0.
	\]
	
	If the spectral subspace of
	$\widehat\Delta^k_{t,S_m}$ corresponding to
	$(-\eta_k,\eta_k)$ had dimension less than $b_k$, there would be a
	unit vector $v\in V_{m,k}$ orthogonal to that spectral subspace.
	Self-adjointness and the spectral theorem would then give
	\[
	\|\widehat\Delta^k_{t,S_m}v\|_m\ge\eta_k,
	\]
	contradicting the preceding estimate. Thus the empirical cluster
	contains at least $b_k$ eigenvalues, counted with multiplicity.

\textit{Conclusion of Step 1.}

Since
\[
\lambda_1^{(k)}
= \cdots
 = \lambda_{b_k}^{(k)}
= 0,
\qquad
\lambda_{b_k+1}^{(k)}
=
\lambda_{+,k},
\]
it follows, with
\[
\eta_k=\frac{\lambda_{+,k}}2,
\]
that, with probability tending to $1$,
\[
|\widehat\lambda_{m,j}^{(k)}|<\eta_k,
\qquad
1\le j\le b_k,
\]
and
\[
\widehat\lambda_{m,b_k+1}^{(k)}>\eta_k.
\]
Since the empirical eigenvalues are ordered, exactly $b_k$
eigenvalues lie in $(-\eta_k,\eta_k)$, counted with multiplicity.

\

\underline{Step 2.} {\it Alignment in transported $L^2$.}

Let
\[
R_m^k:\Hh^k(M^n)\longrightarrow\Hh_{m,k}
\]
be the empirical restriction map defined by
\[
(R_m^k\omega)(x_i) \coloneqq P_{x_i}^{(k)}\omega(x_i),
\]
and let $Q_m^k$ be the discrete $L^2$-orthogonal spectral projection
onto the empirical cluster corresponding to $(-\eta_k,\eta_k)$. On the
high-probability event from Step~1, this projection has rank $b_k$. We
now prove its alignment with the empirical restriction map directly.

For every $\omega\in\Hh^k(M^n)$, the spectral theorem gives
\[
\eta_k \left\| (I-Q_m^k)R_m^k\omega \right\|_m
\le
\left\| \widehat\Delta^k_{t,S_m} (I-Q_m^k)R_m^k\omega \right\|_m.
\]
Since $Q_m^k$ is a spectral projection, it commutes with $\widehat\Delta^k_{t,S_m}$. Because $(I-Q_m^k)$ is an orthogonal projection (and thus has operator norm bounded by 1), we obtain
\[
\left\| \widehat\Delta^k_{t,S_m} (I-Q_m^k)R_m^k\omega \right\|_m
=
\left\| (I-Q_m^k)\widehat\Delta^k_{t,S_m}R_m^k\omega \right\|_m
\le
\left\| \widehat\Delta^k_{t,S_m}R_m^k\omega \right\|_m.
\]
The uniform residual estimate established in Step~1 gives
\[
\sup_{\substack{\omega\in\Hh^k(M^n)\\
\|\omega\|_{L^2(\mu)}=1}}
\|\widehat\Delta^k_{t,S_m}R_m^k\omega\|_m\longrightarrow0.
\]
Because $\eta_k>0$ is fixed, this implies
\[
\left\| (I-Q_m^k)R_m^k \right\|_{\Hh^k(M^n)\to\Hh_{m,k}} \longrightarrow 0
\]
in probability.

Since $\Hh^k(M^n)$ is finite-dimensional and the discrete Gram matrices converge to the continuum Gram matrix,
\[
(R_m^k)^*R_m^k \longrightarrow I_{\Hh^k(M^n)}
\]
in operator norm.

Define
\[
J_m^k\coloneqq Q_m^kR_m^k.
\]
Then
\[
\begin{aligned}
	(J_m^k)^*J_m^k
	&=
	(R_m^k)^*Q_m^kR_m^k\\
	&=
	(R_m^k)^*R_m^k - (R_m^k)^*(I-Q_m^k)R_m^k \longrightarrow I_{\Hh^k(M^n)}
\end{aligned}
\]
in operator norm. Hence $J_m^k$ is injective for all sufficiently large $m$. Since
\[
\dim\Hh^k(M^n)=\dim\operatorname{Ran}Q_m^k=b_k,
\]
it follows that
\[
\operatorname{Ran}J_m^k=\operatorname{Ran}Q_m^k.
\]

Define
\[
U_m^{k,\mathrm{disc}} \coloneqq J_m^k \bigl((J_m^k)^*J_m^k\bigr)^{-1/2}.
\]
Then $U_m^{k,\mathrm{disc}}$ is an isometric isomorphism from $\Hh^k(M^n)$ onto $\operatorname{Ran}Q_m^k$. Moreover,
\[
\begin{aligned}
	\|U_m^{k,\mathrm{disc}}-R_m^k\|_{\op}
	&\le
	\|J_m^k-R_m^k\|_{\op}\\
	&\quad+
	\|J_m^k\|_{\op} \left\| \bigl((J_m^k)^*J_m^k\bigr)^{-1/2}-I \right\|_{\op} \longrightarrow0.
\end{aligned}
\]

Indeed,
\[
\|J_m^k-R_m^k\|_{\op} = \|(I-Q_m^k)R_m^k\|_{\op}\longrightarrow0,
\]
while $(J_m^k)^*J_m^k\to I$ implies, by continuous functional calculus,
\[
\bigl((J_m^k)^*J_m^k\bigr)^{-1/2}\longrightarrow I.
\]
This completes the proof of Theorem \ref{thm:harmonic_cluster}.

\end{proof}

\section{Recovering the Pontryagin classes and periods}\label{sec:learn}

In this section, we keep the assumption that $(M^n,g)$ is a closed
orientable $C^3$-smooth submanifold of dimension $n \ge 2$ of $\mathbb R^d$, that $\mu$ is the
uniform distribution on $M^n$. We apply the results of
the previous sections to show that, from $\mu$-i.i.d. point clouds, one
can consistently recover 
the Pontryagin characteristic forms and
periods (Theorems~\ref{thm:pontryagin} and
\ref{thm:persistent_cycles}).

For $p \in M^n$ define  $\Om_p \in  \Lambda ^2  T_p M^n \otimes \mathfrak{so} (T_p M^n)$ by
$$ \la  \Om _p  (X, Y)  Z, W \ra  =  R_p  (X, Y, Z, W),$$
 and  define $\tilde  \Om_p \in \Lambda ^2  \R^d \otimes \End (\R^d)$ by:
$$ \la \tilde \Om _p  (X, Y)  Z, W \ra  = \tilde R_p  (X, Y, Z, W).$$
Similarly, we define  its empirical version by
$$\la (\widehat {\Om}_{t, S_m})_ p (X, Y) Z, W \ra  = (\widetilde{\hat  R_{t, S_m}})_p (X, Y, Z, W).$$

\begin{theorem}[\bf Consistency of empirical Pontryagin forms and periods]
	\label{thm:pontryagin}
	
	\
	Let $(M^n,g)\subset\R^d$ be a closed orientable
	$C^3$-smooth submanifold of dimension $n\ge 2$ and assume that
	\[
	t=m^{-1/(2n)}.
	\]
	For $1\le r\le\lfloor n/4\rfloor$, let
	$\widetilde{p_r(\Om)}$ denote the ambient extension of the intrinsic
	$r$-th Pontryagin form. Let
	\[
	p_r(\widehat\Omega_{t,S_m})
	\]
	be the ambient $4r$-form obtained by applying the universal
	Chern--Weil polynomial to the empirical curvature $2$-form
	$\widehat\Omega_{t,S_m}$, defined by
	\[
	\left\langle
	\widehat\Omega_{t,S_m,p}(X,Y)Z,W
	\right\rangle
	=	
	\widetilde{(\widehat R_{t,S_m})_p}(X,Y,Z,W).
	\]
	
	Then there exists $C_r>0$, depending only on the geometry of $M^n$
	and on $r$, such that, for all sufficiently small $t$, with
	probability at least $1-3m^{-2/n}$,
	\begin{equation}
		\sup_{p\in M^n}
		\left\|
		p_r(\widehat\Omega_{t,S_m})(p)
		-		
		\widetilde{p_r(\Om)}(p)
		\right\|_{\mathrm{comass}}
		\le C_r\sqrt t.
		\label{eq:pontryagin_rate}
	\end{equation}
	Consequently, for every smooth singular $4r$-cycle $Z$ in $M^n$,
	\begin{equation}
		\left|
		\int_Zp_r(\widehat\Omega_{t,S_m})
		-		
		\int_Zp_r(\Om)
		\right|
		\le
		C_r\mathbf M(Z)\sqrt t.
	\end{equation}
\end{theorem}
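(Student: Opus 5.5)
The estimate is a perturbation argument built on Theorem~\ref{thm:Rcurv}(1). The Pontryagin form is a fixed polynomial in the curvature, and that polynomial is Lipschitz on bounded sets.

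First, I write the $r$-th Pontryagin form through the universal Chern--Weil construction. Let $P_r$ be the $\mathrm{Ad}$-invariant polynomial of degree $2r$ on $\mathfrak{so}$, given up to the normalizing constant $(2\pi)^{-2r}$ by the coefficient of $\lambda^{2r}$ in $\det(I+\lambda A)$. Then
\[
p_r(\Omega)=P_r(\Omega,\ldots,\Omega),
\]
where the $2$-form parts are wedged and the endomorphism parts are multiplied and traced. Concretely, $p_r(\Omega)(X_1,\ldots,X_{4r})$ is a finite, universal, alternating sum over permutations of products of $2r$ factors $\Omega(X_{\sigma(2i-1)},X_{\sigma(2i)})$, followed by a trace or determinant-type contraction in the fiber. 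I apply the same formula to the ambient extensions $\tilde\Omega_p$ and $(\widehat\Omega_{t,S_m})_p$, which are elements of $\Lambda^2\R^d\otimes\End(\R^d)$, with traces taken over $\R^d$.

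Second, I check that the ambient formula applied to $\tilde\Omega_p$ reproduces $\widetilde{p_r(\Omega)}(p)$. This holds because $\tilde\Omega_p(X,Y)=\Omega_p(\Pi_pX,\Pi_pY)$ is supported on $T_pM^n$ in its endomorphism slot and vanishes on the normal space. Hence $\operatorname{tr}_{\R^d}$ of any product of such endomorphisms equals $\operatorname{tr}_{T_pM}$ of the product. The resulting form is the pullback of $p_r(\Omega)_p$ by $\Pi_p$, which is exactly the ambient extension.

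Third, I establish a Lipschitz estimate. The map $\Theta\mapsto P_r(\Theta,\ldots,\Theta)$ is a homogeneous polynomial of degree $2r$ in the coefficients of $\Theta$. By telescoping,
\[
\|P_r(\widehat\Omega)-P_r(\tilde\Omega)\|\le C_{d,r}\,\bigl(\|\widehat\Omega\|+\|\tilde\Omega\|\bigr)^{2r-1}\,\|\widehat\Omega-\tilde\Omega\|,
\]
in any norm on $4$-tensors. The comass on the finite-dimensional space $\Lambda^{4r}(\R^d)^*$ is equivalent to the operator norm, with constants depending only on $d$ and $r$. Since $\|\widehat\Omega-\tilde\Omega\|\lesssim\|\widetilde{\hat R}-\tilde R\|_{\mathrm{op}}$, Theorem~\ref{thm:Rcurv}(1) gives
\[
\sup_p\|\widehat\Omega_p-\tilde\Omega_p\|\le C\sqrt t
\]
on an event of probability at least $1-3m^{-2/n}$. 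On the same event $\widehat\Omega$ is uniformly bounded, because $\tilde R$ is bounded by $\|B\|^2\le\tau_M^{-2}$ and the difference is $O(\sqrt t)$. This gives \eqref{eq:pontryagin_rate}.

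Fourth, I deduce the period bound. For a smooth $4r$-cycle $Z$, the tangent $4r$-vectors of $Z$ lie in $\Lambda^{4r}TM\subset\Lambda^{4r}\R^d$. The defining property of comass and mass gives
\[
\Bigl|\int_Z\alpha\Bigr|\le\mathbf M(Z)\sup_p\|\alpha_p\|_{\mathrm{comass}}.
\]
I apply this to $\alpha=p_r(\widehat\Omega)-\widetilde{p_r(\Omega)}$. On $Z\subset M$ the ambient extension restricts to $p_r(\Omega)$, so $\int_Z\widetilde{p_r(\Omega)}=\int_Z p_r(\Omega)$.

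The main obstacle is the identification in the second step. It needs care with how the empirical curvature acts on the normal directions. The empirical $\widehat\Omega$ is built from $\widetilde{\hat B^{sym}}$, which uses $\hat\Pi$ rather than $\Pi$, so it need not vanish exactly on normal vectors. The traces are therefore genuinely ambient, and their agreement with the intrinsic Pontryagin form only comes from the $O(\sqrt t)$ perturbation bound, not from an exact fiber reduction. I will handle this by defining $p_r(\widehat\Omega)$ through ambient traces, and only using exact reduction for the true $\tilde\Omega$.
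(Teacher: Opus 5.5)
Your proposal is correct and follows the paper's argument. The paper likewise writes $p_r$ as a fixed invariant multilinear polynomial $P_r$, telescopes $P_r(\widehat\Omega,\ldots,\widehat\Omega)-P_r(\widetilde\Omega,\ldots,\widetilde\Omega)$ slot by slot, invokes Theorem~\ref{thm:Rcurv} for the $C\sqrt t$ curvature bound and the resulting uniform boundedness of $\widehat\Omega$ on the same event of probability at least $1-3m^{-2/n}$, and concludes with the mass--comass inequality; your second step, showing that ambient traces of $\widetilde\Omega_p$ reduce to traces over $T_pM^n$, makes explicit an identification the paper uses tacitly.
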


\begin{proof}
	The $r$-th Pontryagin form is obtained from a universal homogeneous
	invariant polynomial $P_r$ of degree $2r$ in the curvature
	$2$-form:
	\[
	p_r(\Omega)=P_r(\Omega,\ldots,\Omega).
	\]
	By the telescoping identity for multilinear polynomial,
	\begin{align*}
		&P_r(\widehat\Omega,\ldots,\widehat\Omega)
		-	
		P_r(\widetilde\Omega,\ldots,\widetilde\Omega)
				&\qquad=
		\sum_{q=1}^{2r}
		P_r(\widehat\Omega,\ldots,\widehat\Omega,
		\widehat\Omega-\widetilde\Omega,
		\widetilde\Omega,\ldots,\widetilde\Omega),
	\end{align*}
	where the difference occupies the $q$-th slot.
	
	The true curvature is uniformly bounded on $M^n$. Moreover, by
	Theorem~\ref{thm:Rcurv}, with probability at least
	$1-3m^{-2/n}$,
	\[
	\sup_{p\in M^n}
	\|
	\widehat\Omega_{t,S_m,p}-\widetilde\Omega_p
	\|_{\mathrm{op}}
	\le C\sqrt t.
	\]
	It follows that $\widehat\Omega_{t,S_m}$ is also uniformly bounded
	on the same event. Since $P_r$ is a fixed multilinear polynomial,
	\[
	\sup_{p\in M^n}
	\left\|
	p_r(\widehat\Omega_{t,S_m})(p)
	-	\widetilde{p_r(\Om)}(p)
	\right\|_{\mathrm{comass}}
	\le C_r\sqrt t.
	\]
	
	Finally, the mass-comass inequality gives
\[
	\left|
	\int_Z
	\left(
	p_r(\widehat\Omega_{t,S_m})-p_r(\Om)
	\right)
	\right|
	\le
	\mathbf M(Z)
	\sup_{p\in M}
	\left\|
	p_r(\widehat\Omega_{t,S_m})(p)
	-
	\widetilde{p_r(\Om)}(p)
	\right\|_{\mathrm{comass}},
\]
	which proves the asserted estimate.
\end{proof}

To evaluate the empirical characteristic numbers, we require not only the empirical Pontryagin forms but also empirical domains of integration. In Topological Data Analysis (TDA), the topology of the underlying manifold $M^n$ can be recovered from the point cloud $S_m$ by constructing a geometric simplicial complex, such as the \v{C}ech or Vietoris-Rips complex, at an appropriate proximity scale. Foundational results by Niyogi, Smale, and Weinberger \cite{NSW2008} guarantee that, with high probability, the homology of this empirical complex is isomorphic to the homology of $M^n$ for sufficiently dense samples.

However, integrating differential forms requires a specific geometric representative (a chain or cycle) rather than an abstract homology class. While persistent homology algorithms return specific simplicial generator cycles $Z_{t, S_m}^{4r}$, establishing analytic convergence of the integrals requires these empirical cycles to converge geometrically to a true smooth cycle $Z^{4r}$ in $M^n$. The rigorous framework for evaluating the convergence of integration domains is the theory of integral currents and the Whitney flat norm, introduced by Federer and Fleming \cite{Federer1969}.

Crucially, flat norm convergence alone permits sequences of cycles with highly oscillating, zig-zagging boundaries whose total $4r$-dimensional volume (mass) diverges to infinity. If the mass is unbounded, the $O(\sqrt{t})$ pointwise error of the empirical forms $\hat{R}_{t, S_m}$ will be magnified to infinity during integration. Therefore, we must constrain the sequence to have uniformly bounded
mass. Such representatives can be sought through the Optimal Homologous
Cycle problem, a discrete Plateau-type minimization problem
\cite{DHK2011}. The optimization is a linear program for real chains and
for integral chains when the relevant boundary matrix is totally
unimodular; in general dimensions, the integral problem may require
integer linear programming. 

Motivated by these geometric and topological guarantees, we formulate the convergence of empirical characteristic numbers over persistent cycles.

\begin{theorem}[\bf Empirical characteristic numbers from persistent cycles]\ \label{thm:persistent_cycles}
	Assume the hypotheses of Theorem \ref{thm:pontryagin}.  Let $U=U_\rho(M^n)$, where $0<\rho<\tau_{M^n}$, be a tubular neighborhood of $M^n$  in $\R^d$   with nearest  projection $\pi:  U \to M^n$. 
Let $Z_{t,S_m}^{4r}$ be a family of simplicial cycles in        $U$  obtained from a persistent homology reconstruction of the point cloud $S_m$.   Assume that, as $m\to\infty$, the random currents
$Z_{t,S_m}^{4r}$ converge in the Whitney flat norm to a smooth singular
cycle $Z^{4r}\subset M^n$ in probability, and that their masses are
uniformly bounded in probability as  $m \to \infty$  (equivalently, as $t \to 0$). For every $\varepsilon>0$ there exists $K>0$ such that, for all
 sufficiently large $m$,
 \[
 \mu^m\{S_m:\mathbf M(Z_{t,S_m}^{4r})\le K\}\ge 1-\varepsilon.
 \]

	Let
\[
\widehat\omega_{t,S_m}\coloneqq\pi^*p_r(\widehat\Omega_{t,S_m}),
\qquad
\omega\coloneqq\pi^*p_r(\Omega),
\]
be the extensions of the empirical and true Pontryagin forms to $U$,
respectively. Then
	\[
	\int_{Z_{t,S_m}^{4r}} \hat{\omega}_{t, S_m} \longrightarrow \int_{Z^{4r}} \omega
	\]
	in probability as $m\to\infty$.
\end{theorem}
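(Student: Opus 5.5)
Split the error via the triangle inequality into a form-error term and a domain-error term:
\[
\int_{Z_{t,S_m}^{4r}}\widehat\omega_{t,S_m}-\int_{Z^{4r}}\omega
=\int_{Z_{t,S_m}^{4r}}\bigl(\widehat\omega_{t,S_m}-\omega\bigr)
+\Bigl(\int_{Z_{t,S_m}^{4r}}\omega-\int_{Z^{4r}}\omega\Bigr).
\]
For each term I show that, given $\varepsilon>0$, it is at most $\varepsilon$ outside an event of probability at most $\varepsilon$ for all large $m$.

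\emph{Form-error term.} I apply the mass--comass inequality to the pulled-back forms on $U$. Since $\pi:U\to M^n$ is the nearest-point projection with $\rho<\tau_{M^n}$, it is $C^1$ on $U$ (shrinking $\rho$ slightly if needed to get uniformity up to the boundary). Hence $\|d\pi\|_{\mathrm{op}}\le L$ on $U$, and $\|\pi^*\alpha\|_{\mathrm{comass}}\le L^{4r}\sup_M\|\alpha\|_{\mathrm{comass}}$. Here one has to take care that $\pi^*$ is applied to the ambient form restricted to tangent directions; since $d\pi_z$ takes values in $T_{\pi(z)}M^n$, where $\widetilde{p_r(\Omega)}$ restricts to $p_r(\Omega)$, this is consistent. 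Theorem~\ref{thm:pontryagin} then gives $\sup_U\|\widehat\omega_{t,S_m}-\omega\|_{\mathrm{comass}}\le L^{4r}C_r\sqrt t$ with probability at least $1-3m^{-2/n}$. On the intersection of this event with $\{\mathbf M(Z_{t,S_m}^{4r})\le K\}$, the first term is at most $KL^{4r}C_r\sqrt t\to0$.

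\emph{Domain-error term.} I use flat-norm duality. If $\|Z_{t,S_m}^{4r}-Z^{4r}\|_{\flat}<\eta$, write $Z_{t,S_m}^{4r}-Z^{4r}=A+\partial B$ with $\mathbf M(A)+\mathbf M(B)<\eta$, with $A,B$ supported in $U$ (or in a slightly larger tube where $\pi$ is still defined; this localization is the point to check). Then
\[
\Bigl|\int_{Z_{t,S_m}^{4r}}\omega-\int_{Z^{4r}}\omega\Bigr|\le \mathbf M(A)\sup\|\omega\|_{\mathrm{comass}}+\mathbf M(B)\sup\|d\omega\|_{\mathrm{comass}}.
\]
The key observation is that $d\omega=\pi^*dp_r(\Omega)=0$, because Pontryagin forms are closed by Chern--Weil theory. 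So only the $\mathbf M(A)$ part survives, and it is bounded by $\eta\,L^{4r}\sup\|p_r(\Omega)\|$. Convergence in probability of the flat distance makes this small with high probability.

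\emph{Combining the estimates.} Fix $\varepsilon$, choose $K$ from the mass-tightness hypothesis, and take a union bound over the three events (curvature, mass, flat distance). This gives probability at least $1-3\varepsilon$ for large $m$ that the total error is below $\varepsilon$, which is the claim.

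\emph{Main obstacle.} The main obstacle is the flat-norm step. It requires that the decomposition $A+\partial B$ can be taken inside the domain of $\pi$, and that the comass bound for pullbacks under $\pi$ is uniform. For the first, I would try to retract any decomposition into $U$ via a Lipschitz retraction of a neighbourhood onto $\overline{U_{\rho'}}$ with $\rho'<\rho$, using that the currents $Z_{t,S_m}^{4r}$ and $Z^{4r}$ already lie in $U$. Alternatively, I would define the flat norm relative to $U$ from the outset.
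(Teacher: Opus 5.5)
Your proposal is correct and follows essentially the same route as the paper. The paper makes the same split into a form error and a cycle error. The form error is bounded by $\mathbf M(Z_{t,S_m}^{4r})$ times the comass of $\widehat\omega_{t,S_m}-\omega$, which is controlled by $\sup_U\|\Lambda^{4r}d\pi\|_{\mathrm{op}}$ and Theorem~\ref{thm:pontryagin}, and then combined with mass tightness. The cycle error is bounded by the flat-norm inequality together with $d\omega=\pi^*dp_r(\Omega)=0$.

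The localization issue you flag is not addressed in the paper, which applies $|(Z_m-Z)(\omega)|\le\mathcal F(Z_m-Z)(\|\omega\|_\infty+\|d\omega\|_\infty)$ directly. It is settled by replacing $\omega$ with $\phi\omega$, where $\phi\equiv1$ near the supports, assuming as you do that these stay in a fixed smaller tube. Then $\|d(\phi\omega)\|_\infty=\|d\phi\wedge\omega\|_\infty<\infty$ suffices. A retraction of a neighbourhood by itself would not control the parts of $A$ and $B$ lying outside that neighbourhood.
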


\begin{proof}
	We treat the cycles $Z_m \coloneqq Z_{t,S_m}^{4r}$ and $Z \coloneqq Z^{4r}$ as integral currents in $\R^d$. Using the triangle inequality, we split the integration error into two components:
	\begin{align}
		\left| \int_{Z_m} \hat{\omega}_{t, S_m} - \int_Z \omega \right| &\le \underbrace{\left| \int_{Z_m} (\hat{\omega}_{t, S_m} - \omega) \right|}_{\text{Form Error (I)}} + \underbrace{\left| \int_{Z_m} \omega - \int_Z \omega \right|}_{\text{Cycle Error (II)}}. \label{eq:persist_split}
	\end{align}
	
	\medskip
	\noindent
	{\it Bounding Term (I):}
	By the definition of integration over a current, the first term is bounded by the mass of the cycle times the uniform ($C^0$) norm of the differential form difference:
	\[
	\text{(I)} \le   \mathbf M(Z_m) \cdot \sup_{x \in U} \| \hat{\omega}_{t, S_m}(x) - \omega(x) \|_{\Lambda^{4r}(\R^d)^*}.
	\]

Let
\[
C_{\pi,r}
\coloneqq
\sup_{z\in U}\|\Lambda^{4r}d\pi_z\|_{\mathrm{op}}<\infty.
\]
Then
\[
\sup_{z\in U}
\|\widehat\omega_{t,S_m}(z)-\omega(z)\|_{\mathrm{comass}}
\le
C_{\pi,r}
\sup_{p\in M^n}
\|p_r(\widehat\Omega_{t,S_m})(p)-\widetilde{p_r(\Omega)}(p)\|_{\mathrm{comass}}.
\]
By Theorem~\ref{thm:pontryagin}, the last supremum is bounded by
$C_r\sqrt t$ with probability at least $1-3m^{-2/n}$.

To prove convergence of Term~{\rm(I)}, let $\varepsilon,\eta>0$.
Uniform boundedness of the masses in probability gives $K>0$ such that,
for all sufficiently large $m$,
\[
\mu^m\{\mathbf M(Z_m)>K\}<\eta/2.
\]
For all sufficiently large $m$, we also have
$KC_{\pi,r}C_r\sqrt t<\varepsilon$, while the failure probability in
Theorem~\ref{thm:pontryagin} is smaller than $\eta/2$. Hence
\[
\mu^m\{\text{Term {\rm(I)}}>\varepsilon\}<\eta,
\]
which proves that Term~{\rm(I)} converges to zero in probability.
	
	\medskip
	\noindent
	{\it Bounding Term (II):}
	This term evaluates the fixed smooth form $\omega$ on the converging
	sequence of cycles. By assumption,
	$\mathcal F(Z_m-Z)\to0$ in probability. The flat-norm inequality gives
	\[
	|(Z_m-Z)(\omega)|
	\le
	\mathcal F(Z_m-Z)
	\bigl(\|\omega\|_\infty+\|d\omega\|_\infty\bigr).
	\]
	The Chern--Weil form $p_r(\Omega)$ is closed, and therefore
	\[
	d\omega=d\bigl(\pi^*p_r(\Omega)\bigr)=\pi^*d p_r(\Omega)=0.
	\]
	It follows that Term~{\rm(II)}
	converges to zero in probability.
	
	\medskip
	Combining the convergence of Term (I) and Term (II), the sum goes to $0$ in probability, completing the proof.
\end{proof}
\begin{remark}[\bf Computational aspects of minimum-volume cycles]\label{rem:cycle_computation}
	While Theorem \ref{thm:persistent_cycles} requires the sequence of empirical persistent cycles $Z_{t, S_m}^{4r}$ to have uniformly bounded mass, finding such a minimum-volume representative within a homology class is fundamentally an optimization problem. For 1-cycles, the simplicial boundary matrix is totally unimodular, allowing the minimum-mass integral cycle to be found efficiently in polynomial time via linear programming \cite{DHK2011}. 
	
	However, for the $4r$-dimensional cycles required for Pontryagin numbers, the boundary matrix generally loses this total unimodularity, making the strict search for an optimal \emph{integral} cycle an NP-hard integer linear programming (ILP) problem. From a geometric perspective, this computational hurdle can be elegantly bypassed in two practical ways:
	\begin{enumerate}
		\item {\it Real Chains:} The integration of differential forms is well-defined over chains with real coefficients. By dropping the integer constraint, one can solve the continuous linear program (LP) relaxation in polynomial time.  The resulting minimum-mass real cycle is a natural candidate for
		satisfying the bounded-mass and flat-convergence hypotheses of the
		theorem. 
		\item {\it Top-Dimensional Fundamental Classes:} If the dimension of the manifold is exactly $n = 4r$, the topological invariant is the classical Pontryagin number evaluated over the entire manifold. In this case, the integration domain $Z_{t,S_m}^{n}$ is simply the fundamental class of the reconstructed complex, represented by the oriented sum of all top-dimensional simplices. This requires no optimization algorithm; the bounded-mass hypothesis is satisfied whenever the total $n$-dimensional volumes of the reconstructed complexes are uniformly bounded.
	\end{enumerate}
\end{remark}

\section{Conclusion and Final Remarks}\label{sec:fin}

In this paper, we have developed a rigorous framework for empirical
Hodge theory on closed submanifolds from uniformly sampled point-cloud
data. We constructed consistent empirical estimators of the tangent
projection, the second fundamental form, the Riemannian curvature
tensor, the Weitzenb\"ock curvature endomorphisms, and the Hodge
Laplacians. We also established consistent recovery of the Pontryagin
forms and the associated periods considered in this paper.

By combining geometric cutoff constructions with empirical-process
estimates for parametrized Lipschitz classes, we proved uniform
consistency of the geometric estimators and compact Mosco convergence
of the empirical Hodge quadratic forms. Consequently, the empirical
spectral cluster near zero recovers the Betti numbers and converges to
the corresponding harmonic spaces in the transported discrete
$L^2$-sense.

Although the Nystr\"om extension is not needed for the spectral
convergence theorem, every empirical eigenform in a bounded spectral
cluster admits an exact extension to a continuous
$\Lambda^k\mathbb R^d$-valued section on $M^n$. Establishing
uniform $C^0$-convergence of these extensions requires additional
low-energy regularity estimates and is left for future work.

A natural next direction is to recover the real homotopy type of a
closed submanifold $M^n\subset\mathbb R^d$, initially under the
simplifying assumption
\[
H^1(M^n;\mathbb R)=0.
\]
This condition is often referred to as cohomological simple
connectivity; in the real Sullivan homotopy category, it yields a
simply connected, and hence nilpotent, real homotopy type. The
algebraic structures developed in
\cites{Merkulov1999,FKLS2021,FL2025,Le2026}
provide a natural framework for this problem.

Chen's iterated-integral theory \cite{Chen1977} suggests a related connection with path
and loop spaces. Iterated integrals also underlie path signatures in
rough-path theory and provide a hierarchy of nonlinear features for
sequential data \cite{Lyons1998}. A possible direction is therefore to investigate
empirical iterated integrals and transferred higher operations on
low-energy empirical differential forms. Developing this connection
would first require suitable product, regularity, and stability results
beyond those proved in the present paper.

\begin{remark}  The scalar graph Laplacian of Belkin--Niyogi \cites{BN2003,BN2008} and the connection Laplacian of Singer--Wu \cites{SW2012,SW2017} are both rooted in heat-kernel approximations \cites{BGV1996,Rosenberg1997} of the intrinsic Laplacian. In contrast, our extrinsic Gaussian deformation bypasses the heat kernel and directly exploits the Euclidean geometry of the ambient space.
\end{remark}

\begin{remark}[\bf Relation with Singer--Wu connection Laplacians]
	The spectral convergence theorem of Singer and Wu for the connection
	Laplacian provides another possible route to empirical Hodge theory.
	Their construction applies not only to tangent vector fields but, more
	generally, to sections of vector bundles equipped with a connection.
	Applied to the exterior bundle $\Lambda^kT^*M$, it yields an empirical
	approximation of the rough Laplacian $\nabla^*\nabla$ on $k$-forms.
	
	In our sign convention, the Bochner--Weitzenb\"ock formula reads
	\[
	\Delta^k=\nabla^*\nabla-\mathcal R_k,
	\]
	where $\mathcal R_k$ is the algebraic curvature term. Therefore, once
	the Riemannian curvature tensor has been recovered from the point cloud,
	as in Theorem~\ref{thm:Rcurv}, one can also construct
	\[
	\widehat\Delta^k_{SW,t,S_m}
	\coloneqq
	\widehat{\nabla^*\nabla}_{SW,t,S_m}
	-
	\widehat{\mathcal R}_{k,t,S_m}.
	\]
	Thus, one might expect that spectral convergence of the Singer--Wu
	empirical connection Laplacian $\widehat{\nabla^*\nabla}_{SW,t,S_m}$,
	combined with the curvature convergence established in
	Theorem~\ref{thm:Rcurv}, would yield spectral convergence of the
	empirical Hodge Laplacian $\widehat\Delta^k_{SW,t,S_m}$. We regard this
	as a conjecture rather than an immediate consequence: the Singer--Wu
	construction and ours arise from genuinely different approximation
	schemes, and combining their spectral-convergence statement with our
	operator-norm estimate for $\widehat{\mathcal R}_{k,t,S_m}$ would
	require reconciling the sampling regimes and notions of convergence
	native to each framework.

	This suggests an alternative route to recovering harmonic forms.
	 The point at which our approach complements Singer--Wu is the
	direct recovery of the curvature tensor: estimating curvature from
	random samples is substantially more delicate than estimating tangent
	spaces or parallel transport, and our second-fundamental-form estimator
	provides the missing curvature input for the Weitzenb\"ock correction.
	
	Furthermore, while estimating empirical parallel transport across a point cloud graph involves aligning local tangent spaces via orthogonal Procrustes problems, our ambient projection framework directly yields the diffusion operator via matrix multiplication in $\mathbb{R}^d$, offering a distinct computational alternative.
\end{remark}

\begin{remark}[\bf Comparison with Cao et al. \cite{CLS2021}]
	In Riemannian geometry, the Weingarten map (shape operator) $A_\xi$ and the second fundamental form $B$ are related by metric duality (see \eqref{eq:shapeoperator}). Thus, estimating one effectively provides the other. However, Cao et al. use a two-step regression approach to estimate the Weingarten map. First, they estimate the tangent and normal spaces via Local PCA. Then, they fit the Weingarten map components by least-squares/quadratic regression of the normal displacements against the tangent coordinates. 
	
	Our method is an integral-based estimator. It extracts the curvature tensor directly through the first moment of the Gaussian kernel applied to the displacement vectors $(y-x) \otimes \omega(y)$. Integral estimators are inherently more stable under 
	high-frequency noise and do not require explicit local surface reconstruction or regression steps.  
\end{remark}

\begin{remark}[\bf Further Possible Generalizations] 
	\begin{enumerate}
		\item In a forthcoming paper, we shall extend the results of the present work to the setting of an arbitrary ambient Riemannian manifold of bounded geometry.
		\item It is possible to extend the method of this paper to learn the Dirac operator on a closed submanifold $M^n \subset \R^d$ that admits a spin structure. A necessary first step in this direction is recognizing whether $M^n$ admits a spin structure purely via point cloud data (e.g., via the vanishing of the empirical second Stiefel-Whitney class).
	
	\item By Novikov's theorem, rational Pontryagin classes are topological invariants, and the de Rham cohomology ring is a homotopy invariant. Therefore, we conjecture  that our framework can be extended to robustly recover both the Pontryagin classes and the cohomology ring even from point clouds corrupted by ambient noise.  The present framework still relies on strong smoothness and sampling assumptions, and extending these results to singular or highly noisy geometric settings remains an important open problem.
	\end{enumerate}
\end{remark}

\begin{remark}[\bf Computational Complexity and Low-Rank Implementations]\
	\label{rem:computational_complexity}
	While the extension of the empirical Hodge Laplacian $\hat{\Delta}_{t, S_m}^k$ to the ambient space $\Lambda^k \mathbb{R}^d$ yields significant theoretical and algebraic simplifications, a naive numerical implementation faces the ``curse of dimensionality.'' The dimension of the ambient exterior space is $\binom{d}{k}$. For a point cloud of $m$ samples, the explicit global Laplacian matrix would be of size $m\binom{d}{k} \times m\binom{d}{k}$, which becomes computationally intractable to store or diagonalize for large $d$.
	
	However, the empirical operator is highly degenerate by construction. The projection $\hat{\Pi}_x$ strictly constrains the active geometry to the $n$-dimensional empirical tangent space, meaning the local rank of the operator acting on $k$-forms is strictly bounded by $\binom{n}{k}$. 
	
	To efficiently compute the spectrum (e.g., the harmonic forms) in practice, one must avoid explicitly constructing the global matrix by employing matrix-free iterative eigensolvers (such as the Lanczos \cite{lanczos1950}, \cite{Golub2013} algorithm or LOBPCG  \cite{Knyazev2001}). These algorithms only require the evaluation of the Matrix-Vector Product (MVP) representing the action of $\hat{\Delta}_{t, S_m}^k$ on a discrete $k$-form $v \in L^2(S_m, \Lambda^k \mathbb{R}^d)$. 
	
	The MVP can be evaluated with high efficiency using factored local projections. Let $V_x \in \mathbb{R}^{d \times n}$ be the matrix whose columns are the orthonormal basis vectors of the empirical tangent space $\hat{T}_x M^n$. The empirical projection is exactly factored as $\hat{\Pi}_x = V_x V_x^T$. By the functoriality of exterior powers, the projection on $k$-forms factors as:
	\begin{equation}
		\Lambda^k \hat{\Pi}_x = (\Lambda^k V_x)(\Lambda^k V_x)^T,
	\end{equation}
	where $\Lambda^k V_x$ is a $\binom{d}{k} \times \binom{n}{k}$ matrix. 
	
	When applying the full operator $\hat{\Delta}_{t, S_m}^k$ to $v$, one never projects the full ambient vector directly. Instead, the operation $(\Lambda^k V_x)^T v(x)$ pulls the ambient $k$-form down into the $\binom{n}{k}$-dimensional intrinsic empirical tangent space. Crucially, both the neighborhood summations for the diffusion operator $\hat{\mathcal{L}}_{t, S_m}$ and the local evaluations of the zeroth-order potentials ($\widehat{\mathcal{W}}_{t, S_m}$, $\widehat{\Rr}_{k,t,S_m}$, and $\widehat{\Rr}^{(1)}_{k,t,S_m}$) occur entirely within this vastly reduced intrinsic space. The final result is only pushed back to the ambient space via $\Lambda^k V_x$ at the very end of the operation.
	
	The fixed spatial cutoff $\chi_\delta$ restricts the interaction to a
	geometric radius graph, but since $\delta$ is independent of $m$,
	this graph need not be sparse asymptotically. Thus the exact empirical
	operator may still contain $O(m^2)$ nonzero interaction blocks.
	
	A possible sparse implementation is to retain only a growing number
	$K_m$ of nearest neighbors at each sample point and to symmetrize the
	resulting neighbor relation. A natural regime suggested by the
	Gaussian decay is
	\[
	K_m
	\asymp
	m\,t^{n/2}(\log m)^{n/2}.
	\]
	Under the scaling $t=m^{-1/(2n)}$, this gives
	\[
	K_m
	\asymp
	m^{3/4}(\log m)^{n/2},
	\]
	which is sublinear in $m$. This scaling corresponds to the expected
	number of sample points in a ball of effective radius
	\[
	\sqrt{t\log m},
	\]
	outside which the Gaussian weights are polynomially small in $m$.
	Proving that this sparsified operator has the same spectral limit as
	the exact empirical Hodge operator requires additional uniform
	Gaussian-tail and projector-perturbation estimates and is left for
	future work.
\end{remark}

\section*{Declaration of the use of AI}
  The author    acknowledges      OpenAI's  ChatGPT  (5.5 High + 5.6  Sol High),  Google's  Gemini (3.1 Pro),   Anthropic's Claude (Sonnet 5 High), and DeepSeek   for   their help  in  the  improvement  of   readability  of the exposition.  The author  is     responsible  for the correctness of the paper.

\appendix

\section{Proof of Proposition \ref{prop:empitangent_clean}}\label{sec:empitangent}

 Assume the condition of Proposition \ref{prop:empitangent_clean}. The proof proceeds in three main steps: establishing a local coordinate representation, computing the eigengap of the population covariance operator $\Sigma_{t,\delta}(p)$, and applying concentration inequalities to bound the empirical deviations.

\medskip

\noindent \underline{Step 1}. {\it Local coordinate system and Taylor expansion.}
Fix a point $p \in M^n$. By translating and rotating our coordinate system, we assume $p = 0$ and that the tangent space $T_p M^n$ is aligned with the first $n$ coordinate axes, i.e., $T_p M^n = \mathbb{R}^n \times \{0\}^{d-n}$. 
For any point $y \in D_\delta(p)$, we decompose it into its tangent and normal components: 
$$y = v + u,$$
where $v \in T_p M^n$ and $u \in (T_p^\perp M^n)$. 
Since $M^n$ is a $C^3$-smooth submanifold with reach $\tau_M$, the normal component is governed by the second fundamental form $B_p$. Specifically, for $\delta < \tau_{M^n}/4$, any $y \in D_\delta(p) \subset M^n \subset \mathbb{R}^d$ can be uniquely parameterized by its tangent projection $v$.   By \cite{AL2019}*{Lemma 1}, we have:
\begin{equation}
	u = \frac{1}{2} B_p(v, v) + R(v), \quad \text{where } \|u\| \le \frac{1}{2\tau_{M^n}}\|v\|^2 \text{ and } \|R(v)\| \le C_1 \|v\|^3. \label{eq:al}
\end{equation}
Thus,  $Du(0)=0$ and $Du(v)=O(\|v\|)$. Hence the metric induced by
the graph parametrization $v\mapsto v+u(v)$ satisfies
\[
g(v)=I+(Du(v))^\top Du(v),
\]
and therefore
\[
\sqrt{\det g(v)}=1+O(\|v\|^2).
\]
Consequently,
\[
d\mu(y)
=
\frac1{\operatorname{vol}_g(M^n)}
\bigl(1+O(\|v\|^2)\bigr)\,dv
\]
where  $dv$ is the Lebesgue measure on $T_pM^n$.

\medskip

\noindent \underline{Step 2}. {\it Analysis of the population covariance $\Sigma_{t, \delta}(p)$.}
Define the expected localized covariance matrix:

\begin{equation} \Sigma_{t, \delta}(p) \coloneqq \E_{S_m \sim \mu^m}[\Sigma_{t,S_m}(p)] = \int_{M^n} \Phi_t(p, y) (y-p)(y-p)^\top \chi_\delta (p, y)\,d\mu(y). \label{eq:expectedlcm}
\end{equation}
We decompose this $d \times d$ matrix into blocks corresponding to the tangent space $T_p M^n$ and the normal space $(T_p^\perp M^n)$:
\[ \Sigma_{t, \delta}(p)= \begin{pmatrix} \Sigma_{T} & \Sigma_{TN} \\ \Sigma_{NT} & \Sigma_{N} \end{pmatrix}. \]
For the Gaussian kernel
$\Phi_t(p,y)=(4\pi t)^{-n/2}\exp(-\|y-p\|^2/4t)$, we use
\[
\|y-p\|^2=\|v\|^2+\|u\|^2.
\]
Since $\|u\|^2\le C\|v\|^4$, the leading Gaussian behavior is
controlled by the tangent displacement $v$; see Remark~\ref{rem:extint}.
Evaluating the blocks by integrating against the Gaussian measure on $\mathbb{R}^n$:
\begin{enumerate}
	\item {\it Tangent-Tangent Block ($\Sigma_T$):} The leading term of $(y-p)(y-p)^\top$ is $v v^\top$. By the spherical symmetry of the Gaussian, $\int_{\R^n} v v^\top e^{-\|v\|^2/4t} dv = c_0 t I_n$ for some constant $c_0 > 0$. (The constant $c_0>0$ absorbs the normalizing factor
	$\vol_g(M^n)^{-1}$ coming from the probability measure
	$d\mu=d\vol_g/\vol_g(M^n)$.) Factoring in the volume distortion $d\mu(y)$ and the $O(\|v\|^4/t)$ normal component in the exponent, we obtain $\Sigma_T = c_0 t I_n + O(t^2)$. 
	\item \label{pertnorm}{\it Tangent-Normal Block ($\Sigma_{TN}$):} The integrand is $v u^\top \Phi_t(p,y)$. The leading term of $v u^\top$ is $\frac{1}{2} v B_p(v,v)^\top$, which is cubic and odd in $v$. Therefore, its leading contribution vanishes by the spherical symmetry of the Gaussian measure, leaving only higher-order even terms. Integration yields a uniformly bounded norm $\|\Sigma_{TN}\|_{\mathrm{op}} \le C_3 t^2$.
	\item {\it Normal-Normal Block ($\Sigma_N$):} The integrand is $u u^\top \Phi_t(p,y)$. Bounded by $C_4 \|v\|^4 e^{-\|v\|^2/4t}$, this yields $\|\Sigma_N\|_{\mathrm{op}} \le C_5 t^2$.
\end{enumerate}
By the Davis-Kahan $\sin\Theta$ theorem \cites{DK1970,YWS2015}, the angle between the true tangent space $\Pi_p$ and the span of the top $n$ eigenvectors of $\Sigma_{t,\delta}(p)$ (denoted $\Pi_{t,\delta}$) is bounded by the ratio of the cross-term norm to the eigengap. 

To find the eigenvalues of the full matrix $\Sigma_{t,\delta}$, we treat it as a block-diagonal matrix perturbed by the cross-terms $\Sigma_{TN}$:
\[ \Sigma_{t, \delta}(p) = \underbrace{\begin{pmatrix} \Sigma_{T} & 0 \\ 0 & \Sigma_{N} \end{pmatrix}}_{\text{Base Matrix}} + \underbrace{\begin{pmatrix} 0 & \Sigma_{TN} \\ \Sigma_{NT} & 0 \end{pmatrix}}_{\text{Perturbation } E}. \]
We know from \eqref{pertnorm} that the norm of the perturbation is $\|E\|_{\text{op}} = \|\Sigma_{TN}\|_{\text{op}} = O(t^2)$. By Weyl's Inequality \cite{HJ2012}*{Theorem 4.3.1}, the eigenvalues of the full matrix cannot differ from the eigenvalues of the base matrix by more than $\|E\|_{\text{op}}$. Looking at the two groups of eigenvalues:
\begin{itemize}
	\item The top $n$ eigenvalues ($\lambda_1 \ge \dots \ge \lambda_n$) come from $\Sigma_T$. Since $\Sigma_T = c_0 t I_n + O(t^2)$, the base eigenvalues are tightly clustered around $c_0 t$. Adding the perturbation $E$, we get:
	\[ \lambda_n \ge c_0 t - O(t^2). \]
	\item The remaining $d-n$ eigenvalues ($\lambda_{n+1} \ge \dots \ge \lambda_d$) come from $\Sigma_N$. Since $\|\Sigma_N\| = O(t^2)$, the base eigenvalues are at most $O(t^2)$. Adding the perturbation $E$ yields:
	\[ \lambda_{n+1} \le O(t^2) + O(t^2) = O(t^2). \]
\end{itemize}
The eigengap $\mathfrak{g}$ is defined as:
\[ \mathfrak{g} = \lambda_n - \lambda_{n+1} \ge \Big(c_0 t - O(t^2)\Big) - O(t^2) = c_0 t - O(t^2). \]
For a sufficiently small bandwidth $t$, the $c_0 t$ term dominates. Thus, 
\begin{equation}
	 \mathfrak{g} \ge \frac{c_0 t}{2}.\label{eq:gappolbound}
	\end{equation}
By the Davis-Kahan theorem, the population bias satisfies:
\[ \|\Pi_{t,\delta} - \Pi_p\|_{\mathrm{op}} \le \frac{\sqrt{2}\|\Sigma_{TN}\|_{\mathrm{op}}}{\mathfrak{g}} \le \frac{\sqrt{2}(C_3 t^2)}{c_0 t / 2} = O(t). \]

\medskip

\noindent \underline{Step 3}. {\it Empirical concentration via matrix Bernstein.}
We now bound the stochastic fluctuation of the empirical covariance matrix $\Sigma_{t,S_m}(p)$. The matrix $\Sigma_{t,S_m}(p)$ is the average of $m$ independent random matrices 
\begin{equation} 
	Z_j(p) \coloneqq \Phi_t(p, x_j) (x_j-p)(x_j-p)^\top \chi_\delta(p, x_j). \label{eq:Yj}
\end{equation}
To apply the matrix Bernstein inequality, we bound the operator norm and the variance of the centered random matrices 
\[ Y_j(p) \coloneqq Z_j(p) - \Sigma_{t,\delta}(p). \]
First, we bound the uncentered term $Z_j$. Through standard calculus, the maximum of $r \mapsto r^2 e^{-r^2/4t}$ is attained exactly at $\| x_j-p\|_{\mathbb{R}^d} = 2\sqrt{t}$. Assuming $2\sqrt{t} \le \delta$, this yields:
\begin{equation}
	\|Z_j(p)\|_{\mathrm{op}} \le \sup_{y \in D_\delta(p)} \Phi_t(p, y) \|y-p\|^2 = 4 e^{-1} (4\pi)^{-n/2} t^{1-n/2} \coloneqq \frac{L}{2}. \label{eq:zbound}
\end{equation}
Because matrix Bernstein requires zero-mean matrices, we bound the centered variables using the triangle inequality and Jensen's inequality:
\[ \|Y_j(p)\|_{\mathrm{op}} \le \|Z_j(p)\|_{\mathrm{op}} + \|\E_{x_j \sim \mu}[Z_j(p)]\|_{\mathrm{op}} \le 2 \sup_{x_j \in D_{\delta}(p)} \|Z_j(p)\|_{\mathrm{op}} \stackrel{\eqref{eq:zbound}}{\le} L. \]

\begin{lemma}\label{lem:MBernstein}  Assume that  $t = m^{-\frac{1}{2n}}$ and $n \ge 2$.
	With probability at least $1 - m^{-2/n}$ we have
	\begin{equation}
		\Delta \coloneqq \sup_{p \in M^n} \|\Sigma_{t,S_m}(p) - \Sigma_{t,\delta}(p)\|_{\mathrm{op}} \le C_9t^2 \label{eq:MBernstein}
	\end{equation}
	for some constant $C_9$ depending only on the geometry of $M^n$.
\end{lemma}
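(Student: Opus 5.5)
The plan is a pointwise matrix Bernstein bound at each fixed $p$, followed by a union bound over a finite net of $M^n$ and a deterministic Lipschitz extension in $p$. The scaling $t=m^{-1/(2n)}$ leaves a large polynomial margin, so the pointwise failure probability will be stretched-exponentially small and will absorb a polynomially large net.

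\emph{Step 1 (pointwise Bernstein).} Fix $p$ and use the centered matrices $Y_j(p)$ with the bound $\|Y_j(p)\|_{\mathrm{op}}\le L\asymp t^{1-n/2}$ from \eqref{eq:zbound}. For the variance I will use $\|\E Y_j^2\|_{\mathrm{op}}\le \E\|Z_j\|_{\mathrm{op}}^2\le \sup\|Z_j\|_{\mathrm{op}}\cdot\E\|Z_j\|_{\mathrm{op}}$. Since $\E\|Z_j\|_{\mathrm{op}}\le \E[\Phi_t(p,x)\|x-p\|^2\chi_\delta(p,x)]\le Ct$ by the Gaussian moment computation of Step 2, this gives $\sigma^2\coloneqq\|\E Y_j^2\|_{\mathrm{op}}\le Ct^{2-n/2}$. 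Matrix Bernstein for the symmetric $d\times d$ average then yields
\[
\mu^m\Big\{\|\Sigma_{t,S_m}(p)-\Sigma_{t,\delta}(p)\|_{\mathrm{op}}>s\Big\}\le 2d\exp\Big(-\frac{ms^2/2}{\sigma^2+Ls/3}\Big).
\]
I will take $s=\tfrac12 C_9t^2$ and substitute $m=t^{-2n}$. Then $ms^2/\sigma^2\gtrsim t^{2-3n/2}$ and $ms/L\gtrsim t^{1-3n/2}$, and both tend to infinity like a positive power $m^{\kappa}$ with $\kappa=(3n/2-2)/(2n)>0$, because $n\ge2$. So the pointwise failure probability is at most $2d\exp(-c\,m^{\kappa})$.

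\emph{Step 2 (net and Lipschitz control).} Both $p\mapsto\Sigma_{t,S_m}(p)$ (for every sample) and $p\mapsto\Sigma_{t,\delta}(p)$ are Lipschitz on $M^n$. Differentiating $\Phi_t(p,y)(y-p)(y-p)^\top\chi_\delta(p,y)$ in $p$ gives derivatives bounded by $C_\delta t^{-n/2-1}$, uniformly in $y$; the cut-off $\chi_\delta$ is smooth and contributes only a $\delta$-dependent constant. Hence both maps are $C t^{-n/2-1}$-Lipschitz with respect to $\|\cdot\|_{\mathbb R^d}$. I will choose an $\varepsilon$-net $\mathcal N\subset M^n$ with $\varepsilon=c\,t^{n/2+3}$, so that the Lipschitz error across the net is at most $\tfrac14 C_9t^2$. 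By compactness $|\mathcal N|\le C\varepsilon^{-n}$, which is polynomial in $m$.

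\emph{Step 3 (union bound and conclusion).} A union bound over $\mathcal N$ bounds the total failure probability by $C m^{N}\exp(-c\,m^{\kappa})$ for some fixed power $N$. This is at most $m^{-2/n}$ for all large $m$, which amounts to shrinking $t_0$. On the complementary event the net bound plus the Lipschitz interpolation gives \eqref{eq:MBernstein} for every $p\in M^n$.

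The main obstacle is the variance estimate in Step 1. It must be sharp enough, giving $\sigma^2\lesssim t^{2-n/2}$ rather than the crude $L^2$, for the exponent to beat the target level $t^2$ under the given scaling. This requires the first-moment bound $\E\|Z_j\|_{\mathrm{op}}=O(t)$, which I will obtain from the normal-coordinate Gaussian integration and volume expansion already used in Step 2 of this appendix.
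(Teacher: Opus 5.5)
Your proposal is correct and follows the paper's own argument. The paper uses exactly these three pieces: a pointwise matrix Bernstein bound with $L\asymp t^{1-n/2}$ and variance proxy $\asymp t^{2-n/2}$, a union bound over a polynomially sized $\varepsilon$-net of $M^n$, and Lipschitz interpolation in $p$.

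The differences are only cosmetic:
\begin{itemize}
\item You fix the deviation level $s\asymp t^2$ and exhibit a stretched-exponential tail $\exp(-c\,m^{\kappa})$, whereas the paper inverts the tail at confidence $m^{-2/n}$.
\item You bound the variance via $\sup\|Z_j\|_{\mathrm{op}}\cdot\E\|Z_j\|_{\mathrm{op}}$, whereas the paper integrates $\Phi_t^2\|y-p\|^4$ directly.
\item Your cruder Lipschitz constant $t^{-n/2-1}$ (the paper has $t^{(1-n)/2}$) is compensated by your finer net $\varepsilon\asymp t^{n/2+3}$.
\end{itemize}
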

\begin{proof}
	The proof consists of three steps: a matrix Bernstein estimate at a
	fixed point, discretization by an $\varepsilon$-net, and Lipschitz
	interpolation.
	
	\smallskip
	\noindent
	\underline{Step 1.} \textit{Matrix Bernstein estimate at a fixed point.}
	
	Fix $p\in M^n$, and write
	\[
	\Sigma_{t,S_m}(p)
	=	\frac{1}{m}\sum_{j =1} ^m Z_j  (p).
	\]
	By \eqref{eq:Yj},  \eqref{eq:expectedlcm}
	\[
	\E_{x_j \sim \mu} Y_j(p)=0.
	\]
	By \eqref{eq:zbound},
	\begin{equation}
		\|Y_j(p)\|_{\mathrm{op}}
		\le
		2\sup_{x_j}\|Z_j(p)\|_{\mathrm{op}}
		\le
		C_6t^{1-n/2}
		\coloneqq L.
		\label{eq:L}
	\end{equation}
	Moreover,
	\begin{equation}
		\left\|
		\E_{x_j\sim\mu}[Z_j(p)^2]
		\right\|_{\mathrm{op}}
		\le
		\int_{D_\delta(p)}
		\Phi_t(p,y)^2\|y-p\|^4\,d\mu(y)
		\le
		C_7t^{2-n/2}.
		\label{eq:sigmamb}
	\end{equation}
	Since
	\[
	\E[Y_j(p)^2]
	= \E[Z_j(p)^2]	-	\bigl(\E Z_j(p)\bigr)^2
	\]
	and both terms are symmetric positive semidefinite, it follows that
	\begin{equation}
		\left\|
		\E[Y_j(p)^2]
		\right\|_{\mathrm{op}}
		\le
		C_7t^{2-n/2}
		\coloneqq\nu.
		\label{eq:nu}
	\end{equation}
	
	The matrix Bernstein inequality therefore gives, for every $u>0$,
	\begin{equation}
		\mu^m
		\left\{
			S_m:
			|\Sigma_{t,S_m}(p)-\Sigma_{t,\delta}(p)|_{\mathrm{op}}>u
			\right\}
		\le
		2d\exp
		\left(
		-\frac{mu^2/2}{\nu+Lu/3}
		\right).
		\label{eq:Bersteind}
	\end{equation}
	
	\smallskip
	\noindent
	\underline{Step 2.} \textit{Discretization by an
		$\varepsilon$-net.}
	
	Fix
	\[
	A>\frac{n+3}{2},
	\qquad
	\varepsilon=t^A,
	\]
	and let $\mathcal N_\varepsilon\subset M^n$ be an
	$\varepsilon$-net. Since $M^n$ is compact and $n$-dimensional,
	\begin{equation}
		|\mathcal N_\varepsilon|
		\le
		C_M\varepsilon^{-n}
		=		
		C_Mt^{-An}.
		\label{eq:epsilonnet}
	\end{equation}
	Set
	\[
	\eta=m^{-2/n}
	\]
	and
	\[
	\gamma
	\coloneqq
	\log\left(
	\frac{2d|\mathcal N_\varepsilon|}{\eta}
	\right).
	\]
	Because $t=m^{-1/(2n)}$, we have
	\[
	|\log t|
	=	
	\frac{1}{2n}\log m.
	\]
	Consequently,
	\begin{align}
		\gamma
		&\le
		\log(2dC_M)
		+
		An|\log t|
		+
		\frac{2}{n}\log m
		\nonumber\\
		&=
		\log(2dC_M)
		+
		\left(
		\frac A2+\frac2n
		\right)\log m
		\le
		C_{\mathrm{net}}\log m
		\label{eq:cnet}
	\end{align}
	for all sufficiently large $m$.
	
	Choose
	\[
	u
	=	
	\frac{2L\gamma}{3m}
	+
	\sqrt{\frac{2\nu\gamma}{m}}.
	\]
	By \eqref{eq:Bersteind} and a union bound over
	$\mathcal N_\varepsilon$, with probability at least $1-\eta$,
	\begin{equation}
		\max_{p_k\in\mathcal N_\varepsilon}
		\|\Sigma_{t,S_m}(p_k)-\Sigma_{t,\delta}(p_k)\|_{\mathrm{op}}
		\le u.
		\label{eq:net_deviation}
	\end{equation}
	Using
	\[
	L=C_6t^{1-n/2},
	\qquad
	\nu=C_7t^{2-n/2},
	\qquad
	\gamma\le C_{\mathrm{net}}\log m,
	\]
	we obtain
	\[
	\begin{aligned}
		u
		&\le
		\frac{2C_6C_{\mathrm{net}}}{3}
		\frac{t^{1-n/2}\log m}{m}
		+
		\sqrt{2C_7C_{\mathrm{net}}}
		\sqrt{\frac{t^{2-n/2}\log m}{m}}\\
		&\le
		C\left(
		\frac{t^{1-n/2}\log m}{m}
		+
		\sqrt{\frac{t^{2-n/2}\log m}{m}}
		\right),
	\end{aligned}
	\]
	where $C>0$ depends only on $C_6$, $C_7$, and
	$C_{\mathrm{net}}$. Since $m=t^{-2n}$, this becomes
	\begin{equation}
		u
		\le
		C\left(
		t^{1+3n/2}\log m
		+
		t^{1+3n/4}\sqrt{\log m}
		\right).
		\label{eq:usolved}
	\end{equation}
	Since $n\ge2$,
	\[
	1+\frac{3n}{2}>2,
	\qquad
	1+\frac{3n}{4}>2.
	\]
	Furthermore, $\log m=2n|\log t|$, and every positive power of
	$t$ dominates every power of $|\log t|$ as $t\to0$. Therefore
	\[
	t^{1+3n/2}\log m=o(t^2),
	\qquad
	t^{1+3n/4}\sqrt{\log m}=o(t^2).
	\]
	Hence, for all sufficiently large $m$,
	\begin{equation}
		u\le Ct^2.
		\label{eq:u_t2}
	\end{equation}
	
	\smallskip
	\noindent
	\underline{Step 3.} \textit{Lipschitz interpolation.}
	
	For $p\in M^n$, let $p_k\in\mathcal N_\varepsilon$ satisfy
	$\|p-p_k\|_{\mathbb R^d}\le\varepsilon$. Then
	\begin{align}
		\Delta
		&\le
		\max_{p_k\in\mathcal N_\varepsilon}
		\|\Sigma_{t,S_m}(p_k)-\Sigma_{t,\delta}(p_k)\|_{\mathrm{op}}
		\nonumber\\
		&\qquad+
		\widehat{\mathrm{Lip}}(t)\varepsilon,
		\label{eq:delta_net}
	\end{align}
	where $\widehat{\mathrm{Lip}}(t)$ is the Lipschitz constant of the map
	\[
	p\longmapsto
	\Sigma_{t,S_m}(p)-\Sigma_{t,\delta}(p).
	\]
	
	Let $r=x_j-p$. Differentiating
	\[
	Z_j(p)
	=	
	\Phi_t(p,x_j)
	(x_j-p)(x_j-p)^\top
	\chi_\delta(p,x_j)
	\]
	with respect to $p$, we obtain
	\begin{align}
		\|D_pZ_j(p)\|_{\mathrm{op}}
		&\le
		\frac{1}{(4\pi t)^{n/2}}
		\left(
		\frac{|r|^3}{2t}
		+
		2|r|
		\right)
		e^{-|r|^2/(4t)}
		\nonumber\\
		&\quad+
		C_\delta
		\frac{1}{(4\pi t)^{n/2}}
		|r|^2e^{-|r|^2/(4t)}.
		\label{eq:supgrad}
	\end{align}
	For every $q\ge0$,
	\[
	\sup_{s\ge0}s^q e^{-s^2/(4t)}
	\le
	C_qt^{q/2}.
	\]
	It follows from \eqref{eq:supgrad} that
	\[
	\sup_{p,x_j}
	\|D_pZ_j(p)\|_{\mathrm{op}}
	\le
	Ct^{(1-n)/2}.
	\]
	Averaging over $j$ gives the same bound for
	$p\mapsto\Sigma_{t,S_m}(p)$, and taking expectations gives the same
	bound for $p\mapsto\Sigma_{t,\delta}(p)$. Hence
	\[
	\widehat{\mathrm{Lip}}(t)
	\le
	Ct^{(1-n)/2}.
	\]
	Since $\varepsilon=t^A$ and $A>(n+3)/2$,
	\[
	\widehat{\mathrm{Lip}}(t)\varepsilon
	\le
	Ct^{A+(1-n)/2}
	=	
	o(t^2).
	\]
	In particular, for all sufficiently small $t$,
	\begin{equation}
		\widehat{\mathrm{Lip}}(t)\varepsilon
		\le
		Ct^2.
		\label{eq:interpolation_t2}
	\end{equation}
	
	Combining \eqref{eq:net_deviation}, \eqref{eq:u_t2},
	\eqref{eq:delta_net}, and \eqref{eq:interpolation_t2}, we conclude
	that, with probability at least
	\[
	1-\eta=1-m^{-2/n},
	\]
	\[
	\Delta
	\le
	u+\widehat{\mathrm{Lip}}(t)\varepsilon
	\le
	C_9t^2.
	\]
	This proves \eqref{eq:MBernstein}.
\end{proof}

\noindent \underline{Step 4}. {\it Final Davis--Kahan argument.}

Recall that $\Pi_{t,\delta}(p)$ denotes the orthogonal projection onto the span
of the top $n$ eigenvectors of the population covariance
$\Sigma_{t,\delta}(p)$. From \eqref{eq:gappolbound}, the population eigengap satisfies
\begin{equation}
\mathfrak g(p)\ge \frac{c_0t}{2}\label{eq:eigengapp}
\end{equation}
uniformly in $p$, for all sufficiently small $t$. Moreover, the
population off-diagonal tangent-normal block satisfies
\[
\|\Sigma_{TN}(p)\|_{\mathrm{op}}\le Ct^2.
\]
Therefore, by the Davis--Kahan theorem,
\begin{equation}
\sup_{p\in M^n}\|\Pi_{t,\delta}(p)-\Pi_p\|_{\mathrm{op}}
\le Ct.\label{eq:dkpoptrue}
\end{equation}

By \eqref{eq:MBernstein},
\[
\Delta=O(t^2)=o(t).
\]
Since the population eigengap is bounded below by $c_0t/2$, Weyl's
inequality shows that, for all sufficiently small $t$, the empirical
top $n$-dimensional spectral cluster remains isolated, with eigengap
bounded below by $c_0t/4$. A second application of the
Davis--Kahan theorem therefore gives
\begin{equation}
\sup_{p\in M^n}
\|(\widehat\Pi_{t,S_m})_p-\Pi_{t,\delta}(p)\|_{\mathrm{op}}
\le
C\frac{\Delta}{t}
\stackrel{\eqref{eq:MBernstein}}{\le} Ct. \label{eq:dkempipop}
\end{equation}

Combining  \eqref{eq:dkpoptrue}  and \eqref{eq:dkempipop} yields
\[
\sup_{p\in M^n}
\|(\hat\Pi_{t,S_m})_p-\Pi_p\|_{\mathrm{op}}
\le Ct.
\]
This proves \eqref{eq:empoj_clean}.

\noindent \underline{Step 5}. {\it Continuity of the empirical projection map.}

On the same high-probability event, the empirical eigengap between the
$n$-th and $(n+1)$-st eigenvalues is uniformly bounded below by 
\begin{equation}
	\frac{c_0t}{2}-2 C_9 t^2 \ge \frac{c_0t}{4}.\label{eq:empieigengap}
\end{equation}
Because the two spectral clusters lie in uniformly separated intervals for all $p$ on the high-probability event, we choose a contour $\Gamma_t$ depending on $t$ but independent of $p$ and enclosing only the top $n$ eigenvalues. Then the matrix-valued map
\[
p\longmapsto \Sigma_{t,S_m}(p)
\]
is continuous because $\Phi_t$, $(x_j-p)(x_j-p)^\top$, and $\chi_\delta(p,x_j)$ are continuous in $p$. By classical finite-dimensional perturbation theory \cite{Kato1995}*{Section II.1.4}, the associated spectral projector is given by the Riesz contour integral
\begin{equation}
	(\hat\Pi_{t,S_m})_p
	=
	\frac{1}{2\pi i}
	\oint_{\Gamma_t}
	(zI-\Sigma_{t,S_m}(p))^{-1}\,dz, \label{eq:riesz1}
\end{equation}
which consequently depends continuously on $p$. Thus $p\mapsto(\hat\Pi_{t,S_m})_p$ is continuous on $M^n$.

\medskip
\noindent \underline{Step 6}. {\it Derivative concentration and Lipschitz control.}

For a matrix-valued map $A(p)$ on $M^n$, we use the norm
\[
\|D_pA(p)\|_{\mathrm{op}}
\coloneqq
\sup_{\xi\in T_pM^n,\ \|\xi\|=1}
\|D_pA(p)[\xi]\|_{\mathrm{op}}.
\]

\begin{lemma}[\bf Derivative concentration for empirical covariance]
	\label{lem:derivative_covariance_concentration}\
	Assume the hypotheses of Proposition~\ref{prop:empitangent_clean}, and
	assume $n\ge3$. Let
	\[
	\Delta_D
	\coloneqq
	\sup_{p\in M^n}
	\|D_p\Sigma_{t,S_m}(p)-D_p\Sigma_{t,\delta}(p)\|_{\mathrm{op}}.
	\]
	If $t=m^{-1/(2n)}$, then, for all sufficiently large $m$, with
	probability at least $1-m^{-2/n}$,
	\[
	\Delta_D\le C't^2.
	\]
\end{lemma}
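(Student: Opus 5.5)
The plan is to mirror the three-step scheme of Lemma~\ref{lem:MBernstein}: a pointwise matrix Bernstein bound, then a union bound over an $\varepsilon$-net, then Lipschitz interpolation. The difference is that we now work with the derivative fields $D_pZ_j(p)[\xi]$ rather than with $Z_j(p)$ itself.

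First, fix $p$ and a unit vector $\xi\in T_pM^n$. Since $x_j$ does not depend on $p$, differentiation commutes with the expectation in \eqref{eq:expectedlcm}. Hence
\[
D_p\Sigma_{t,\delta}(p)[\xi]=\E_\mu\bigl[D_pZ_j(p)[\xi]\bigr],
\]
and the centered matrices $Y^D_j\coloneqq D_pZ_j(p)[\xi]-\E_\mu D_pZ_j(p)[\xi]$ are i.i.d., symmetric and mean zero. To bound them we use \eqref{eq:supgrad} and the elementary estimate $\sup_s s^qe^{-s^2/4t}\le C_qt^{q/2}$. This gives
\[
\|Y_j^D\|_{\mathrm{op}}\le L_D\coloneqq Ct^{(1-n)/2}.
\]
For the variance proxy we integrate the square of \eqref{eq:supgrad} against $d\mu$ in normal coordinates. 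The dominant term is $t^{-n}(|r|^6/t^2+|r|^2)e^{-|r|^2/2t}$, and integrating it over $r\sim\sqrt t$ in $n$ dimensions gives
\[
\nu_D\coloneqq Ct^{1-n/2}.
\]
Matrix Bernstein then gives the tail bound
\[
2d\exp\bigl(-\tfrac{mu^2/2}{\nu_D+L_Du/3}\bigr).
\]
To cover all directions we fix an orthonormal frame, or equivalently take a finite net in the unit sphere of $T_pM^n$. This costs only a constant factor $n$ in the union bound, by linearity in $\xi$.

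Next, we take an $\varepsilon$-net with $\varepsilon=t^{A}$, where $A$ is large, and set $\eta=m^{-2/n}$ and $\gamma\le C\log m$, exactly as in \eqref{eq:cnet}. With $u=\frac{2L_D\gamma}{3m}+\sqrt{2\nu_D\gamma/m}$ and $m=t^{-2n}$ we get
\[
u\le C\bigl(t^{(1+3n)/2}\log m+t^{(1+3n/2)/2}\sqrt{\log m}\bigr).
\]
The second exponent is $\tfrac12+\tfrac{3n}{4}$, which exceeds $2$ precisely when $n>2$. This is where the hypothesis $n\ge3$ enters: for $n=2$ the exponent is exactly $2$ and the $\sqrt{\log m}$ factor would spoil the bound. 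Since powers of $t$ beat powers of $|\log t|$, for $n\ge3$ we obtain $u\le C't^2$ on the net, with probability at least $1-m^{-2/n}$.

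Finally, for the interpolation step we need a Lipschitz bound for $p\mapsto D_p\Sigma_{t,S_m}(p)-D_p\Sigma_{t,\delta}(p)$. This requires second derivatives of $Z_j$ in $p$. Each extra derivative of $\Phi_t$ and of the polynomial factor costs at most $t^{-1/2}$, and derivatives of $\chi_\delta$ cost only $O(1)$. So the same $\sup_s$ bound gives
\[
\|D^2_pZ_j\|\le Ct^{-n/2}.
\]
When comparing derivatives at $p$ and at a net point $p_k$ we also need the tangent spaces $T_pM^n$ and $T_{p_k}M^n$ to be related. We do this via the smooth ambient extension of the covariance (defined for $p\in\R^d$), whose derivative restricted to $T_pM^n$ differs between nearby points by $O(\varepsilon)\cdot\sup\|D\Sigma\|$. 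Both contributions are $O(t^{A-n/2})$, which is $o(t^2)$ once $A>n/2+2$. Combining the net bound and the interpolation bound yields $\Delta_D\le C't^2$.

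I expect the main obstacle to be the variance computation. The naive bound $\nu_D\le L_D^2$ would not suffice, so one must use the Gaussian integral carefully to get the factor $t^{n/2}$ gain; this is exactly what makes the exponent count close only for $n\ge3$.
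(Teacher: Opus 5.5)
Your proposal is correct and follows essentially the same route as the paper. Both arguments apply matrix Bernstein to $D_pZ_j(p)[\xi]$ with $L_D\asymp t^{(1-n)/2}$ and $\nu_D\asymp t^{1-n/2}$, take a union bound over a $t^A$-net in $p$ together with finitely many tangent directions, and interpolate using $\|D_p^2Z_j\|\le Ct^{-n/2}$ with $A>n/2+2$, so that $n\ge3$ enters exactly through $\tfrac12+\tfrac{3n}{4}>2$; your ambient extension of $\Sigma$ to compare derivatives at $p$ and at the net point plays the same role as the paper's fixed trivialization of $TM^n$ over a finite atlas.
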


\begin{proof}
	We apply the same Matrix Bernstein and net-interpolation argument as in
	Lemma~\ref{lem:MBernstein}, now to the differentiated random matrices
	\[
	D_pZ_j(p)[\xi],
	\qquad
	\xi\in T_pM^n,\quad \|\xi\|=1,
	\]
	where
	\[
	Z_j(p)=
	\Phi_t(p,x_j)(x_j-p)(x_j-p)^\top\chi_\delta(p,x_j).
	\]
	The supremum over unit tangent directions is handled by adding a fixed
	finite net in the unit sphere of $T_pM^n$ inside each coordinate chart;
	this changes only the constants in the logarithmic factor.
	
	Differentiating $Z_j(p)$ gives terms from the Gaussian factor, the
	quadratic factor $(x_j-p)(x_j-p)^\top$, and the cut-off. The dominant
	term comes from differentiating the Gaussian:
	\[
	D_p\Phi_t(p,x_j)[\xi]
	=
	\frac{\langle x_j-p,\xi\rangle}{2t}\Phi_t(p,x_j),
	\]
	which contributes an additional factor of order $t^{-1/2}$ on the
	kernel scale $\|x_j-p\|\sim\sqrt t$.   The derivative of the quadratic factor gives the same envelope order,
	and the derivative of the cut-off is lower order. Consequently,
	\[
	L_D\le C'_1t^{1/2-n/2},
	\qquad
	\nu_D\le C'_2t^{1-n/2}.
	\]
	
	The interpolation from the net to all $p\in M^n$ uses the corresponding
	bound for the second spatial derivatives of $Z_j(p)$. Differentiating
	once more gives an envelope of order
	\[
	\sup_{p,x_j}\|D_p^2Z_j(p)\|_{\mathrm{op}}\le Ct^{-n/2}
	\]
	in a finite atlas with fixed trivialization of $TM^n$.
	Choose
	\[
	\varepsilon=t^A,
	\qquad A>\frac n2+2.
	\]
	Then the interpolation error is
	\[
	Ct^{-n/2}\varepsilon=O(t^2).
	\]

Thus matrix Bernstein and the same net argument give
	\begin{equation}
	\Delta_D
	\le
	C'_3\left(
	\sqrt{\frac{t^{1-n/2}\log m}{m}}
	+
	\frac{t^{1/2-n/2}\log m}{m}
	\right)\label{eq:deltaD}
	\end{equation}
	Under $t=m^{-1/(2n)}$, equivalently $m=t^{-2n}$, the first term is
	\[
	\sqrt{t^{1-n/2}t^{2n}\log m}
	=
	t^{\frac{3n}{4}+\frac12}\sqrt{\log m},
	\]
	and the second term is
	\[
	t^{1/2-n/2}t^{2n}\log m
	=
	t^{\frac{3n+1}{2}}\log m.
	\]
	For $n\ge3$,
	\[
	\frac{3n}{4}+\frac12>2,
	\qquad
	\frac{3n+1}{2}>2.
	\]
	Hence both terms are $O(t^2)$ for sufficiently small $t$, after
	absorbing the logarithmic factors. Therefore $\Delta_D\le C't^2$.
\end{proof}
We now pass from derivative concentration of the covariance matrices to
derivative control of the corresponding spectral projectors. By
\eqref{eq:gappolbound}, \eqref{eq:MBernstein}, and
\eqref{eq:empieigengap}, on the common high-probability event there
exists a contour $\Gamma_t$, independent of $p$, enclosing the top
$n$ eigenvalues of both $\Sigma_{t,S_m}(p)$ and
$\Sigma_{t,\delta}(p)$, such that
\[
\operatorname{length}(\Gamma_t)\le Ct
\]
and
\[
\operatorname{dist}\left(
\Gamma_t,
\sigma(\Sigma_{t,S_m}(p))
\cup
\sigma(\Sigma_{t,\delta}(p))
\right)
\ge ct
\]
uniformly in $p\in M$.
By the Riesz formula,
\begin{equation}
	\hat\Pi_{t,S_m}(p)
	=
	\frac{1}{2\pi i}
	\oint_{\Gamma_t}
	(zI-\Sigma_{t,S_m}(p))^{-1}\,dz.\label{eq:riesz2}
\end{equation}
Differentiating \eqref{eq:riesz2} in $p$ gives
\begin{equation}
	D_p\hat\Pi_{t,S_m}
	=
	\frac{1}{2\pi i}
	\oint_{\Gamma_t}
	(zI-\Sigma_{t,S_m})^{-1}
	(D_p\Sigma_{t,S_m})
	(zI-\Sigma_{t,S_m})^{-1}
	\,dz.\label{eq:riesz1d}
\end{equation}
The same formula holds for $D\Pi_{t,\delta}(p)$. 
To bound the difference between the empirical and population derivatives, we analyze the integrands on the contour $\Gamma_t$.
Let
\[
R(z)\coloneqq(zI-\Sigma_{t,S_m})^{-1},
\qquad
R_0(z)\coloneqq(zI-\Sigma_{t,\delta})^{-1}.
\]
Then
\[
\|R(z)\|_{\mathrm{op}}
+
\|R_0(z)\|_{\mathrm{op}}
\le \frac Ct
\]
uniformly for $z\in\Gamma_t$.

Using the resolvent identity
\[
R(z)-R_0(z)
=R(z)\bigl(\Sigma_{t,S_m}-\Sigma_{t,\delta}\bigr)R_0(z),
\]
the difference of the differentiated Riesz integrands decomposes into
the three terms
\begin{align}
	&R(z) \big( D_p\Sigma_{t,S_m} - D_p\Sigma_{t,\delta} \big) R(z) \label{eq:res_term1}\\
	&+ R(z) (D_p\Sigma_{t,\delta}) R(z) \big( \Sigma_{t,S_m} - \Sigma_{t,\delta} \big) R_0(z) \label{eq:res_term2}\\
	&+ R(z) \big( \Sigma_{t,S_m} - \Sigma_{t,\delta} \big) R_0(z) (D_p\Sigma_{t,\delta}) R_0(z). \label{eq:res_term3}
\end{align}

Integrating the three terms \eqref{eq:res_term1}--\eqref{eq:res_term3} over $\Gamma_t$,  taking operator norms,
 the $1/(2\pi)$ factor and the contour length $\mathcal{O}(t)$ contribute one factor of $t$, compensating for one resolvent factor $t^{-1}$.
 
 Using the triangle inequality, the first term yields the derivative fluctuation bounded by $C t (1/t)^2 \Delta_D = C \Delta_D / t$. The second and third terms are bounded symmetrically, yielding $C t (1/t)^3 \|D_p\Sigma_{t,\delta}\|_{\mathrm{op}} \|\Sigma_{t,S_m} - \Sigma_{t,\delta}\|_{\mathrm{op}}$. Combining these estimates directly produces the bound:
\begin{equation}
	\|D_p\hat\Pi_{t,S_m}-D_p\Pi_{t,\delta}\|_{\mathrm{op}}
	\le
	C\frac{\Delta_D}{t}
	+
	C
	\frac{
		\|D_p\Sigma_{t,\delta}\|_{\mathrm{op}}
		\|\Sigma_{t,S_m}-\Sigma_{t,\delta}\|_{\mathrm{op}}
	}{t^2}.\label{eq:ste61}
\end{equation}

By Lemma~\ref{lem:derivative_covariance_concentration},  with  probability  at least $1 -m ^{-2/n}$
\begin{equation}
\Delta_D\le C't^2.\label{eq:dcc}
\end{equation}
Moreover, \eqref{eq:MBernstein} gives
\begin{equation}
\|\Sigma_{t,S_m}-\Sigma_{t,\delta}\|_{\mathrm{op}}\le Ct^2 \label{eq:mbernstein1}
\end{equation}
with   probability at least $1-  m ^{-2/n}$.

We claim that the population covariance admits the uniform
$C^1$-expansion
\begin{equation}
	\Sigma_{t,\delta}(p)
	=c_0t\,\Pi_p+t^2E_t(p),
	\qquad
	\sup_{0<t<t_0}\|E_t\|_{C^1}<\infty.
	\label{eq:populationC1}
\end{equation}
Indeed, this follows by differentiating under the integral sign and
applying the same local graph expansion and Gaussian moment
calculation as in Step 2. The $C^3$-regularity of $M^n$ provides the
uniform bounds on the first spatial derivatives of the remainder.

Since $p\mapsto\Pi_p$ is of class $C^2$ and $M^n$ is compact,
\eqref{eq:populationC1} gives
\[
D_p\Sigma_{t,\delta}(p)
=
c_0t\,D_p\Pi_p+t^2D_pE_t(p),
\]
and hence
\begin{equation}
	\sup_{p\in M^n}
	\|D_p\Sigma_{t,\delta}(p)\|_{\mathrm{op}}
	\le Ct.
	\label{eq:bounddsigma}
\end{equation}

Substituting \eqref{eq:dcc}, \eqref{eq:mbernstein1}, \eqref{eq:bounddsigma} into \eqref{eq:ste61}, the first term is bounded by $C(t^2)/t = \mathcal{O}(t)$, and the second term is bounded by $C(t)(t^2)/t^2 = \mathcal{O}(t)$. Thus,
\begin{equation*}
\|D_p\hat\Pi_{t,S_m}-D_p\Pi_{t,\delta}\|_{\mathrm{op}} \le Ct.
\end{equation*}
The uniform $C^1$-expansion \eqref{eq:populationC1}, together with
the spectral gap of order $t$, also implies
\begin{equation}
	\sup_{p\in M^n}
	\|D_p\Pi_{t,\delta}(p)-D_p\Pi_p\|_{\mathrm{op}}
	\le Ct.
	\label{eq:population_projector_derivative}
\end{equation}
Indeed, apply the differentiated Riesz projector formula to
$\Sigma_{t,\delta}(p)=c_0t\Pi_p+t^2E_t(p)$ and to the reference family
$c_0t\Pi_p$, $p \in M^n$. The resolvents are $O(t^{-1})$, the contour has length
$O(t)$, and both the perturbation and its first spatial derivative
are $O(t^2)$. The resolvent identity then yields
\eqref{eq:population_projector_derivative}.

Let 
\[
A_p(y)
\coloneqq
\hat\Pi_p\hat\Pi_y-\Pi_p\Pi_y.
\]

Using
\[
D_yA_p(y)
=
\widehat\Pi_p
\bigl(D_y\widehat\Pi_y-D_y\Pi_y\bigr)
+
(\widehat\Pi_p-\Pi_p)D_y\Pi_y,
\]
together with \eqref{eq:population_projector_derivative}, the uniform projection estimate,
and the boundedness of $D\Pi$, we obtain
\[
\sup_{p,y\in M}
\|D_yA_p(y)\|_{\mathrm{op}}\le Ct.
\]

If $y$ lies in a fixed sufficiently small neighborhood of $p$, then integration along a minimizing geodesic, together with the local comparison between intrinsic and extrinsic distances, gives
\[
\|A_p(y)-A_p(p)\|_{\mathrm{op}}
\le Ct\,d_M(p,y)
\le 2Ct\,\|y-p\|_{\mathbb R^d}.
\]
The neighborhood may be chosen uniformly in $p$, since $M$ is compact.

If $y$ lies outside this neighborhood, then
$\|y-p\|_{\mathbb R^d}$ is uniformly bounded below, whereas the uniform projection estimate gives
\[
\|A_p(y)-A_p(p)\|_{\mathrm{op}}\le Ct.
\]
After enlarging $C$, we therefore obtain, for all $p,y\in M$,
\begin{equation}
	\|A_p(y)-A_p(p)\|_{\mathrm{op}}
	\le
	Ct\|y-p\|_{\mathbb R^d}.
	\label{eq:apmean}
\end{equation}
Since
\[
A_p(p)=\widehat\Pi_p^2-\Pi_p^2,
\]
this proves the asserted transition estimate in the final part of
Proposition~\ref{prop:empitangent_clean}.

\section{Proofs of Lemmas \ref{lem:MC_kernel_vector}, \ref{lem:MC_scalar_kernel},  and \ref{lem:densitybounded}}\label{sec:density}
In this Appendix, we assume that $M^n$ is a smooth closed Riemannian submanifold in $\R^d$, $\Phi_t$ is the extrinsic Gaussian kernel defined in \eqref{eq:phit}:
$$ \Phi_t (x, y) = \frac{1}{(4\pi\,t)^{n/2}}\exp \Big ( - \frac{\|x-y\|^2_{\R^d}}{4t}\Big ).$$
Let $h_t: M^n \times M^n \to E$ denote a kernel taking values in a fixed
finite-dimensional Euclidean or tensor space $E$.  Denote by $\mu$ the uniform probability distribution on $M^n$. We study uniform
concentration of the empirical averages
\[
\frac1m\sum_{j=1}^m\Phi_t(p,x_j)h_t(p,x_j)
\]
around their expectations. Whenever $E\neq\mathbb R$, we reduce the
problem by duality to an associated class of real-valued functions and
apply the concentration inequality of Gin\'e and Guillou to that scalar
class.

We recall the uniform concentration inequality of Gin\'e and Guillou
\cite{GG2002}*{Theorem 2.1}, based on results of Talagrand
\cites{Talagrand1994,Talagrand1996} and Gin\'e--Guillou
\cite{GG2002}*{Proposition 2.2}. Let $\mathcal F$ be a bounded,
measurable, separable VC-subgraph class on $(M^n,\mu)$ with VC
characteristics $(A,v)$. Thus, for every probability measure $P$ on
$M^n$ and every $0<\tau<1$,
\begin{equation}
	N(\mathcal{F}, L_2(P), \tau \|F\|_{L_2(P)}) \le \Big( \frac{A}{\tau} \Big)^v, \label{eq:covering}
\end{equation}
where $N(T, d, \tau)$ denotes the $\tau$-covering number of the metric space $(T, d)$ (the smallest number of balls of radius $\tau$ needed to cover $T$), and $F \coloneqq \sup_{f \in \mathcal{F}} |f|$ is the measurable envelope of the family. In inequality \eqref{eq:covering}, the distance $d$ is the standard $L_2(P)$ metric.

Assume further that there are constants $U$ and $\sigma$ such that
\begin{equation} 
	U \ge \sup_{f\in \mathcal{F}} \|f\|_\infty, \label{eq:U}
\end{equation}
\begin{equation}
	\sigma^2 \ge \sup_{f \in \mathcal{F}} \Var_\mu(f), \label{eq:sigma}
\end{equation}
and which satisfy the relation
\begin{equation}
	0 < \sigma \le U. \label{eq:sigmau}
\end{equation}	
Theorem 2.1 of \cite{GG2002} states that there exist universal constants $C$ and $L$, depending only on the VC characteristics $(A, v)$ of $\mathcal{F}$, such that whenever
\begin{equation}
	\epsilon \ge C \left[ U \log \Big(\frac{AU}{\sigma} \Big) + \sqrt{m \sigma^2 \log \Big(\frac{AU}{\sigma}\Big)} \right], \label{eq:epsc}
\end{equation}
the following probability inequality holds for any $m \in \N^+$.

\medskip

\noindent{\it The probability inequality.}
For an i.i.d. sample
\[
S_m=(x_1,\ldots,x_m)\sim\mu^m,
\]
Gin\'e--Guillou's inequality \cite{GG2002}*{Eq. (2.3)} gives:
\begin{align}
&(\mu^m)^*\left\{
S_m:
\sup_{f\in\mathcal F}
\left|\sum_{i=1}^m\bigl(f(x_i)-\E_\mu f\bigr)\right|>\epsilon
\right\}\nonumber\\
&\qquad\le
L\exp\left\{
-\frac1L\frac{\epsilon}{U}
\log\left(
1+\frac{\epsilon U}
{L\bigl[\sqrt m\,\sigma+U\sqrt{\log(AU/\sigma)}\bigr]^2}
\right)
\right\}.
\label{eq:gg23}
\end{align}
In the remainder of this Appendix, we apply \eqref{eq:gg23} to prove Lemmas \ref{lem:MC_kernel_vector}, \ref{lem:MC_scalar_kernel}, and \ref{lem:densitybounded}.

In each application below, namely Lemmas~\ref{lem:MC_kernel_vector},
\ref{lem:MC_scalar_kernel}, and \ref{lem:densitybounded}, the relevant
function class $\Ff_t$ in \eqref{eq:localizedkernel}, $\Aa_t$ in
\eqref{eq:localizedscalarkernel}, or $\Gg_t$ in
\eqref{eq:singularkernel}, is a bounded finite-dimensional parametric
class. More precisely, for fixed $t>0$, the parameters range over
compact finite-dimensional spaces such as $M$,
$M\times S^{d-1}$, or
$M\times S^{d-1}\times S^{d^2-1}$, and the corresponding kernels
depend smoothly on the parameters. The smooth cutoff function
$\chi_\delta$ removes possible discontinuities at the boundary of the
localized region. Since $\chi_\delta$ is radial and nonincreasing,
the level sets of the localized kernels are controlled by Euclidean
balls. Together with the smooth finite-dimensional parametrization of
the remaining factors, this gives the uniform VC-subgraph property for
the classes $\Ff_t$, $\Aa_t$, and $\Gg_t$. Lemma~\ref{lem:vcuni}
below records that the corresponding VC-characteristics $(A,v)$ may
be chosen independently of $t$.

\begin{lemma}[\bf Uniform VC-type property of the localized classes]
	\label{lem:vcuni}
	For $0<t<t_0$, let $\mathcal C_t$ denote any of the scalar
	localized classes defined in
	\eqref{eq:localizedkernel}, \eqref{eq:localizedscalarkernel}, or
	\eqref{eq:singularkernel}. Then there exist constants $A\ge1$ and
	$v\ge1$, depending only on the finite-dimensional parameter spaces,
	the manifold, the cutoff function, and the particular type of class,
	but independent of $t$, such that \eqref{eq:covering} holds for every
	probability measure $P$ and every $0<\tau<1$.
\end{lemma}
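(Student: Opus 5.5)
The plan is to deduce uniform VC-type bounds from standard permanence properties of VC-type classes. The key point is that the definition \eqref{eq:covering} measures covering radii relative to the envelope, $\tau\|F\|_{L_2(P)}$, so multiplying a whole class by a positive constant such as $(4\pi t)^{-n/2}$ or $t^{-1}$ changes neither $A$ nor $v$. It therefore suffices to show that every function in $\mathcal C_t$ is a product of a fixed number of factors, each taken from a class whose VC characteristics do not depend on $t$.

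I would carry this out in the following order.

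\begin{enumerate}
\item \emph{Factorization.} Write a generic element of $\Ff_t$, $\Aa_t$ or $\Gg_t$, after the duality reduction with unit vectors $\theta\in S^{d-1}$ (and $\Theta\in S^{d^2-1}$), as
\[
y\longmapsto c_t\,\exp\Bigl(-\tfrac{\|y-p\|^2}{4t}\Bigr)\,\chi\Bigl(\tfrac{\|y-p\|}{\delta}\Bigr)\,P_{p,\theta}(y)\,G_{\theta,\Theta}(y).
\]
Here $P_{p,\theta}$ collects the factors that are polynomial in $y-p$, such as $\langle\theta,y-p\rangle/t$ or $\|y-p\|/t$. The factor $G_{\theta,\Theta}$ collects the terms that are linear in the dual parameters with coefficients fixed functions of $y$ alone, such as the entries of $\Pi_y\Pi_p$ or of $\omega(y)$.

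\item \emph{Radial factors.} The Gaussian factor and the cutoff factor are monotone functions of $\|y-p\|^2$. For fixed $t$, the map $y\mapsto\|y-p\|^2/t$ lies in the finite-dimensional vector space spanned by $\|y\|^2$, $y_1,\dots,y_d$ and $1$, and rescaling by $1/t$ does not leave this space. Dudley's theorem on finite-dimensional vector spaces, combined with the stability of VC-subgraph classes under composition with a fixed monotone function, then gives VC indices depending only on $d$. The same argument handles $\|y-p\|$, since it is the monotone function $\sqrt{\cdot}$ applied to $\|y-p\|^2$.

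\item \emph{Remaining factors.} Polynomial factors of bounded degree in $y-p$ lie in a finite-dimensional vector space independent of $t$. The factors $G$ also lie in a finite-dimensional space, namely the span of finitely many fixed bounded functions of $y$, because the dual parameters enter linearly.

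\item \emph{Assembly.} Use the permanence property that a product of finitely many uniformly bounded VC-type classes is VC-type. Its covering number at scale $\tau\|F_1\cdots F_r\|$ is bounded by the product of the covering numbers of the factors at scale comparable to $\tau/r$, relative to their own envelopes. This yields $(A,v)$ depending only on $n$, $d$, the parameter spaces, $\chi$ and the class type. Measurability and separability follow from the continuity of the kernels in the compact parameters, which lets one restrict to a countable dense parameter set.
\end{enumerate}

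I expect step 4 to be the main obstacle. In the product bound, the envelope of the product must be compared with the product of the individual envelopes in $L_2(P)$ for \emph{every} probability measure $P$. To avoid this comparison I would first try to work with uniform entropy relative to sup-norm envelopes. Each factor class has a constant envelope: $1$ for the Gaussian factor $\exp(-\|y-p\|^2/4t)$ and for the cutoff factor, and $\sup\|G\|$ for the $G$ factors. For the polynomial factors, I would insert the cutoff $\chi(\|y-p\|/\delta)$ so that they are bounded by constants depending on $\delta/t$. These $t$-dependent constants are harmless, because the envelope of the product is then a fixed multiple of $c_t$, and \eqref{eq:covering} is invariant under rescaling the class by a constant.

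A secondary concern is the factor $\hat\Pi_{x_j}$ in \eqref{eq:sum}, which is data-dependent. I would note that the classes to which the Gin\'e--Guillou inequality is applied contain only the deterministic projections $\Pi_y$. The empirical projections are handled separately, as in Step 3 of the proof of Proposition \ref{prop:probempib}, so they never enter the VC argument.
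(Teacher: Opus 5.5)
Steps 1--3 are sound. Together with product permanence they form the same skeleton as the paper's sketch. The only difference is that the paper gets $t$-independence by letting $t$ range over $(0,t_0)$ inside the parameter set and passing to subclasses, whereas you fix $t$ and use scale invariance.

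\textbf{The gap is in how you settle Step~4.} Scale invariance of \eqref{eq:covering} handles a \emph{common} rescaling of a class and its envelope. It does not handle a $t$-dependent mismatch between the product of the factor envelopes and the actual size of the product, and that mismatch is what occurs here.
\begin{itemize}
\item For $\Ff_t$ and $\Aa_t$: the Gaussian forces $\|y-p\|\lesssim\sqrt t$ wherever the kernel is not negligible. So the true envelope $F=\sup_{f}|f|$ is $\asymp t^{-n/2}\cdot t^{-1}\sup_{r}re^{-r^2/4t}\asymp t^{-(n+1)/2}$, the $U_t$ of \eqref{eq:Ut_clean}. Your product envelope is $(4\pi t)^{-n/2}\cdot(\delta/t)\cdot\sup\|G\|\asymp t^{-n/2-1}$, larger by a factor $\asymp\delta/\sqrt t$.
\item For $\Gg_t$: the same loss occurs for a different reason. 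Treating $\langle\Pi_x\omega(x)-\Pi_x\Pi_y\omega(y),A\rangle$ as an element of a fixed span discards the cancellation that makes it $O(\|y-x\|\,\|\omega\|_{C^1})$. That cancellation is exactly what produces \eqref{eq:ugt}.
\end{itemize}
So what you actually prove is $N(\mathcal C_t,L_2(P),\tau\bar U_t)\le(A/\tau)^v$ with $\bar U_t\asymp t^{-n/2-1}$. Measured against $\|F\|_{L_2(P)}$, as \eqref{eq:covering} requires, this becomes $(CA\,t^{-1/2}/\tau)^v$, so $A$ is not uniform in $t$. For the concentration lemmas alone, this weaker statement would still suffice if one took $U=\bar U_t$ in \eqref{eq:gg23}: the variance term still dominates and $\log(A\bar U_t/\sigma_t)\asymp\log m$. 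It is not the lemma, however, and the paper's own sketch passes over the same point.

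\textbf{A repair for $\Ff_t$ and $\Aa_t$.} Localize the unbounded factor at the kernel scale $\sqrt t$ rather than at $\delta$. Replace $\langle y-p,\theta\rangle$ (respectively $\|y-p\|$) by its clipping at level $R\sqrt t$.
\begin{itemize}
\item Clipping is monotone, so the VC-subgraph index is unchanged, and the factor's envelope becomes $R\sqrt t$.
\item The error is nonzero only where $\|y-p\|>R\sqrt t$, and there it is at most $C\,U_t\,Re^{-R^2/4}$.
\item Choosing $R\asymp\sqrt{\log(e/\tau)}$ makes the error at most $\tau U_t/2$. Your product bound applied to the clipped class then gives $N(\mathcal C_t,L_2(P),\tau U_t)\le(CAR/\tau)^v\le(A'/\tau)^{v'}$ uniformly in $t$.
\end{itemize}
Equivalently, write $\Phi_t(p,y)\langle y-p,\theta\rangle/t=(4\pi)^{-n/2}t^{-(n+1)/2}\langle\psi((y-p)/\sqrt t),\theta\rangle$ with $\psi(z)=ze^{-|z|^2/4}$, which is bounded independently of $t$.

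\textbf{A repair for $\Gg_t$.} First Taylor-expand $q\mapsto\Pi_q\omega(q)$ at $x$, after extending it to a tubular neighbourhood. This exposes a linear factor $\langle y-x,w(x,A)\rangle$ with $\|w\|\le C\|\omega\|_{C^1}$, which is handled as above. What remains is a remainder that is $O(\sqrt t)$ relative to $U_t$.
\begin{itemize}
\item For $\tau\gtrsim\sqrt t$, the remainder is below the resolution and can be dropped.
\item For $\tau\lesssim\sqrt t$, your crude bound loses only a factor $t^{-1/2}\lesssim\tau^{-1}$, so it is already of the form $(A'/\tau^2)^v$.
\end{itemize}
Your remark that the empirical projectors $\hat\Pi$ never enter these classes is correct.
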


\begin{proof}
Allow $p$, the auxiliary unit vectors or tensors, and $0<t<t_0$ to vary
simultaneously.  The Gaussian family
\[
\mathcal K=
\left\{
y\longmapsto (4\pi t)^{-n/2}
\exp\left(-\frac{\|y-p\|^2}{4t}\right):
p\in M,\ 0<t<t_0
\right\}
\]
is VC-subgraph with index independent of $t$.  Indeed, for $s>0$, the
subgraph inequality
\[
s<(4\pi t)^{-n/2}
\exp\left(-\frac{\|y-p\|^2}{4t}\right)
\]
is equivalent to
\[
\|y-p\|^2+4t\log s+2nt\log(4\pi t)<0.
\]
After expanding $\|y-p\|^2$, this is a linear-threshold condition in
the fixed finite collection of variables
\[
1,\quad y_1,\ldots,y_d,\quad \|y\|^2,\quad \log s.
\]
Restriction from $\R^d$ to $M$ does not increase the VC index.

The translated distance functions and the nonincreasing radial cutoff
form VC-subgraph classes governed by Euclidean balls.  The affine
factors $\langle y-p,u\rangle$ form a finite-dimensional linear class.
The family
\[
\left\{
	y\mapsto \frac1t\langle y-p,u\rangle:
	p\in M,\ |u|=1,\ 0<t\le t_0
	\right\}
\]
is contained in the fixed finite-dimensional affine-linear space
\[
\operatorname{span}\{1,y_1,\ldots,y_d\}.
\]
Indeed,
\[
\frac1t\langle y-p,u\rangle
= \sum_{i=1}^d\frac{u_i}{t}y_i
-  \frac{\langle p,u\rangle}{t}.
\]
Consequently, this family has a VC-subgraph index bounded independently
of $t$.
Moreover,
\[
\langle\Pi_y\Pi_p,A\rangle_{\mathrm{HS}}
\]
is, as a function of $y$, a linear combination of the fixed finitely
many coordinate functions $y\mapsto(\Pi_y)_{ab}$, with coefficients
depending on $(p,A)$.  The same observation applies to the coordinate
functions involving the fixed field $\Pi_y\omega(y)$ in the singular
kernel lemma.  These are finite-dimensional linear classes and hence
VC-subgraph classes \cite{VW1996}*{Lemma 2.6.15}.

The standard permanence properties of VC-subgraph and VC-type classes
under finite sums, products, scalar multiplication, and restriction
therefore show that each enlarged class is VC-type with fixed
characteristics.  Every fixed-$t$ class $\F_t$ is a subclass and has
the same characteristics.  The uniform covering estimate follows from
the VC covering theorem.
\end{proof}

We shall use   Lemma \ref{lem:vcuni}  without further comment in the proofs below.

\begin{lemma}[\bf Uniform Monte Carlo approximation for localized kernels]\label{lem:MC_kernel_vector}\
	Let $M^n \subset \mathbb{R}^d$ be a compact $C^3$ submanifold, $\mu$ the uniform probability distribution on $M^n$, and $x_1,\dots,x_m \sim \mu$ drawn i.i.d.
	Let $\delta \in  (0, \frac{\tau_{M^n}}{4})$. For $p \in M^n$, define the operator-valued kernel
	\[
	F_p(y) \coloneqq \Phi_t(p,y) \frac{(y-p)}{t} \otimes \Pi_y\Pi_p \chi_\delta (p,y).
	\]
	Then for $t = m^{-1/(2n)}$, there exists $C>0$ such that for all sufficiently small $t$, with probability at least $1 - m^{-2}$ over the sample $(x_1, \ldots, x_m)$, we have:
	\begin{equation}
		\sup_{p \in M} \left\| \frac{1}{m}\sum_{j=1}^m F_p(x_j) - \int_M F_p(y)\, d\mu(y) \right\|_{\mathrm{op}} \le C \sqrt{\frac{\log m}{m\, t^{n/2+1}}}. \label{eq:MC_kernel_bound}
	\end{equation}
	If $n \ge 2$, then this bound satisfies
	\begin{equation}
		\sqrt{\frac{\log m}{m\,t^{n/2+1}}} = o(\sqrt{t}). \label{eq:mcscalar2}	
	\end{equation}
\end{lemma}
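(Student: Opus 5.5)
The plan is to reduce the operator-valued supremum to a scalar empirical process and then apply the Gin\'e--Guillou inequality \eqref{eq:gg23}. First, since $E=\mathbb R^d\otimes\End(\mathbb R^d)$ is finite-dimensional, the operator norm is dominated, up to a constant depending only on $d$, by the maximum over a fixed orthonormal basis of coordinate functionals. Equivalently, we take the supremum over $(u,A)\in S^{d-1}\times S^{d^2-1}$ of the scalar functions
\[
f_{p,u,A}(y)\coloneqq \Phi_t(p,y)\,\frac{\langle y-p,u\rangle}{t}\,\langle \Pi_y\Pi_p,A\rangle_{\mathrm{HS}}\,\chi_\delta(p,y).
\]
The class $\Ff_t$ of all $f_{p,u,A}$, with $p\in M$, is then exactly the localized class referred to as \eqref{eq:localizedkernel}. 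By Lemma \ref{lem:vcuni}, it is VC-type with characteristics $(A,v)$ that do not depend on $t$. This gives the covering hypothesis \eqref{eq:covering}.

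Next we determine the envelope $U$ and the variance bound $\sigma^2$. For $U$, we use $\sup_{s\ge0}s\,e^{-s^2/4t}\le C\sqrt t$ together with $\|\Pi_y\Pi_p\|\le1$. This gives $\|f\|_\infty\le C t^{-n/2}\cdot t^{-1}\sqrt t=Ct^{-(n+1)/2}\eqqcolon U$. For $\sigma^2$, we bound $\E_\mu f^2\le\int\Phi_t^2\|y-p\|^2t^{-2}\,d\mu$. We pass to normal coordinates $y=\exp_p(\sqrt t\,w)$, using \eqref{eq:gray} and Lemma \ref{lem:intrextr}, and we use Lemma \ref{lem:compact-exponential-open-subset}(c) to dispose of the region outside the normal ball up to an exponentially small error. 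This yields $\sigma^2=C t^{-n}\cdot t\cdot t^{-2}\cdot t^{n/2}=Ct^{-n/2-1}$. We check that $\sigma\le U$, which holds because $t^{-n/4-1/2}\le t^{-n/2-1/2}$. We also check that $\log(AU/\sigma)\asymp|\log t|\asymp\log m$.

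We then apply \eqref{eq:gg23} with $\epsilon=K\bigl(\sqrt{m\sigma^2\log m}+U\log m\bigr)$, where $K$ is large. In the regime $\sqrt m\,\sigma\gg U\sqrt{\log m}$, the exponent is of order $-(\epsilon/U)\log(1+\epsilon U/(m\sigma^2))\approx -\epsilon^2/(m\sigma^2)\approx -K^2\log m$. Choosing $K$ large therefore gives failure probability at most $m^{-2}$. Dividing by $m$, we obtain
\[
\sup_{f\in\Ff_t}\bigl|\tfrac1m\textstyle\sum f(x_j)-\E f\bigr|\le C\Bigl(\sqrt{\tfrac{\log m}{m t^{n/2+1}}}+\tfrac{t^{-(n+1)/2}\log m}{m}\Bigr).
\]
The main obstacle is this bookkeeping step. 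We must confirm that the second, Bernstein-type term is dominated by the first. Their ratio is $t^{-n/4}\sqrt{\log m/m}=t^{-n/4}t^{n}\sqrt{\log m}\to0$ under $m=t^{-2n}$, so the first term dominates. This establishes \eqref{eq:MC_kernel_bound}.

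Finally, for \eqref{eq:mcscalar2} we substitute $m=t^{-2n}$. This gives $\log m/(m t^{n/2+1})=2n|\log t|\,t^{3n/2-1}$. Since $3n/2-1>1$ for $n\ge2$, this quantity is $o(t)$, and taking square roots gives $o(\sqrt t)$.
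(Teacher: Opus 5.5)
Your proposal is correct and follows essentially the same route as the paper. You dualize to the scalar class $f_{p,u,A}$ and invoke Lemma~\ref{lem:vcuni}, take $U_t\asymp t^{-(n+1)/2}$ and $\sigma_t^2\asymp t^{-n/2-1}$, and apply \eqref{eq:gg23} with $\epsilon_m\asymp\sqrt{m\sigma_t^2\log m}$; the variance term dominates under $m=t^{-2n}$, and the paper additionally notes that, by continuity in $p$ and separability of $M^n$, the supremum event is measurable, so the outer measure in \eqref{eq:gg23} may be replaced by $\mu^m$.
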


\begin{proof} 
	First, we observe that for any fixed $t \in \R_+$ and sample $S_m = (x_1, \ldots, x_m) \in (M^n)^m$, the function
	\[
	\mathbf{F}_{t, S_m}: M^n \to \R, \quad p \mapsto \left\| \frac{1}{m}\sum_{j=1}^m F_p(x_j) - \int_M F_p(y)\, d\mu(y) \right\|_{\mathrm{op}},
	\]
	is continuous with respect to $p$. Because $M^n$ is a separable metric space, it contains a countable dense subset $\mathcal{D} \subset M^n$. Hence, for any $a > 0$, the supremum over the uncountable space $M^n$ equals the supremum over $\mathcal{D}$. The event
	\[ 
	\Om_{t, m}^F \coloneqq \left\{ S_m \in (M^n)^m : \sup_{p \in M^n} \mathbf{F}_{t, S_m}(p) \le a \right\} 
	\]
	is therefore a measurable subset of $(M^n)^m$ with respect to the product Borel $\sigma$-algebra, since it can be written as a countable intersection of measurable sets:
	\[ 
	\Om_{t, m}^F = \bigcap_{q \in \mathcal{D}} \left\{ S_m \in (M^n)^m : \mathbf{F}_{t, S_m}(q) \le a \right\}.
	\]
Thus  the outer measure  in \eqref{eq:gg23}   is replaced  by    $\mu^m$ for   the  measurable subset satisfying  \eqref{eq:MC_kernel_bound}.

	We apply the uniform concentration inequality for empirical processes from \cite{GG2002}*{Theorem 2.1}, formulated in \eqref{eq:gg23}, to a scalar-valued class associated to the operator-valued kernels $F_p$.

	The kernel $F_p(y)$ takes values in the finite-dimensional tensor
	space
	\[
	E\coloneqq
	\mathbb R^d\otimes\End(\R^d).
	\]
	We equip $E$ with the injective norm
	\[
	\|T\|_{\varepsilon}
	\coloneqq
	\sup_{\|u\|=1,\ \|A\|_{\mathrm{HS}}=1}
	\bigl|\langle T,u\otimes A\rangle\bigr|.
	\]
	For $p\in M$, $u\in \R^d$, and
	$A\in\End(\R^d)$, define
	\[
	f_{p,u,A}(y)
	\coloneqq
	\Phi_t(p,y)
	\frac{\langle y-p,u\rangle}{t}
	\langle\Pi_y\Pi_p,A\rangle_{\mathrm{HS}}
	\chi_\delta(p,y).
	\]
	Then
	\[
	f_{p,u,A}(y)=\langle F_p(y),u\otimes A\rangle,
	\]
	and hence the desired tensor-valued concentration estimate is
	equivalent to the corresponding scalar estimate uniformly over
	$\{\|u\|=1\}$ and $\{ \|A\|_{\mathrm{HS}}=1\}$.

	Let the function class be
	\begin{equation}
	\mathcal{F}_t \coloneqq \left\{ f_{p,u,A} : p\in M,\ \|u\|=1,\ \|A\|_{\mathrm{HS}}=1 \right\}.\label{eq:localizedkernel}
	\end{equation}

	\medskip
	
	\noindent\underline{Step 1.} {\it Envelope estimate.}
	
	For $r \ge 0$, consider the function
	\[
	\psi_t(r) \coloneqq r e^{-r^2/(4t)}.
	\]
	A direct computation gives
	\[
	\psi_t'(r) = e^{-r^2/(4t)} \left( 1 - \frac{r^2}{2t} \right).
	\]
	Hence, the function $\psi_t$ attains its maximum at $r = \sqrt{2t}$, yielding a maximal value of 
	\[
	\max_{r \ge 0}\psi_t(r) = \sqrt{2t}\,e^{-1/2}.
	\]
	Consequently, noting  that $\sqrt{2t} \le \delta$ for  sufficiently small $t$, we bound the spatial components of the kernel:
	\begin{equation}
		\Phi_t(p,y)\frac{|\langle y-p,u\rangle|}{t} \le C\, t^{-(n/2+1/2)}. \label{eq:phitypmax}
	\end{equation}
	Since the projection operators satisfy
	\[
	|\langle\Pi_y\Pi_p,A\rangle_{\mathrm{HS}}|
	\le \|\Pi_y\Pi_p\|_{\mathrm{HS}}\|A\|_{\mathrm{HS}}
	\le \sqrt d.
	\]

	we can construct the uniform envelope bound:
	\begin{equation}
		U_t \coloneqq C\, t^{-(n/2+1/2)} \ge \sup_{f\in\mathcal{F}_t}\|f\|_\infty. \label{eq:Ut_clean}
	\end{equation}
	Thus, the envelope condition \eqref{eq:U} is satisfied.
	
	\medskip
	
	\noindent\underline{Step 2.} {\it Variance estimate.}
	
	Since $\Var(f) \le \E(f^2)$, it suffices to estimate the second moment.
	Using normal coordinates centered at $p$, we write $y = \exp_p(\sqrt{t}\,v)$. By \eqref{eq:gray} and \eqref{eq:intrextr}, the volume element expands as

	\[
	d\mu(y) = 	\frac{t^{n/2}}{\vol_g(M^n)}(1+O(t\|v\|^2))\,dv.
	\]
	
	In normal coordinates $y=\exp_p(\sqrt t\,v)$,
	\[
	y-p=\sqrt t\,v+O(t\|v\|^2),
	\]
	uniformly in $p$.  Hence, for $\|u\|=1$,
	\[
	|\langle y-p,u\rangle|^2
	\le C\bigl(t\|v\|^2+t^2\|v\|^4\bigr).
	\]

	Therefore, bounding the integral over the normal coordinates:
	\begin{align}
		\E_\mu[f_{p,u,A}^2] &\le  \int_{D_\delta(p)} \Phi_t(p,y)^2 \frac{\langle y-p,u\rangle^2}{t^2} \langle \Pi_y\Pi_p,A\rangle^2 \,d\mu(y) \nonumber\\
		&\le C \int_{\mathbb{R}^n} \left( \frac{1}{(4\pi t)^{n/2}} e^{-\|v\|^2/4} \right)^2 \frac{t\|v\|^2}{t^2} t^{n/2}\,dv \nonumber\\
		&= C\, t^{-(n/2+1)}. \label{eq:variance_upper}
	\end{align}
	Hence, setting
	\begin{equation}
		\sigma_t^2 \coloneqq C\, t^{-(n/2+1)}, \label{eq:sigma_upper}
	\end{equation}
	and taking into account \eqref{eq:Ut_clean}, we conclude that the bounds \eqref{eq:sigma} and \eqref{eq:sigmau} are satisfied for $t$ sufficiently small.
	
	From \eqref{eq:Ut_clean} and \eqref{eq:sigma_upper}, the ratio between the envelope and standard deviation scales as:
	\begin{equation}
		\frac{U_t}{\sigma_t} \asymp t^{-n/4}. \label{eq:ratio_clean_final}
	\end{equation}
	Hence, the logarithmic VC penalty scales directly with the bandwidth:
	\begin{equation}
		\log\left(\frac{AU_t}{\sigma_t}\right) \asymp \log(1/t). \label{eq:logratio_final}
	\end{equation}
	Under our assumed scaling $t = m^{-1/(2n)}$, we obtain:
	\begin{equation}
		\log(1/t) \asymp \log m. \label{eq:logm_final}
	\end{equation}

	\medskip
	
	\noindent\underline{Step 3.} {\it Application of \eqref{eq:gg23} (\cite{GG2002}*{Theorem 2.1}).}
	
	We define the normalized target fluctuation $\eta_m$ and the unnormalized threshold $\epsilon_m = m\eta_m$:
	\[
	\eta_m \coloneqq C_0 \sqrt{ \frac{\log m}{m\, t^{n/2+1}} }, \quad \text{and} \quad \epsilon_m \coloneqq m\eta_m = C_0 \sqrt{ m\, t^{-(n/2+1)}\log m }.
	\]
	To apply \eqref{eq:gg23}, we must verify that the threshold condition \eqref{eq:epsc},
	\[
	\epsilon_m \ge C \left[ U_t \log\left(\frac{A U_t}{\sigma_t}\right) + \sqrt{m \sigma_t^2 \log\left(\frac{A U_t}{\sigma_t}\right)} \right],
	\]
	holds for some fixed positive constant $C$ if $C_0$ is large enough and $t$ is sufficiently small.
	Using \eqref{eq:logratio_final}, $\log(A U_t/\sigma_t) \asymp \log(t^{-n/4}) \asymp \log m$. 
	Hence, the required threshold condition \eqref{eq:epsc} becomes 
	\[
	\epsilon_m \ge C \left[ t^{-(n/2+1/2)}\log m + \sqrt{ m\, t^{-(n/2+1)}\log m } \right].
	\]
	Under the scaling $t = m^{-1/(2n)}$, the second term (the variance term) strictly dominates the first (the envelope term).
	Hence, for a sufficiently large choice of $C_0$, condition \eqref{eq:epsc} of \cite{GG2002}*{Theorem 2.1} is satisfied.
	
	To explicitly evaluate the exponential probability tail bound in \eqref{eq:gg23}, let $V_m \coloneqq \sqrt{m}\sigma_t + U_t \sqrt{\log(A U_t / \sigma_t)}$. Because the variance term dominates, we have $V_m^2 \asymp m \sigma_t^2$. We examine the argument of the logarithm in the exponent:
	\[
	x_m \coloneqq \frac{\epsilon_m U_t}{L V_m^2} \asymp \frac{C_0 \sqrt{m \sigma_t^2 \log m} \cdot U_t}{L m \sigma_t^2} = \frac{C_0 U_t}{L \sqrt{m \sigma_t^2}} \sqrt{\log m}.
	\]
	Substituting $U_t \asymp t^{-(n+1)/2}$ and $\sqrt{m \sigma_t^2} \asymp \sqrt{t^{-2n} t^{-(n/2+1)}} = t^{-(5n/4 + 1/2)}$, we find that $x_m \asymp t^{(3n)/4} \sqrt{\log(1/t)}$. Because $t \to 0$ and $n \ge 1$, we clearly have $x_m \to 0$. 
	Using the standard inequality $\log(1+x) \ge x/2$ for sufficiently small $x > 0$, the exponent in \eqref{eq:gg23} is bounded above by:
	\begin{align*}
		-\frac{1}{L} \frac{\epsilon_m}{U_t} \log(1 + x_m) &\le -\frac{1}{L} \frac{\epsilon_m}{U_t} \left(\frac{1}{2} \frac{\epsilon_m U_t}{L V_m^2} \right) = -\frac{\epsilon_m^2}{2 L^2 V_m^2} \\
		&\asymp -\frac{C_0^2 m \sigma_t^2 \log m}{2 L^2 m \sigma_t^2} = -C' C_0^2 \log m.
	\end{align*}
	Therefore, the probability on the right-hand side of
\eqref{eq:gg23} is bounded by
\[
L\exp(-C'C_0^2\log m)=Lm^{-C'C_0^2}.
\] 
	
	By choosing $C_0$ to be sufficiently large such that $C' C_0^2 \ge 3$, we conclude that with probability at least $1-m^{-2}$,
	\[
	\sup_{p\in M} \left\| \frac1m\sum_{j=1}^m F_p(x_j) - \int_M F_p(y)\,d\mu(y) \right\|_{\mathrm{op}} \le C_0 \sqrt{ \frac{\log m}{m\, t^{n/2+1}} }
	\]
	for $t$ sufficiently small. This proves the first assertion of Lemma \ref{lem:MC_kernel_vector}.
	
	To prove the second assertion, we evaluate the limit under the scaling constraint $m = t^{-2n}$:
	$$ \sqrt{\frac{\log m}{m\, t^{n/2 +1}}} = \sqrt{\frac{ 2n \log(1/t)}{t^{-2n} t^{n/2 + 1}}} = \sqrt{\frac{ 2n \log(1/t)}{t^{\frac{-3n + 2}{2}}}} = \sqrt{2n \log(1/t)}\; t^{\frac{3n - 2}{4}}. $$
	If $n \ge 2$, the exponent on $t$ satisfies $\frac{3n-2}{4} \ge \frac{4}{4} = 1$. Since $t \to 0$, it immediately follows that $t^1 \sqrt{\log(1/t)} = o(\sqrt{t})$.
	
	This completes the proof of Lemma \ref{lem:MC_kernel_vector}.
\end{proof}
\begin{lemma}[\bf Uniform Monte Carlo estimate for scalar Gaussian kernels]
	\label{lem:MC_scalar_kernel}
	Let $M^n\subset \R^d$ be a compact $C^3$-submanifold and $\mu$ the uniform probability distribution on $M^n$. Let $x_1,\dots,x_m\sim\mu$ be i.i.d.
	Let $\delta \in  (0, \frac{\tau_{M^n}}{4})$.
	For a fixed $t >0$ and $p\in M^n$, define
	\[
	A_p(y) \coloneqq \Phi_t(p,y)\frac{\|y-p\|}{t} \chi_\delta (p,y).
	\]
	Assume $t=m^{-1/(2n)}$. Then there exists $C>0$ such that for sufficiently small $t$, with probability at least $1-m^{-2}$ over the choice of $(x_1, \ldots, x_m)$,
	\begin{equation}
		\sup_{p\in M} \left| \frac1m\sum_{j=1}^m A_p(x_j) - \E_\mu[A_p] \right| \le C \sqrt{\frac{\log m}{m\,t^{n/2+1}}}.
		\label{eq:mcscalar}
	\end{equation}
	Furthermore, if $n \ge 2$, then under this scaling we have:
	\begin{equation}
		\sqrt{\frac{\log m}{m\,t^{n/2+1}}} = o(\sqrt{t}).	
	\end{equation}
\end{lemma}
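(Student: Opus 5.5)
The plan is to repeat the proof of Lemma~\ref{lem:MC_kernel_vector} almost verbatim, now for the scalar class
\begin{equation}
\mathcal A_t\coloneqq\left\{A_p:\ p\in M^n\right\},\qquad A_p(y)=\Phi_t(p,y)\frac{\|y-p\|}{t}\chi_\delta(p,y),
\label{eq:localizedscalarkernel}
\end{equation}
and apply the Gin\'e--Guillou inequality \eqref{eq:gg23}. Because the class is already scalar, no duality reduction is needed.

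\textbf{Measurability and VC type.} The map $p\mapsto\frac1m\sum_j A_p(x_j)-\E_\mu A_p$ is continuous in $p$. Hence the supremum over $M^n$ equals the supremum over a countable dense set, and the good event is measurable, exactly as in the proof of Lemma~\ref{lem:MC_kernel_vector}. By Lemma~\ref{lem:vcuni}, $\mathcal A_t$ is VC-type with characteristics $(A,v)$ independent of $t$. The relevant building blocks are the Gaussian family, the radial distance $y\mapsto\|y-p\|$ (whose level sets are Euclidean balls), and the radial nonincreasing cutoff, combined through products.

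\textbf{Envelope and variance.} Apply the computation for $\psi_t(r)=re^{-r^2/4t}$ with $r=\|y-p\|$. It gives $\sup_{p,y}A_p(y)\le C t^{-n/2}\cdot t^{1/2}/t=Ct^{-(n+1)/2}=:U_t$. For the variance, use $\Var(A_p)\le\E_\mu A_p^2$ and integrate in normal coordinates $y=\exp_p(\sqrt t\,v)$, using \eqref{eq:gray} and Lemma~\ref{lem:intrextr}. With $\|y-p\|^2\le C t\|v\|^2$, this yields
\[
\E_\mu A_p^2\le C\int_{\R^n}t^{-n}e^{-\|v\|^2/2}\frac{t\|v\|^2}{t^2}\,t^{n/2}\,dv=C t^{-(n/2+1)}=:\sigma_t^2.
\]
Lemma~\ref{lem:compact-exponential-open-subset} controls the part of the integral outside the normal ball, which is exponentially small. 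Then $U_t/\sigma_t\asymp t^{-n/4}$, so $\sigma_t\le U_t$ for small $t$. Moreover $\log(AU_t/\sigma_t)\asymp\log(1/t)\asymp\log m$ under $t=m^{-1/(2n)}$.

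\textbf{Applying \eqref{eq:gg23}.} Take $\epsilon_m=C_0\sqrt{m\,t^{-(n/2+1)}\log m}$. The threshold condition \eqref{eq:epsc} requires $\epsilon_m\gtrsim U_t\log m+\sqrt{m\sigma_t^2\log m}$. The variance term dominates, since
\[
U_t\log m\Big/\sqrt{m\sigma_t^2\log m}\asymp t^{3n/4}\sqrt{\log m}\to0.
\]
The argument $x_m$ of the logarithm in \eqref{eq:gg23} tends to $0$, so $\log(1+x_m)\ge x_m/2$. The exponent is then at most $-C'C_0^2\log m$, and choosing $C_0$ large gives failure probability at most $m^{-2}$. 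Dividing by $m$ yields \eqref{eq:mcscalar}.

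\textbf{The rate.} The final claim is the same arithmetic as in Lemma~\ref{lem:MC_kernel_vector}:
\[
\sqrt{\frac{\log m}{m\,t^{n/2+1}}}=\sqrt{2n\log(1/t)}\;t^{(3n-2)/4}=o(\sqrt t)\quad\text{for }n\ge2,
\]
since $(3n-2)/4\ge1>1/2$.

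\textbf{Main obstacle.} The only delicate point is the $t$-uniform VC property of the factor $\|y-p\|/t$. It is not affine in $y$, so the argument used for $\langle y-p,u\rangle/t$ does not apply. The plan is to handle it via the subgraph condition $s<\|y-p\|/t$. For $s\ge 0$ this is equivalent to $\|y-p\|^2-s^2t^2>0$, which is a linear threshold condition in the functions $1,y_i,\|y\|^2$ with parameter-dependent coefficients. This gives a VC index independent of $t$, and the permanence of VC-type classes under finite products then completes the argument.
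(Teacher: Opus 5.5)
Your proposal is correct and follows the paper's proof essentially step for step. You use the same scalar class, the same envelope $U_t\asymp t^{-(n+1)/2}$ and variance bound $\sigma_t^2\asymp t^{-(n/2+1)}$, and the same Gin\'e--Guillou computation imported from Lemma~\ref{lem:MC_kernel_vector}. The one thing to tidy is your VC argument for $\|y-p\|/t$, which Lemma~\ref{lem:vcuni} already covers. If you keep it, note that $s$ is a coordinate of the subgraph space, not a parameter. The threshold condition is therefore linear in $1,y_1,\dots,y_d,\|y\|^2,s^2$, with the fixed half-space $\{s<0\}$ adjoined by union, rather than in $1,y_i,\|y\|^2$ alone.
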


\begin{proof}  As established in the proof of Lemma \ref{lem:MC_kernel_vector}, the continuity of the kernel for a fixed $t > 0$ over the separable space $M^n$ ensures that the supremum event is Borel measurable. Therefore, we may replace the outer measure $(\mu^m)^*$ in \eqref{eq:gg23} with the standard measure $\mu^m$ for the subset satisfying \eqref{eq:mcscalar}. 

	We apply \cite{GG2002}*{Theorem 2.1}, formulated in \eqref{eq:gg23}, to the scalar class
	\begin{equation}
	\mathcal{A}_t \coloneqq \{A_p : p \in M\}.\label{eq:localizedscalarkernel}
	\end{equation}
	\noindent
	\underline{Step 1.} {\it Envelope estimate.}
	
	Consider the function $\psi_t(r) = re^{-r^2/(4t)}$. 
	As shown in the proof of Lemma \ref{lem:MC_kernel_vector}, $\psi_t$ attains its maximum at $r=\sqrt{2t}$.
	Hence,  for sufficiently small $t$
	\[
	\sup_{p,y} A_p(y) \le C\,t^{-(n/2+1/2)}.
	\]
	Therefore, by setting
	\begin{equation}
		U_t \coloneqq C\,t^{-(n/2+1/2)} \ge \sup_{A_p \in \mathcal{A}_t} \|A_p\|_\infty, \label{eq:utscalar}
	\end{equation}
	the envelope condition \eqref{eq:U} is satisfied.	
	
	\medskip
	\noindent
	\underline{Step 2.} {\it Variance estimate.}
	
	Since $\Var(A_p)\le \E[A_p^2]$, it suffices to estimate the second moment.
	Using normal coordinates $y=\exp_p(\sqrt{t}u)$, exactly as in Step 2 of the proof of Lemma \ref{lem:MC_kernel_vector}, we obtain:
	\begin{align}
		\E_\mu[A_p^2] &\le \int_{D_\delta(p)} \Phi_t(p,y)^2 \frac{\|y-p\|^2}{t^2} \,d\mu(y) \nonumber\\
		&\le C \int_{\R^n} \left( \frac1{(4\pi t)^{n/2}} e^{-\|u\|^2/4} \right)^2 \frac{t\|u\|^2}{t^2} t^{n/2}\,du \nonumber\\
		&= C\,t^{-(n/2+1)}.
	\end{align}
	Therefore, setting
	\begin{equation}
		\sigma_t^2 \coloneqq C\,t^{-(n/2+1)} \ge \sup_{A_p \in \mathcal{A}_t} \Var_\mu(A_p),
	\end{equation}
	and taking into account \eqref{eq:utscalar}, we conclude that the variance conditions \eqref{eq:sigma} and \eqref{eq:sigmau} are satisfied for $t$ sufficiently small.

	\noindent
	\underline{Step 3.} {\it Application of \eqref{eq:gg23} (\cite{GG2002}*{Theorem 2.1}).}
	
	We observe that our envelope $U_t$ and variance bound $\sigma_t^2$ are identically equal to those established in the proof of Lemma \ref{lem:MC_kernel_vector}. Thus, the threshold condition verification and the probability tail bound expansion follow exactly the same algebraic steps. Importing the result of Step 3 and the asymptotic evaluation from Lemma \ref{lem:MC_kernel_vector} completes the proof of Lemma \ref{lem:MC_scalar_kernel}.
\end{proof}
\begin{lemma}[\bf Concentration for the singular empirical kernel]\label{lem:densitybounded} 
	Assume that $M^n \subset \R^d$ is a compact $C^3$ submanifold and $\mu$ is the uniform probability measure on $M^n$. Let $x_1, \ldots, x_m \sim \mu$ be i.i.d. Let $\delta \in  (0, \frac{\tau_{M^n}}{4})$.
	For a fixed $x \in M^n$ and $\om \in C^\infty(M^n, \Lambda^k \R^d)$, define for $t \in \R_+$ the vector-valued function:
	\[   
	f_x(y) \coloneqq \frac{1}{t} \Phi_t(x,y) \big(\Pi_x \omega(x) - \Pi_x\Pi_y \omega(y) \big)  \chi_\delta  (x, y),
	\]		
	where $\Phi_t$ is defined in \eqref{eq:phit}. Then there exists $C > 0$ such that for $t = m^{-1/(2n)}$ sufficiently small,
	\begin{equation}
		\sup_{x\in M} \left\| \frac{1}{m}\sum_{j=1}^m f_x(x_j) - \E_{y \sim \mu}[f_x(y)] \right\| \le C \sqrt{\frac{\log m}{m\, t^{n/2+1}}} \|\om\|_{C^1} \label{eq:mc_error}
	\end{equation}
	with probability at least $1-m^{-2}$ over i.i.d. $S_m \sim \mu^m$.
	
	Furthermore, under the scaling $t = m^{-1/(2n)}$, this statistical bound converges to zero as $m \to \infty$ for any dimension $n \ge 2$, satisfying:
	\begin{equation}
		\sqrt{\frac{\log m}{m\, t^{n/2+1}}} = o(t^{1/2}). \label{eq:density}
	\end{equation} 
\end{lemma}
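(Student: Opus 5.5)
The plan is to repeat the Gin\'e--Guillou argument of Lemma~\ref{lem:MC_kernel_vector}, with one change: we exploit the fact that $f_x(y)$ vanishes to first order at $y=x$, and this is where the factor $\|\omega\|_{C^1}$ comes from. We first reduce to a scalar class by duality. Put
\[
g_{x,u}(y)\coloneqq \langle f_x(y),u\rangle,\qquad x\in M^n,\ u\in\Lambda^k\mathbb R^d,\ \|u\|=1.
\]
Then the left-hand side of \eqref{eq:mc_error} equals the supremum over the class
\[
\mathcal G_t\coloneqq\{g_{x,u}\}
\]
of the centered empirical process. The same Borel-measurability argument as in Lemma~\ref{lem:MC_kernel_vector} lets us replace the outer measure by $\mu^m$. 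By homogeneity we may assume $\|\omega\|_{C^1}=1$.

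\emph{Envelope and variance.} We write
\[
\Pi_x\omega(x)-\Pi_x\Pi_y\omega(y)=\Pi_x\bigl(\omega(x)-\omega(y)\bigr)+\Pi_x(\Pi_x-\Pi_y)\omega(y).
\]
Since $p\mapsto\Pi_p$ is Lipschitz and $\|\omega(x)-\omega(y)\|\le C\|\omega\|_{C^1}\|y-x\|$ (for $\|y-x\|\le\delta$, intrinsic and extrinsic distances are comparable by Lemma~\ref{lem:intrextr}), we obtain
\[
\|f_x(y)\|\le C\|\omega\|_{C^1}\,\Phi_t(x,y)\frac{\|y-x\|}{t}\chi_\delta(x,y)=C\|\omega\|_{C^1}A_x(y),
\]
where $A_x$ is the scalar kernel of Lemma~\ref{lem:MC_scalar_kernel}. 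Steps 1 and 2 of that lemma therefore transfer directly. They give the envelope $U_t=Ct^{-(n/2+1/2)}$ and the variance bound $\sigma_t^2=Ct^{-(n/2+1)}$, and hence $\log(AU_t/\sigma_t)\asymp\log m$.

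\emph{VC property.} Next we check that $\mathcal G_t$ is VC-type with characteristics independent of $t$, following Lemma~\ref{lem:vcuni}. The class $\mathcal G_t$ consists of products of four kinds of factors: the Gaussian family; the radial cutoff; the factor $1/t$; and $y\mapsto\langle\Pi_x\omega(x)-\Pi_x\Pi_y\omega(y),u\rangle$. The last factor is a linear combination, with coefficients depending on $(x,u)$, of the fixed finite set of functions $1$ and the coordinates of $\Pi_y\omega(y)$. This class is finite-dimensional, which is exactly the case anticipated in Lemma~\ref{lem:vcuni}. The $1/t$ scaling has no effect on VC characteristics. I expect this step to be the main obstacle. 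Permanence under products of VC classes holds only up to envelope bookkeeping, so the uniform-in-$t$ covering bound \eqref{eq:covering} must be checked with the product envelope. To handle this I would bound the covering numbers of each factor class relative to its own envelope and multiply.

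\emph{Conclusion.} With $U_t$, $\sigma_t$ and $(A,v)$ as in Lemma~\ref{lem:MC_kernel_vector}, Step 3 of that proof applies verbatim. We take $\epsilon_m=C_0\sqrt{m t^{-(n/2+1)}\log m}$; the variance term dominates; $x_m\to0$; and the tail is bounded by $Lm^{-C'C_0^2}\le m^{-2}$ once $C_0$ is large. Restoring the factor $\|\omega\|_{C^1}$ gives \eqref{eq:mc_error}. The rate statement \eqref{eq:density} is the computation already carried out at the end of Lemma~\ref{lem:MC_kernel_vector}: the bound equals $\sqrt{2n\log(1/t)}\,t^{(3n-2)/4}=o(\sqrt t)$ for $n\ge2$.
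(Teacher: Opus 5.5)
Your proposal is correct and follows essentially the same route as the paper: you reduce by duality to the scalar class $\mathcal G_t$, use the first-order vanishing of $f_x$ at $y=x$ to get the envelope $U_t\asymp t^{-(n+1)/2}\|\omega\|_{C^1}$ and the variance $\sigma_t^2\asymp t^{-n/2-1}\|\omega\|_{C^1}^2$, and then run the same Gin\'e--Guillou threshold/tail computation and the same rate calculation $t^{(3n-2)/4}\sqrt{\log(1/t)}=o(\sqrt t)$. Your concern about envelope bookkeeping for products of VC classes is one the paper settles only by citing standard permanence properties (Lemma~\ref{lem:vcuni}), and, as you anticipate, it changes at most the logarithmic factor $\log(AU_t/\sigma_t)\asymp\log m$.
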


\begin{proof} As in the proof of Lemma \ref{lem:MC_kernel_vector}, we observe that due to the continuity of the empirical functions over a separable metric space, the relevant supremum event is Borel measurable. Thus, we can replace the outer measure $(\mu^m)^*$ in \eqref{eq:gg23} with the standard measure $\mu^m$ for the subset satisfying \eqref{eq:mc_error}.  Since both occurrences of $\omega$ are preceded by the appropriate
	tangential projection, replacing $\omega$ by the smooth tangential
	field $p\mapsto\Pi_p\omega(p)$ leaves $f_x$ unchanged.  Moreover,
	compactness and smoothness of $p\mapsto\Pi_p$ give
	$\|\Pi\omega\|_{C^1}\le C\|\omega\|_{C^1}$.
	
	To bound the vector norm, we apply the concentration inequality \eqref{eq:gg23} to a dual scalar class. Define:
	\begin{equation}
	\mathcal{G}_t \coloneqq \left\{ y \mapsto \langle f_x(y), A \rangle \;\middle|\; x \in M^n, A \in \Lambda^k \R^d, \|A\| = 1 \right\}.\label{eq:singularkernel}
	\end{equation}
	By duality, the supremum of the absolute value over $\mathcal{G}_t$ controls the norm in \eqref{eq:mc_error}.
	
	\medskip
	\noindent
	\underline{Step 1.} {\it Envelope estimate.}
	
	Since $\omega$ is smooth and the orthogonal projections $\Pi_x, \Pi_y$ depend smoothly on $x, y \in M^n$,    with $\Pi_x ^2 = \Pi_x$,  we have 
	\[
	\Pi_x\Pi_y = \Pi_x  +  O (\|  x-y\| )
	\]
	Hence, for any $x, y \in M^n$:
	\[
	\|\Pi_x\omega(x) - \Pi_x\Pi_y\omega(y)\| \le C\|x-y\|\,\|\omega\|_{C^1}.
	\]
	A direct optimization of $r \mapsto r e^{-r^2/(4t)}$ yields a maximum of order $\sqrt{t}$. Thus, if $\sqrt t \le \delta$,  for any function $g \in \mathcal{G}_t$:
	\begin{align}
		|g(y)| &\le \frac{1}{t} \Phi_t(x,y) \|\Pi_x\omega(x) - \Pi_x\Pi_y\omega(y)\| \|A\| \nonumber\\
		&\le C \frac{1}{t} \Phi_t(x,y) \|x-y\|\,\|\omega\|_{C^1} \nonumber\\
		&\le C' \frac{1}{t} t^{-n/2} \sqrt{t}\,\|\omega\|_{C^1} = C' t^{-(n+1)/2}\|\omega\|_{C^1}. \label{eq:fxenvelope}
	\end{align}
	Hence, the class $\mathcal{G}_t$ has the envelope:
	\begin{equation}
		U_t \coloneqq C' t^{-(n+1)/2}\|\omega\|_{C^1}. \label{eq:ugt}
	\end{equation}
	
	\medskip
	\noindent
	\underline{Step 2.} {\it Variance estimate.}
	
	Using normal coordinates $y = \exp_x(\sqrt{t}v)$ and taking into account \eqref{eq:intrextr} and \eqref{eq:gray}, the volume element expands as $d\mu(y) =  (\vol_g (M^{n}) ^{-1})t^{n/2}(1 + O(t|v|^2))dv$. Therefore:
	\begin{align}
		\sup_{g \in \mathcal{G}_t} \int_{M^n} |g(y)|^2\,d\mu(y) &\le \sup_x \int_{D_\delta(x)} \frac{1}{t^2} \Phi_t(x,y)^2 \big( C \|x-y\|\,\|\omega\|_{C^1} \big)^2 d\mu(y) \nonumber\\
		&\le C^2 \|\omega\|_{C^1}^2 \int_{\R^n} \frac{1}{t^2} \left( \frac{1}{(4\pi t)^{n/2}} e^{-|v|^2/4} \right)^2 (t|v|^2)\, t^{n/2} dv \nonumber\\
		&\le C'' \frac{1}{t^2} t^{-n+1} \cdot t^{n/2}\, \|\omega\|_{C^1}^2 \nonumber\\
		&= C'' t^{-n/2-1}\|\omega\|_{C^1}^2.
	\end{align}
	Thus, by setting
	\begin{equation}
		\sigma_t^2 \coloneqq C'' t^{-n/2-1}\|\omega\|_{C^1}^2, \label{eq:sigmat_upper_density}
	\end{equation}
	and taking into account \eqref{eq:ugt}, we conclude that the conditions \eqref{eq:sigma} and \eqref{eq:sigmau} are satisfied for $t$ sufficiently small. 
	
	Note that
	\begin{equation}
		\sigma_t^2 \asymp t^{-n/2-1} \implies \sigma_t \asymp t^{-n/4-1/2}.
	\end{equation}

	\noindent
	\underline{Step 3.} {\it Application of \eqref{eq:gg23} (\cite{GG2002}*{Theorem 2.1}).}
	
	We define the normalized target fluctuation $\eta_m$ and the unnormalized threshold $\epsilon_m = m\eta_m$:
	\begin{equation}
		\eta_m = C_0 \sqrt{ \frac{\log m}{m\, t^{n/2+1}} } \|\om\|_{C^1} \implies \epsilon_m = C_0 \sqrt{ m\, t^{-n/2-1} \log m } \|\om\|_{C^1}.
	\end{equation}
	To apply \eqref{eq:gg23}, we verify the threshold condition \eqref{eq:epsc}:
	\[
	\epsilon_m \ge K \left[ U_t \log\left(\frac{A U_t}{\sigma_t}\right) + \sqrt{m \sigma_t^2 \log\left(\frac{A U_t}{\sigma_t}\right)} \right]
	\]
	for some positive constant $K$. Using our bounds $U_t \asymp t^{-(n+1)/2}$ and $\sigma_t^2 \asymp t^{-n/2-1}$, the logarithmic ratio scales as $\frac{U_t}{\sigma_t} \asymp \frac{t^{-(n+1)/2}}{t^{-n/4-1/2}} = t^{-n/4}$, which is exactly the same as in the unscaled case, so $\log(A U_t/\sigma_t) \asymp \log(t^{-n/4}) \asymp \log m$. 
	
	We check if the variance term strictly dominates the envelope term on the right-hand side:
	\[
	\frac{\sqrt{m \sigma_t^2 \log m}}{U_t \log m} \asymp \frac{\sqrt{m t^{-n/2-1} \log m}}{t^{-(n+1)/2} \log m} = \sqrt{\frac{m t^{n/2}}{\log m}}.
	\]
	Under the scaling $t = m^{-1/(2n)}$, we have $t^{n/2} = m^{-1/4}$, so the ratio scales as $m^{3/8} / \sqrt{\log m}$. Since $m^{3/8} \gg \sqrt{\log m}$ as $m \to \infty$, the variance term  dominates. By choosing $C_0$ sufficiently large, $\epsilon_m$ easily exceeds the threshold condition.
	
	To explicitly evaluate the probability tail bound in \eqref{eq:gg23}, let $V_m \coloneqq \sqrt{m}\sigma_t + U_t \sqrt{\log(A U_t / \sigma_t)}$. Because the variance term  dominates, $V_m^2 \asymp m \sigma_t^2$. We examine the argument of the logarithm in the exponent:
	\[
	x_m \coloneqq \frac{\epsilon_m U_t}{L V_m^2} \asymp \frac{C_0 \sqrt{m \sigma_t^2 \log m} \cdot U_t}{L m \sigma_t^2} = \frac{C_0 U_t}{L \sqrt{m \sigma_t^2}} \sqrt{\log m}.
	\]
	Substituting $U_t \asymp t^{-(n+1)/2}$ and $\sqrt{m \sigma_t^2} \asymp \sqrt{t^{-2n} t^{-n/2-1}} = t^{-5n/4-1/2}$, we find that $x_m \asymp t^{3n/4} \sqrt{\log(1/t)}$. Since $t \to 0$, $x_m \to 0$. 
	
	Using the inequality $\log(1+x) \ge x/2$ for sufficiently small $x > 0$, the exponent in \eqref{eq:gg23} is bounded above by:
	\begin{align*}
		-\frac{1}{L} \frac{\epsilon_m}{U_t} \log(1 + x_m) &\le -\frac{1}{L} \frac{\epsilon_m}{U_t} \left(\frac{1}{2} \frac{\epsilon_m U_t}{L V_m^2} \right) = -\frac{\epsilon_m^2}{2 L^2 V_m^2} \\
		&\asymp -\frac{C_0^2 m \sigma_t^2 \log m}{2 L^2 m \sigma_t^2} = -C' C_0^2 \log m.
	\end{align*}
	Therefore, the right-hand side of \eqref{eq:gg23} is bounded by $L \exp(-C' C_0^2 \log m) = L m^{-C' C_0^2}$. 
	By choosing $C_0$ sufficiently large such that $C' C_0^2 \ge 3$, we conclude that with probability at least $1 - m^{-2}$:
	\[
	\sup_{g \in \mathcal{G}_t} \left| \frac{1}{m}\sum_{j=1}^m g(x_j) - \E_\mu[g] \right| \le C_0 \sqrt{ \frac{\log m}{m\, t^{n/2+1}} } \|\om\|_{C^1}.
	\]
	This completes the proof of the first assertion \eqref{eq:mc_error}.
	
	To prove the second assertion \eqref{eq:density}, we substitute $m = t^{-2n}$ into the rate:
	\begin{equation}
		\sqrt{\frac{\log m}{m\, t^{n/2+1}}} = \sqrt{\frac{\log(t^{-2n})}{t^{-2n} t^{n/2+1}}} = \sqrt{t^{2n - n/2 - 1} \log(t^{-2n})} = \sqrt{t^{\frac{3n}{2} - 1} \log(t^{-2n})}.
	\end{equation}
	For any manifold dimension $n \ge 2$, the exponent is $\frac{3n}{2} - 1 \ge \frac{3(2)}{2} - 1 = 2$.
	Therefore, the term is bounded by $\mathcal{O}(t \sqrt{\log(1/t)})$, which is  $o(t^{1/2})$.
	This completes the proof of Lemma \ref{lem:densitybounded}.
\end{proof}


\begin{thebibliography}{99999}
	
	\bibitem{AB2006}  S. B. Alexander,  R. L. Bishop,  Gauss equation and injectivity radii for subspaces in spaces of curvature bounded above. Geom. Dedicata 117(2006), 65--84.
	
	\bibitem{AL2019} E. Aamari, C. Levrard, Nonasymptotic rates for manifold, tangent space and curvature estimation. Ann. Statist. 47(1): 177-204 (2019). DOI: 10.1214/18-AOS1685, long version \url{https://doi.org/10.1214/18-AOS1685}
	
	\bibitem{BGV1996} N. Berline, E. Getzler, M. Vergne, Heat Kernels and Dirac Operators. 2nd Edition, Springer, 1996.	

	\bibitem{BN2003} M. Belkin, P. Niyogi, Laplacian eigenmaps for dimensionality reduction and data representation. Neural computation, 15(6), 1373--1396 (2003).
	
	\bibitem{BN2008} M. Belkin, P. Niyogi, Towards a theoretical foundation for Laplacian-based manifold methods. Journal of Computer and System Sciences, 74(8), 1289--1308 (2008).
	
	\bibitem{BN20062008} M. Belkin, P. Niyogi, Convergence of Laplacian Eigenmaps. Advances in Neural Information Processing Systems 19 (NIPS 2006), 129-136 \url{https://proceedings.neurips.cc/paper_files/paper/2006/file/5848ad959570f87753a60ce8be1567f3-Paper.pdf}, long version: \url{https://misha.belkin-wang.org/papers/CLEM_08.pdf}

	\bibitem{CLS2021} Y. Cao, D. Li, H. Sun, et al., Efficient Weingarten map and curvature estimation on manifolds. Mach Learn. 110, 1319--1344   (2021). \url{https://doi.org/10.1007/s10994-021-05953-4}
	

\bibitem{Chen1977}    K.T., Chen,   Iterated path integrals. Bull. Am. Math. Soc. 83 (1977), 831--879. 




	\bibitem{DK1970} C. Davis, W.M. Kahan, The rotation of eigenvectors by a perturbation. III, SIAM Journal on Numerical Analysis, vol. 7 (1970) N 1, 1--46.
	
	
	\bibitem{DHK2011}  T.K. Dey, A.N.  Hirani,  and B. Krishnamoorthy, 
	Optimal homologous cycles, total unimodularity, and linear programming.
	SIAM Journal on Computing, 40, N4(2011), SIAM, 1026--1044.

	 
	 \bibitem{Federer1969}  H. Federer, Geometric Measure Theory,
	 Die Grundlehren der mathematischen Wissenschaften, 153  (1969),
	 Springer-Verlag, New York.
	 
	
	\bibitem{Federer1959} H. Federer, Curvature measures. Trans. Amer. Math. Soc. 93(1959) 418--491.
	
	\bibitem{FKLS2021} D. Fiorenza, K. Kawai, H. V. L\^e and L. Schwachh\"ofer, Almost formality of manifolds of low dimension. Ann. Sc. Norm. Super. Pisa Cl. Sci. (5), vol. XXII (2021), 79-107.
	
	\bibitem{FL2025} D. Fiorenza,  H. V. L\^e,  	
	Unital $C_\infty$-algebras and the real homotopy type of $(r-1)$-connected compact manifolds of dimension $\leq \ell(r-1)+2$, https://arxiv.org/abs/2310.19506,  Ann. Sc. Norm. Super. Pisa,  \url{https://doi.org/10.2422/2036-2145.202401_003}
	\newblock 2025.
	
\bibitem{GarciaTrillos2015} N. Garc{\'\i}a Trillos  and D. Slep{\v{c}}ev,  A variational approach to the consistency of spectral clustering, Applied and Computational Harmonic Analysis,  45, (2018) No. 2, 239--281.

	
	\bibitem{GG2002} E. Gin\'e, A. Guillou, Rates of strong uniform consistency for multivariate kernel density estimators. Annales de l'Institut Henri Poincar\'e, Probabilités et Statistiques, Vol. 38, No. 6, (2002), pp. 907--921.

	
\bibitem{Golub2013}	G. H. Golub,  C. F. Van Loan.
	Matrix Computations, 4th  edition,
Johns Hopkins University Press,  (2013)

	
	\bibitem{Gray1974} A. Gray, The volume of a small geodesic ball of a Riemannian manifold. Michigan Mathematical Journal, vol.20, no.4, 329--344 (1974).
	
	\bibitem{HJ2012} R. A. Horn, C. R. Johnson, Matrix analysis (2nd ed.). Cambridge University Press. (2012).
	
	\bibitem{Jost2017} J. Jost, Riemannian Geometry and Geometric Analysis. 7th Edition, Springer, 2017.
	
	\bibitem{Kato1995} T. Kato, Perturbation Theory for Linear Operators. (Classics in Mathematics), Springer (1995).


\bibitem{Knyazev2001}, A. V. Knyazev, Toward the optimal preconditioned eigensolver: Locally optimal block preconditioned conjugate gradient method,
SIAM Journal on Scientific Computing, 23(2001), Nr. 2, 517--541,


	\bibitem{KN1963} S. Kobayashi, K. Nomizu, Foundations of Differential Geometry. vol. I, Interscience Publishers, 1963.
	
	\bibitem{KN1969} S. Kobayashi, K. Nomizu, Foundations of Differential Geometry. vol. II, Interscience Publishers, 1969.
	
\bibitem{Kuwae2003}	K.	Kuwae,  and T. Shioya,Convergence of spectral structures: a functional
analytic theory and its applications to spectral
geometry.  Communications in
analysis and geometry
Volume 11(2003), Number 4, 599-673.
		
		
\bibitem{lanczos1950} C.  Lanczos, An iteration method for the solution of the eigenvalue problem of linear differential and integral operators,
Journal of Research of the National Bureau of Standards, 45 (1950), 4, 255--282,

	
	\bibitem{Le2026} H. V. L\^e, Minimal Unital Cyclic $C_\infty$-Algebras and the Real and Rational Homotopy Type of Closed Manifolds. \url{https://arxiv.org/abs/2603.01219}
	
	\bibitem{LMPT2026}	H. V. L\^e, H. Q. Minh, F. Protin, W. Tuschmann, Mathematical Foundations of Machine Learning, Springer 2026 (to appear).
	
	\bibitem{Merkulov1999} S. A. Merkulov, Strong homotopy algebras of a K\"ahler manifold. Int. Math. Res. Not. IMRN (1999), Nr 3, 153--164.
	
	\bibitem{Lyons1998}  T. J. Lyons, Differential equations driven by rough signals.
	Revista Matem{\'a}tica Iberoamericana, vol. 14(1998), Nr. 2, 215--310,1998,
	doi:10.4171/RMI/240.
	
	\bibitem{NSW2008} P. Niyogi, S. Smale, S. Weinberger, Finding the homology of submanifolds with high confidence from random samples. Discrete and Computational Geometry, vol. 39, nos. 1--3, 419-441 (2008) \url{https://doi.org/10.1007/s00454-008-9053-2}
	
	\bibitem{Rosenberg1997} S. Rosenberg, The Laplacian on a Riemannian manifold : an introduction to analysis on manifolds. London Mathematical Society student texts. Cambridge University Press, Cambridge, U.K., New York, NY, USA, 1997.
	
	\bibitem{SW2011} A. Singer, H.-T. Wu, Orientability and diffusion map. Appl. Comput. Harmon. Anal., 31(2011), 44--58.
	
	\bibitem{SW2017} A. Singer, H.-T. Wu, Spectral convergence of the connection Laplacian from random samples. Information and Inference: A Journal of the IMA (2017) 6, 58--123 \url{https://doi:10.1093/imaiai/iaw016}
	
	\bibitem{SW2012} A. Singer, H.-T. Wu, Vector diffusion maps and the connection Laplacian. Comm. Pure Appl. Math., 65(2012), 1067--1144 \url{https://doi.org/10.1002/cpa.21395}
	

	\bibitem{Talagrand1994} M. Talagrand, Sharper bounds for Gaussian and empirical processes. Ann. Probab. 22 (1994) 28--76.
	
	\bibitem{Talagrand1996} M. Talagrand, New concentration inequalities in product spaces. Invent. Math. 126 (1996) 505--563.
	
\bibitem{Trillos2020}    N. 	Garcia Trillos, M.  Gerlach,  M.  Hein,  and D. Slep\v cev, Error estimates for spectral convergence of the graph Laplacian on random geometric graphs toward the Laplace--Beltrami operator. Foundations of Computational Mathematics, 20(4), 827--887 (2020).
	
	\bibitem{VW1996} A. W. van der Vaart, J.A. Wellner, Weak convergence and Empirical Processes. 2nd Edition. Springer (1996).
	
	\bibitem{YWS2015} Y. Yu, T. Wang, R.J. Samworth, A useful variant of the Davis--Kahan theorem for statisticians. Biometrika, 102(2015), N.2, 315-323.
\end{thebibliography}
\end{document}